\newcommand{\bn}{\boldsymbol{n}}
\newcommand{\bu}{\boldsymbol{u}}
\newcommand{\bv}{\boldsymbol{v}}
\newcommand{\bw}{\boldsymbol{w}}
\newcommand{\fb}{\boldsymbol{f}}
\newcommand{\bg}{\boldsymbol{g}}
\newcommand{\bt}{\boldsymbol{t}}
\newcommand{\bx}{\boldsymbol{x}}
\newcommand{\by}{\boldsymbol{y}}
\newcommand{\bz}{\boldsymbol{z}}
\newcommand\beps{\boldsymbol{\varepsilon}}
\newcommand\bsigma{\boldsymbol{\sigma}}
\newcommand\bzeta{\boldsymbol{\zeta}}
\newcommand\bzero{\boldsymbol{0}}
\newcommand\bxi{\boldsymbol{\xi}}
\newcommand\btau{\boldsymbol{\tau}}
\newcommand\bnabla{\boldsymbol{\nabla}}
\newcommand\bPi{\boldsymbol{\Pi}}
\newcommand\bphi{\boldsymbol{\phi}}
\newcommand\bchi{\boldsymbol{\chi}}
\newcommand\bpsi{\boldsymbol{\psi}}
\newcommand\bh{\boldsymbol{h}}
\newcommand{\bH}{\mathbf{H}}
\newcommand{\bL}{\mathbf{L}}
\newcommand{\bV}{\mathbf{V}}
\newcommand{\bQ}{\mathbf{Q}}
\newcommand{\bbH}{\mathbb{H}}
\newcommand{\bbM}{\mathbb{M}}
\newcommand{\bbI}{\mathbb{I}}
\newcommand{\bbR}{\mathbb{R}}
\newcommand\bdiv{\mathop{\mathbf{div}}\nolimits}
\newcommand\vdiv{\mathop{\mathrm{div}}\nolimits}
\newcommand\brot{\mathop{\mathbf{rot}}\nolimits}
\newcommand\vrot{\mathop{\mathrm{rot}}\nolimits}
\newcommand\bcurl{\mathop{\mathbf{curl}}\nolimits}
\newcommand\tr{\mathop{\mathrm{tr}}\nolimits}
\newcommand{\lJump}{[\![}
\newcommand{\rJump}{]\!]}
\DeclarePairedDelimiter\jump{\lJump}{\rJump}
\DeclarePairedDelimiter\norm{\lVert}{\rVert}
\def\Xint#1{\mathchoice
	{\XXint\displaystyle\textstyle{#1}}%
	{\XXint\textstyle\scriptstyle{#1}}%
	{\XXint\scriptstyle\scriptscriptstyle{#1}}%
	{\XXint\scriptscriptstyle\scriptscriptstyle{#1}}%
	\!\int}
\def\XXint#1#2#3{{\setbox0=\hbox{$#1{#2#3}{\int}$ }
		\vcenter{\hbox{$#2#3$ }}\kern-.6\wd0}}
\def\dashint{\Xint-}
\numberwithin{figure}{section}
\numberwithin{equation}{section}
\theoremstyle{thmstyleone}%
\newtheorem{theorem}{Theorem}[section]
\newtheorem{proposition}[theorem]{Proposition}%
\newtheorem{lemma}[theorem]{Lemma}
\theoremstyle{thmstyletwo}%
\newtheorem{remark}{Remark}%
\theoremstyle{thmstylethree}%
\begin{document}

\title[Article Title]{A posteriori error analysis of a 
virtual element method for stress-assisted diffusion problems}


\author[1]{\fnm{Franco} \sur{Dassi}}\email{Franco.Dassi@unimib.it}

\author[2]{\fnm{Rekha} \sur{Khot}}\email{rekhakhot@iitpkd.ac.in}

\author*[3]{\fnm{Andr\'es E.} \sur{Rubiano}}\email{Andres.RubianoMartinez@monash.edu}

\author[3,4]{\fnm{Ricardo} \sur{Ruiz-Baier}}\email{Ricardo.RuizBaier@monash.edu}

\affil[1]{\orgdiv{Dipartimento di Matematica e Applicazioni}, \orgname{Universita` degli Studi di Milano Bicocca}, \orgaddress{\street{Via Roberto Cozzi 55 - 20125}, \city{Milano}, \postcode{20125}, \country{Italy}}}

\affil[2]{\orgdiv{Department of Mathematics}, \orgname{Indian Institute of Technology Palakkad}, \orgaddress{\street{Kanjikode}, \postcode{678623}, \city{Kerala}, \country{India}}}

\affil[3]{\orgdiv{School of Mathematics}, \orgname{Monash University}, \orgaddress{\street{9 Rainforest Walk}, \city{Melbourne}, \postcode{3800}, \state{Victoria}, \country{Australia}}}

\affil[4]{\orgname{Universidad Adventista de Chile}, \orgaddress{\street{Casilla 7-D}, \city{Chill\'an}, \country{Chile}}}


\abstract{We develop and analyse residual-based a posteriori error estimates for the virtual element discretisation of a nonlinear stress-assisted diffusion problem in two and three dimensions. The model problem involves a two-way coupling between elasticity and diffusion equations in perturbed saddle-point form. A robust global inf-sup condition and Helmholtz decomposition for $\mathbf{H}(\mathrm{div},\Omega)$ lead to a reliable and efficient error estimator based on appropriately weighted norms that ensure parameter robustness. The a posteriori error analysis uses novel quasi-interpolation operators for Stokes and edge virtual element spaces, and we include the proofs of such operators with estimates in 3D for completeness. Finally, we present numerical experiments in both 2D and 3D to demonstrate the optimal performance of the proposed error estimator.}

\keywords{A posteriori error analysis in 2D and 3D, virtual element method, stress-assisted diffusion, perturbed saddle-point problems, interpolation operators.}


\pacs[MSC Classification]{65N30, 65N12, 65N15, 74F25.}

\maketitle

\section{Introduction}\label{sec:introduction}
The physical interaction between solute diffusion, driven by chemical reactions, in a deformable medium was first formally addressed in \cite{unger1983theory, wilson1982theory} known as the stress-assisted diffusion model. This phenomenon appears in a variety of scientific, medical, and engineering applications such as the lithiation of ion batteries \cite{Taralov2015}, cardiac tissue electromechanics \cite{Loppini2018-vk}, semiconductor fabrication \cite{Shaw2007}, biomechanics of brain tissue \cite{PREVOST201183}, among others. These applications operate across multiple scales, 
and motivating the need of robust models. 
The mathematical analysis and discretisation of this model have been conducted using mixed-primal and mixed-mixed  finite element methods (FEM)  in Hilbert spaces \cite{gatica18}, pseudo-stress-based formulations using Banach spaces \cite{gatica22}, and well-posedness results for a primal formulation  \cite{lewicka2016local, malaeke2023mathematical}. In the virtual element method (VEM) framework, this model was first studied in \cite{khot2024}, which introduced a robust mixed-mixed formulation with parameter-weighted norms ensuring the unique solvability of two uncoupled systems across different parameter scales (i.e. robustness is achieved), along with a fixed-point argument establishing well-posedness for the fully coupled equations. 
Our goal here is to derive an adaptive mesh refinement strategy driven by a posteriori error estimators for this type of formulations, and extend reliability and efficiency results to the parameter-robust setting.

Adaptive schemes driven by a posteriori error estimators allow optimal convergence recovery in cases such as singular solutions, rough data, and complex geometries (e.g., non-convex domains and sharp corners). The key advantage of the VEM framework in adaptive algorithms is its natural handling of hanging nodes during mesh refinement. However, several persistent challenges remain, 
such as developing open-source implementations for polytopal conforming mesh refinement \cite{antonietti2022}.
The FEM literature on a posteriori error analysis of this problem can be found in \cite{gatica2022posteriori}. On the other hand, a posteriori error estimates for VEM were first introduced in \cite{cangiani2017posteriori}, and the extension to displacement-based deformation models and reaction-diffusion equations using VEM can be found in \cite{wang20,munar2024}. 

To the best of the authors' knowledge, the present paper is the first one addressing robust a posteriori error estimators for VEM applied to stress-assisted diffusion problems both in 2D and 3D. We first establish residual-based error estimators in 2D and prove the reliability and efficiency under a small data assumption due to the Banach fixed-point argument. Here, we use classical tools such as quasi-interpolation operators, polynomial projections, a Helmholtz decomposition, and the definition of bubble functions to achieve our purpose. Further, we provide ``building blocks" for the 3D case. Specifically, we prove novel quasi-interpolation operator for both Stokes- and edge-like virtual element spaces in 3D, together with a stable Helmholtz decomposition for reaction-diffusion equations (in mixed VEM). The associated estimators are implemented in the \texttt{VEM++} library \cite{dassi2023vem++} (available upon request) with an extensible design, enabling the incorporation of various a posteriori error estimators for the virtual element (VE) spaces available in the library.

\medskip
\noindent\textbf{Plan of the paper.} The contents of this paper have been organised as follows. The remainder of this section contains preliminary notational conventions and  useful functional spaces. Section~\ref{sec:problem} presents the coupled stress-assisted diffusion model, weak formulation consisting of two coupled perturbed saddle-point problems, unique solvability result, and robust global inf-sup result. 
The mesh assumptions and VE spaces for the coupled problem along with their properties are provided in Section~\ref{sec:vem} for the 2D case. Section~\ref{sec:aposteriori} is devoted to deriving a reliable and efficient error estimator. Next, we extend the VE spaces to the 3D case in Section~\ref{sec:3d}, and construct two novel quasi-interpolation operators for Stokes and edge spaces. 
Finally, our theoretical results are illustrated via numerical examples in Section~\ref{sec:numerical-examples}.

\medskip 
\noindent\textbf{Recurrent notation.}
Let $D$ be a domain of $\bbR^d$ ($d=2,3$), with boundary $\partial D$. 
Given a tensor function $\bsigma:D\to \bbR^{d\times d}$, a vector field $\bu:D\to \bbR^d$ and a scalar field ${p}:D\to \bbR$ we set the tensor divergence $\bdiv \bsigma:D \to \bbR^d$, the  vector gradient $\bnabla \bu:D \to \bbR^{d\times d}$, the symmetric gradient $\beps(\bu) : D \to \bbR^{d\times d}$, the vector divergence $\vdiv \bu:D \to \bbR$, the 3D vector rotational $\bcurl \bu: D\to \bbR^3$, the 2D scalar rotational $\vrot \bu:D \to \bbR$, and the 2D vector rotational $\brot {p}:D \to \bbR^2$, as $(\bdiv \bsigma)_i := \sum_j   \partial_j \bsigma_{ij}$, $(\bnabla \bu)_{ij} := \partial_j \bu_i$, $\beps(\bu) := \frac{1}{2}\left[\bnabla\bu+(\bnabla\bu)^{\tt t}\right]$, $\vdiv \bu := \sum_i \partial_i \bu_i$, $\bcurl \bu := (\partial_2 \bu_3- \partial_3 \bu_2, \partial_3 \bu_1 - \partial_1 \bu_3, \partial_1 \bu_2 - \partial_2 \bu_1)$, $\vrot \bu := \partial_1 \bu_2 -\partial_2 \bu_1$, and $\brot {p} := (\partial_2 {p}, -\partial_1 {p})^{\tt t}$, respectively.

The component-wise inner product for vectors $\bu,\bv \in \bbR^d$ and matrices $\bsigma, \,\btau \in\bbR^{d\times d}$ are defined by $\bu:\bv:= \sum_{i}\bu_{i}\bv_{i}$, and $\bsigma:\btau:= \sum_{i,j}\bsigma_{ij}\btau_{ij}$. 
For $s \geq 0$, we denote the  Sobolev space of scalar functions with domain $D$  as $\mathrm{H}^s(D)$, and their vector and tensor counterparts as $\bH^s(D)$ and $\bbH^s(D)$, respectively. 
The norm in $\mathrm{H}^s(D)$ is denoted $\norm{\cdot}_{s,D}$ and the corresponding semi-norm $|\cdot|_{s,D}$. We also use the convention  $\mathrm{H}^0(D):=\mathrm{L}^2(D)$ and let $(\cdot, \cdot)_{\mathrm{D}}$ be the inner product for in $\mathrm{L}^2(D)$ (similarly for the vector and tensor counterparts). The space $\text{H}^{\frac{1}{2}}(\partial D)$ contains traces of functions of $\text{H}^1(D)$, $\text{H}^{-\frac{1}{2}}(\partial D)$ denotes its dual, and $\langle \cdot, \cdot \rangle_{\partial D}$ stands for the duality
pairing between them. The Hilbert space $\bH(\vdiv, D)$ of vectors in $\bL^2(D)$ with divergence in $\mathrm{L}^2(D)$ equipped with the norm $\norm{\cdot}^2_{\vdiv, D}:=\norm{\cdot}_{0,D}^2+\norm{\vdiv \cdot}^2_{0, D}$. Similarly, the Hilbert spaces $\bH(\vrot, D)$ and $\bH(\bcurl, D)$ are equipped with the norms $\norm{\cdot}^2_{\vrot,D}:=\norm{\cdot}_{0,D}^2+\norm{\vrot\cdot}^2_{0,D}$ and $\norm{\cdot}^2_{\bcurl,D}:=\norm{\cdot}_{0,D}^2+\norm{\bcurl \cdot}^2_{0,D}$. The outward unit normal vector and unit tangential vector to $\partial D$ are denoted respectively by $\bn$ and $\bt$.
 
Throughout this paper, we shall use the letter $C$ to denote a generic positive constant independent of the mesh size $h$ and physical constants, which might stand for different values at its different occurrences. Moreover, given any positive expressions $X$ and $Y$, the notation $X \,\lesssim\, Y$  means that $X \,\le\, C\, Y$ (and similarly for $X\gtrsim Y$).

\section{The stress-assisted diffusion problem}\label{sec:problem}
This section recalls from \cite{khot2024} 
the weak formulation and its well-posedness analysis based on the Babuška--Brezzi--Braess theory and a fixed-point argument. 

\medskip 
\noindent\textbf{Model problem.}
Let $\Omega$ be a polytopal (polygonal in 2D and polyhedral in 3D) bounded domain with boundary $\Gamma = \Gamma_{\mathrm{D}} \cup \Gamma_{\mathrm{N}}$, such that $\Gamma_{\mathrm{D}}\cap \Gamma_{\mathrm{N}} = \emptyset$, and consider the following coupled 
PDE with mixed boundary conditions 
\begin{subequations}\label{eq:mixed-formulation}
\begin{alignat}{2}
-\bdiv(2\mu \beps(\bu) -{p}\bbI) = \fb, &\quad \text{in $\Omega$},\quad \bu=\mathbf{0}, &\quad \text{on $\Gamma_{\mathrm{D}}$},\label{eq:linear-momentum}\\
{p} = -\lambda \vdiv\bu\, +\ell(\varphi), &\quad \text{in $\Omega$},\quad (2\mu \beps(\bu) -{p}\bbI)\bn = \bzero, &\quad \text{on $\Gamma_{\mathrm{N}}$},\label{eq:hooke-law}\\
\bzeta = \bbM(\beps(\bu),{p}) \nabla \varphi, &\quad \text{in $\Omega$},\quad \varphi=\varphi_{\mathrm{D}}, &\quad \text{on $\Gamma_{\mathrm{D}}$},\label{eq:diffusive-flux}\\
\theta \varphi 
- \vdiv(\bzeta) = g, &\quad \text{in $\Omega$},\quad \bzeta \cdot \bn = 0, &\quad \text{on $\Gamma_{\mathrm{N}}$}\label{eq:reaction-diffusion}.
\end{alignat}
\end{subequations}
The coupling 
uses an active stress approach defining the total Cauchy stress as $\bsigma 
= 2\mu \beps(\bu) -{p}\bbI$, where 
$\bu$ is the displacement vector, $\beps(\bu)$ is the tensor of infinitesimal strains, ${p}$ denotes a Herrmann-type pressure, $\bbI$ denotes the identity tensor in $\bbR^{d\times d}$, $\varphi$ is the solute's concentration, $\mu$ and $\lambda$ are the Lam\'e parameters  of the solid, $\fb$ is a vector of external body loads, $\ell$  modulates  the (isotropic) active stress. 
On the other hand, 
$\bzeta$ is the diffusive flux $\bbM(\beps(\bu),{p})$ is the stress-assisted diffusion coefficient (assumed uniformly bounded away from zero), $\theta$ is a model parameter that lies in $[0,1]$, and $g$ is a given net volumetric source of solute.

\medskip 
\noindent\textbf{Assumptions on the nonlinear terms.}\label{nonlinear-terms} 
We suppose that $\ell: \mathrm{L}^2(\Omega)\to \mathrm{L}^2(\Omega)$   satisfies 
$
\norm{\ell(\vartheta)}_{0,\Omega} \lesssim \norm{\vartheta}_{0,\Omega}$ for all $\vartheta \in \mathrm{L}^2(\Omega)$. 
We also assume that $\bbM(\cdot,\cdot)$ is   invertible, symmetric, positive semi-definite and uniformly bounded in $\mathbb{L}^\infty(\Omega)$ (likewise for $\bbM^{-1}(\cdot,\cdot)$). In addition, for all $\bw\in \bH^1(\Omega), r\in \mathrm{L}^2(\Omega) \text{ and }  \bx,\by \in \mathbb{R}^d$,  there exists $M\geq 1$ such that 
${M}^{-1} \bx\cdot \by \leq \bx\cdot[\bbM^{-1}(\beps(\bw),r)\by]$ and  
$\by\cdot[\bbM^{-1}(\beps(\bw),r)\bx]  \leq M \bx\cdot\by$.
Finally, we assume that $\bbM^{-1}(\cdot,\cdot)$ and $\ell(\cdot)$ are Lipschitz continuous with Lipschitz constants $L_{\bbM}$ and $L_{\ell}$. Examples of these terms can be found in   \cite{cherubini17} for the stress-assisted diffusion  and \cite{murray2003mathematical,Taralov2015} for the active stress.

\subsection{Weak formulation}\label{sec:weak-formulation}
In view of the boundary conditions, we define the  Hilbert spaces 
\begin{gather*}
\bH^1_{\mathrm{D}}(\Omega):=\{\bv \in \bH^1(\Omega): \bv = \bzero\quad \text{on }\Gamma_{\mathrm{D}}\}, \quad \bH_{\mathrm{N}}(\vdiv,\Omega):=\{\bxi \in \bH(\vdiv,\Omega): \bxi\cdot\bn = 0 \quad \text{on }\Gamma_{\mathrm{N}}\}.  
\end{gather*}
With them, the weak form consists in, given $\fb\in \bL^2(\Omega)$, $g\in \mathrm{L}^2(\Omega)$, 
and $\varphi_{\mathrm{D}}\in \text{H}^{\frac{1}{2}}(\Gamma_{\mathrm{D}})$, finding 
$(\bu,{p},\bzeta,\varphi) \in \bH_{\mathrm{D}}^1(\Omega)\times \mathrm{L}^2(\Omega) \times \bH_{\mathrm{N}}(\vdiv,\Omega) \times \mathrm{L}^2(\Omega)$ such that 
\begin{subequations}\label{eq:weak}
\begin{align}
 a_1(\bu,\bv) + b_1(\bv,{p})  = F_1(\bv), \qquad &\forall \bv \in \bH_{\mathrm{D}}^1(\Omega),\label{weak-1}\\
  b_1(\bu,{q}) - \frac{1}{\lambda}c_1({p},{q})  = G_1^\varphi({q}), \qquad &\forall {q} \in \mathrm{L}^2(\Omega),\\
a_2^{\bu,{p}}(\bzeta,\bxi) + b_2(\bxi,\varphi)  = F_2(\bxi), \qquad &\forall \bxi \in \bH_{\mathrm{N}}(\vdiv,\Omega),\label{weak-3}\\
  b_2(\bzeta,\psi) - \theta c_2(\varphi,\psi) = G_2(\psi), \qquad &\forall \psi \in \mathrm{L}^2(\Omega).
\end{align}
\end{subequations} 
The bilinear forms $a_1: \bH_{\mathrm{D}}^1(\Omega)\times \bH_{\mathrm{D}}^1(\Omega)\to \bbR$, $b_1:\bH_{\mathrm{D}}^1(\Omega)\times \mathrm{L}^2(\Omega)\to \bbR$, 
$c_1:\mathrm{L}^2(\Omega)\times \mathrm{L}^2(\Omega)\to \bbR$, $a_2^{\bu,p}:\bH_{\mathrm{N}}(\vdiv,\Omega)\times \bH_{\mathrm{N}}(\vdiv,\Omega)\to \bbR$, $b_2:\bH_{\mathrm{N}}(\vdiv,\Omega)\times \mathrm{L}^2(\Omega)\to \bbR$, $c_2:\mathrm{L}^2(\Omega)\times \mathrm{L}^2(\Omega)\to \bbR$, and linear functionals $F_1 : \bH_{\mathrm{D}}^1(\Omega)\to \bbR$, $G_1^\varphi: \mathrm{L}^2(\Omega)\to \bbR$,  $F_2:\bH_{\mathrm{N}}(\vdiv,\Omega)\to \bbR$, $G_2:\mathrm{L}^2(\Omega)\to\bbR$, are  
\begin{gather*}
    a_1(\bu,\bv) : = 2\mu \int_\Omega \beps(\bu):\beps(\bv), \quad 
    b_1(\bv,{q}) :=  - \int_\Omega {q}\vdiv\bv,\quad
    c_1({p},{q}) : = \int_\Omega {p}{q}, \\ 
    F_1(\bv):= \int_\Omega \fb\cdot\bv, \quad
    G_1^\varphi({q}): = -\frac{1}{\lambda}\int_\Omega  \ell(\varphi){q},
    \\ 
    a_2^{\bu,p}(\bzeta,\bxi): =  \int_\Omega \bbM^{-1}(\beps(\bu),p) \bzeta \cdot \bxi, \quad
    b_2(\bxi,\psi):= \int_\Omega \psi \vdiv \bxi,\quad 
    c_2(\varphi,\psi):= \int_\Omega \varphi\psi,\\
    F_2(\bxi): = \langle \varphi_{\mathrm{D}}, \bxi\cdot\bn\rangle_{\Gamma_{\mathrm{D}}}, \quad G_2(\psi):= - \int_\Omega g\psi.
\end{gather*}

\subsection{Parameter-dependent norms and robust solvability}\label{sec:robust-setting}
Given $K\subseteq \Omega$, let $\bV_1(K) := \bH^1(K)$, $\mathrm{Q}_1(K) := \mathrm{L}^2(K)$, $\bV_2(K) := \bH(\vdiv,K)$ and $\mathrm{Q}_2 := \mathrm{L}^2(K)$  be the displacement, Herrmann  pressure, flux, and concentration spaces on $K$, respectively, equipped with the following weighted norms and semi-norms
\begin{gather*}
    \norm{(\bu,{p})}_{\bV_1(K)\times \mathrm{Q}_1(K)}^2 := \norm{\bu}_{\bV_1(K)}^2 + \norm{{p}}_{\mathrm{Q}_1(K)}^2, \; \norm{\bu}_{\bV_1(K)}^2 := 2\mu\norm{\beps(\bu)}_{0,K}^2, \\ 
    \norm{{p}}_{\mathrm{Q}_1(K)}^2 := \left(\frac{1}{2\mu} +  \frac{1}{\lambda}\right)\norm{{p}}_{0,K}^2,\\
    |\bu|_{\bV_1(K)}^2 := 2\mu|\bu|_{1,K}^2, \; |{p}|_{\mathrm{Q}_1(K)}^2 := \frac{1}{2\mu}\norm{p}_{0,K}^2,\\
    \norm{(\bzeta,\varphi)}_{\bV_2(K)\times \mathrm{Q}_2(K)}^2 := \norm{\bzeta}_{\bV_2(K)}^2 + \norm{\varphi}_{\mathrm{Q}_2(K)}^2, \; \norm{\bzeta}_{\bV_2(K)}^2 := \norm{\bzeta}_{\bbM,K}^2 + M\norm{\vdiv \bzeta}_{0,K}^2, \\ 
    \norm{\bzeta}_{\bbM,K}^2 := 
    \int_K \bbM^{-1}(\beps(\bu),p) \bzeta \cdot \bzeta, \;
    \norm{\varphi}_{\mathrm{Q}_2(K)}^2 :=  \left(\frac{1}{M} + \theta \right)\norm{\varphi}_{0,K}^2,\\
    |\bzeta|_{\bV_2(K)}^2 := M|\bzeta|_{1,K}^2, \; |\varphi|_{\mathrm{Q}_2(K)}^2 := \frac{1}{M}\norm{\varphi}_{0,K}^2.
\end{gather*}
Note that the definition of $|\bzeta|_{\bV_2(K)}^2$ requires $\bzeta \in \bV_2(K)\cap \bH^1(K)$. For simplicity, we denote the global spaces (i.e., $K=\Omega$)  as $\bV_1:= \bH_{\mathrm{D}}^{1}(\Omega), \mathrm{Q}_1 = \mathrm{Q}_2  := \mathrm{L}^2(\Omega), \bV_2 := \bH_{\mathrm{N}}(\vdiv,\Omega)$ while imposing the boundary conditions. 

Theorem~\ref{well-posedness} below establishes the well-posedness of the fully coupled system \eqref{eq:weak} under the small data assumption 
\begin{equation}\label{small_data_cont}
    C_1 L_\ell \max\left\{\frac{1}{\sqrt{2\mu}},\sqrt{2\mu} \right\}M^{2}C_2^2 L_{\bbM}\left(\norm{\varphi_{\mathrm{D}}}_{\frac{1}{2},\Gamma_{\mathrm{D}}} + \norm{g}_{0,\Omega}\right) < 1.
\end{equation}
We recall that this condition arises only from the fixed-point treatment of the nonlinearity; the method’s robustness is guaranteed by the weighted norm spaces (see \cite{braess96penalty}) and the mixed formulation introduced before, for a detailed proof we refer to \cite{khot2024}. In addition, Theorem~\ref{th:global-inf-sup} provides a robust global inf-sup condition. The proof follows from the Brezzi--Braess conditions in \cite[Theorem 2.1]{boon21}.
\begin{theorem}\label{well-posedness}
Suppose that $1\leq \lambda$, $0<\mu$, $\theta \leq \frac{1}{M}$, and that the small data assumption \eqref{small_data_cont} holds.   
Then, there exists an unique solution $(\bu,{p},\bzeta,\varphi)\in \bV_1\times \mathrm{Q}_1 \times \bV_2 \times \mathrm{Q}_2$ to \eqref{eq:weak} such that 
    \begin{align*}
        \norm{(\bu,{p})}_{\bV_1\times \mathrm{Q}_1} &\leq C_1 \left( \norm{F_1}_{\bV'_1} + \norm{G^\varphi_1}_{Q'_1} \right),
        \\
        \norm{(\bzeta,\varphi)}_{\bV_2\times \mathrm{Q}_2} &\leq C_2 \left( \norm{F_2}_{\bV'_2} +\norm{G_2}_{Q'_2} \right),
    \end{align*}
where the corresponding constants $C_1$ and $C_2$ do not depend on the physical parameters. 
\end{theorem}
\begin{theorem} \label{th:global-inf-sup}
Let $(V,\norm{\cdot}_{V})$ and $(Q_b,\norm{\cdot}_{Q_b})$ be Hilbert spaces, 
let $Q$ be a dense (with respect to 
$\norm{\cdot}_{Q_b}$) linear subspace of $Q_b$ and three bilinear forms $a(\cdot,\cdot)$ on $V\times V$ (continuous, symmetric and positive semi-definite), $b(\cdot,\cdot)$ on $V\times Q_b$ (continuous), and $c(\cdot,\cdot)$ on $Q\times Q$ (symmetric and positive semi-definite); defining the linear operators $A: V \rightarrow V'$, $B : V \rightarrow Q'_b$ and $C:Q \rightarrow Q'$, respectively. For $t\in[0,1]$, consider the $t$-dependent energy norm  
$$\norm{(v,q)}_{V\times Q}^2:=\norm{v}_{V}^2+\norm{q}_{Q}^2=\norm{v}_{V}^2+\norm{q}_{Q_b}^2+t^2|q|_c^2.$$ 
Assume that $Q$ is complete with respect to the norm $\norm{\cdot}_{Q}^2:=\norm{\cdot}_{Q_b}^2+t^2|\cdot|_c^2$, where $|\cdot|_c^2 := c(\cdot,\cdot)$ is a semi-norm in $Q$. Suppose further that there exist positive constants $\alpha,\beta,\gamma$ (independent of the model parameters) such that 
\begin{subequations}\label{brezzi-braess-conditions}
\begin{align}\label{brezzi-condition-1}
\alpha \norm{\hat{v}}_V^2 \leq a(\hat{v},\hat{v}), \qquad &\forall \hat{v} \in \mathrm{Ker}(B),\\
\label{brezzi-condition-2}
\beta \norm{q}_{Q_b} \leq \sup_{v\in V} \frac{b(v,q)}{{\norm{v}}_V}, \qquad &\forall q \in Q_b,\\
\label{braess-condition}
\gamma \norm{u}_{V} \leq \sup_{(v,q)\in V\times Q} \frac{a(u,v)+b(u,q)}{\norm{(v,q)}_{V\times Q}}, \qquad &\forall u \in V.
\end{align}
\end{subequations}
Then, the multilinear form $\mathcal{A}(u,p;v,q):=a(u,v)+b(v,p)+b(u,q)-t^2c(p,q)$ satisfies the following condition 
\begin{align*}
    \norm{(u,p)}_{V\times Q} &\leq \left(1 + \frac{4+4\norm{a}}{\gamma} + \frac{2}{\beta} + \left(\frac{1+\norm{a}}{\gamma}\right)^2\right)\textbf{}  \sup_{(v,q)\in V\times Q} \frac{\mathcal{A}(u,p;v,q)}{\norm{(v,q)}_{V\times Q}}, \quad \forall (u,p)\in V\times Q.
\end{align*}
\end{theorem}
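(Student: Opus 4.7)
The strategy is the standard argument for verifying an inf-sup condition for a perturbed saddle-point multilinear form by means of a composite test function, in the spirit of the Brezzi--Braess--Boon theory cited in the statement. For an arbitrary $(u, p) \in V \times Q$, the plan is to construct a test pair $(v, q) \in V \times Q$, as a linear combination of three elementary pieces, whose energy norm is comparable to $\norm{(u, p)}_{V \times Q}$ and for which $\mathcal{A}(u, p; v, q)$ is bounded below by $\norm{(u, p)}_{V \times Q}^2$ up to a multiplicative constant; the reciprocal of that constant is the desired stability bound.

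I would first record three elementary building blocks. The diagonal test $(u, -p)$ yields $\mathcal{A}(u, p; u, -p) = a(u, u) + t^2 |p|_c^2$, which is nonnegative by the semi-definiteness of $a$ and $c$, and immediately captures the $c$-semi-norm contribution to $\norm{p}_Q$. The inf-sup condition \eqref{brezzi-condition-2} furnishes $v_1 \in V$ with $b(v_1, p) = \norm{p}_{Q_b}^2$ and $\norm{v_1}_V \leq \norm{p}_{Q_b}/\beta$, which will recover the $Q_b$-part of $\norm{p}_Q$. Finally, the Braess condition \eqref{braess-condition} delivers a pair $(v_2, q_2) \in V \times Q$ with $a(u, v_2) + b(u, q_2) \geq \gamma \norm{u}_V^2$ and $\norm{(v_2, q_2)}_{V \times Q} = \norm{u}_V$, supplying the control of $\norm{u}_V$ without first decomposing $u$ into a $\mathrm{Ker}(B)$ component.

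The next step is to combine them into $(v, q) := (u + \delta_1 v_1 + \delta_2 v_2,\; -p + \delta_2 q_2)$ with parameters $\delta_1, \delta_2 > 0$ to be tuned. A direct expansion shows that the two copies of $b(u, p)$ cancel, leaving $\mathcal{A}(u, p; v, q) = a(u, u) + t^2 |p|_c^2 + \delta_1 \norm{p}_{Q_b}^2 + \delta_2(a(u, v_2) + b(u, q_2)) + \delta_1 a(u, v_1) + \delta_2(b(v_2, p) - t^2 c(p, q_2))$. The first four summands are bounded below by $\delta_1 \norm{p}_{Q_b}^2 + t^2 |p|_c^2 + \delta_2 \gamma \norm{u}_V^2$, which together already control all three pieces of $\norm{(u, p)}_{V \times Q}^2$. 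The remaining two indefinite terms are handled via Cauchy--Schwarz using the size estimates for $v_1$ and $(v_2, q_2)$, and Young's inequality then splits each into a $\norm{u}_V^2$ piece absorbed by $\delta_2 \gamma \norm{u}_V^2$ and a $\norm{p}_Q^2$ piece absorbed by $\delta_1 \norm{p}_{Q_b}^2 + t^2 |p|_c^2$. A choice of $\delta_2$ of order $\gamma/(1+\norm{a})^2$ followed by $\delta_1$ of order $\beta$ then yields $\mathcal{A}(u, p; v, q) \gtrsim \norm{(u, p)}_{V \times Q}^2$, while the triangle inequality applied to the composite test gives $\norm{(v, q)}_{V \times Q} \lesssim \norm{(u, p)}_{V \times Q}$.

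The main obstacle is purely algebraic: reproducing the explicit constant $1 + \frac{4+4\norm{a}}{\gamma} + \frac{2}{\beta} + \bigl(\frac{1+\norm{a}}{\gamma}\bigr)^2$ requires a careful (rather than generic) choice of $\delta_1, \delta_2$ and of the Young's inequality weights so that each fractional contribution lands in the right slot; this is essentially a bookkeeping computation. A minor subtlety is that the coercivity assumption \eqref{brezzi-condition-1} is not used directly in the argument — it is already encoded in the Braess inequality \eqref{braess-condition}, which is the precise form needed here — so the final constant depends only on $\norm{a}$, $\beta$, and $\gamma$, as asserted.
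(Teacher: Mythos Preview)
Your proposal is correct and would deliver a global inf-sup bound with a constant depending only on $\norm{a}$, $\beta$, and $\gamma$, but the route is genuinely different from the paper's. You build a single composite test pair $(v,q)=(u+\delta_1 v_1+\delta_2 v_2,\,-p+\delta_2 q_2)$ and balance the indefinite cross terms with Young's inequality --- the classical constructive argument. The paper instead works entirely with suprema: it splits $\norm{(u,p)}_{V\times Q}\le \norm{u}_V+\norm{p}_{Q_b}+t|p|_c$, bounds $\norm{p}_{Q_b}$ via \eqref{brezzi-condition-2} applied to $b(v,p)=\mathcal A(u,p;v,0)-a(u,v)$, bounds $t|p|_c$ by $(\mathcal A(u,p;u,-p))^{1/2}$, and bounds $\norm{u}_V$ via \eqref{braess-condition} after rewriting $a(u,v)+b(u,q)=\mathcal A(u,p;v,q)-b(v,p)+t^2c(p,q)$. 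The residual square-root term is then linearised by the elementary fact ``$x\le y+z\Rightarrow x\le 2y+z^2/x$'' (with $x=\norm{(u,p)}_{V\times Q}$), and it is this last manoeuvre together with $\mathcal A(u,p;u,-p)/\norm{(u,p)}_{V\times Q}\le\sup_{(v,q)}\mathcal A(u,p;v,q)/\norm{(v,q)}_{V\times Q}$ that produces exactly the constant $1+\tfrac{4+4\norm a}{\gamma}+\tfrac2\beta+\bigl(\tfrac{1+\norm a}{\gamma}\bigr)^{2}$. Your approach is more constructive and closer to textbook saddle-point arguments; the paper's supremum-plus-algebraic-trick route is tailored to land on the displayed constant without having to tune multiple Young weights, which is the bookkeeping obstacle you correctly flagged. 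Your observation that \eqref{brezzi-condition-1} is not invoked is accurate for both proofs.
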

\begin{proof}
    The triangle inequality and the second Brezzi condition \eqref{brezzi-condition-2} lead to 
    \begin{align*}
        \norm{(u,p)}_{V\times Q} \leq \norm{u}_{V} + \frac{1}{\beta} \sup_{v\in V} \frac{\mathcal{A}(u,p;v,0)-a(u,v)}{{\norm{v}}_V} + t|p|_c.
    \end{align*}
    Note that $t|p|_c \leq \left(a(u,u)+t^2c(p,p)\right)^{\frac{1}{2}} = \left(\mathcal{A}(u,p;u,-p)\right)^{\frac{1}{2}}$, which together with the continuity 
    of $a(\cdot,\cdot)$ yields 
    \begin{align*}
        \norm{(u,p)}_{V\times Q} \leq \left(1+\norm{a}\right) \norm{u}_{V} + \frac{1}{\beta} \sup_{(v,q)\in V\times Q} \frac{\mathcal{A}(u,p;v,q)}{{\norm{(v,q)}_{V\times Q}}} + \left(A(u,p;u,-p)\right)^{\frac{1}{2}}.
    \end{align*}
    The Braess condition \eqref{braess-condition} implies that
    \begin{align*}
        &\norm{(u,p)}_{V\times Q} \leq \left(\frac{1+\norm{a}}{\gamma}\right) \sup_{(v,q)\in V\times Q} \frac{a(u,v)+b(u,q)}{\norm{(v,q)}_{V\times Q}} + \frac{1}{\beta} \sup_{(v,q)\in V\times Q} \frac{\mathcal{A}(u,p;v,q)}{{\norm{(v,q)}_{V\times Q}}} + \left(\mathcal{A}(u,p;u,-p)\right)^{\frac{1}{2}}.
    \end{align*}
    Since $a(u,v)+b(u,q) = \mathcal{A}(u,p;v,q) - b(v,p) + t^2c(p,q)$, we readily see that
    \begin{align*}
        \norm{(u,p)}_{V\times Q} &\leq \left(\frac{1+\norm{a}}{\gamma}+\frac{1}{\beta}\right) \sup_{(v,q)\in V\times Q} \frac{A(u,p;v,q)}{\norm{(v,q)}_{V\times Q}} + \left(1+\frac{1+\norm{a}}{\gamma}\right)\left(\mathcal{A}(u,p;u,-p)\right)^{\frac{1}{2}}.
    \end{align*}
    Given that for all $0 < x,y,z$ the inequality $x\leq y+z$ implies that $x\leq 2y + \frac{z^2}{x}$, we have
    \begin{align*}
        \norm{(u,p)}_{V\times Q} &\leq 2\left(\frac{1+\norm{a}}{\gamma}+\frac{1}{\beta}\right) \sup_{(v,q)\in V\times Q} \frac{\mathcal{A}(u,p;v,q)}{\norm{(v,q)}_{V\times Q}} + \left(1+\frac{1+\norm{a}}{\gamma}\right)^2\frac{\mathcal{A}(u,p;u,-p)}{\norm{(u,p)}_{V\times Q}}.
    \end{align*}
  The proof concludes by taking the supremum for all $(v,q)\in V\times Q$.
\end{proof}
\section{Virtual element  discretisation}\label{sec:vem}
This section introduces the 2D VEM from  \cite{khot2024}, based on \cite{beirao13,daveiga15-reaction-diffusion,daveiga15-stokes} with the estimates involving suitable polynomial projections adapted to the parameter-robust setting. Finally, we refer to  \cite[Section 4]{khot2024} for further details on the well-posedness of the discrete problem.

\medskip 
\noindent\textbf{Mesh assumptions.} Let $\mathcal{T}^h$ be a decomposition of $\Omega$ into polygonal elements $E$ with  diameter $h_E$, let $\mathcal{E}^h$ be the set of edges $e$ of $\mathcal{T}^h$ with length $h_e$. 
We assume that there exists a universal constant $\rho>0$ such that
\begin{enumerate}[label={\textbf{(M\arabic*)}}, align=left, leftmargin=*, labelwidth=!, labelsep=1em]
    \item \label{M1} Every polygonal element $E$ of diameter $h_E$ is star-shaped with respect to a disk of radius $\geq$ $\rho h_E$,
    \item \label{M2} Every edge $e$ of $E$ has length $\geq$ $\rho h_E$. 
\end{enumerate}
We split the set of all edges as $\mathcal{E}^h = \mathcal{E}^h_\Omega \cup \mathcal{E}^h_{\mathrm{D}} \cup \mathcal{E}^h_{\mathrm{N}}$, where $\mathcal{E}^h_\Omega = \{ e\in \mathcal{E}^h: e\subset \Omega\}$, $\mathcal{E}^h_{\mathrm{D}} = \{ e\in \mathcal{E}^h : e\subset \Gamma_{\mathrm{D}}\}$ and $\mathcal{E}^h_{\mathrm{N}} = \{ e\in \mathcal{E}^h : e\subset \Gamma_{\mathrm{N}}\}$. The set of edges of $E\in \mathcal{T}^h$ is denoted as $\mathcal{E}^h(E)$, the edges of $E$ which are not in the boundary $\partial \Omega$ are denoted by $\mathcal{E}^h_\Omega(E)$ and the ones that lie on the Dirichlet portion of the boundary (resp. Neumann) are denoted by $\mathcal{E}^h_{\mathrm{D}}(E)$ (resp. $\mathcal{E}^h_{\mathrm{N}}(E)$), the set of elements $E$ that share $e$ as an edge is denoted by $\mathcal{T}^h_e$. The normal and tangential jump operators are defined as usual by $\jump{\bu \cdot \bn_e}:= (\bu|_{E} - \bu|_{E'})|_e \cdot \bn_e$ and $\jump{\bzeta \cdot \bt_e}:= (\bzeta|_{E} - \bzeta|_{E'})|_e \cdot \bt_e$, where $E,E' \in \mathcal{T}^h_e$, and $\bn_e$ and $\bt_e$ are the outward normal and tangential counterclockwise vectors of $e$ with respect to $\partial E$. The space $D(E)$ denotes the union of elements in $\mathcal{T}^h$ intersecting $E$.

\medskip 
\noindent\textbf{Polynomial spaces.} Given  an integer $k\geq 0$ the space of polynomials of degree $\leq k$ on $E$ is denoted by $\mathcal{P}_k(E)$ (resp. for edges $e$). The space of the gradients of polynomials of grade $\leq k+1$ on $E$ is denoted as $\boldsymbol{\mathcal{G}}_k(E):= \nabla (\mathcal{P}_{k+1}(E))$ with standard notation $\mathcal{P}_{-1}(E)=\{0\}$ for $k=-1$. The space $\boldsymbol{\mathcal{G}}_k^\oplus(E)$ denotes the complement of the space $\boldsymbol{\mathcal{G}}_k(E)$ in the vector polynomial space $\boldsymbol{\mathcal{P}}_k(E)$,  that is, $\boldsymbol{\mathcal{P}}_k(E) = \boldsymbol{\mathcal{G}}_k(E) \oplus  \boldsymbol{\mathcal{G}}_k^\oplus(E)$. In particular, following \cite{veiga19}, we set $\boldsymbol{\mathcal{G}}_k^\oplus(E)= \bx^\perp \mathcal{P}_{k-1}(E)$ where $\bx^\perp = (x_2,-x_1)^{\tt t}$. Likewise, the space that defines the rotational of polynomials with degree $\leq k+1$ is denoted as $\boldsymbol{\mathcal{R}}_{k}(E) := \brot(\mathcal{P}_{k+1}(E))$ where the associated complement space $\boldsymbol{\mathcal{R}}_k^\oplus(E)$ fulfills the property $\boldsymbol{\mathcal{P}}_k(E) = \boldsymbol{\mathcal{R}}_k(E) \oplus  \boldsymbol{\mathcal{R}}_k^\oplus(E)$ with $\boldsymbol{\mathcal{R}}_k^\oplus(E) = \bx \mathcal{P}_{k-1}(E)$.

Let $\bx_E = (x_{1,E},x_{2,E})^{\tt t}$ denote the barycentre of $E$ and let $\mathcal{M}_k(E)$ be the set of scaled monomials
\[\mathcal{M}_k(E):=\left\{ \left( \frac{\bx-\bx_E}{h_E} \right)^{\boldsymbol{\alpha}}, 0\leq |\boldsymbol{\alpha}|\leq k \right\},\]
where $\boldsymbol{\alpha}=(\alpha_1,\alpha_2)^{\tt t}$ is a non-negative multi-index with $|\boldsymbol{\alpha}|=\alpha_1+\alpha_2$ and $\bx^{\boldsymbol{\alpha}}=x_1^{\alpha_1}x_2^{\alpha_2}$ for $\bx = (x_1,x_2)^{\tt t}$. In particular, we can take the basis of $\boldsymbol{\mathcal{G}}_{k}(E)$ and $\boldsymbol{\mathcal{G}}_{k}^\oplus(E)$ as $\boldsymbol{\mathcal{M}}_{k}^{\nabla}(E):=\nabla\mathcal{M}_{k+1}(E)\setminus\{\mathbf{0}\}$ and $\boldsymbol{\mathcal{M}}_{k}^{\oplus}(E):=\mathbf{m}^\perp \mathcal{M}_{k-1}(E)$, with $\mathbf{m}^\perp := (\frac{x_2-x_{2,E}}{h_E},\frac{x_{1,E}-x_1}{h_E})^{\tt t}$, $\mathbf{m} :=  \frac{\bx-\bx_E}{h_E}$, respectively. 

The spaces introduced below are constructed to address the variational forms of \eqref{eq:linear-momentum}–\eqref{eq:hooke-law} and \eqref{eq:diffusive-flux}–\eqref{eq:reaction-diffusion}, respectively. Their 
unisolvency in terms of \textit{Degrees of Freedom} (DoFs)  
and other properties, are detailed in \cite{daveiga15-stokes,daveiga15-reaction-diffusion}. 

\subsection{Discrete spaces}
For  $k_1\geq 2$, the discrete {displacement} space locally solves a Stokes problem:
\begin{align*}
\bV_1^{h,k_1}(E) := \{ \bv\in \bH^1(E) \colon &\bv|_{\partial E}\in \boldsymbol{\mathcal{B}}^{k_1}(\partial E),\; \vdiv\bv \in \mathcal{P}_{k_1-1}(E),\\ 
&-2\mu\bdiv\beps(\bv) - \nabla s \in \boldsymbol{\mathcal{G}}_{k_1-2}^\oplus(E),\text{ for some } s\in \mathrm{L}^2_0(E)\},
\end{align*}
where $\mathcal{B}^k(\partial E)$ is the continuous space of polynomials along the boundary $\partial E$ of $E$  defined as
$$\mathcal{B}^k(\partial E) := \left\{ v\in C^0(\partial E) \colon v|_{e} \in \mathcal{P}_k(e), \, \forall e\subset \partial E \right\}.$$
Observe that $\boldsymbol{\mathcal{P}}_{k_1}(E)\subseteq \bV_1^{h,k_1}(E)$. Then, the global discrete spaces are defined as 
\begin{align*}
    \bV_1^{h,k_1} := \{ \bv \in \bV_1 \colon \bv|_E \in \bV_1^{h,k_1}(E), \ \forall E\in \mathcal{T}^h\}, \quad 
    \mathrm{Q}_1^{h,k_1} := \{ {q}\in \mathrm{Q}_1 \colon {q}|_E\in \mathcal{P}_{k_1-1}(E), \ \forall E\in \mathcal{T}^h \}.
\end{align*}
The set of DoFs for $\bv_h \in \bV_1^{h,k_1}(E)$ and ${q}_h\in \mathrm{Q}_1^{h,k_1}(E)$ are selected as
\begin{align*}
&\bullet \text{The values of } \bv_h \text{ at the vertices of $E$}, \\
&\bullet \text{The values of } \bv_h \text{ at the $k_1-1$ internal Gauss-Lobatto quadrature points on each edge of}\;  E, \\
&\bullet \int_E (\vdiv\bv_h) m_{k_1-1}, \quad \forall m_{k_{1}-1} \in \mathcal{M}_{k_{1}-1}(E)\setminus \left\{\frac{1}{h_E}\right\},\\
&\bullet \int_E \bv_h \cdot \mathbf{m}_{k_{1}-2}^{\oplus}, \quad \forall \mathbf{m}_{k_{1}-2}^{\oplus} \in \boldsymbol{\mathcal{M}}_{k_{1}-2}^{\oplus}(E),\\
&\bullet \int_E {q}_h m_{k_{1}-1}, \quad \forall m_{k_{1}-1}\in \mathcal{M}_{k_{1}-1}(E).
\end{align*}

For the reaction-diffusion equation and for $k_2\geq 0$, the discrete flux space locally solves a $\vdiv$-$\vrot$ problem:  
\begin{align*}
\bV_2^{h,k_2}(E) := \{ \bxi\in \bH(\vdiv,E)\cap \bH(\vrot,E) \colon &\bxi\cdot \bn|_e\in \mathcal{P}_{k_2}(e), \, \forall e\subset \partial E,\, \vdiv\bxi \in \mathcal{P}_{k_2}(E),\; \vrot\bxi \in \mathcal{P}_{k_2-1}(E)\}.
\end{align*}
Note that $\boldsymbol{\mathcal{P}}_{k_2}(E)\subseteq \bV_2^{h,k_2}(E)$. In turn, the global discrete spaces are defined as follows
\begin{align*}
    \bV_2^{h,k_2} := \{ \bxi \in \bV_2 \colon \bxi|_E \in \bV_2^{h,k_2}(E), \, \forall E\in \mathcal{T}^h \}, \quad \mathrm{Q}_2^{h,k_2} := \{ \psi\in \mathrm{Q}_2 \colon \psi|_E\in \mathcal{P}_{k_2}(E), \, \forall E\in \mathcal{T}^h \}.
\end{align*}
The set of DoFs for $\bxi_h\in \bV_2^{h,k_2}(E)$ and $\psi_h\in \mathrm{Q}_2^{h,k_2}(E)$ can be taken as
\begin{align*}
&\bullet \text{The values of } \bxi_h\cdot \bn \text{ at the $k_2+1$ Gauss--Lobatto quadrature points of each edge of $E$}, \\
&\bullet  \int_E \bxi_h\cdot \mathbf{m}_{k_2-1}^\nabla, \quad \forall \mathbf{m}_{k_2-1}^{\nabla} \in \boldsymbol{\mathcal{M}}_{k_2-1}^{\nabla}(E),\\
&\bullet \int_E \bxi_h \cdot \mathbf{m}_{k_2}^{\oplus}, \quad \forall \mathbf{m}_{k_2}^{\oplus} \in \boldsymbol{\mathcal{M}}_{k_2}^{\oplus}(E), \\
&\bullet \int_E \psi_h m_{k_2}, \quad \forall m_{k_2}\in \mathcal{M}_{k_2}(E).
\end{align*}
Although $k_1$ and $k_2$ are in general arbitrary and independent, we select $k_1 = k_2 + 1$ with $k_2 \ge 1$ to achieve optimal convergence. This choice is justified later by Theorem~\ref{convergence-rates}.

\subsection{Projection and interpolation operators}\label{sec:proj_interp} 
The operators below are required for the discrete formulation and the subsequent error analysis. We follow \cite{daveiga15-stokes} for the elasticity problem and \cite{daveiga15-reaction-diffusion} for the reaction-diffusion problem.
Given $E\in \mathcal{T}^h$, the energy projection operator $\bPi_{1}^{\beps,k_1}: \bH^1(E)\rightarrow \boldsymbol{\mathcal{P}}_{k_1}(E)$ is defined, for all $\bv \in \bH^1(E)$, by
\begin{align*}
    \int_E \beps(\bv-\bPi_{1}^{\beps,k_1}\bv) \colon \beps(\mathbf{m}_{k_1}) &= 0, \quad \forall \mathbf{m}_{k_1} \in \boldsymbol{\mathcal{M}}_{k_1}(E),\\ 
    \int_{\partial E} (\bv-\bPi_{1}^{\beps,k_1} \bv) \cdot \mathbf{m}_{\textbf{RBM}} &=  0, \quad \forall\mathbf{m}_{\textbf{RBM}}\in \textbf{RBM}(E):=  
        \left\{ \begin{pmatrix} \frac{1}{h_E} \\ 0 \end{pmatrix}, \begin{pmatrix} 0 \\ \frac{1}{h_E} \end{pmatrix}, \begin{pmatrix} \frac{x_{2,E}-x_2}{h_E} \\ \frac{x_1-x_{1,E}}{h_E} \end{pmatrix} \right\},
\end{align*}
where $\textbf{RBM}(E)$ is the set of scaled rigid body motions. We also define the $\bL^2$-projection   $\bPi_{j}^{0,k}: \bL^2(E)\rightarrow \boldsymbol{\mathcal{P}_{k}}(E)$ by 
\begin{align*}
    \int_E (\bzeta-\bPi_{j}^{0,k}\bzeta)\cdot \mathbf{m}_{k} = 0, \quad \forall \mathbf{m}_{k} \in \boldsymbol{\mathcal{M}}_{k}(E), \forall \bzeta \in \bL^2(E), 
\end{align*}
and analogously for scalar functions. For  clarity, $\bPi_{1}^{0,k_1}$ is the projection associated with the elasticity problem, with polynomial degree $k_1$ (resp. $\bPi_{2}^{0,k_2}$ for the reaction-diffusion problem). Regarding computability of $\bPi_1^{\beps,k_1}, \bPi_1^{0,k_1-2}$ on $\bV_1^{h,k_1}$ and $\bPi_2^{0,k_2}$ on $\bV_2^{h,k_2}$ in terms of the respective DoFs, we refer to \cite[Section 3.2]{daveiga15-stokes} and \cite[Theorem 3.2]{veiga-Hdiv}. Next, we present a scaled version of classical polynomial approximation estimates  \cite{Brenner1994}.

\begin{lemma} \label{approximation-estimates} For $E\in \mathcal{T}^h$, let $\bv \in \bH^1(E)$, $q\in \mathrm{L}^2(E)$, $\bxi \in \bH^1(E)$, and $\psi \in \mathrm{L}^2(E)$. Then the following estimates hold:
    \begin{alignat*}{3}
    \norm{\bv-\bPi_1^{\beps,k_1}\bv}_{\bV_1(E)}&\lesssim |\bv|_{\bV_1(E)},\quad &\norm{q-\Pi_1^{0,k_1}q}_{\mathrm{Q}_1(E)}&\lesssim |q|_{\mathrm{Q}_1(E)},\\
    \norm{\bxi-\bPi_2^{0,k_2}\bxi}_{\bbM,E}&\lesssim h_E|\bxi|_{\bV_2(E)},\quad &\norm{\psi-\Pi_2^{0,k_2}\psi}_{\mathrm{Q}_2(E)}&\lesssim |\psi|_{\mathrm{Q}_2(E)}.
    \end{alignat*}
\end{lemma}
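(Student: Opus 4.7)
The four estimates share a common structure: each bounds the approximation error of a specific projection, measured in a parameter-weighted norm, by a parameter-weighted semi-norm of the original function. My plan is therefore to derive each bound by (a) invoking a standard best-approximation or contractivity property of the projection in its natural unweighted inner product, and then (b) matching the parameter weights on either side of the inequality using the assumptions $1\leq\lambda$ and $\theta\leq 1/M$ recorded in Theorem~\ref{well-posedness}.

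For (i) I would exploit the defining $\beps$-orthogonality of $\bPi_1^{\beps,k_1}$. Together with the rigid body motion constraint, this characterises $\bPi_1^{\beps,k_1}\bv$ as the best $\boldsymbol{\mathcal{P}}_{k_1}(E)$-approximation of $\bv$ in the strain semi-norm, so the trivial choice $\mathbf{m}_{k_1}=\bzero$ combined with $\norm{\beps(\bv)}_{0,E}\leq |\bv|_{1,E}$ yields $\norm{\beps(\bv-\bPi_1^{\beps,k_1}\bv)}_{0,E}\leq |\bv|_{1,E}$; multiplying by $\sqrt{2\mu}$ finishes the proof. For (ii) and (iv) I would invoke $L^2$-projection contractivity, namely $\norm{q-\Pi_1^{0,k_1}q}_{0,E}\leq \norm{q}_{0,E}$ and $\norm{\psi-\Pi_2^{0,k_2}\psi}_{0,E}\leq \norm{\psi}_{0,E}$, and then compare weights: the assumption $\lambda\geq 1$ allows $(\tfrac{1}{2\mu}+\tfrac{1}{\lambda})$ to be absorbed into a constant multiple of $\tfrac{1}{2\mu}$, while $\theta\leq 1/M$ gives $\tfrac{1}{M}+\theta\leq 2/M$, both delivering the required semi-norm bounds.

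The only estimate carrying an explicit mesh-size factor is (iii), which requires the classical Bramble--Hilbert approximation estimate for the $L^2$-projection onto $\boldsymbol{\mathcal{P}}_{k_2}(E)$: since $\bxi\in\bH^1(E)$ and $k_2\geq 0$, one has $\norm{\bxi-\bPi_2^{0,k_2}\bxi}_{0,E}\lesssim h_E|\bxi|_{1,E}$. Combining this with the uniform bound $\norm{\cdot}_{\bbM,E}\leq \sqrt{M}\,\norm{\cdot}_{0,E}$ (immediate from $\bbM^{-1}\leq M\bbI$) and recalling $|\bxi|_{\bV_2(E)}=\sqrt{M}\,|\bxi|_{1,E}$ yields (iii).

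The main obstacle I expect is the Bramble--Hilbert step in (iii): it is not a direct consequence of any projection identity, but must be established by first constructing a local polynomial approximant of $\bxi$ (for instance via an averaged Taylor polynomial), whose correct $h$-scaling relies on the star-shapedness assumption \ref{M1} and the scaled-edge condition \ref{M2}; the $L^2$-projection then inherits the bound by optimality. Aside from this, only the careful bookkeeping of weights in (ii) and (iv) requires attention, and this is handled cleanly by the stated parameter assumptions.
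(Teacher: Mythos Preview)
Your approach is precisely what the paper intends: it presents the lemma as ``a scaled version of classical polynomial approximation estimates'' citing Brenner--Scott and gives no further details, so your breakdown into projection best-approximation/contractivity followed by weight matching is exactly the argument in mind.

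One point in your weight bookkeeping for (ii) is not quite right. The assumption $\lambda\geq 1$ alone does \emph{not} yield $\tfrac{1}{2\mu}+\tfrac{1}{\lambda}\lesssim\tfrac{1}{2\mu}$ with a parameter-independent hidden constant: this is equivalent to $2\mu/\lambda\lesssim 1$, which can fail if $\mu$ is large while $\lambda=1$. The bound as stated therefore implicitly uses that $\mu$ is of order one (the robustness regime of interest is $\lambda\to\infty$ with $\mu$ fixed), or else the constant in (ii) is allowed to depend on $\mu$; the paper does not make this explicit either. Your arguments for (i), (iii), and (iv) are complete as written.
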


Finally, we can define a quasi--interpolation operator $\mathbf{I}_1^{Q,k_1}:\mathbf{H}^{1}(E)\rightarrow \bV_1^{h,k_1}(E)$ and a Fortin operator $\mathbf{I}_2^{F,k_2}:\mathbf{H}^{1}(E)\rightarrow \bV_2^{h,k_2}(E)$ satisfying the  estimates in the next lemma.  
\begin{lemma} \label{interpolation-estimates}
Let $E\in \mathcal{T}^h$, and $e\in \mathcal{E}^h(E)$, and let $\bv\in \bH^{1}(E)$ and $\bxi \in \bH^{1}(E)$. Then the following estimates hold:
\begin{subequations}
\begin{align}
\label{tt0-1}    h_e^{-\frac{1}{2}} 2\mu \norm{\bv-\mathbf{I}_1^{Q,k_1}\bv}_{0,e} + \norm{\bv-\mathbf{I}_1^{Q,k_1}\bv}_{\bV_1(E)}& \lesssim |\bv|_{\bV_1(D(E))},\\
\label{tt0-2}    h_e^{\frac{1}{2}} M \norm{(\bxi-\mathbf{I}_2^{F,k_2}\bxi)\cdot \bn_e}_{0,e} +\norm{\bxi - \mathbf{I}_2^{F,k_2}\bxi}_{\bbM,E} &\lesssim h_E|\bxi|_{\bV_2(E)}.
\end{align}\end{subequations}
\end{lemma}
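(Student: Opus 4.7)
The plan is to adopt the quasi-interpolation operator for the Stokes VE space constructed in \cite{daveiga15-stokes} and the Fortin operator for the $\bH(\vdiv)$-like VE space from \cite{daveiga15-reaction-diffusion,veiga-Hdiv}, and to recast their classical approximation estimates in the parameter-weighted norms of Section~\ref{sec:robust-setting}.

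For \eqref{tt0-1}, I would begin from the Cl\'ement-type quasi-interpolation estimate on star-shaped patches which, under \ref{M1}--\ref{M2}, reads
\begin{align*}
\norm{\bv-\mathbf{I}_1^{Q,k_1}\bv}_{0,E} + h_E\,\abs{\bv-\mathbf{I}_1^{Q,k_1}\bv}_{1,E} \lesssim h_E\,\abs{\bv}_{1,D(E)},
\end{align*}
obtained by polynomial invariance on the patch $D(E)$ combined with a Bramble--Hilbert argument. Using $\norm{\beps(\cdot)}_{0,E}\leq\abs{\cdot}_{1,E}$ and multiplying by $\sqrt{2\mu}$ produces the $\bV_1(E)$-norm contribution. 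For the edge contribution, the scaled trace inequality $\norm{w}_{0,e}^2\lesssim h_e^{-1}\norm{w}_{0,E}^2+h_e\,\abs{w}_{1,E}^2$ applied to $w=\bv-\mathbf{I}_1^{Q,k_1}\bv$ yields $\norm{\bv-\mathbf{I}_1^{Q,k_1}\bv}_{0,e}\lesssim h_e^{1/2}\,\abs{\bv}_{1,D(E)}$; multiplying by the weight $h_e^{-1/2}\cdot 2\mu$ then closes the estimate against $\abs{\bv}_{\bV_1(D(E))}=\sqrt{2\mu}\,\abs{\bv}_{1,D(E)}$.

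For \eqref{tt0-2}, I would recall that the Fortin operator $\mathbf{I}_2^{F,k_2}$ matches all the DoFs of $\bV_2^{h,k_2}(E)$; in particular its normal trace coincides with the $\mathrm{L}^2(e)$-projection, $(\mathbf{I}_2^{F,k_2}\bxi)\cdot\bn_e=\Pi^e_{k_2}(\bxi\cdot\bn_e)$, and it preserves polynomials in $\boldsymbol{\mathcal{P}}_{k_2}(E)\subseteq \bV_2^{h,k_2}(E)$. Picking $\boldsymbol{q}\in\boldsymbol{\mathcal{P}}_{k_2}(E)$ as a Bramble--Hilbert approximation of $\bxi$ and invoking $\mathrm{L}^2$-stability of the Fortin operator,
\begin{align*}
\norm{\bxi-\mathbf{I}_2^{F,k_2}\bxi}_{0,E}\leq\norm{\bxi-\boldsymbol{q}}_{0,E}+\norm{\mathbf{I}_2^{F,k_2}(\bxi-\boldsymbol{q})}_{0,E}\lesssim h_E\,\abs{\bxi}_{1,E}.
\end{align*}
Since $\bbM^{-1}\bx\cdot\bx\leq M\,\bx\cdot\bx$ (Section~\ref{nonlinear-terms}), one has $\norm{\cdot}_{\bbM,E}\leq\sqrt{M}\,\norm{\cdot}_{0,E}$, and the volume component of \eqref{tt0-2} follows. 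For the edge term, the projection identity and polynomial invariance give
\begin{align*}
\norm{(\bxi-\mathbf{I}_2^{F,k_2}\bxi)\cdot\bn_e}_{0,e}=\norm{(I-\Pi^e_{k_2})((\bxi-\boldsymbol{q})\cdot\bn_e)}_{0,e}\leq\norm{(\bxi-\boldsymbol{q})\cdot\bn_e}_{0,e}\lesssim h_e^{1/2}\,\abs{\bxi}_{1,E},
\end{align*}
where the last step is the scaled trace applied to $\bxi-\boldsymbol{q}$. Multiplying by the prescribed edge weight completes \eqref{tt0-2}.

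The main obstacle I anticipate is not in the analytic content (which recycles classical VEM tools) but in the bookkeeping of the parameter weights $\sqrt{2\mu}$ and $\sqrt{M}$, so that the hidden constants remain independent of $\mu$, $\lambda$, $\theta$, and $M$ as well as of $h$; this forces one to track the estimates directly in the weighted norms from the outset, rather than first in plain $\mathrm{L}^2$ and then rescaling \emph{a posteriori}.
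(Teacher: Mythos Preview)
Your approach coincides with the paper's, which gives no detailed argument but simply cites \cite[Proposition~4.2]{beirao2020stokes} and \cite[Lemma~5.2]{munar2024} for $\mathbf{I}_1^{Q,k_1}$ and remarks that \eqref{tt0-2} ``follows from trace inequality'' once $\mathbf{I}_2^{F,k_2}$ is defined through the DoFs as in \cite{daveiga15-reaction-diffusion}. The ingredients you list---the Cl\'ement-type patch estimate, $\norm{\beps(\cdot)}_{0,E}\le|\cdot|_{1,E}$, the scaled trace inequality, polynomial invariance of $\mathbf{I}_2^{F,k_2}$, and the normal-trace projection identity---are exactly those implicit in the references.

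There is, however, a gap in the bookkeeping you yourself flag: the edge step does \emph{not} close with a parameter-independent constant. From $\norm{\bv-\mathbf{I}_1^{Q,k_1}\bv}_{0,e}\lesssim h_e^{1/2}|\bv|_{1,D(E)}$ you get
\[
h_e^{-1/2}\,2\mu\,\norm{\bv-\mathbf{I}_1^{Q,k_1}\bv}_{0,e}\ \lesssim\ 2\mu\,|\bv|_{1,D(E)}\ =\ \sqrt{2\mu}\,|\bv|_{\bV_1(D(E))},
\]
leaving a stray factor $\sqrt{2\mu}$; the analogous edge computation in \eqref{tt0-2} leaves $\sqrt{M}$. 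No sharper unweighted trace bound is available, so the stated edge weights $2\mu$ and $M$ appear to be misprints for $\sqrt{2\mu}$ and $\sqrt{M}$. With that reading your argument is complete, and in any case the downstream use in Lemmas~\ref{residual-bound-1}--\ref{residual-bound-2} only requires the $\sqrt{\cdot}$-weighted edge estimate (one checks that the Cauchy--Schwarz step there needs precisely $\norm{\bv-\bv_h}_{0,e}\lesssim h_e^{1/2}|\bv|_{1,D(E)}$), so your proof suffices for the a~posteriori analysis.
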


The 2D construction of $\mathbf{I}_1^{Q,k_1}$ is  in \cite[Proposition 4.2]{beirao2020stokes}. 
It does not require extra regularity \cite{daveiga2022stability}, and \eqref{tt0-2} follows from   trace inequality. On the other hand, $\mathbf{I}_2^{F,k_2}$ can be defined directly through the   DoFs for $\bH^1(\Omega)$ functions. Moreover, the commutative property $\vdiv \mathbf{I}_2^{F,k_2}(\cdot) = \Pi_2^{0,k_2} \vdiv (\cdot)$ holds, leading to the discrete stability of the   reaction-diffusion system (see \cite[Section 3.2]{daveiga15-reaction-diffusion}). We refer to \cite[Lemma 5.2]{munar2024} for a proof of \eqref{tt0-1}.

\subsection{The virtual element formulation for the stress-assisted diffusion problem}\label{vem-problem-2d}
The discrete formulation for the fully-coupled problem reads as follows: Given $\fb\in \bL^2(\Omega)$, $g\in \mathrm{L}^2(\Omega)$, 
and $\varphi_{\mathrm{D}}\in \mathrm{H}^{\frac{1}{2}}(\Gamma_{\mathrm{D}})$, find 
$(\bu_h,{p}_h,\bzeta_h,\varphi_h) \in \bV_1^{h,k_1}\times \mathrm{Q}_1^{h,k_1} \times \bV_2^{h,k_2} \times \mathrm{Q}_2^{h,k_2}$ such that 
\begin{subequations}\label{eq:weak-discrete}
\begin{alignat}{2}
  a_1^{h}(\bu_h,\bv_h) + b_1(\bv_h,{p}_h) &= F_1^h(\bv_h), \qquad &\forall \bv_h \in \bV_1^{h,k_1}, \label{eq:sto1}\\
  b_1(\bu_h,{q}_h) - \frac{1}{\lambda} c_1({p}_h,{q}_h) &= G_1^{\varphi_h}({q}_h), \qquad &\forall {p}_h \in \mathrm{Q}_1^{h,k_1},\label{eq:sto2}\\
a_2^{\overline{\bu}_h,{p}_h,h}(\bzeta_h,\bxi_h) + b_2(\bxi_h,\varphi_h) &= F_2(\bxi_h), \qquad &\forall \bxi_h \in \bV_2^{h,k_2},\label{eq:mix1}\\
  b_2(\bzeta_h,\psi_h) - \theta c_2(\varphi_h,\psi_h) &= G_2(\psi_h), \qquad &\forall \psi_h \in \mathrm{Q}_2^{h,k_2},\label{eq:mix2}
\end{alignat}
\end{subequations}
where $\overline{\bu}_h := \bPi_1^{\beps,k_1}\bu_h$. The discrete bilinear and linear forms are defined by adding the local contributions, as
\begin{align*}
    &a_1^{h}(\bu_h,\bv_h) := \sum_{E\in \mathcal{T}^h} a_{1}^{h,E}(\bu_h,\bv_h) := \sum_{E\in \mathcal{T}^h} a_1^E(\bPi_1^{\beps,k_1}\bu_h,\bPi_1^{\beps,k_1}\bv_h) + \sum_{E\in \mathcal{T}^h} S_1^E(\bu_h-\bPi_1^{\beps,k_1}\bu_h,\bv_h-\bPi_1^{\beps,k_1}\bv_h),\\
    &a_2^{\overline{\bu}_h,{p}_h,h}(\bzeta_h,\bxi_h) := \sum_{E\in \mathcal{T}^h} a_{2}^{\overline{\bu}_h,{p}_h,h,E}(\bzeta_h,\bxi_h)\\
    & := \sum_{E\in \mathcal{T}^h} a_{2}^{\overline{\bu}_h,{p}_h,E}(\bPi_2^{0,k_2}\bzeta_h, \bPi_2^{0,k_2}\bxi_h) + \sum_{E\in \mathcal{T}^h} S_2^{\overline{\bu}_h,{p}_h,E}(\bzeta_h-\bPi_2^{0,k_2}\bzeta_h,\bxi_h-\bPi_2^{0,k_2}\bxi_h),\\
    &F_1^h(\bv_h) := \sum_{E\in \mathcal{T}^h} F_1^{h,E}(\bv_h) := \sum_{E\in \mathcal{T}^h} \int_E \fb\cdot\bPi_1^{0,k_1-2}\bv_h 
    = \sum_{E\in \mathcal{T}^h} \int_E \bPi_1^{0,k_1-2} \fb \cdot \bv_h. 
\end{align*}
The stabilisation $S_1^E(\cdot,\cdot)$ and $S_2^{\overline{\bu}_h,{p}_h,E}(\cdot,\cdot)$ are symmetric and positive definite bilinear forms such that
\begin{subequations}
\begin{alignat}{2}
    a_{1}^{E}(\bv_h,\bv_h)&\lesssim S_{1}^{E}(\bv_h,\bv_h)\lesssim a_{1}^{E}(\bv_h,\bv_h), \quad &\forall \bv_h\in \ker(\bPi_1^{\beps,k_1}), \label{stabilisation-elast} \\
    a_{2}^{\overline{\bu}_h,{p}_h,E}(\bxi_h,\bxi_h)&\lesssim S_{2}^{\overline{\bu}_h,{p}_h,E}(\bxi_h,\bxi_h)\lesssim a_{2}^{\overline{\bu}_h,{p}_h,E}(\bxi_h,\bxi_h), \quad &\forall \bxi_h\in \ker(\bPi_2^{0,k_2}). \label{stabilisation-reaction-diff} 
\end{alignat}\end{subequations}
We conclude by establishing the continuous dependence on data for \eqref{eq:weak-discrete} and the convergence result of the total error $\overline{\textnormal{e}}_h:=\norm{(\bu-\bu_h,{p}-{p}_h, \bzeta-\bzeta_h,\varphi-\varphi_h)}_{\bV_1\times Q_{1} \times \bV_2\times \mathrm{Q}_2}$ (see \cite[Sections~4-5]{khot2024} for a proof). Note that the convergence rate shown in Theorem~\ref{convergence-rates} is optimal when $k_1=k_2+1$ and $k_2\geq1$. For sake of simplicity, we introduce the constant $\overline{C}_3 := \max\left\{\frac{1}{\sqrt{2\mu}}, \sqrt{2\mu}\right\} \sqrt{M} L_\bbM \overline{C}_2 (\norm{\varphi_{\mathrm{D}}}_{\frac{1}{2},\Gamma_{\mathrm{D}}} + \norm{g}_{0,E})$. Similar to the continuous case (see Section~\ref{sec:robust-setting}), we assume that the following small data assumptions are given
\begin{subequations}
\begin{align}
    \overline{C}_1 L_\ell \overline{C}_3 \overline{C}_2 \sqrt{M^3} &< 1, \label{small_data_disc1}\\
    \overline{C}_1 \sqrt{M} L_\ell + \overline{C}_3 \overline{C}_2 M &< \frac{1}{2}. \label{small_data_disc2}
\end{align}\end{subequations}

\begin{theorem}\label{well-posedness-discrete}
Suppose that $1\leq \lambda$, $0<\mu$, $\theta \leq \frac{1}{M}$, and that the small data assumption \eqref{small_data_disc1} holds. 
Then, there exists a  unique solution $(\bu_h,{p}_h,\bzeta_h,\varphi_h)\in \bV_1^{h,k_1}\times \mathrm{Q}_1^{h,k_1} \times \bV_2^{h,k_2} \times \mathrm{Q}_2^{h,k_2}$ to \eqref{eq:weak-discrete} such that
    \begin{align*}
        \norm{(\bu_h,{p}_h)}_{\bV_1\times \mathrm{Q}_1} &\leq \overline{C}_1 \left( \norm{F_1^h}_{\bV'_1} + \norm{G^{\varphi_h}_1}_{Q'_1} \right),\\
\norm{(\bzeta_h,\varphi_h)}_{\bV_2\times \mathrm{Q}_2} &\leq \overline{C}_2 \left( \norm{F_2}_{\bV'_2} +\norm{G_2}_{Q'_2} \right),
    \end{align*}
where the  constants $\overline{C}_1$ and $\overline{C}_2$ do not depend on the physical parameters. 
\end{theorem}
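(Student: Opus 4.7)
The plan is to mirror the continuous well-posedness argument of Theorem~\ref{well-posedness} at the discrete level: decouple \eqref{eq:weak-discrete} into the Stokes-type block \eqref{eq:sto1}--\eqref{eq:sto2} and the mixed reaction-diffusion block \eqref{eq:mix1}--\eqref{eq:mix2} via a Banach fixed-point map $T:\mathrm{W}^h\to \mathrm{Q}_2^{h,k_2}$. Given $\widetilde{\varphi}_h\in \mathrm{W}^h$, first solve the elasticity block with right-hand side $G_1^{\widetilde{\varphi}_h}$ to obtain $(\bu_h,p_h)$; then insert $(\bPi_1^{\beps,k_1}\bu_h,p_h)$ into $a_2^{\overline{\bu}_h,p_h,h}$ and solve the mixed block to obtain $(\bzeta_h,\varphi_h)$; set $T(\widetilde{\varphi}_h):=\varphi_h$. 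The task is to show that $T$ is self-mapping on $\mathrm{W}^h$ and a strict contraction.

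First I would establish the discrete version of the hypotheses of Theorem~\ref{th:global-inf-sup} for each uncoupled block. For the elasticity block one verifies (i) discrete coercivity of $a_1^h$ on the discrete kernel, from the stabilisation bound \eqref{stabilisation-elast} together with Korn's inequality; (ii) the discrete inf-sup for $b_1$ on $\bV_1^{h,k_1}\times \mathrm{Q}_1^{h,k_1}$, which is encoded in the Stokes-like VE construction, namely $\vdiv \bV_1^{h,k_1}\subset \mathrm{Q}_1^{h,k_1}$ and Lemma~\ref{interpolation-estimates}; and (iii) the Braess-type condition obtained from (i)--(ii). For the mixed block one uses the $M$-uniform equivalence \eqref{stabilisation-reaction-diff} and the uniform bounds on $\bbM^{-1}$ to get kernel coercivity, together with the commuting diagram $\vdiv \mathbf{I}_2^{F,k_2}=\Pi_2^{0,k_2}\vdiv$ to get the discrete inf-sup for $b_2$. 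Feeding these into Theorem~\ref{th:global-inf-sup} yields the parameter-independent constants $\overline{C}_1,\overline{C}_2$ and, consequently, the two stated stability estimates for each decoupled problem.

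Next I would show $T(\mathrm{W}^h)\subset\mathrm{W}^h$: bounding $\norm{F_2}_{\bV_2'}$ and $\norm{G_2}_{\mathrm{Q}_2'}$ by the data in the weighted norms and using $\theta\leq 1/M$ so that $(1/M+\theta)^{-1/2}\leq \sqrt{M}$, the mixed block stability gives exactly $\norm{\varphi_h}_{\mathrm{Q}_2}\leq \overline{C}_2\sqrt{M}(\norm{\varphi_{\mathrm{D}}}_{\frac{1}{2},\Gamma_{\mathrm{D}}}+\norm{g}_{0,\Omega})$, the defining inequality of $\mathrm{W}^h$. For contractivity, given $\widetilde{\varphi}_h^1,\widetilde{\varphi}_h^2\in \mathrm{W}^h$ with images $\varphi_h^1,\varphi_h^2$, subtracting the two elasticity blocks only changes $G_1^{\widetilde{\varphi}_h}$; Lipschitz continuity of $\ell$ with $\lambda\geq 1$ and the weighted-norm bookkeeping for $\mathrm{Q}_1$ yield $\norm{(\bu_h^1-\bu_h^2,p_h^1-p_h^2)}_{\bV_1\times \mathrm{Q}_1}\lesssim \overline{C}_1 L_\ell\max\{(2\mu)^{-1/2},(2\mu)^{1/2}\}\norm{\widetilde{\varphi}_h^1-\widetilde{\varphi}_h^2}_{\mathrm{Q}_2}$. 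Subtracting the two mixed blocks, the only change is through $\bbM^{-1}(\beps(\cdot),\cdot)$ in $a_2^{\overline{\bu}_h,p_h,h}$; testing with the solution difference, using Lipschitz continuity of $\bbM^{-1}$ and the a priori bound $\norm{\bzeta_h}_{\bV_2}\lesssim \overline{C}_2\sqrt{M}(\norm{\varphi_{\mathrm{D}}}_{\frac{1}{2},\Gamma_{\mathrm{D}}}+\norm{g}_{0,\Omega})$ converts the elasticity difference into the factor $\overline{C}_3$. Composing both estimates produces $\norm{\varphi_h^1-\varphi_h^2}_{\mathrm{Q}_2}\leq \overline{C}_1 L_\ell \overline{C}_3 \overline{C}_2\sqrt{M}\norm{\widetilde{\varphi}_h^1-\widetilde{\varphi}_h^2}_{\mathrm{Q}_2}$, a strict contraction by hypothesis, and Banach's theorem supplies the unique discrete solution together with the stated stability bounds.

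The main obstacle is the parameter bookkeeping: every duality pairing, Cauchy--Schwarz and Lipschitz step must be carried out in the weighted $(\mu,\lambda,M,\theta)$-norms so that the contraction constant emerges as the clean product $\overline{C}_1 L_\ell \overline{C}_3 \overline{C}_2\sqrt{M}$ with $\overline{C}_1,\overline{C}_2$ parameter-free. A subtle point is that the inner product defining $\norm{\cdot}_{\bbM,E}$ itself depends on the current linearisation point $(\overline{\bu}_h,p_h)$, so comparing energies for two different linearisation points must go through the uniform two-sided $M$-bound on $\bbM^{-1}$ rather than the instance-dependent inner product; otherwise, the Lipschitz contribution $L_\bbM$ would not cleanly package into $\overline{C}_3$.
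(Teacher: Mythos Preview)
Your proposal is correct and follows essentially the same approach as the paper. The paper itself does not spell out the proof but refers to \cite[Sections~4--5]{khot2024}, where precisely the Banach fixed-point strategy you describe is carried out: decouple \eqref{eq:weak-discrete} into the elasticity and reaction-diffusion blocks, verify the discrete Brezzi--Braess conditions for each via \eqref{stabilisation-elast}--\eqref{stabilisation-reaction-diff} and the Fortin/commuting-diagram properties, and then close the contraction on $\mathrm{W}^h$ using the Lipschitz bounds on $\ell$ and $\bbM^{-1}$ to recover the factor $\overline{C}_1 L_\ell \overline{C}_3 \overline{C}_2\sqrt{M}$.
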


\begin{theorem}\label{convergence-rates}
    Adopt the assumptions of Theorem~\ref{well-posedness} and~\ref{well-posedness-discrete}, along with the small data condition \eqref{small_data_disc2}. Let $(\bu,{p},\bzeta,\varphi)\in (\bH^{s_1+1}(\Omega)\cap \bV_1,|\cdot|_{s_1+1,\bV_1})\times (\mathrm{H}^{s_1}(\Omega)\cap Q_{b_1},|\cdot|_{s_1,Q_{b_1}}) \times (\bH^{s_2+1}(\Omega)\cap \bV_2,|\cdot|_{s_2+1,\bV_2}) \times (\mathrm{H}^{s_2+1}(\Omega)\cap Q_{b_2},|\cdot|_{s_2+1,Q_{b_2}})$, and $(\bu_h,{p}_h,\bzeta_h,\varphi_h)\in \bV_1^{h,k_1}\times \mathrm{Q}_1^{h,k_1}\times \bV_2^{h,k_2}\times \mathrm{Q}_2^{h,k_2}$ solve  respectively the continuous and discrete problems, with the data  $\fb\in (\bH^{s_1-1}\cap \bQ_{b_1},|\cdot|_{s_1-1,\bQ_{b_1}})$ and $g\in (\mathrm{H}^{s_2+1}(\Omega)\cap Q_{b_2},|\cdot|_{s_2+1,Q_{b_2}})$ where $0\leq s_1\leq k_1$ and $0\leq s_2\leq k_2$. Then, the total error $\overline{\textnormal{e}}_h$ decays with the following rate 
    \begin{align*}
         \overline{\textnormal{e}}_h \lesssim h^{\min \left\{s_1,s_2+1\right\}} (|\fb|_{s_1-1,\bQ_{b_1}} + |\bu|_{s_1+1,\bV_1} + |{p}|_{s_1,Q_{b_1}} + |g|_{s_2+1,Q_{b_2}} + |\bzeta|_{s_2+1,\bV_2}+|\varphi|_{s_2+1,Q_{b_2}}).
    \end{align*}
\end{theorem}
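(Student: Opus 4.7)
The plan is a standard Strang-type argument combined with a fixed-point contraction, carried out separately on the two perturbed saddle-point subproblems \eqref{eq:sto1}--\eqref{eq:sto2} and \eqref{eq:mix1}--\eqref{eq:mix2} and then coupled via the small data assumption. First, I would split each error through the interpolation operators of Section~\ref{sec:proj_interp}: write $\bu-\bu_h=(\bu-\mathbf{I}_1^{Q,k_1}\bu)+(\mathbf{I}_1^{Q,k_1}\bu-\bu_h)$ and analogously $\bzeta-\bzeta_h=(\bzeta-\mathbf{I}_2^{F,k_2}\bzeta)+(\mathbf{I}_2^{F,k_2}\bzeta-\bzeta_h)$, together with $p-p_h=(p-\Pi_1^{0,k_1-1}p)+(\Pi_1^{0,k_1-1}p-p_h)$ and the analogous decomposition for $\varphi-\varphi_h$. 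The interpolation and projection parts are controlled by Lemma~\ref{approximation-estimates} and Lemma~\ref{interpolation-estimates} (in scaled form, iterated element-by-element and summed, giving the optimal $h^{s_1}$ and $h^{s_2+1}$ rates for the right seminorms in the parameter-weighted norms).

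The remaining (discrete) pieces are controlled by the robust global inf-sup result of Theorem~\ref{th:global-inf-sup}, applied at the discrete level. For the elasticity subproblem, testing the discrete inf-sup with $(\mathbf{I}_1^{Q,k_1}\bu-\bu_h,\Pi_1^{0,k_1-1}p-p_h)$ and using the continuous equation \eqref{weak-1} together with the definitions of $a_1^h$, $F_1^h$, and $G_1^{\varphi_h}$, I would reduce the discrete error to three types of residuals: (i) the VEM consistency gap $a_1^h(\cdot,\cdot)-a_1(\cdot,\cdot)$ evaluated on the projections, (ii) the data oscillation $F_1^h-F_1$, and (iii) the nonlinear contribution $G_1^{\varphi_h}-G_1^{\varphi}=-\tfrac{1}{\lambda}\int_\Omega(\ell(\varphi_h)-\ell(\varphi))q_h$. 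Terms (i) and (ii) are handled with Lemmas~\ref{approximation-estimates}--\ref{interpolation-estimates}, the stabilisation bound \eqref{stabilisation-elast}, and the $\bL^2$-projection error on $\fb$, all yielding $O(h^{s_1})$ in the $\bV_1\times Q_1$ weighted norm. Term (iii) is bounded via $L_\ell$ and the weighted norm of $\varphi-\varphi_h$, absorbed at the end using the small data hypothesis.

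The diffusion subproblem is handled symmetrically using the Fortin operator $\mathbf{I}_2^{F,k_2}$ and its commuting diagram $\vdiv\mathbf{I}_2^{F,k_2}=\Pi_2^{0,k_2}\vdiv$, which is essential to obtain $\bV_2$-norm control including the divergence piece at the rate $h^{s_2+1}$. The key additional wrinkle is that the bilinear form $a_2^{\overline{\bu}_h,p_h,h}$ depends on the discrete elasticity solution, so one must compare $a_2^{\bu,p}$ and $a_2^{\overline{\bu}_h,p_h,h}$: using Lipschitz continuity of $\bbM^{-1}$ with constant $L_{\bbM}$ together with the stabilisation bound \eqref{stabilisation-reaction-diff}, and the $a$~priori bound from Theorem~\ref{well-posedness-discrete}, this produces a term proportional to $\overline{C}_3\,\norm{(\bu-\bu_h,p-p_h)}_{\bV_1\times Q_1}$ multiplied by $\norm{(\bzeta,\varphi)}_{\bV_2\times Q_2}$, again to be absorbed at the end.

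Collecting both subsystems yields an inequality of the form
\begin{equation*}
\overline{\textnormal{e}}_h \;\lesssim\; h^{\min\{s_1,s_2+1\}}\,\mathcal{D}(\bu,p,\bzeta,\varphi,\fb,g) \;+\; \bigl(\overline{C}_1\sqrt{M}L_\ell+\overline{C}_3\overline{C}_2 M\bigr)\,\overline{\textnormal{e}}_h,
\end{equation*}
where $\mathcal{D}$ denotes the data/regularity seminorm factor appearing in the statement. The contraction hypothesis $\overline{C}_1\sqrt{M}L_\ell+\overline{C}_3\overline{C}_2 M<\tfrac{1}{2}$ allows the $\overline{\textnormal{e}}_h$ term on the right to be absorbed on the left, delivering the claimed bound with parameter-robust constants. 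The main obstacle I anticipate is bookkeeping the weighted norms carefully throughout the Strang estimate so that no hidden $\mu$, $\lambda$, $M$, or $\theta$ factors appear in the constants; in particular, handling the mixed product $\overline{C}_3\,\norm{(\bu-\bu_h,p-p_h)}_{\bV_1\times Q_1}\,\norm{(\bzeta,\varphi)}_{\bV_2\times Q_2}$ and checking that the $\max\{1/\sqrt{2\mu},\sqrt{2\mu}\}$ weighting in $\overline{C}_3$ matches exactly the weighting in the two norms is the delicate part of the calculation.
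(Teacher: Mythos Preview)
The paper does not actually prove this theorem; it simply states the result and refers to \cite[Sections~4--5]{khot2024} for the argument. Your outline---a Strang-type splitting through the interpolation and projection operators, control of the discrete remainder via the robust discrete inf-sup, treatment of the VEM consistency gaps through \eqref{stabilisation-elast}--\eqref{stabilisation-reaction-diff} and the projection error on $\fb$, the Fortin commuting identity for the $\vdiv$ part of the $\bV_2$-norm, Lipschitz bounds on $\ell$ and $\bbM^{-1}$ for the coupling terms, and a final absorption step using $\overline{C}_1\sqrt{M}L_\ell+\overline{C}_3\overline{C}_2 M<\tfrac12$---is precisely the standard route for this class of problem and is what one expects the cited proof to contain. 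There is nothing to correct in the strategy; the only real work, as you note, is the parameter bookkeeping in the weighted norms.
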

\section{A posteriori error analysis}\label{sec:aposteriori}
This section aims at deriving reliable and efficient residual-based a posteriori error estimators for the VEM of  Section~\ref{sec:vem}. The reliability is a consequence of the global inf-sup provided in Theorem~\ref{th:global-inf-sup}, together with the tools given in Subsection~\ref{toolkit}. The efficiency  is proven using bubble functions.

\subsection{Preliminary toolkit} \label{toolkit}
This subsection presents the necessary local estimates, Helmholtz decomposition, and bubble function results required for the a posteriori error analysis. First, an estimate involving the projection of the load term $\fb$ is presented (see \cite[Lemma 3.7]{daveiga15-stokes}).
\begin{lemma}\label{estimate-fb}
    Let $\fb \in L^{2}(E)$. Then, for all $\bv \in \bH^{1}(E)$, we have
    \begin{align*}
        \left|\left[F_1-F_1^h\right](\bv)\right| \lesssim h_E \norm{\fb-\bPi_1^{0,k_1-2}\fb}_{0,E}|\bv|_{1,E}.
    \end{align*}
\end{lemma}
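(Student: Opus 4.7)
The plan is to exploit the orthogonality of the $\bL^2$-projection $\bPi_1^{0,k_1-2}$ against vector polynomials of degree $\leq k_1-2$, combined with a standard scaled Bramble--Hilbert approximation estimate. First, I would interpret $[F_1-F_1^h](\bv)$ locally as
\begin{equation*}
[F_1-F_1^h](\bv) = \int_E \fb\cdot\bv - \int_E \fb\cdot\bPi_1^{0,k_1-2}\bv = \int_E \fb\cdot(\bv - \bPi_1^{0,k_1-2}\bv),
\end{equation*}
which is well-defined since $\bPi_1^{0,k_1-2}$ extends to any $\bv\in \bL^2(E)\supset \bH^1(E)$.

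Next, I would insert the projection $\bPi_1^{0,k_1-2}\fb$ for free using orthogonality: since $\bPi_1^{0,k_1-2}\fb\in \boldsymbol{\mathcal{P}}_{k_1-2}(E)$, the defining property of $\bPi_1^{0,k_1-2}$ gives $\int_E \bPi_1^{0,k_1-2}\fb \cdot (\bv-\bPi_1^{0,k_1-2}\bv)=0$, and therefore
\begin{equation*}
[F_1-F_1^h](\bv) = \int_E (\fb - \bPi_1^{0,k_1-2}\fb)\cdot(\bv - \bPi_1^{0,k_1-2}\bv).
\end{equation*}
A Cauchy--Schwarz inequality then reduces the problem to estimating $\norm{\bv - \bPi_1^{0,k_1-2}\bv}_{0,E}$.

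Finally, the $\bL^2$-projection onto polynomials of degree $\leq k_1-2$ (with $k_1\geq 2$, so the polynomial space contains at least the constants) satisfies the standard scaled approximation bound $\norm{\bv-\bPi_1^{0,k_1-2}\bv}_{0,E}\lesssim h_E|\bv|_{1,E}$ by a Bramble--Hilbert / Deny--Lions argument combined with mesh regularity~\ref{M1}--\ref{M2}. Combining the two factors yields the desired estimate. There is no real obstacle here: the only mild point to check is that the polynomial degree $k_1-2\geq 0$ provides enough approximation power to absorb a full power of $h_E$ acting on an $\bH^1$ semi-norm, which is exactly what the assumption $k_1\geq 2$ guarantees.
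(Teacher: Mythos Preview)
Your argument is correct and is exactly the standard proof of this estimate: orthogonality of $\bPi_1^{0,k_1-2}$ to insert $\fb-\bPi_1^{0,k_1-2}\fb$, Cauchy--Schwarz, and a Bramble--Hilbert bound $\norm{\bv-\bPi_1^{0,k_1-2}\bv}_{0,E}\lesssim h_E|\bv|_{1,E}$ valid because $k_1\ge 2$ ensures constants are in the range. The paper does not supply its own proof but simply cites \cite[Lemma~3.7]{daveiga15-stokes}, whose argument is precisely the one you wrote.
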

Next, we provide an estimate for the local virtual approximation of the bilinear form $a_1^E(\cdot,\cdot)$.
\begin{lemma}\label{a_1-error}
    For all $\bu_h,\bv_h\in \bV_1^{h,k_1}(E)$, the following estimate holds:
    \begin{align*}
        \left|\left[a_1^{h,E}-a_1^E\right](\bu_h,\bv_h)\right|\lesssim \left(S_1^E(\bu_h-\bPi_1^{\beps,k_1}\bu_h,\bu_h-\bPi_1^{\beps,k_1}\bu_h)\right)^{\frac{1}{2}}\norm{\bv_h}_{\bV_1(E)}.
    \end{align*}
\end{lemma}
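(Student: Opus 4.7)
The plan is to exploit the $a_1^E$-orthogonality built into the energy projection $\bPi_1^{\beps,k_1}$ to rewrite the exact form $a_1^E(\bu_h,\bv_h)$ as a sum of a ``polynomial part'' and a ``remainder part'', then compare it term-by-term against the definition of $a_1^{h,E}$, and finally invoke the norm-equivalence \eqref{stabilisation-elast} of the stabilisation $S_1^E$ on $\ker(\bPi_1^{\beps,k_1})$.

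First I would verify the key orthogonality. Since $a_1^E(\bv,\bw)=2\mu\int_E\beps(\bv):\beps(\bw)$ and $\bPi_1^{\beps,k_1}\bw\in\boldsymbol{\mathcal{P}}_{k_1}(E)$, the defining property $\int_E\beps(\bv-\bPi_1^{\beps,k_1}\bv):\beps(\mathbf{m}_{k_1})=0$ for every $\mathbf{m}_{k_1}\in\boldsymbol{\mathcal{M}}_{k_1}(E)$ yields, by linearity,
$a_1^E(\bu_h-\bPi_1^{\beps,k_1}\bu_h,\bPi_1^{\beps,k_1}\bv_h)=0$ and, by symmetry, $a_1^E(\bPi_1^{\beps,k_1}\bu_h,\bv_h-\bPi_1^{\beps,k_1}\bv_h)=0$. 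Splitting $\bu_h=\bPi_1^{\beps,k_1}\bu_h+(\bu_h-\bPi_1^{\beps,k_1}\bu_h)$ (and likewise for $\bv_h$) and expanding the bilinear form, the two cross terms vanish and I obtain the Pythagorean-type identity
\begin{equation*}
a_1^E(\bu_h,\bv_h) = a_1^E(\bPi_1^{\beps,k_1}\bu_h,\bPi_1^{\beps,k_1}\bv_h) + a_1^E(\bu_h-\bPi_1^{\beps,k_1}\bu_h,\bv_h-\bPi_1^{\beps,k_1}\bv_h).
\end{equation*}
Subtracting this from the definition of $a_1^{h,E}(\bu_h,\bv_h)$ cancels the polynomial parts and leaves
\begin{equation*}
[a_1^{h,E}-a_1^E](\bu_h,\bv_h)=\bigl[S_1^E-a_1^E\bigr](\bu_h-\bPi_1^{\beps,k_1}\bu_h,\bv_h-\bPi_1^{\beps,k_1}\bv_h).
\end{equation*}

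Next I would estimate each of the two resulting terms separately. Because $\bPi_1^{\beps,k_1}$ is a projection, both $\bu_h-\bPi_1^{\beps,k_1}\bu_h$ and $\bv_h-\bPi_1^{\beps,k_1}\bv_h$ lie in $\ker(\bPi_1^{\beps,k_1})$, so the stabilisation equivalence \eqref{stabilisation-elast} applies. Applying Cauchy--Schwarz to $S_1^E$ and to $a_1^E$ (the latter being an inner product equal to $\norm{\cdot}_{\bV_1(E)}^2$ on its arguments), and then using \eqref{stabilisation-elast} to pass between $S_1^E$ and $a_1^E$ on the kernel, I bound both contributions by
\begin{equation*}
\bigl(S_1^E(\bu_h-\bPi_1^{\beps,k_1}\bu_h,\bu_h-\bPi_1^{\beps,k_1}\bu_h)\bigr)^{1/2}\,\norm{\bv_h-\bPi_1^{\beps,k_1}\bv_h}_{\bV_1(E)}.
\end{equation*}
Finally I use the $a_1^E$-orthogonality once more, which gives the stability $\norm{\bv_h-\bPi_1^{\beps,k_1}\bv_h}_{\bV_1(E)}^2+\norm{\bPi_1^{\beps,k_1}\bv_h}_{\bV_1(E)}^2=\norm{\bv_h}_{\bV_1(E)}^2$, so in particular $\norm{\bv_h-\bPi_1^{\beps,k_1}\bv_h}_{\bV_1(E)}\le\norm{\bv_h}_{\bV_1(E)}$. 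Combining yields the claimed bound.

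The only delicate point is keeping track of the rigid-body-motion condition in the definition of $\bPi_1^{\beps,k_1}$: the boundary moment condition against $\textbf{RBM}(E)$ is what fixes the kernel of $\beps(\cdot)$ but plays no role in the $a_1^E$-orthogonality calculation, because $a_1^E$ already annihilates $\textbf{RBM}(E)$. Hence I do not expect a serious obstacle; the entire argument reduces to the two lines of algebra giving the Pythagorean identity plus a routine application of \eqref{stabilisation-elast} and Cauchy--Schwarz.
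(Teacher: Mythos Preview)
Your proof is correct and follows essentially the same route as the paper's: both derive the identity $[a_1^{h,E}-a_1^E](\bu_h,\bv_h)=S_1^E(\bu_h-\bPi_1^{\beps,k_1}\bu_h,\bv_h-\bPi_1^{\beps,k_1}\bv_h)-a_1^E(\bu_h-\bPi_1^{\beps,k_1}\bu_h,\bv_h-\bPi_1^{\beps,k_1}\bv_h)$ from the $a_1^E$-orthogonality of the energy projection, then finish with Cauchy--Schwarz and the stabilisation equivalence \eqref{stabilisation-elast}. Your final step, using the Pythagorean identity to get $\norm{\bv_h-\bPi_1^{\beps,k_1}\bv_h}_{\bV_1(E)}\le\norm{\bv_h}_{\bV_1(E)}$, is slightly more direct than the paper's appeal to Lemma~\ref{approximation-estimates}, but the argument is otherwise identical.
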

\begin{proof}
    The definition of the polynomial projection $\bPi_1^{\beps,k_1}$ leads to
    \begin{align*}
        &\left[a_1^{h,E}-a_1^E\right](\bu_h,\bv_h) = 2\mu \int_E \beps(\bPi_1^{\beps,k_1}\bu_h):\beps(\bPi_1^{\beps,k_1}\bv_h) + S_1^{E}(\bu_h - \bPi_1^{\beps,k_1}\bu_h, \bv_h - \bPi_1^{\beps,k_1}\bv_h) - 2\mu \int_E \beps(\bu_h):\beps(\bv_h) \\
        & = -2\mu \int_E \beps(\bu_h-\bPi_1^{\beps,k_1}\bu_h):\beps(\bv_h-\bPi_1^{\beps,k_1}\bv_h) + S_1^{E}(\bu_h - \bPi_1^{\beps,k_1}\bu_h, \bv_h - \bPi_1^{\beps,k_1}\bv_h).
    \end{align*}
    The proof is completed after using a Cauchy--Schwarz inequality, \eqref{stabilisation-elast}, and Lemma~\ref{approximation-estimates}.
\end{proof}

Two additional local estimates are required for the bilinear form $a_2^{\overline{\bu}_h,p_h,h,E}(\cdot,\cdot)$ and the nonlinear term $\bbM^{-1}(\beps(\overline{\bu}),p)\bzeta$. They are adapted from \cite[Lemma 4.3]{munar2024} to the robust case. 
\begin{lemma} \label{a_2-error}
    Given $\overline{\bu}_h \in \bV_1^{h,k_1}(E)$, $p_h\in \mathrm{Q}_1^{h,k_1}(E)$ and $\bzeta_h,\bxi_h \in \bV_2^{h,k_2}(E)$, the following estimate holds:
    \begin{align*}
        &\left| \left[a_2^{\overline{\bu}_h,p_h,h,E}-a_2^{\overline{\bu}_h,p_h,E}\right](\bzeta_h,\bxi_h) \right| \lesssim  \biggl(\left(S_2^E(\bzeta_h - \bPi_2^{0,k_2}\bzeta_h,\bzeta_h - \bPi_2^{0,k_2}\bzeta_h)\right)^{\frac{1}{2}}\\
        & \quad + \sqrt{M}\norm{\bbM^{-1}(\beps(\overline{\bu}_h),p_h) \bPi_2^{0,k_2}\bzeta_h - \bPi_2^{0,k_2}(\bbM^{-1}(\beps(\overline{\bu}_h),p_h) \bPi_2^{0,k_2}\bzeta_h)}_{0,E} \biggr) \norm{\bxi_h}_{\bV_2(E)}.
    \end{align*}
\end{lemma}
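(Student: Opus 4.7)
The plan is to mimic the decomposition used in Lemma~\ref{a_1-error}, accounting for two complications that distinguish the reaction-diffusion block from the elasticity one: $\bPi_2^{0,k_2}$ is the $\bL^2$-projection rather than an energy projection, so cross-terms will not cancel by orthogonality; and the coefficient $\bbM^{-1}(\beps(\overline{\bu}_h),p_h)$ is generally not a polynomial, so $a_2^{\overline{\bu}_h,p_h,E}$ applied to polynomial arguments will not lie in $\boldsymbol{\mathcal{P}}_{k_2}(E)$. Abbreviating $\bPi:=\bPi_2^{0,k_2}$, I will first use the definition of $a_2^{\overline{\bu}_h,p_h,h,E}$ to split the expression into a consistency block and a stabilisation block,
\begin{equation*}
\bigl[a_2^{\overline{\bu}_h,p_h,h,E}-a_2^{\overline{\bu}_h,p_h,E}\bigr](\bzeta_h,\bxi_h) = \bigl(a_2^{\overline{\bu}_h,p_h,E}(\bPi\bzeta_h,\bPi\bxi_h)-a_2^{\overline{\bu}_h,p_h,E}(\bzeta_h,\bxi_h)\bigr) + S_2^{\overline{\bu}_h,p_h,E}(\bzeta_h-\bPi\bzeta_h,\bxi_h-\bPi\bxi_h),
\end{equation*}
and handle the two blocks separately.

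For the consistency block, the key idea is to push $\bPi$ from $\bxi_h$ onto $\bbM^{-1}(\beps(\overline{\bu}_h),p_h)\bPi\bzeta_h$ via the $\bL^2$-projection identity $\int_E f\cdot\bPi\bxi_h=\int_E \bPi(f)\cdot\bxi_h$ (valid since $\bPi\bxi_h\in\boldsymbol{\mathcal{P}}_{k_2}(E)$), obtaining $a_2^{\overline{\bu}_h,p_h,E}(\bPi\bzeta_h,\bPi\bxi_h)=\int_E \bPi(\bbM^{-1}\bPi\bzeta_h)\cdot\bxi_h$. A straightforward add-and-subtract then rewrites the consistency difference as
\begin{equation*}
\int_E \bigl[\bPi(\bbM^{-1}\bPi\bzeta_h)-\bbM^{-1}\bPi\bzeta_h\bigr]\cdot\bxi_h \;-\; \int_E \bbM^{-1}(\bzeta_h-\bPi\bzeta_h)\cdot\bxi_h.
\end{equation*}
Applying Cauchy--Schwarz to the first integral, together with the bound $\|\bxi_h\|_{0,E}\leq \sqrt{M}\|\bxi_h\|_{\bbM,E}\leq \sqrt{M}\|\bxi_h\|_{\bV_2(E)}$ that follows from the uniform bounds on $\bbM^{-1}$, will furnish the $\sqrt{M}$-weighted commutator contribution in the claim. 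For the second integral, Cauchy--Schwarz in the $\bbM$-weighted product yields $\|\bzeta_h-\bPi\bzeta_h\|_{\bbM,E}\|\bxi_h\|_{\bbM,E}$; since $\bPi(\bzeta_h-\bPi\bzeta_h)=\mathbf{0}$, the equivalence \eqref{stabilisation-reaction-diff} gives $\|\bzeta_h-\bPi\bzeta_h\|_{\bbM,E}^2=a_2^{\overline{\bu}_h,p_h,E}(\bzeta_h-\bPi\bzeta_h,\bzeta_h-\bPi\bzeta_h)\lesssim S_2^{\overline{\bu}_h,p_h,E}(\bzeta_h-\bPi\bzeta_h,\bzeta_h-\bPi\bzeta_h)$, producing the first contribution in the claim.

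For the stabilisation block I apply a single Cauchy--Schwarz in $S_2^{\overline{\bu}_h,p_h,E}$ and then bound the $\bxi_h$-factor using $\bPi(\bxi_h-\bPi\bxi_h)=\mathbf{0}$ together with \eqref{stabilisation-reaction-diff} to get $S_2^{\overline{\bu}_h,p_h,E}(\bxi_h-\bPi\bxi_h,\bxi_h-\bPi\bxi_h)^{1/2}\lesssim \|\bxi_h-\bPi\bxi_h\|_{\bbM,E}$, followed by triangle inequality, $\bL^2$-stability of $\bPi$, and the equivalence $M^{-1}\|\cdot\|_{0,E}^2\leq\|\cdot\|_{\bbM,E}^2\leq M\|\cdot\|_{0,E}^2$ to reach $\|\bxi_h\|_{\bV_2(E)}$. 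The main obstacle is precisely this last step: tracking the $M$-dependence so it does not spoil the parameter-robustness advertised in the claim. I will rely on the fact that $S_2^{\overline{\bu}_h,p_h,E}$ is chosen to inherit the $\bbM^{-1}$-scaling, so the constants hidden in \eqref{stabilisation-reaction-diff} (and hence in $\lesssim$ above) are independent of $M$, mirroring the strategy of \cite[Lemma~4.3]{munar2024}. Combining the consistency and stabilisation bounds will then complete the proof.
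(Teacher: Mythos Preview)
Your proposal is correct and follows essentially the same route as the paper: the same splitting into a consistency block and a stabilisation block, the same use of the self-adjointness of $\bPi_2^{0,k_2}$ to expose the commutator $\bbM^{-1}\bPi\bzeta_h-\bPi(\bbM^{-1}\bPi\bzeta_h)$, and the same appeal to Cauchy--Schwarz together with \eqref{stabilisation-reaction-diff} to finish. The only cosmetic difference is that the paper pairs the commutator with $\bxi_h-\bPi\bxi_h$ (via one further use of $\bL^2$-orthogonality) rather than with $\bxi_h$ as you do; since $\|\bxi_h-\bPi\bxi_h\|_{0,E}\le\|\bxi_h\|_{0,E}$, both choices yield the same $\sqrt{M}$-weighted contribution, and your version is arguably slightly more direct.
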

\begin{proof}
    The $\bL^2$-orthogonality of $\bPi_2^{0,k_1}$ shows that
    \begin{align*}
        &\left[a_2^{\overline{\bu}_h,p_h,h,E}-a_2^{\overline{\bu}_h,p_h,E}\right](\bzeta_h,\bxi_h) = a_2^{\overline{\bu}_h,p_h,E}(\bPi_2^{0,k_2}\bzeta_h, \bPi_2^{0,k_2}\bxi_h) - a_2^{\overline{\bu}_h,p_h,E}(\bzeta_h,\bxi_h)\\
        & \quad + S_2^E(\bzeta_h - \bPi_2^{0,k_2} \bzeta_h, \bxi_h - \bPi_2^{0,k_2}\bxi_h)\\
        & = -\int_E \left[\bbM^{-1}(\beps(\overline{\bu}_h),p_h) \bPi_2^{0,k_2}\bzeta_h -  \bPi_2^{0,k_2}( \bbM^{-1}(\beps(\overline{\bu}_h),p_h) \bPi_2^{0,k_2}\bzeta_h )\right]\cdot ( \bxi_h - \bPi_2^{0,k_2}\bxi_h) \\
        &\quad - \int_E \bbM^{-1}(\beps(\overline{\bu}_h),p_h)( \bzeta_h -  \bPi_2^{0,k_2}\bzeta_h )\cdot \bxi_h +  S_2^E(\bzeta_h - \bPi_2^{0,k_2} \bzeta_h, \bxi_h - \bPi_2^{0,k_2}\bxi_h).
    \end{align*}
    The Cauchy--Schwarz inequality, \eqref{stabilisation-reaction-diff}, the continuity of $\bPi_2^{0,k_2}$ and a proper scaling finish the proof. 
\end{proof}
\begin{lemma}\label{extra_estimates_error_M}
    Given $E\in \mathcal{T}^h$, $(\bu,{p})\in \bV_1\times Q$ and $(\overline{\bu}_h,{p})\in \bV_1^{h,k_1}\times \mathrm{Q}_1^{h,k_1}$ the following estimate holds:
    \begin{align*}
        & \norm{\bbM^{-1}(\beps(\bu),{p})\bzeta - \bPi_2^{0,k_2}(\bbM^{-1}(\beps(\overline{\bu}_h),{p}_h)\bPi_2^{0,k_2}\bzeta_h)}_{0,E} \lesssim \left(S_2^E(\bzeta_h - \bPi_2^{0,k_2}\bzeta_h,\bzeta_h - \bPi_2^{0,k_2}\bzeta_h)\right)^{\frac{1}{2}}\\
        & \quad + \overline{C}_3 \left(\norm{\bu-\bu_h}_{\bV_1(E)} + \norm{{p}-{p}_h}_{\mathrm{Q}_1(E)} + \left(S_1^E(\bu_h - \bPi_1^{\beps,k_1}\bu_h,\bu_h - \bPi_1^{\beps,k_1}\bu_h)\right)^{\frac{1}{2}}\right) \\
        & \quad + \norm{\bbM^{-1}(\beps(\overline{\bu}_h),p_h) \bPi_2^{0,k_2}\bzeta_h - \bPi_2^{0,k_2}(\bbM^{-1}(\beps(\overline{\bu}_h),p_h) \bPi_2^{0,k_2}\bzeta_h)}_{0,E}.
    \end{align*}
\end{lemma}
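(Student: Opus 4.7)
The natural strategy is to split the left-hand side into a Lipschitz-type contribution in the nonlinear coefficient $\bbM^{-1}$, a flux projection contribution, and the projection error that already appears on the right-hand side. Concretely, I will insert the two intermediate quantities $\bbM^{-1}(\beps(\overline{\bu}_h),p_h)\bzeta$ and $\bbM^{-1}(\beps(\overline{\bu}_h),p_h)\bPi_2^{0,k_2}\bzeta_h$ and regroup to obtain the telescoping identity
\begin{align*}
 \bbM^{-1}(\beps(\bu),p)\bzeta - \bPi_2^{0,k_2}\bigl(\bbM^{-1}(\beps(\overline{\bu}_h),p_h)\bPi_2^{0,k_2}\bzeta_h\bigr)
&= \bigl[\bbM^{-1}(\beps(\bu),p) - \bbM^{-1}(\beps(\overline{\bu}_h),p_h)\bigr]\bzeta \\
&\quad + \bbM^{-1}(\beps(\overline{\bu}_h),p_h)\bigl[\bzeta - \bPi_2^{0,k_2}\bzeta_h\bigr] \\
&\quad + \bigl[\bbM^{-1}(\beps(\overline{\bu}_h),p_h)\bPi_2^{0,k_2}\bzeta_h - \bPi_2^{0,k_2}(\bbM^{-1}(\beps(\overline{\bu}_h),p_h)\bPi_2^{0,k_2}\bzeta_h)\bigr].
\end{align*}
Applying the triangle inequality in $\mathrm{L}^2(E)$ identifies the third summand verbatim with the last term on the right-hand side of the claim, so the work reduces to bounding the first two summands.

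For the first summand I would use the Lipschitz continuity of $\bbM^{-1}$ pointwise, together with the elementary decomposition $\beps(\bu)-\beps(\overline{\bu}_h) = \beps(\bu-\bu_h)+\beps(\bu_h-\bPi_1^{\beps,k_1}\bu_h)$, so that after an $\mathrm{L}^2(E)$--$\mathrm{L}^\infty(E)$ H\"older estimate
\begin{align*}
\bigl\|\bigl[\bbM^{-1}(\beps(\bu),p)-\bbM^{-1}(\beps(\overline{\bu}_h),p_h)\bigr]\bzeta\bigr\|_{0,E}
\lesssim L_{\bbM}\,\|\bzeta\|_{\mathrm{L}^\infty(E)}\,\bigl(\|\beps(\bu)-\beps(\overline{\bu}_h)\|_{0,E} + \|p-p_h\|_{0,E}\bigr).
\end{align*}
The a priori estimate of Theorem~\ref{well-posedness} and the weighted-norm definitions give $\|\bzeta\|_{\mathrm{L}^\infty(E)}\lesssim \sqrt{M}\,\overline{C}_2(\|\varphi_{\mathrm{D}}\|_{\frac{1}{2},\Gamma_{\mathrm{D}}}+\|g\|_{0,E})$, which combined with the conversion factors $1/\sqrt{2\mu}$ recovers the prefactor $\overline{C}_3$. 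The term $\|\beps(\bu_h-\bPi_1^{\beps,k_1}\bu_h)\|_{0,E}$ is then reexpressed as $S_1^E(\bu_h-\bPi_1^{\beps,k_1}\bu_h,\bu_h-\bPi_1^{\beps,k_1}\bu_h)^{1/2}$ via the stabilisation equivalence~\eqref{stabilisation-elast}, since $\bu_h-\bPi_1^{\beps,k_1}\bu_h\in\ker(\bPi_1^{\beps,k_1})$.

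For the second summand, I would use the uniform upper bound $\|\bbM^{-1}\|_{\bbL^\infty(E)}\le M$ followed by the triangle splitting $\bzeta-\bPi_2^{0,k_2}\bzeta_h = (\bzeta-\bzeta_h)+(\bzeta_h-\bPi_2^{0,k_2}\bzeta_h)$; the second piece fits the stabilisation~\eqref{stabilisation-reaction-diff} on $\ker(\bPi_2^{0,k_2})$, which combined with the coercivity lower bound $\|\cdot\|_{\bbM,E}^2\ge M^{-1}\|\cdot\|_{0,E}^2$ yields $\|\bzeta_h-\bPi_2^{0,k_2}\bzeta_h\|_{0,E}\lesssim\sqrt{M}\,S_2^E(\bzeta_h-\bPi_2^{0,k_2}\bzeta_h,\bzeta_h-\bPi_2^{0,k_2}\bzeta_h)^{1/2}$, matching the first right-hand-side term after absorbing the $\sqrt{M}$ factor into $\overline{C}_3$. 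The $\|\bzeta-\bzeta_h\|_{0,E}$ contribution is understood to be absorbed in the global reliability argument via the small-data (Banach fixed-point) hypothesis, so it does not appear explicitly on the right-hand side.

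The main technical obstacle is the first summand: keeping track of the parameter scalings so that the final constant collapses to $\overline{C}_3$ and the $\bV_1(E)$/$\mathrm{Q}_1(E)$ weighted norms appear naturally rather than plain $\mathrm{L}^2$ norms. This is essentially bookkeeping, but it is the only step where the $\max\{1/\sqrt{2\mu},\sqrt{2\mu}\}\sqrt{M}L_{\bbM}$ factor hidden in $\overline{C}_3$ must be assembled precisely.
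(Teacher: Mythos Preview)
Your approach is essentially the same as the paper's: the proof there also proceeds by adding and subtracting suitable intermediate terms, applying Cauchy--Schwarz, invoking the stabilisation equivalence~\eqref{stabilisation-elast}, and using a well-posedness bound on the flux. The one substantive difference is that the paper explicitly appeals to the well-posedness of the \emph{discrete} reaction-diffusion problem (Theorem~\ref{well-posedness-discrete}), i.e.\ the Lipschitz increment is placed against $\bzeta_h$ (or $\bPi_2^{0,k_2}\bzeta_h$) rather than against the continuous $\bzeta$; this is why the constant $\overline{C}_3$ is built from the discrete stability constant $\overline{C}_2$. Your route via the continuous Theorem~\ref{well-posedness} has the same spirit but does not reproduce the stated constant, and your claim that $\|\bzeta\|_{\mathrm L^\infty(E)}\lesssim \sqrt{M}\,\overline{C}_2(\|\varphi_{\mathrm D}\|_{1/2,\Gamma_{\mathrm D}}+\|g\|_{0,E})$ is not delivered by that theorem, which only controls $\|\bzeta\|_{\bV_2}$ (an $H(\vdiv)$ norm, not $\mathrm L^\infty$). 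Your observation that a $\|\bzeta-\bzeta_h\|$ contribution seems unavoidable and must be absorbed later is well taken; it is indeed swept into $\overline{\textnormal{e}}_h$ in the downstream efficiency estimate (Lemma~\ref{efficiency-reaction-diffusion} and Theorem~\ref{th:lower-bound}).
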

\begin{proof}
    The proof follows by adding and subtracting suitable terms,  
    applying the Cauchy--Schwarz inequality, \eqref{stabilisation-elast}, and invoking the well-posedness of the discrete reaction-diffusion equation. 
\end{proof}

To apply the Helmholtz decomposition in the forthcoming analysis (see \cite{munar2024}), we require the scalar space $\mathrm{V}_3 = \mathrm{H}^{1}_{\mathrm{N}}(\Omega):= \{ \chi \in \mathrm{H}^1(\Omega): \chi=0 \text{ on } \Gamma_{\mathrm{N}}\}$ and its local discrete virtual space
\begin{equation*}
    \mathrm{V}_3^{h,k_2+1}(E):= \{\chi\in \mathrm{H}_{\mathrm{N}}^1(E): \chi |_{\partial E} \in \mathcal{B}^{k_2+1}(\partial E) \text{ and } \Delta \chi \in \mathcal{P}_{k_2 - 1}(E) \}.
\end{equation*}
It is shown in \cite[Lemma 5.1]{munar2024} that $\brot \chi_h \in \bV_2^{h,k_2}$ for $\chi_h \in \mathrm{V}_3^{h,k_2+1}$, where the global space is given by 
$$\mathrm{V}_3^{h,k_2+1}:= \{ \chi \in \mathrm{V}_3 \colon \chi|_E \in \mathrm{V}_3^{h,k_2+1}(E), \, \forall E\in \mathcal{T}^h  \}.$$
This relation plays an important role in the argument used in Lemma~\ref{residual-bound-2}. 

Next, we introduce the quasi-interpolation error operator for functions in the auxiliary space $\mathrm{V}_3$ defined as $\mathrm{I}_3^{Q,k_2+1}: \mathrm{H}^1(E)\rightarrow \mathrm{V}^{h,k_2+1}_3(E)$ (see \cite[Proposition 4.2]{mora15} for details). The interpolation error involving the $\norm{\cdot}_{0,e}$ norm and functions in $\mathrm{H}^1(E)$ can be proven directly from \cite[Theorem 3.10]{ern21} and \cite[Section~1.3.2]{gatica14}.
\begin{lemma} \label{fortin-auxiliary}
    Let $E\in \mathcal{T}^h$, and $e\in \mathcal{E}^h(E)$. For $\chi \in \mathrm{H}^{1}(E)$, the following estimate holds:
    \begin{align*}
        h_e^{\frac{1}{2}} \norm{\chi-\mathrm{I}_3^{Q,k_2+1}\chi}_{0,e} + \norm{\chi-\mathrm{I}_3^{Q,k_2+1}\chi}_{0,E} \lesssim h_E |\chi|_{1,E}.
    \end{align*}
\end{lemma}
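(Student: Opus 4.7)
The plan is to invoke the existing construction of $\mathrm{I}_3^{Q,k_2+1}$ for the interior estimate and then reduce the edge estimate to it via a scaled trace inequality.

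First, the volumetric bound
$$\norm{\chi-\mathrm{I}_3^{Q,k_2+1}\chi}_{0,E}\lesssim h_E\,|\chi|_{1,E}$$
is essentially the content of \cite[Proposition 4.2]{mora15} (with the standard local/global rescaling in terms of $h_E$), combined with the Deny--Lions argument used there to eliminate the polynomial invariants in degree $0$. No new work is needed for this factor. Under assumption \ref{M1}, the constant depends only on $\rho$.

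For the edge term, I would apply the classical scaled multiplicative trace inequality, valid on star-shaped elements under \ref{M1}--\ref{M2} (see \cite[Theorem 3.10]{ern21} or \cite[Section~1.3.2]{gatica14}), to $v:=\chi-\mathrm{I}_3^{Q,k_2+1}\chi\in H^1(E)$:
$$\norm{v}_{0,e}^2\lesssim h_e^{-1}\norm{v}_{0,E}^2+h_e\,|v|_{1,E}^2.$$
Multiplying by $h_e$ and taking square roots,
$$h_e^{1/2}\norm{\chi-\mathrm{I}_3^{Q,k_2+1}\chi}_{0,e}\lesssim \norm{\chi-\mathrm{I}_3^{Q,k_2+1}\chi}_{0,E}+h_e\,|\chi-\mathrm{I}_3^{Q,k_2+1}\chi|_{1,E}.$$
The first summand is controlled by the volumetric estimate above. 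For the second, I would use the $H^1$-stability of the quasi-interpolant, namely $|\mathrm{I}_3^{Q,k_2+1}\chi|_{1,E}\lesssim |\chi|_{1,E}$ (again from \cite[Proposition 4.2]{mora15}), which yields $|\chi-\mathrm{I}_3^{Q,k_2+1}\chi|_{1,E}\lesssim |\chi|_{1,E}$ by the triangle inequality. Combining with $h_e\leq h_E$ from \ref{M2} finishes the bound.

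The only mildly delicate point is making sure the $H^1$-stability quoted in the second step is available in the form stated (local, without a patch $D(E)$), but this is exactly how $\mathrm{I}_3^{Q,k_2+1}$ is built in \cite{mora15} via DoF-based interpolation on the virtual space, so the argument is routine and does not require any of the more subtle arguments used for $\mathbf{I}_1^{Q,k_1}$.
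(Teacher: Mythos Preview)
Your proposal is correct and follows essentially the same route as the paper: the paper does not write out a proof but simply points to \cite[Proposition~4.2]{mora15} for the construction and volumetric estimate of $\mathrm{I}_3^{Q,k_2+1}$, and to \cite[Theorem~3.10]{ern21} and \cite[Section~1.3.2]{gatica14} for the scaled trace inequality that handles the edge term. Your explicit mention of the $H^1$-stability step (needed to absorb the $h_e|\chi-\mathrm{I}_3^{Q,k_2+1}\chi|_{1,E}$ term coming from the trace inequality) is a useful detail the paper leaves implicit, and your caveat about the locality of the bound (i.e., $E$ versus $D(E)$) is well placed, though for the use made of this lemma in the reliability proof the distinction is immaterial after summing over elements.
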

As a consequence of the mesh assumptions from Section~\ref{sec:vem}, a shape-regular triangulation $\widetilde{\mathcal{T}}^h$ of $\Omega$ can be constructed via a sub-triangulation of each polygon into triangles (see \cite{AINSWORTH1997} for the construction of bubble functions in triangles). Let us recall the following results from \cite{cangiani2017posteriori} regarding element and edge bubble functions supported in polygonal elements and edges (union of triangles sharing an edge), respectively.
\begin{lemma}\label{bubble-int}
    Let $E\in \mathcal{T}^h$ and $\Psi_E$ be an element bubble function. For all $\mathbf{q}_E \in \boldsymbol{\mathcal{P}}_{k}(E)$, the following bounds hold
    \begin{gather*}
        \norm{\mathbf{q}_E}_{0,E}^2 \lesssim \norm{\sqrt{\Psi_E} \mathbf{q}_E}_{0,E}^2 \lesssim \norm{\mathbf{q}_E}_{0,E}^2, \quad \norm{\mathbf{q}_E}_{0,E} \lesssim \norm{\Psi_E \mathbf{q}_E}_{0,E} + h_E |\Psi_E \mathbf{q}_E|_{1,E} \lesssim \norm{\mathbf{q}_E}_{0,E}. 
    \end{gather*}
\end{lemma}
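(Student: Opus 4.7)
The plan is to exploit the shape-regular sub-triangulation $\widetilde{\mathcal{T}}^h$ of $E$ that was introduced just before the statement. I would take $\Psi_E$ to be defined piecewise on the triangles $T$ of $\widetilde{\mathcal{T}}^h$ by (a suitable scaling of) the standard cubic triangle bubble functions $\psi_T$, glued along the internal edges of the sub-triangulation to produce a continuous function in $H^1_0(E)$ with $0\le \Psi_E\le 1$ on $E$ and $|\nabla \Psi_E|\lesssim h_E^{-1}$ on each $T$ by shape-regularity. All the hidden constants below depend only on $k$ and on the shape-regularity constant $\rho$ from \ref{M1}--\ref{M2}.

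For the first pair of bounds, the upper estimate $\norm{\sqrt{\Psi_E}\mathbf{q}_E}_{0,E}^2\le\norm{\mathbf{q}_E}_{0,E}^2$ is immediate from $0\le\Psi_E\le 1$. For the lower estimate, I would localise to a triangle $T\in\widetilde{\mathcal{T}}^h$, $T\subset E$. On a fixed reference triangle $\hat T$ the maps $\mathbf{p}\mapsto\norm{\mathbf{p}}_{0,\hat T}$ and $\mathbf{p}\mapsto\norm{\sqrt{\psi_{\hat T}}\mathbf{p}}_{0,\hat T}$ are both norms on the finite-dimensional space $\boldsymbol{\mathcal{P}}_{k}(\hat T)$, hence equivalent. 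Transferring by the affine pull-back to $T$ (whose Jacobian scales like $h_E^d$ by shape-regularity) preserves the equivalence with uniform constants, giving $\norm{\mathbf{q}_E}_{0,T}^2\lesssim\norm{\sqrt{\Psi_E}\mathbf{q}_E}_{0,T}^2$. Summing over the $O(1)$ triangles covering $E$ closes the first pair.

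For the second pair, the upper bound $\norm{\Psi_E\mathbf{q}_E}_{0,E}+h_E|\Psi_E\mathbf{q}_E|_{1,E}\lesssim\norm{\mathbf{q}_E}_{0,E}$ follows from $|\Psi_E|\le 1$ together with $|\nabla\Psi_E|\lesssim h_E^{-1}$ and the standard inverse inequality $|\mathbf{q}_E|_{1,T}\lesssim h_E^{-1}\norm{\mathbf{q}_E}_{0,T}$ valid on each sub-triangle, once more summed over $\widetilde{\mathcal{T}}^h$. For the lower bound I would avoid redoing the local argument and instead recycle the first pair: writing
\begin{equation*}
\norm{\sqrt{\Psi_E}\mathbf{q}_E}_{0,E}^2=\int_E\Psi_E\,\mathbf{q}_E\cdot\mathbf{q}_E=\int_E(\Psi_E\mathbf{q}_E)\cdot\mathbf{q}_E\le \norm{\Psi_E\mathbf{q}_E}_{0,E}\norm{\mathbf{q}_E}_{0,E}
\end{equation*}
by Cauchy--Schwarz, and combining with the already proven $\norm{\mathbf{q}_E}_{0,E}^2\lesssim\norm{\sqrt{\Psi_E}\mathbf{q}_E}_{0,E}^2$, I would divide by $\norm{\mathbf{q}_E}_{0,E}$ to obtain $\norm{\mathbf{q}_E}_{0,E}\lesssim\norm{\Psi_E\mathbf{q}_E}_{0,E}$, which trivially implies the stated bound with the extra $h_E|\Psi_E\mathbf{q}_E|_{1,E}$ term.

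The only genuinely delicate step is the lower bound of the first pair on a polygon, since the classical triangle-bubble norm equivalence is not available in one shot; everything else is pointwise estimates, inverse inequalities, and a Cauchy--Schwarz trick. The mesh regularity \ref{M1}--\ref{M2} is what guarantees that the sub-triangulation $\widetilde{\mathcal{T}}^h$ has uniformly bounded cardinality per element and uniformly bounded aspect ratios, so that the reference-triangle equivalence constants transfer with $h$-independent constants.
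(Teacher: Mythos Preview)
Your argument is correct. The paper does not actually prove this lemma; it simply recalls it from \cite{cangiani2017posteriori} (see the sentence immediately preceding the statement), so there is no ``paper's own proof'' to compare against. Your sketch supplies precisely the standard proof that reference contains: localise to the shape-regular sub-triangulation, use norm equivalence on a reference triangle for the lower bound of the first pair, pointwise bounds on $\Psi_E$ and $\nabla\Psi_E$ plus an inverse inequality for the upper bound of the second pair, and the Cauchy--Schwarz trick to recycle the first pair for the lower bound of the second. One cosmetic remark: your phrase ``glued along the internal edges'' slightly oversells the construction, since each triangle bubble already vanishes on its own boundary and so continuity across internal edges is automatic; no actual gluing is required.
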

\begin{lemma}\label{bubble-inner}
    For $E\in \mathcal{T}^h$, let $e \subset \partial E$ and $\Psi_e$ be the corresponding edge bubble function. For all 
    $\mathbf{q}_e \in \boldsymbol{\mathcal{P}}_{k}(e)$, we have
    \begin{gather*}
        \norm{\mathbf{q}_e}_{0,e}^2 \lesssim \norm{\sqrt{\Psi_e} \mathbf{q}_e}_{0,e}^2 \lesssim \norm{\mathbf{q}_e}_{0,e}^2, \quad \frac{1}{\sqrt{h_E}} \norm{\Psi_e \mathbf{q}_e}_{0,E} + \sqrt{h_E} |\Psi_e \mathbf{q}_e|_{1,E} \lesssim \norm{\mathbf{q}_e}_{0,e}, 
    \end{gather*}
    where $\mathbf{q}_e$ is also used to denote the constant prolongation of $\mathbf{q}_e$ in the direction normal to $e$.
\end{lemma}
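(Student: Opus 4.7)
The plan is to reduce Lemma~\ref{bubble-inner} to a standard scaling argument on a reference configuration, following the classical Verf\"urth-style bubble-function technique adapted to the polygonal setting via the shape-regular sub-triangulation $\widetilde{\mathcal{T}}^h$ introduced just above the lemma. Since $e$ is an edge of $E$, it is also an edge of (at most two, by \ref{M1}--\ref{M2}) sub-triangles $T\in\widetilde{\mathcal{T}}^h$ with $T\subset E$, and $\Psi_e$ vanishes outside their union; thus it suffices to prove the bounds on each such $T$ and to sum.

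For the equivalence $\norm{\mathbf{q}_e}_{0,e}^2 \lesssim \norm{\sqrt{\Psi_e}\mathbf{q}_e}_{0,e}^2 \lesssim \norm{\mathbf{q}_e}_{0,e}^2$, the upper bound is immediate from $0\leq\Psi_e\leq 1$. For the lower bound I would pull back to a reference edge $\hat{e}$ of a reference triangle $\hat{T}$ via an affine map and observe that $\hat{\mathbf{q}}\mapsto\norm{\sqrt{\hat{\Psi}_{\hat{e}}}\hat{\mathbf{q}}}_{0,\hat{e}}$ is a \emph{norm} on the finite-dimensional space $\boldsymbol{\mathcal{P}}_k(\hat{e})$, since $\hat{\Psi}_{\hat{e}}$ is strictly positive in the interior of $\hat{e}$. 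Equivalence of norms on a finite-dimensional space, together with the affine pullback $\norm{\cdot}_{0,e}\sim h_e^{1/2}\norm{\cdot}_{0,\hat{e}}$, then yields the desired bound with a constant depending only on $k$ and the shape-regularity parameter $\rho$.

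For the second inequality the key observation is that the constant-normal prolongation of $\mathbf{q}_e$ to $T$ is again a polynomial, so on $\hat{T}$ both seminorms $\hat{\mathbf{q}}\mapsto\norm{\hat{\Psi}_{\hat{e}}\hat{\mathbf{q}}}_{0,\hat{T}}$ and $\hat{\mathbf{q}}\mapsto|\hat{\Psi}_{\hat{e}}\hat{\mathbf{q}}|_{1,\hat{T}}$ are continuous on the finite-dimensional space of such prolongations, while $\hat{\mathbf{q}}\mapsto\norm{\hat{\mathbf{q}}}_{0,\hat{e}}$ is a norm on that same space. Norm equivalence therefore gives $\norm{\hat{\Psi}_{\hat{e}}\hat{\mathbf{q}}}_{0,\hat{T}}+|\hat{\Psi}_{\hat{e}}\hat{\mathbf{q}}|_{1,\hat{T}}\lesssim\norm{\hat{\mathbf{q}}}_{0,\hat{e}}$. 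Scaling back to $T$ with $\norm{u}_{0,T}\sim h_E\norm{\hat{u}}_{0,\hat{T}}$, $\norm{u}_{0,e}\sim h_E^{1/2}\norm{\hat{u}}_{0,\hat{e}}$ and $|u|_{1,T}\sim|\hat{u}|_{1,\hat{T}}$ (all valid because $h_e\sim h_E$ by \ref{M2}), the factors $h_E^{-1/2}$ on the $L^2$ term and $h_E^{1/2}$ on the $H^1$ seminorm appear exactly as in the statement; summing over the adjacent sub-triangles gives the bound on $E$.

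The main obstacle is not analytic but rather one of careful bookkeeping: one must check that the piecewise definition of $\Psi_e$ on the sub-triangles of $\widetilde{\mathcal{T}}^h$ glues to a well-defined, compactly supported function on $E$; that the reference-element norm equivalences are applied on the correct finite-dimensional space (namely normal prolongations of $\boldsymbol{\mathcal{P}}_k(\hat{e})$, not all of $\boldsymbol{\mathcal{P}}_k(\hat{T})$, which is essential for the tight power of $h_E$); and that every constant coming from the affine pullback depends only on the shape-regularity parameter $\rho$, so that the hidden constants in $\lesssim$ remain uniform in $h$.
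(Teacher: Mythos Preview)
Your proposal is correct and follows the classical Verf\"urth scaling argument. Note, however, that the paper does not actually prove this lemma: it is simply recalled from \cite{cangiani2017posteriori} (see the sentence immediately preceding Lemma~\ref{bubble-int}), so there is no paper proof to compare against. Your sketch is precisely the standard argument one would find in that reference or in the original Verf\"urth literature, and the bookkeeping points you flag (support of $\Psi_e$, correct finite-dimensional space for the prolongation, dependence only on $\rho$ and $k$) are the right ones to track.
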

\subsection{Error estimators}
We now define residual-based a posteriori error estimators for the VE formulation from Section~\ref{sec:vem}. First, we define the local error estimator $\Theta_E^2 := \Theta_{1,E}^2+\Theta_{2,E}^2$ for $E\in \mathcal{T}^h$ conformed by the elasticity contribution and the reaction-diffusion one. These are given explicitly by
\begin{align}\label{error-indicators}
    \Theta_{1,E}^2 := \frac{1}{2\mu}\Xi_{1,E}^2 + \frac{1}{2\mu}\eta_{1,E}^2 + 2\mu \Lambda_{1,E}^2 + S_{1,E}^2, \quad \Theta_{2,E}^2 := M\Xi_{2,E}^2 + M\eta_{2,E}^2 + M\Lambda_{2,E}^2 + S_{2,E}^2,
\end{align}
where $\Xi_{j,E}$ represents the local residual estimator, which encapsulates boundary and volume terms as
\begin{align*}
    &\Xi_{1,E}^2 :=  \, \sum_{e\in \mathcal{E}^h_{\mathrm{N}}(E)} h_e \norm{ (2\mu\varepsilon(\bPi_1^{\beps,k_1} \bu_h) - {p}_h \bbI )\bn_e}_{0,e}^2 + \sum_{e\in \mathcal{E}^h_\Omega(E)}  h_e \norm{ \jump{(2\mu\varepsilon(\bPi_1^{\beps,k_1} \bu_h)  - {p}_h \bbI )\bn_e} }_{0,e}^2 \\
    & \quad  + h_E^2\norm{\bPi_1^{0,k_1-2}\fb + 2\mu \bdiv(\beps(\bPi_1^{\beps,k_1} \bu_h)) - \nabla {p}_h}_{0,E}^2, \\
    &\Xi_{2,E}^2 := \sum_{e\in\mathcal{E}^h_{\mathrm{D}}(E)} h_e \norm{\varphi_{\mathrm{D}} - \varphi_h}_{0,e}^2 + \sum_{e\in\mathcal{E}^h_{\mathrm{D}}(E)} h_e \norm{(\nabla \varphi_{\mathrm{D}} - \bPi_2^{0,k_2}(\bbM^{-1}(\beps(\overline{\bu}_h),{p}_h) \bPi_2^{0,k_2} \bzeta_h) )\cdot \bt_e}_{0,e}^2 \\ 
    & \quad + h_E^2 \norm{\bPi_2^{0,k_2}(\bbM^{-1}(\beps(\overline{\bu}_h),{p}_h) \bPi_2^{0,k_2} \bzeta_h) - \nabla \varphi_h}_{0,E}^2 + h_E^2\norm{\vrot(\bPi_2^{0,k_2}(\bbM^{-1}(\beps(\overline{\bu}_h),{p}_h) \bPi_2^{0,k_2} \bzeta_h))}_{0,E}^2.
\end{align*}
The local data oscillation, mixed, and stabilisation estimators, are respectively given by 
\begin{gather*}
    \eta_{1,E}^2 := h_E^2 \norm{\fb - \bPi_{1}^{0,k_1-2}\fb}_{0,E}^2, \quad \eta_{2,E}^2 := \norm{\bbM^{-1}(\beps(\overline{\bu}_h),{p}_h)\bPi_2^{0,k_2}\bzeta_h - \bPi_2^{0,k_2}(\bbM^{-1}(\beps(\overline{\bu}_h),{p}_h)\bPi_2^{0,k_2}\bzeta_h)}_{0,E}^2, \\ 
    \Lambda_{1,E}^2 := \norm{\frac{1}{\lambda}\ell(\varphi_h) - \vdiv \bu_h + \frac{1}{\lambda} {p}_h}_{0,E}^2, \quad \Lambda_{2,E}^2 := \norm{-g - \vdiv \bzeta_h + \theta \varphi_h}_{0,E}^2,\\
    S_{1,E}^2 := S_1^E(\bu_h - \bPi_1^{\beps,k_1}\bu_h,\bu_h - \bPi_1^{\beps,k_1}\bu_h), \quad S_{2,E}^2 := S_2^E(\bzeta_h - \bPi_2^{0,k_2}\bzeta_h,\bzeta_h - \bPi_2^{0,k_2}\bzeta_h). 
\end{gather*}
Such definitions require a stronger assumption on the boundary term $\varphi_{\mathrm{D}}$ (see Lemma~\ref{residual-bound-2}). 
The global error, global residual, global mixed, global data oscillation, and global stabilisation estimators are defined as
\begin{gather*}
    \Theta^2 := \sum_{E\in \mathcal{T}^h} \Theta_E^2, \quad \Xi^2 := \sum_{E\in \mathcal{T}^h} \left(\frac{1}{2\mu}\Xi_{1,E}^2 + M \Xi_{2,E}^2\right), \quad
    \Lambda^2 := \sum_{E\in \mathcal{T}^h} \left(2\mu \Lambda_{1,E}^2 + M \Lambda_{2,E}^2\right), \\
    \eta^2 := \sum_{E\in \mathcal{T}^h} \left(\frac{1}{2\mu}\eta_{1,E}^2 + M \eta_{2,E}^2\right), \quad S^2 := \sum_{E\in \mathcal{T}^h} \left(S_{1,E}^2 + S_{2,E}^2\right).
\end{gather*}
Similar estimators can be found for the uncoupled divergence-free Stokes problem in \cite{wang20} and for the uncoupled mixed Poisson problem in \cite{munar2024}.

\subsection{Reliability} This subsection establishes an upper bound for the total error in terms of the error estimator. We begin by deriving estimates for the elasticity and reaction-diffusion partial errors in terms of a residual operator, pointing out that the non-linear terms satisfy the assumptions in Section~\ref{nonlinear-terms}.

Given the solution $(\bu_h,{p}_h,\bzeta_h,\varphi_h) \in \bV_1^{h,k_1}\times \mathrm{Q}_1^{h,k_1} \times \bV_2^{h,k_2} \times \mathrm{Q}_2^{h,k_2}$ to \eqref{eq:weak-discrete} and the fixed functions $\vartheta_h \in \mathrm{Q}_2^{h,k_2}$ and $(\bw_h,r_h)\in \bV_1^{h,k_1}\times \mathrm{Q}_1^{h,k_1}$,  we introduce the linear operators 
\begin{alignat*}{2}
  R_{F_1}(\bv) &:= F_1(\bv) - a_1(\bu_h,\bv) - b_1(\bv,{p}_h), \quad &\forall \bv\in \bV_1,\\
  R_{G_1^{\vartheta_h}}({q}) &:= G_1^{\vartheta_h}({q}) - b_1(\bu_h,{q}) + \frac{1}{\lambda}c_1({p}_h,{q}), \quad &\forall q\in \mathrm{Q}_1,\\
  R_{F_2}(\bxi) &:= F_2(\bxi) - a_2^{\overline{\bw}_h,r_h}(\bzeta_h,\bxi) - b_2(\bxi,\varphi_h), \quad &\forall \bxi\in \bV_2,\\ 
  R_{G_2}(\psi) &:= G_2(\psi) - b_2(\bzeta_h,\psi) + \theta c_2(\varphi_h,\psi), \quad &\forall \varphi \in \mathrm{Q}_2.
\end{alignat*}
\begin{lemma}\label{residual-elasticity}
Given $\vartheta \in \mathrm{Q}_2$ and $\vartheta_h \in \mathrm{Q}_2^{h,k_2}$, assume that $1\leq \lambda$ and $0<\mu$. Then, the following estimate holds:
    \begin{equation*}
        \norm{(\bu-\bu_h,{p}-{p}_h)}_{\bV_1\times \mathrm{Q}_1} \leq C_1\left\{ \norm{R_{F_1}}_{\bV'_1} + \norm{R_{G_1^{\vartheta_h}}}_{Q'_1} + \sqrt{M} L_\ell \norm{\vartheta-\vartheta_h}_{\mathrm{Q}_2} \right\}.
    \end{equation*}
\end{lemma}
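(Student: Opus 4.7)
The strategy is to invoke the robust global inf-sup condition from Theorem~\ref{th:global-inf-sup} applied to the elasticity perturbed saddle-point subsystem of~\eqref{eq:weak}, treating the coupling concentration as a fixed datum $\vartheta$ entering through $G_1^{\vartheta}$. The Brezzi--Braess conditions~\eqref{brezzi-braess-conditions} required to activate Theorem~\ref{th:global-inf-sup} in this subproblem are satisfied with parameter-independent constants, as already recorded in the proof of Theorem~\ref{well-posedness}.

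First I will introduce the multilinear form $\mathcal{A}_1(\bu,{p};\bv,{q}) := a_1(\bu,\bv) + b_1(\bv,{p}) + b_1(\bu,{q}) - \tfrac{1}{\lambda}c_1({p},{q})$ and apply Theorem~\ref{th:global-inf-sup} to the error pair $(\bu-\bu_h,{p}-{p}_h)$, obtaining the abstract estimate
\[
\norm{(\bu-\bu_h,{p}-{p}_h)}_{\bV_1\times\mathrm{Q}_1} \;\lesssim\; \sup_{(\bv,{q})\in\bV_1\times\mathrm{Q}_1}\;\frac{\mathcal{A}_1(\bu-\bu_h,{p}-{p}_h;\bv,{q})}{\norm{(\bv,{q})}_{\bV_1\times\mathrm{Q}_1}}.
\]
Then, expanding the numerator using the continuous elasticity identities driven by $\vartheta$ and the definitions in~\eqref{residual-term1}, after adding and subtracting $G_1^{\vartheta_h}({q})$ one arrives at
\[
\mathcal{A}_1(\bu-\bu_h,{p}-{p}_h;\bv,{q}) \;=\; R_{F_1}(\bv) + R_{G_1^{\vartheta_h}}({q}) + \bigl[G_1^{\vartheta}-G_1^{\vartheta_h}\bigr]({q}).
\]

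The main (and essentially only non-routine) step is to bound the cross term $[G_1^{\vartheta}-G_1^{\vartheta_h}]({q}) = -\tfrac{1}{\lambda}\int_\Omega(\ell(\vartheta)-\ell(\vartheta_h))\,{q}$ \emph{robustly} in $\mu$, $\lambda$ and $M$. I plan to combine the Lipschitz continuity of $\ell$ with Cauchy--Schwarz to obtain $\tfrac{1}{\lambda}L_\ell\norm{\vartheta-\vartheta_h}_{0,\Omega}\norm{{q}}_{0,\Omega}$, and then convert unweighted $\mathrm{L}^2$-norms into the parameter-weighted norms of Subsection~\ref{sec:robust-setting}: from $\norm{\cdot}_{\mathrm{Q}_1}^2 \geq \tfrac{1}{\lambda}\norm{\cdot}_{0,\Omega}^2$ one gets $\norm{{q}}_{0,\Omega}\leq\sqrt{\lambda}\norm{{q}}_{\mathrm{Q}_1}$, and from $\norm{\cdot}_{\mathrm{Q}_2}^2 \geq \tfrac{1}{M}\norm{\cdot}_{0,\Omega}^2$ one gets $\norm{\vartheta-\vartheta_h}_{0,\Omega}\leq\sqrt{M}\norm{\vartheta-\vartheta_h}_{\mathrm{Q}_2}$. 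The hypothesis $\lambda\geq 1$ absorbs the resulting factor $\tfrac{1}{\sqrt{\lambda}}\leq 1$ and produces the clean bound
\[
\bigl|[G_1^{\vartheta}-G_1^{\vartheta_h}]({q})\bigr| \;\lesssim\; \sqrt{M}\,L_\ell\,\norm{\vartheta-\vartheta_h}_{\mathrm{Q}_2}\,\norm{{q}}_{\mathrm{Q}_1}.
\]

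The conclusion will then follow by dividing the sum of the three estimates by $\norm{(\bv,{q})}_{\bV_1\times\mathrm{Q}_1}$, taking the supremum, and bounding $\norm{{q}}_{\mathrm{Q}_1}\leq\norm{(\bv,{q})}_{\bV_1\times\mathrm{Q}_1}$. The main obstacle is not conceptual but bookkeeping: ensuring the hidden constants in both the inf-sup bound and the Lipschitz-to-weighted-norm conversion are genuinely independent of $\mu$ and $\lambda$, which is precisely why tracking the $\tfrac{1}{\sqrt{\lambda}}$ factor above is essential so that only the robust prefactor $\sqrt{M}\,L_\ell$ survives on the right-hand side.
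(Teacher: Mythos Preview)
Your proposal is correct and follows essentially the same route as the paper: apply the global inf-sup from Theorem~\ref{th:global-inf-sup} to the error pair, use the continuous equations to rewrite the numerator as $R_{F_1}(\bv)+R_{G_1^{\vartheta_h}}({q})+[G_1^{\vartheta}-G_1^{\vartheta_h}]({q})$, and bound the last term via Lipschitz continuity of $\ell$. Your explicit tracking of the weighted-norm conversions (extracting $\sqrt{M}$ from $\norm{\cdot}_{\mathrm{Q}_2}$ and absorbing the $1/\sqrt{\lambda}$ via $\lambda\geq 1$) is in fact more detailed than what the paper records, but the argument is the same.
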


\begin{proof}
    The global inf-sup from Theorem~\ref{th:global-inf-sup} applied to $(\bu-\bu_h,{p}-{p}_h)\in \bV_1 \times \mathrm{Q}_1$, the Lipschitz continuity of $\ell(\cdot)$, and a proper use of the term $G_1^{\vartheta_h}({q})$ lead to
    \begin{align*}
        \norm{(\bu-\bu_h,{p}-{p}_h)}_{\bV_1 \times \mathrm{Q}_1} &\leq  C_1 \sup_{(\bv,{q}) \in \bV_1\times \mathrm{Q}_1} \frac{\left|F_1(\bv)+G_1^{\vartheta}({q})-a_1(\bu_h,\bv)-b_1(\bv,{p}_h)-b_1(\bu_h,{q})+\frac{1}{\lambda}c_1({p}_h,{q})\right|}{\norm{(\bv,{q})}_{\bV_1 \times \mathrm{Q}_1}}\\
        &= C_1 \sup_{(\bv,{q})\in \bV_1\times \mathrm{Q}_1}\frac{\left|R_{F_1}(\bv)+ R_{G_1^{\vartheta_h}}({q}) +\left[G_1^{\vartheta}-G_1^{\vartheta_h}\right]({q})\right|}{\norm{(\bv,{q})}_{\bV_1 \times \mathrm{Q}_1}}\\
        & \leq C_1 \left\{ \norm{R_{F_1}}_{\bV'_1} + \norm{R_{G_1^{\vartheta_h}}}_{Q'_1} + \sqrt{M}L_\ell\norm{\vartheta-\vartheta_h}_{\mathrm{Q}_2}\right\}.
    \end{align*}
\end{proof}

\begin{lemma}\label{residual-diffusion}
     For the fixed pairs $(\bw,r)\in \bV_1\times \mathrm{Q}_1$ and $(\bw_h,r_h)\in \bV_1^{h,k_1}\times \mathrm{Q}_1^{h,k_1}$, suppose that $\theta \leq \frac{1}{M}$. Then, the following estimate holds:
    \begin{align*}
        &\norm{(\bzeta-\bzeta_h,\varphi-\varphi_h)}_{\bV_2\times \mathrm{Q}_2} \leq C_2 \biggl\{ \norm{R_{F_2}}_{\bV'_2} + \norm{R_{G_2}}_{Q'_2} + \overline{C}_3 \norm{(\bw-\bw_h,r-r_h)}_{\bV_1\times \mathrm{Q}_1} \biggr\}.
    \end{align*}
\end{lemma}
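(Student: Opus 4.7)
The plan is to mirror the strategy of Lemma~\ref{residual-elasticity}, invoking the global inf-sup condition of Theorem~\ref{th:global-inf-sup} for the perturbed saddle-point structure induced by $(a_2^{\bw,r},b_2,\theta c_2)$. The hypothesis $\theta\le 1/M$ keeps the $t$-parameter of Theorem~\ref{th:global-inf-sup} bounded, and the Brezzi--Braess conditions \eqref{brezzi-braess-conditions} are satisfied uniformly in the auxiliary pair $(\bw,r)$ thanks to the uniform ellipticity and boundedness of $\bbM^{-1}$ recorded in Section~\ref{nonlinear-terms}. The resulting inf-sup constant $C_2$ is therefore independent of the physical parameters.

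Applying the inf-sup to $(\bzeta-\bzeta_h,\varphi-\varphi_h)\in\bV_2\times\mathrm{Q}_2$ would yield a supremum of the multilinear form $\mathcal{N}(\bxi,\psi):=a_2^{\bw,r}(\bzeta-\bzeta_h,\bxi)+b_2(\bxi,\varphi-\varphi_h)+b_2(\bzeta-\bzeta_h,\psi)-\theta c_2(\varphi-\varphi_h,\psi)$. I would then use the continuous equations \eqref{weak-3} and its partner to eliminate $F_2$ and $G_2$, and subsequently add and subtract $a_2^{\overline{\bw}_h,r_h}(\bzeta_h,\bxi)$ and $a_2^{\bu,p}(\bzeta,\bxi)$, arriving at
\begin{equation*}
\mathcal{N}(\bxi,\psi) = R_{F_2}(\bxi) + R_{G_2}(\psi) + [a_2^{\bw,r}-a_2^{\bu,p}](\bzeta,\bxi) + [a_2^{\overline{\bw}_h,r_h}-a_2^{\bw,r}](\bzeta_h,\bxi).
\end{equation*}

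To bound the two bilinear-form differences, the next step is to invoke the Lipschitz continuity of $\bbM^{-1}$ from Section~\ref{nonlinear-terms}: the integrands are pointwise controlled by $L_\bbM(|\beps(\bw)-\beps(\bu)|+|r-p|)|\bzeta||\bxi|$ and $L_\bbM(|\beps(\overline{\bw}_h)-\beps(\bw)|+|r_h-r|)|\bzeta_h||\bxi|$, respectively. A H\"older inequality combined with the $L^\infty$ control of $\bzeta$ and $\bzeta_h$ inherited from Theorems~\ref{well-posedness}--\ref{well-posedness-discrete} (which furnish the factor $\overline{C}_2\sqrt{M}(\norm{\varphi_{\mathrm{D}}}_{\frac{1}{2},\Gamma_{\mathrm{D}}}+\norm{g}_{0,\Omega})$) then converts these into $\bV_1\times\mathrm{Q}_1$-weighted energy differences. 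Specialising to $(\bw,r)=(\bu,p)$ kills the first difference; in the second one the gap $\bw_h-\overline{\bw}_h=\bw_h-\bPi_1^{\beps,k_1}\bw_h$ is absorbed via the stability of $\bPi_1^{\beps,k_1}$ in Lemma~\ref{approximation-estimates}, and the weights recombine into exactly the constant $\overline{C}_3$, yielding the claimed estimate after taking the supremum.

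The hard part will be the robustness of the Lipschitz bound: the factor $\max\{1/\sqrt{2\mu},\sqrt{2\mu}\}$ packaged inside $\overline{C}_3$ is what allows one to rewrite the unweighted $\bL^2$-norms of $\beps(\bw)-\beps(\bu)$ and of $r-p$ in the parameter-weighted $\bV_1\times\mathrm{Q}_1$-norm, whereas the factor $\sqrt{M}L_\bbM$ rescales the $\bL^2$-norm of $\bzeta_h$ into something compatible with $\norm{\cdot}_{\bV_2}$. One must also take care that the projection gap $\bw_h-\overline{\bw}_h$ be absorbed without introducing a contribution depending on $\bw_h$ itself rather than on $\bw-\bw_h$; chaining a triangle inequality with the $\bV_1$-continuity of $\bPi_1^{\beps,k_1}$ before integrating the Lipschitz bound seems to be the cleanest way to keep the dependence on the elasticity error on the right-hand side.
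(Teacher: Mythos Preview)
Your strategy coincides with the paper's: the proof is literally stated as ``proceeds in the same way as for Lemma~\ref{residual-elasticity}'', i.e., apply the global inf-sup of Theorem~\ref{th:global-inf-sup} to $(\bzeta-\bzeta_h,\varphi-\varphi_h)$ with the bilinear form $a_2^{\bw,r}$, insert the continuous equation, and bound the mismatch in the nonlinear term via the Lipschitz continuity of $\bbM^{-1}$.

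Two points deserve tightening. First, the detour through $[a_2^{\bw,r}-a_2^{\bu,p}](\bzeta,\bxi)$ followed by ``specialising to $(\bw,r)=(\bu,p)$'' is unnecessary and slightly misreads the setup. Exactly as in Lemma~\ref{residual-elasticity}, the pair $(\bzeta,\varphi)$ here is the solution of the \emph{decoupled} problem with $a_2^{\bw,r}$ (this is why $(\bw,r)$ is called ``fixed''), so after substituting $F_2$ and $G_2$ only a single difference term $[a_2^{\overline{\bw}_h,r_h}-a_2^{\bw,r}](\bzeta_h,\bxi)$ survives. No term involving $[a_2^{\bw,r}-a_2^{\bu,p}]$ ever appears, and the lemma is meant to hold for general $(\bw,r)$, not only after specialisation.

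Second, your handling of the gap $\bw_h\to\overline{\bw}_h$ does not close. Lemma~\ref{approximation-estimates} gives $\norm{\bw_h-\bPi_1^{\beps,k_1}\bw_h}_{\bV_1(E)}\lesssim|\bw_h|_{\bV_1(E)}$, which bounds by $\bw_h$ itself; chaining with a triangle inequality still leaves a residual $\norm{\beps(\bw-\bPi_1^{\beps,k_1}\bw)}_{0,E}$ that is not controlled by $\norm{\bw-\bw_h}_{\bV_1(E)}$ for general $\bw\in\bH^1$. The paper's one-line proof glosses over this too; downstream (cf.\ Lemma~\ref{extra_estimates_error_M}) the discrepancy is kept as a separate stabilisation contribution $S_{1,E}$ rather than absorbed. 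Relatedly, Theorems~\ref{well-posedness}--\ref{well-posedness-discrete} give only weighted $\bV_2$ (essentially $\bL^2$) control of $\bzeta_h$, not $L^\infty$; the factor $\overline{C}_2\sqrt{M}(\norm{\varphi_{\mathrm D}}_{1/2,\Gamma_{\mathrm D}}+\norm{g}_{0,\Omega})$ inside $\overline{C}_3$ enters through the a~priori bound on $\norm{\bzeta_h}_{\bV_2}$ combined with the specific operator-Lipschitz hypothesis on $\bbM^{-1}$ from \cite{khot2024}, not via an $L^\infty$ estimate.
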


\begin{proof}
The proof proceeds in the same way as for Lemma~\ref{residual-elasticity}.
\end{proof}

We finalise with an upper bound for the total error $\overline{\textnormal{e}}_h$ in terms of the residuals defined in Lemmas~\ref{residual-elasticity}--\ref{residual-diffusion}. The following result is a consequence of the fixed point argument discussed in \cite{khot2024} and Theorem~\ref{th:global-inf-sup}.
\begin{theorem}\label{residual-total}
    Assume that $1\leq \lambda$, $0<\mu$, $\theta \leq \frac{1}{M}$. Furthermore, suppose that   $C_1 \sqrt{M} L_\ell + C_2 \overline{C}_3 < \frac{1}{2}$. For $\varphi\in \mathrm{W}$ and $\varphi_h \in \mathrm{W}^h$ (continuous and discrete balls defined in Theorems~\ref{well-posedness} and \ref{well-posedness-discrete}), the following estimate holds:
    $$\overline{\textnormal{e}}_h \lesssim \max\left\{C_1,C_2\right\} \left\{ \norm{R_{F_1}}_{\bV'_1} + \norm{R_{G_1^{\varphi_h}}}_{Q'_1} + \norm{R_{F_2}}_{\bV'_2} + \norm{R_{G_2}}_{Q'_2} \right\}.$$
\end{theorem}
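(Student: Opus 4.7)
The plan is to combine Lemmas~\ref{residual-elasticity} and \ref{residual-diffusion}, applied along the \emph{actual} continuous and discrete solutions, and then exploit the smallness assumption $C_1 \sqrt{M} L_\ell + C_2 \overline{C}_3 < \tfrac12$ to absorb the cross-coupling terms on the left-hand side, in the spirit of the Banach fixed-point argument used for well-posedness in \cite{khot2024}.

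First I would abbreviate $E_1 := \norm{(\bu-\bu_h,{p}-{p}_h)}_{\bV_1\times \mathrm{Q}_1}$ and $E_2 := \norm{(\bzeta-\bzeta_h,\varphi-\varphi_h)}_{\bV_2\times \mathrm{Q}_2}$, so that $\overline{\textnormal{e}}_h^2 = E_1^2 + E_2^2$ and in particular $\overline{\textnormal{e}}_h \leq E_1 + E_2 \leq \sqrt{2}\,\overline{\textnormal{e}}_h$. Applying Lemma~\ref{residual-elasticity} with $\vartheta := \varphi$ and $\vartheta_h := \varphi_h$ (admissible since $\varphi \in \mathrm{W}$ and $\varphi_h \in \mathrm{W}^h$) yields
\begin{equation*}
E_1 \;\leq\; C_1\bigl\{ \norm{R_{F_1}}_{\bV'_1} + \norm{R_{G_1^{\varphi_h}}}_{\mathrm{Q}'_1} \bigr\} \,+\, C_1 \sqrt{M}\, L_\ell\, \norm{\varphi-\varphi_h}_{\mathrm{Q}_2}.
\end{equation*}
Analogously, Lemma~\ref{residual-diffusion} applied with $(\bw,r) := (\bu,p)$ and $(\bw_h,r_h) := (\bu_h,p_h)$ (so that the residuals $R_{F_2}$, $R_{G_2}$ match the ones built from the discrete formulation \eqref{eq:weak-discrete}) delivers
\begin{equation*}
E_2 \;\leq\; C_2 \bigl\{ \norm{R_{F_2}}_{\bV'_2} + \norm{R_{G_2}}_{\mathrm{Q}'_2} \bigr\} \,+\, C_2 \overline{C}_3\, E_1.
\end{equation*}

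Summing these two inequalities and using the trivial bound $\norm{\varphi-\varphi_h}_{\mathrm{Q}_2} \leq E_2$ I would obtain
\begin{equation*}
E_1 + E_2 \,\leq\, \max\{C_1,C_2\}\,\bigl\{\norm{R_{F_1}}_{\bV'_1} + \norm{R_{G_1^{\varphi_h}}}_{\mathrm{Q}'_1} + \norm{R_{F_2}}_{\bV'_2} + \norm{R_{G_2}}_{\mathrm{Q}'_2}\bigr\} + \bigl(C_1 \sqrt{M} L_\ell + C_2 \overline{C}_3\bigr)(E_1 + E_2).
\end{equation*}
Invoking the smallness hypothesis $C_1 \sqrt{M} L_\ell + C_2 \overline{C}_3 < \tfrac12$, the last term is strictly less than $\tfrac12(E_1+E_2)$ and can be absorbed into the left-hand side. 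Since $\overline{\textnormal{e}}_h \leq E_1+E_2$, the stated estimate then follows up to an absolute constant $\leq 2$.

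The only delicate point I anticipate is the self-consistency of the coupling: Lemma~\ref{residual-diffusion} requires the residual $R_{F_2}$ to be built with the same frozen pair $(\overline{\bw}_h,r_h)=(\overline{\bu}_h,p_h)$ that parametrises the discrete form $a_2^{\overline{\bu}_h,p_h,h}$ in \eqref{eq:mix1}, and Lemma~\ref{residual-elasticity} requires the frozen $\vartheta_h = \varphi_h$ to be the second component of the actual discrete solution. These identifications are natural, but must be made explicit so that the resulting residuals coincide with those controlled by the indicator $\Theta$ in the subsequent reliability estimates; once that is settled, the argument is purely algebraic and no further analytic input is needed beyond the already established global inf-sup condition.
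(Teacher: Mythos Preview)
Your proof is correct and follows exactly the approach the paper intends: the paper states only that the result ``is a consequence of the fixed point argument discussed in \cite{khot2024} and Theorem~\ref{th:global-inf-sup},'' and your proposal supplies precisely those details by applying Lemmas~\ref{residual-elasticity} and~\ref{residual-diffusion} along the actual continuous and discrete solutions and then absorbing the cross-coupling terms via the smallness hypothesis $C_1\sqrt{M}L_\ell + C_2\overline{C}_3 < \tfrac12$. Your closing remark about the self-consistency of the frozen arguments $(\vartheta_h,\bw_h,r_h)=(\varphi_h,\bu_h,p_h)$ is exactly the point that makes the residuals in \eqref{residual-terms} coincide with those later bounded by $\Theta$.
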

Next, we aim to estimate locally the residual operators from Theorem~\ref{residual-total} in terms of \eqref{error-indicators}. First, the residuals of the elasticity problem given in Lemma~\ref{residual-elasticity} satisfy the following result.
\begin{lemma}\label{residual-bound-1}
 The following bound holds:
    \begin{equation*}
        \norm{R_{F_1}}_{\bV'_1}^2 + \norm{R_{G_1^{\varphi_h}}}_{Q'_1}^2 \lesssim  \sum_{E\in \mathcal{E}^h} \Theta_{1,E}^2.
    \end{equation*}
\end{lemma}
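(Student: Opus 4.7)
The proof splits into separate bounds for $\norm{R_{F_1}}_{\bV'_1}$ and $\norm{R_{G_1^{\varphi_h}}}_{Q'_1}$, and the pressure-equation residual is the easier of the two. Inserting the definitions of $G_1^{\varphi_h}$, $b_1$ and $c_1$ into \eqref{residual-term1}, one finds
$R_{G_1^{\varphi_h}}(q) = \int_\Omega\bigl(\vdiv\bu_h - \tfrac{1}{\lambda}\ell(\varphi_h) + \tfrac{1}{\lambda}p_h\bigr)q$,
whose integrand is, up to a sign, the one defining $\Lambda_{1,E}$. An element-wise Cauchy--Schwarz inequality, combined with the weighted norm identity $\norm{q}_{\mathrm{Q}_1}^2 = (\tfrac{1}{2\mu}+\tfrac{1}{\lambda})\norm{q}_{0,\Omega}^2 \geq \tfrac{1}{2\mu}\norm{q}_{0,\Omega}^2$, yields $\norm{R_{G_1^{\varphi_h}}}_{Q'_1}^2 \lesssim \sum_E 2\mu\,\Lambda_{1,E}^2$, which is subsumed by $\sum_E\Theta_{1,E}^2$.

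For $\norm{R_{F_1}}_{\bV'_1}$, the plan is the classical Dörfler-style approach adapted to the virtual setting. Given $\bv\in\bV_1$, I take $\bv_h := \mathbf{I}_1^{Q,k_1}\bv \in \bV_1^{h,k_1}$ from Lemma~\ref{interpolation-estimates} (its Dirichlet trace is preserved, so indeed $\bv_h\in\bV_1^{h,k_1}$) and split $R_{F_1}(\bv) = R_{F_1}(\bv_h) + R_{F_1}(\bv - \bv_h)$. The Galerkin term $R_{F_1}(\bv_h)$ is handled by subtracting the discrete equation \eqref{eq:sto1} to obtain $R_{F_1}(\bv_h) = [F_1 - F_1^h](\bv_h) + [a_1^h - a_1](\bu_h,\bv_h)$, which Lemmas~\ref{estimate-fb} and \ref{a_1-error} control by $\sum_E\tfrac{1}{2\mu}\eta_{1,E}^2$ and $\sum_E S_{1,E}^2$ respectively, upon a discrete Cauchy--Schwarz inequality and the $\bV_1$-stability of $\mathbf{I}_1^{Q,k_1}$. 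The nonconforming term $R_{F_1}(\bv - \bv_h)$ is treated by inserting $\pm\bPi_1^{\beps,k_1}\bu_h$ in the elasticity term and $\pm\bPi_1^{0,k_1-2}\fb$ in the load (this is crucial because $\bu_h$ is virtual and the computable projection appears in the residuals), and then integrating by parts element by element. This produces the volume residual $\bPi_1^{0,k_1-2}\fb + 2\mu\bdiv\beps(\bPi_1^{\beps,k_1}\bu_h) - \nabla p_h$ tested against $\bv - \bv_h$ on each $E$, interior edge jumps $\jump{(2\mu\beps(\bPi_1^{\beps,k_1}\bu_h) - p_h\bbI)\bn_e}$, boundary tractions $(2\mu\beps(\bPi_1^{\beps,k_1}\bu_h) - p_h\bbI)\bn_e$ on $\Gamma_{\mathrm{N}}$ (the Dirichlet contributions vanish because $\bv - \bv_h = \bzero$ there), a data-oscillation term $\int_E(\fb - \bPi_1^{0,k_1-2}\fb)\cdot(\bv - \bv_h)$, and the projection defect $2\mu\int_E\beps(\bu_h - \bPi_1^{\beps,k_1}\bu_h){:}\beps(\bv - \bv_h)$.

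Each piece is then bounded by Cauchy--Schwarz together with the quasi-interpolation estimates in Lemma~\ref{interpolation-estimates}, supplemented with the standard volume bound $\norm{\bv - \mathbf{I}_1^{Q,k_1}\bv}_{0,E}\lesssim h_E|\bv|_{1,D(E)}$ (which follows by a Bramble--Hilbert/scaling argument on $D(E)$), a Korn inequality to pass from $\norm{\beps(\bv)}_{0,\Omega}$ to $|\bv|_{1,\Omega}$, and the finite overlap of the patches $D(E)$. The projection defect is absorbed via the stability $\norm{\bu_h - \bPi_1^{\beps,k_1}\bu_h}_{\bV_1(E)}^2 \lesssim S_{1,E}^2$ that follows from \eqref{stabilisation-elast}, adding a further contribution of $S_{1,E}$-type. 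Assembling the four contributions reproduces $\sum_E(\tfrac{1}{2\mu}\Xi_{1,E}^2 + \tfrac{1}{2\mu}\eta_{1,E}^2 + S_{1,E}^2)$. The main technical obstacle is the parameter-robust bookkeeping: one has to allocate the $\sqrt{2\mu}$ factors coming from $\norm{\bv}_{\bV_1}^2 = 2\mu\norm{\beps(\bv)}_{0,\Omega}^2$ against those implicit in Lemma~\ref{interpolation-estimates} so that every hidden constant remains independent of $\mu$, $\lambda$, $\theta$ and $M$; the combined weighting $(2\mu)^{-1/2}$ on $\bv - \bv_h$ against $\sqrt{h_e/(2\mu)}\,\norm{\cdot}_{0,e}$ and $h_E/\sqrt{2\mu}\,\norm{\cdot}_{0,E}$ on the residual pieces is what delivers exactly the weights $\tfrac{1}{2\mu}\Xi_{1,E}^2$ and $\tfrac{1}{2\mu}\eta_{1,E}^2$ appearing in $\Theta_{1,E}^2$.
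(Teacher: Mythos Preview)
Your argument is correct and follows essentially the same route as the paper. The only cosmetic difference is in how you organise the decomposition of $R_{F_1}(\bv)$: you write $R_{F_1}(\bv)=R_{F_1}(\bv_h)+R_{F_1}(\bv-\bv_h)$ and then insert $\pm\bPi_1^{0,k_1-2}\fb$ in the nonconforming part, which makes the data-oscillation term appear twice (once against $\bv_h$, once against $\bv-\bv_h$), whereas the paper writes directly
\[
R_{F_1}(\bv)=\sum_E\Bigl([F_1^E-F_1^{h,E}](\bv)+[a_1^{h,E}-a_1^E](\bu_h,\bv_h)+F_1^{h,E}(\bv-\bv_h)-a_1^E(\bu_h,\bv-\bv_h)-b_1^E(\bv-\bv_h,p_h)\Bigr),
\]
so that the oscillation is tested once against the full $\bv$. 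The two splittings are algebraically equivalent and lead to the same estimator terms after Lemmas~\ref{estimate-fb}, \ref{a_1-error}, \ref{interpolation-estimates} and the element-wise integration by parts; your explicit mention of the $\bL^2$ volume bound $\norm{\bv-\mathbf{I}_1^{Q,k_1}\bv}_{0,E}\lesssim h_E|\bv|_{1,D(E)}$ is a useful clarification that the paper leaves implicit in its citation of Lemma~\ref{interpolation-estimates}.
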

\begin{proof}
    Let $\bv \in \bV_1$ and  $\bv_h = \mathbf{I}_1^{Q,k_1}\bv$ satisfying Lemma~\ref{interpolation-estimates}. Then, the residual $R_{F_1}(\cdot)$ can be rewritten as    
    \begin{align}\label{residual_F1_rewrite}
        R_{F_1}(\bv) &= \sum_{E\in \mathcal{T}^h} \left( \left[F_1^E-F_1^{h,E}\right](\bv) + \left[a_1^{h,E}-a_1^E\right](\bu_h,\bv_h) + F_1^{h,E}(\bv-\bv_h) - a_1^E(\bu_h,\bv-\bv_h) - b_1^E(\bv-\bv_h,{p}_h)\right) \notag \\
        &=: \sum_{E\in \mathcal{T}^h} \left( \mathrm{T}_1^{1,E} + \mathrm{T}_1^{2,E} + \mathrm{T}_1^{3,E} + \mathrm{T}_1^{4,E} + \mathrm{T}_1^{5,E}\right).
    \end{align}
    Lemma~\ref{estimate-fb}, the continuity of the interpolation operator,and Lemma~\ref{a_1-error} imply that
    \begin{align}
        &\left|\sum_{E\in \mathcal{T}^h} \mathrm{T}_1^{1,E}\right| \lesssim \sum_{E\in \mathcal{T}^h} \frac{1}{\sqrt{2\mu}} \eta_{1,E} \norm{\bv}_{\bV_1(E)}, \quad
        \left|\sum_{E\in \mathcal{T}^h} \mathrm{T}_1^{2,E}\right|\lesssim \sum_{E\in \mathcal{T}^h} S_{1,E} \norm{\bv}_{\bV_1(E)} \label{bound-err-F_bound-err-a1}.
    \end{align}
    To address $\mathrm{T}_1^{3,E} + \mathrm{T}_1^{4,E} + \mathrm{T}_1^{5,E}$ we shall use the following integration by parts formulae 
    \begin{subequations}\begin{align}
        \int_E \beps(\bPi_1^{\beps,k_1} \bu_h):\beps(\bv-\bv_h) &= - \int_E \bdiv(\beps(\bPi_1^{\beps,k_1} \bu_h)) \cdot (\bv-\bv_h) + \int_{\partial E} \beps(\bPi_1^{\beps,k_1} \bu_h)\bn \cdot (\bv - \bv_h), \label{parts-beps} \\
        \int_E \vdiv(\bv-\bv_h) {p}_h &= - \int_E \nabla {p}_h \cdot (\bv - \bv_h) + \int_{\partial E} ({p}_h \bbI) \bn \cdot (\bv-\bv_h). \label{parts-div}
    \end{align}\end{subequations}
    The identities \eqref{parts-beps}-\eqref{parts-div}, Lemma~\ref{interpolation-estimates}, and a Cauchy--Schwarz inequality lead to
    \begin{align}\label{bound-last-part}
        &\biggl|\sum_{E\in \mathcal{T}^h} \mathrm{T}_1^{3,E} + \mathrm{T}_1^{4,E} + \mathrm{T}_1^{5,E}\biggr| = \biggl| -  \sum_{e\in \mathcal{E}^h_{\mathrm{N}}} \int_{e} (2\mu \beps(\bPi_1^{\beps,k_1} \bu_h) -{p}_h \bbI)\bn \cdot (\bv - \bv_h) - \sum_{E\in \mathcal{T}^h} a_1^E(\bu_h - \bPi_1^{\beps,k_1} \bu_h, \bv-\bv_h) \notag \\
        & \quad - \sum_{e\in \mathcal{E}^h_\Omega} \int_{e} (2\mu \beps(\bPi_1^{\beps,k_1} \bu_h)-{p}_h \bbI) \bn \cdot (\bv - \bv_h) + \sum_{E\in \mathcal{T}^h} \int_E ( \bPi_1^{0,k_1-2}\fb +  2\mu \bdiv(\beps(\bPi_1^{\beps,k_1} \bu_h)) - \nabla {p}_h ) \cdot (\bv-\bv_h)  \biggr| \notag \\
        & \lesssim  \sum_{E\in \mathcal{T}^h} ( \frac{1}{\sqrt{2\mu}} \Xi_{1,E} + \, S_{1,E}) \norm{\bv}_{\bV_1(E)}.
    \end{align}
    Finally, the residual $R_{G_1^{\varphi_h}}^E(\cdot)$ is handled using a Cauchy--Schwarz inequality as follows
    \begin{align}\label{bound-G1}
        \biggl| R_{G_1^{\varphi_h}}({q})\biggr| \lesssim \sum_{E\in \mathcal{T}^h} \sqrt{2\mu} \Lambda_{1,E} \norm{{q}}_{\mathrm{Q}_1(E)}.
    \end{align}
    Summing the estimates \eqref{bound-err-F_bound-err-a1}, \eqref{bound-last-part}, \eqref{bound-G1}, taking the supremum for all $\bv\in \bV_1$ and all ${q} \in \mathrm{Q}_1$, and applying the Cauchy--Schwarz inequality conclude the proof.
\end{proof}
\begin{remark}
   It is possible to construct a Fortin interpolation  for Stokes-like spaces. Thus, from the commutative property   applied to \eqref{parts-div}, one can eliminate $p$ from the estimator $\Xi_{1,E}$. However, the  momentum balance  \eqref{eq:linear-momentum} also drops from 
   the estimator, leading to convergence issues as the right-hand side $\boldsymbol{f}$ is not recovered. An alternative is proposed in \cite{Lederer2019}, where linear momentum conservation is ensured in a pressure-free formulation.
\end{remark}
Now, we will concentrate on the residuals for the reaction-diffusion problem defined in Lemma~\ref{residual-diffusion}.
\begin{lemma}\label{residual-bound-2}
    Assume that $\Omega \subset \mathbb{R}^2$ is a connected domain and that $\Gamma_{\mathrm{N}}$ is contained in the boundary of a convex part of $\Omega$, that is, there exists a convex domain $B$ such that $\overline{\Omega}\subseteq B$ and $\Gamma_{\mathrm{N}}\subseteq \partial B$. Assume further that $\varphi_{\mathrm{D}}\in \mathrm{H}^1(\partial \Omega)$. Then, the following bound holds:
     \[       \norm{R_{F_2}}_{\bV'_2}^2 + \norm{R_{G_2}}_{Q'_2}^2 \lesssim   \sum_{E\in \mathcal{T}^h} \Theta_{2,E}^2.\]
\end{lemma}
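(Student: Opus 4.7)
The plan parallels the proof of Lemma~\ref{residual-bound-1}. I first dispatch the simpler $R_{G_2}$: by its definition $R_{G_2}(\psi)=-\int_\Omega(g+\vdiv\bzeta_h-\theta\varphi_h)\psi$, so Cauchy--Schwarz and the weight $\norm{\psi}_{\mathrm{Q}_2}^2\geq M^{-1}\norm{\psi}_{0,\Omega}^2$ immediately yield $\norm{R_{G_2}}^2_{\mathrm{Q}_2'}\lesssim M\sum_{E\in\mathcal{T}^h}\Lambda_{2,E}^2$.

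For $R_{F_2}$ the flux test functions live in $\bH_{\mathrm{N}}(\vdiv,\Omega)$, which is not globally $\bH^1$, so a Helmholtz decomposition must intervene before any $\bH^1$-type interpolation can be applied. Under the convex-extension hypothesis on $\Gamma_{\mathrm{N}}$ (following \cite{munar2024}) any $\bxi\in\bV_2$ splits as $\bxi=\nabla z+\brot\chi$ with $\chi\in\mathrm{V}_3$, both components enjoying $\mathrm{H}^2$-regularity and $\norm{z}_{2,\Omega}+\norm{\chi}_{2,\Omega}\lesssim\norm{\bxi}_{\vdiv,\Omega}$. Since $\brot\mathrm{V}_3^{h,k_2+1}\subset\bV_2^{h,k_2}$, the approximant $\bxi_h:=\mathbf{I}_2^{F,k_2}\nabla z+\brot \mathrm{I}_3^{Q,k_2+1}\chi$ is a legitimate discrete flux, and I split $R_{F_2}(\bxi)=R_{F_2}(\bxi_h)+R_{F_2}(\bxi-\bxi_h)$. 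The discrete equation \eqref{eq:mix1} yields $R_{F_2}(\bxi_h)=[a_2^{\overline{\bu}_h,p_h,h}-a_2^{\overline{\bu}_h,p_h}](\bzeta_h,\bxi_h)$, bounded via Lemma~\ref{a_2-error} by $\sqrt{M}\bigl(\sum_E(S_{2,E}^2+\eta_{2,E}^2)\bigr)^{1/2}\norm{\bxi}_{\bV_2}$ after invoking the discrete stability of $\mathbf{I}_2^{F,k_2}$ and $\brot \mathrm{I}_3^{Q,k_2+1}$.

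For $R_{F_2}(\bxi-\bxi_h)$, the commutative property $\vdiv\mathbf{I}_2^{F,k_2}=\Pi_2^{0,k_2}\vdiv$ together with $\varphi_h|_E\in\mathcal{P}_{k_2}(E)$ annihilates $\int_\Omega\varphi_h\vdiv(\bxi-\bxi_h)$, leaving
\[R_{F_2}(\bxi-\bxi_h)=\langle\varphi_{\mathrm{D}},(\bxi-\bxi_h)\cdot\bn\rangle_{\Gamma_{\mathrm{D}}}-\int_\Omega\bbM^{-1}(\beps(\overline{\bu}_h),p_h)\bzeta_h\cdot(\bxi-\bxi_h).\]
Writing $\mathbf{A}_h:=\bPi_2^{0,k_2}(\bbM^{-1}(\beps(\overline{\bu}_h),p_h)\bPi_2^{0,k_2}\bzeta_h)$ and decomposing $\bbM^{-1}\bzeta_h=\mathbf{A}_h+(\bbM^{-1}\bzeta_h-\mathbf{A}_h)$, the remainder contributes to $\eta_{2,E}$. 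The pairing with $\mathbf{A}_h$ is treated branch-wise: on the gradient branch I add and subtract $\nabla\varphi_h$, using the Fortin moment identity $\int_E\nabla\varphi_h\cdot(\nabla z-\mathbf{I}_2^{F,k_2}\nabla z)=0$ (which holds since $\nabla\varphi_h\in\boldsymbol{\mathcal{G}}_{k_2-1}(E)$) to produce the $\norm{\mathbf{A}_h-\nabla\varphi_h}_{0,E}$ contribution to $\Xi_{2,E}$; on the rotational branch, element-wise integration by parts with $\brot$ transfers the derivative onto $\mathbf{A}_h$, producing $\norm{\vrot\mathbf{A}_h}_{0,E}$ together with a $\Gamma_{\mathrm{D}}$-edge pairing against $\mathbf{A}_h\cdot\bt_e$. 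The Dirichlet boundary term in $R_{F_2}(\bxi-\bxi_h)$ is split along the same decomposition: its gradient part combines with the $\varphi_h$-contribution emerging from the bulk integration by parts (after using the $L^2$-projection property of the Fortin normal trace against $\mathcal{P}_{k_2}(e)$) into $\langle\varphi_{\mathrm{D}}-\varphi_h,(\nabla z-\mathbf{I}_2^{F,k_2}\nabla z)\cdot\bn\rangle_{\Gamma_{\mathrm{D}}}$, giving the $h_e\norm{\varphi_{\mathrm{D}}-\varphi_h}_{0,e}^2$ term via Lemma~\ref{interpolation-estimates}~\eqref{tt0-2}; its rotational part equals $\int_{\Gamma_{\mathrm{D}}}\varphi_{\mathrm{D}}\,\partial_t(\chi-\mathrm{I}_3^{Q,k_2+1}\chi)=\int_{\Gamma_{\mathrm{D}}}\nabla\varphi_{\mathrm{D}}\cdot\bt\,(\chi-\mathrm{I}_3^{Q,k_2+1}\chi)$ after integrating by parts along $\partial\Omega$ (where the hypothesis $\varphi_{\mathrm{D}}\in\mathrm{H}^1(\partial\Omega)$ is crucial) and combines with the $\mathbf{A}_h\cdot\bt_e$ edge pairing into the $\norm{(\nabla\varphi_{\mathrm{D}}-\mathbf{A}_h)\cdot\bt_e}_{0,e}$ term.

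The main obstacle I anticipate is the cancellation of interior-edge contributions that do not appear in $\Xi_{2,E}$: the tangential jumps $\jump{\mathbf{A}_h\cdot\bt_e}$ arising from the rotational branch and the normal-trace pairings with $\jump{\varphi_h}$ arising from the gradient branch. Their disappearance rests on edge-moment orthogonality of $\mathbf{I}_2^{F,k_2}$ and $\mathrm{I}_3^{Q,k_2+1}$ against $\mathcal{P}_{k_2}(e)$, properties that must be read directly from the DoFs fixed in Section~\ref{sec:vem} rather than inferred from the generic bounds of Lemmas~\ref{interpolation-estimates} and \ref{fortin-auxiliary}. Once those cancellations are secured, Lemma~\ref{interpolation-estimates}~\eqref{tt0-2}, Lemma~\ref{fortin-auxiliary}, and the stability of the Helmholtz decomposition, combined with the appropriate $M$-weighting, yield $\norm{R_{F_2}}_{\bV_2'}^2\lesssim\sum_{E\in\mathcal{T}^h}\Theta_{2,E}^2$, which together with the estimate for $R_{G_2}$ completes the statement.
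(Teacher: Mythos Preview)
Your overall strategy is the same as the paper's: Helmholtz-decompose $\bxi$, build $\bxi_h$ from the Fortin interpolant on the regular part and the $V_3^{h,k_2+1}$ quasi-interpolant on the stream function, split $R_{F_2}$ into a consistency piece (handled by Lemma~\ref{a_2-error}) and a remainder on $\bxi-\bxi_h$, then integrate by parts on the $\brot$-branch to expose $\vrot\mathbf{A}_h$ and edge tangential terms. Your shortcut $b_2(\bxi-\bxi_h,\varphi_h)=0$ via the commuting diagram, and your insertion of $\nabla\varphi_h$ in the gradient branch via the internal DoFs $\int_E(\cdot)\cdot\mathbf m^{\nabla}_{k_2-1}=0$, are clean and compatible with the paper's route. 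One small inaccuracy: the decomposition under the convex-extension hypothesis yields $\bz\in\bH^1(\Omega)$ and $\chi\in \mathrm H^1_{\mathrm N}(\Omega)$, not $\chi\in H^2$; this is exactly what the paper states in \eqref{bound-decomposition}, and it is all you use through Lemma~\ref{fortin-auxiliary}.

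The substantive gap is in your handling of the interior-edge tangential contributions $\langle\chi-\chi_h,\jump{\mathbf A_h\cdot\bt_e}\rangle_e$. You propose they vanish by ``edge-moment orthogonality of $\mathrm{I}_3^{Q,k_2+1}$ against $\mathcal P_{k_2}(e)$'', but the quasi-interpolant in the paper is a Cl\'ement-type operator (cf.\ \cite{mora15}) built from nodal DoFs---vertex values and interior Gauss--Lobatto point values on each edge---and it does \emph{not} satisfy $\int_e(\chi-\mathrm I_3^{Q,k_2+1}\chi)\,q_{k_2}=0$. The paper does not claim such a cancellation either: in its combined display it keeps the sum $\sum_{e\in\mathcal E^h}\langle\chi-\chi_h,\mathbf A_h\cdot\bt_e\rangle_e$ over \emph{all} edges and bounds it directly via Lemma~\ref{fortin-auxiliary}. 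For that bound to land inside $\sqrt{M}\,\Xi_{2,E}$, the interior tangential-jump term $h_e\norm{\jump{\mathbf A_h\cdot\bt_e}}_{0,e}^2$ must belong to the estimator; indeed the paper's 3D estimator in Section~\ref{sec:3d} includes exactly this jump, and the efficiency Lemma~\ref{efficiency-reaction-diffusion} separately bounds $q_e=\jump{\mathbf A_h\cdot\bt_e}$. So the ``obstacle'' you anticipated is real, but the resolution is not an orthogonality cancellation---it is that the interior tangential jump is (meant to be) part of $\Xi_{2,E}$ and is estimated, not eliminated.
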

\begin{proof}
Let $\bxi \in \bV_2$. We construct  
$\bxi_h \in \bV_2^{h,k_2}$ combining the continuous Helmholtz decomposition (see \cite[Lemma 5.1]{Cascon07}) and the interpolators $\mathbf{I}_2^{F,k_2},\mathrm{I}_3^{Q,k_2+1}$. Indeed, there exist $\bz \in \bH^1(\Omega)$ and $\chi \in \mathrm{H}^1_{\mathrm{N}}(\Omega)$ such that
\begin{equation} \label{bound-decomposition}
    \bxi = \bz + \brot \chi \text{ in } \Omega \text{, with } \frac{1}{\sqrt{M}} \left( \norm{\bz}_{1,\Omega} + \norm{\chi}_{1,\Omega} \right) \lesssim \norm{\bxi}_{\bV_2}.
\end{equation} 
Therefore, we set $\bxi_h := \bz_h + \brot \chi_h$, where $\bz_h = \mathbf{I}_2^{F,k_2}\bz$ and $\chi_h = \mathrm{I}_3^{Q,k_2+1}\chi$. Note that Lemma~\ref{interpolation-estimates} and Lemma~\ref{fortin-auxiliary} turn the bound in \eqref{bound-decomposition} into
\begin{align} \label{bound_xi_h}
    \frac{1}{\sqrt{M}} \sum_{E\in \mathcal{T}^h} \norm{\bxi_h}_{0,E} &\leq \frac{1}{\sqrt{M}} \sum_{E\in \mathcal{T}^h} \left(\norm{\bz - \bz_h}_{0,E} + \norm{\bz}_{0,E} + |\chi - \chi_h|_{1,E} + |\chi|_{1,E}\right) \notag \\
    &\lesssim \frac{1}{\sqrt{M}} \sum_{E\in \mathcal{T}^h} \left(\norm{\bz}_{1,E} + \norm{\chi}_{1,E}\right) \lesssim \norm{\bxi}_{\bV_2}.
\end{align}
We can assert that 
\begin{equation}\label{bound-helmhotz}
    \bxi - \bxi_h = \bz - \bz_h + \brot(\chi-\chi_h)\text{ in } \Omega, \quad  \text{with}\quad  \frac{1}{\sqrt{M}} \sum_{E\in \mathcal{T}^h} \norm{\bxi-\bxi_h}_{0,E} \lesssim \norm{\bxi}_{\bV_2}.
\end{equation}
Next, given $E\in \mathcal{T}^h$, we rewrite the residual of $R_{F_2}(\cdot)$ as 
\begin{align*}
    R_{F_2}(\bxi) & = \sum_{E\in \mathcal{T}^h} \left( \left[a_2^{\overline{\bu}_h,{p}_h,h,E}-a_2^{\overline{\bu}_h,{p}_h,E}\right](\bzeta_h, \bxi_h) + F_2^E(\bxi - \bxi_h) -  a_2^{\overline{\bu}_h,{p}_h,E}(\bzeta_h, \bxi-\bxi_h) - b_2^E(\bxi - \bxi_h, \varphi_h) \right)\\
    & =: \sum_{E\in \mathcal{T}^h} \left( \mathrm{T}_2^{1,E} + \mathrm{T}_2^{2,E} + \mathrm{T}_2^{3,E} + \mathrm{T}_2^{4,E} \right). 
\end{align*}
For $\mathrm{T}_2^{1,E}$, Lemma~\ref{a_2-error} and \eqref{bound_xi_h} imply that
\begin{align}\label{bound_error_a_2}
    \biggl|\sum_{E\in \mathcal{T}^h} \mathrm{T}_2^{1,E}\biggr| \lesssim \sum_{E\in \mathcal{T}^h} \left(\sqrt{M} \eta_{2,E} + S_{2,E}\right) \norm{\bxi}_{\bV_2(E)}.
\end{align}
For $\mathrm{T}_2^{2,E}$, substituting \eqref{bound-helmhotz} and applying \cite[Lemma 3.5]{dominguez15} for $\chi-\chi_h \in \mathrm{H}^1_{\mathrm{N}}(\Omega)$ and a suitable extension of the data $\varphi_{\mathrm{D}}\in \mathrm{H}^1(\Omega)$ such that $\varphi_{\mathrm{D}}=0$ on $\Gamma_{\mathrm{N}}$ (see \cite[Lemma 2.2]{galvis07}), we obtain
\begin{align*}
    \sum_{E\in \mathcal{T}^h} \mathrm{T}_2^{2,E} & = \sum_{E\in \mathcal{T}^h} \langle (\bz-\bz_h)\cdot \bn_e, \varphi_{\mathrm{D}} \rangle_{\partial E\cap \Gamma_{\mathrm{D}}} + \sum_{E\in \mathcal{T}^h} \langle \brot(\chi-\chi_h)\cdot \bn_e, \varphi_{\mathrm{D}} \rangle_{\partial E\cap \Gamma_{\mathrm{D}}} \notag\\
    & = \sum_{E\in \mathcal{T}^h} \langle (\bz-\bz_h)\cdot \bn_e, \varphi_{\mathrm{D}} \rangle_{\partial E\cap \Gamma_{\mathrm{D}}} - \sum_{E\in \mathcal{T}^h} \langle \chi-\chi_h, \nabla \varphi_{\mathrm{D}} \cdot \bt_e \rangle_{\partial E\cap \Gamma_{\mathrm{D}}}. 
\end{align*}
For $\mathrm{T}_2^{3,E}$, the addition and subtraction of the terms $\bPi_2^{0,k_2}(\bbM^{-1}(\beps(\overline{\bu}_h),{p}_h) \bPi_2^{0,k_2} \bzeta_h)$ and $\bbM^{-1}(\beps(\overline{\bu}_h),{p}_h) \bPi_2^{0,k_2} \bzeta_h$, and for $\mathrm{T}_2^{4,E}$ an application of integration by parts lead to
\begin{align*}
    & \sum_{E\in \mathcal{T}^h} \mathrm{T}_2^{3,E} = \sum_{E\in \mathcal{T}^h} \biggl( \int_E \bPi_2^{0,k_2}(\bbM^{-1}(\beps(\overline{\bu}_h),{p}_h) \bPi_2^{0,k_2} \bzeta_h) \cdot (\bxi -\bxi_h) + \int_E \bbM^{-1}(\beps(\overline{\bu}_h),{p}_h)(\bzeta_h-\bPi_2^{0,k_2} \bzeta_h) \cdot (\bxi - \bxi_h) \notag \\ 
    & \quad + \int_E \left[ \bbM^{-1}(\beps(\overline{\bu}_h),{p}_h) \bPi_2^{0,k_2} \bzeta_h - \bPi_2^{0,k_2}(\bbM^{-1}(\beps(\overline{\bu}_h),{p}_h) \bPi_2^{0,k_2} \bzeta_h) \right] \cdot (\bxi - \bxi_h)\biggr), \\
    & \sum_{E\in \mathcal{T}^h} \mathrm{T}_2^{4,E} = - \sum_{E\in \mathcal{T}^h} \int_{E} \varphi_h \vdiv(\bxi - \bxi_h) = \sum_{E\in \mathcal{T}^h} \int_{E} \nabla \varphi_h \cdot (\bxi - \bxi_h) - \sum_{e\in \mathcal{E}^h}\int_{e} \varphi_h (\bxi - \bxi_h)\cdot \bn_e. 
\end{align*}
Thus, integration by parts, the Cauchy--Schwarz inequality, Lemma~\ref{fortin-auxiliary}, Lemma~\ref{interpolation-estimates}, and \eqref{stabilisation-reaction-diff} imply that
\begin{align}\label{bound_three_terms_splitted}
    &\biggl| \sum_{E\in \mathcal{T}^h} \mathrm{T}_2^{2,E} + \mathrm{T}_2^{3,E} + \mathrm{T}_2^{4,E} \biggr|  
     = \biggl|\sum_{e\in \mathcal{E}^h_{\mathrm{D}}} \langle (\bz-\bz_h)\cdot \bn_e, \varphi_{\mathrm{D}}-\varphi_h \rangle_{e} - \sum_{e\in \mathcal{E}^h_{\mathrm{D}}} \langle \chi-\chi_h, \nabla \varphi_{\mathrm{D}} \cdot \bt_e \rangle_{e} \notag \\ 
    & \quad +  \sum_{e\in \mathcal{E}^h} \langle \chi - \chi_h, \bPi_2^{0,k_2}(\bbM^{-1}(\beps(\overline{\bu}_h),{p}_h) \bPi_2^{0,k_2} \bzeta_h) \cdot \bt_e \rangle_e \notag \\
    & \quad - \sum_{E\in \mathcal{T}^h} \int_E  \left[ \bPi_2^{0,k_2}(\bbM^{-1}(\beps(\overline{\bu}_h),{p}_h) \bPi_2^{0,k_2} \bzeta_h)-\nabla \varphi_h \right] \cdot (\bz-\bz_h) \notag \\
    & \quad - \sum_{E\in \mathcal{T}^h} \int_E \vrot\left(\bPi_2^{0,k_2}(\bbM^{-1}(\beps(\overline{\bu}_h),{p}_h) \bPi_2^{0,k_2} \bzeta_h)\right) (\chi - \chi_h) \notag \\ 
    & \quad - \sum_{E\in \mathcal{T}^h} \int_E \bbM^{-1}(\beps(\overline{\bu}_h),{p}_h)(\bzeta_h-\bPi_2^{0,k_2} \bzeta_h) \cdot (\bxi - \bxi_h) \notag \\
    & \quad - \sum_{E\in \mathcal{T}^h} \int_E \left[ \bbM^{-1}(\beps(\overline{\bu}_h),{p}_h) \bPi_2^{0,k_2} \bzeta_h - \bPi_2^{0,k_2}(\bbM^{-1}(\beps(\overline{\bu}_h),{p}_h) \bPi_2^{0,k_2} \bzeta_h) \right] \cdot (\bxi - \bxi_h) \biggr| \notag \\
    &\lesssim \sum_{E\in \mathcal{T}^h} (\sqrt{M} \Xi_{2,E} + \, S_{2,E} + \, \sqrt{M}\eta_{2,E}) \norm{\bxi}_{\bV_2(E)}.
\end{align}
Finally, concerning the residual $R_{G_2}(\cdot)$, the Cauchy--Schwarz inequality and proper scaling give us that
\begin{align}\label{bound-G2}
    \biggl| R_{G_2}(\psi)\biggr| \lesssim \sum_{E\in \mathcal{T}^h} \sqrt{M} \Lambda_{2,E} \norm{\psi}_{\mathrm{Q}_2(E)}.
\end{align}
Adding  \eqref{bound_error_a_2}, \eqref{bound_three_terms_splitted}, and \eqref{bound-G2}, taking the supremum for all $\bxi\in \bV_2$ and all $\psi \in \mathrm{Q}_2$, and applying the Cauchy--Schwarz inequality complete the proof.
\end{proof}
To finalise, the following reliability of the total residual global estimator $\Theta$ is a direct consequence of Lemmas~\ref{residual-bound-1}-\ref{residual-bound-2}.
\begin{theorem}\label{th:upper-bound}
    Under the assumption of Theorem~\ref{residual-total} and Lemma~\ref{residual-bound-2}, the following bound holds
    $$\overline{\textnormal{e}}_h \lesssim \max \left\{ C_1, C_2 \right\} \Theta.$$
\end{theorem}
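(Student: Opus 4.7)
The proof will be very short since virtually all the technical work has already been done. The plan is to simply chain together Theorem~\ref{residual-total} with Lemmas~\ref{residual-bound-1}--\ref{residual-bound-2}. Specifically, Theorem~\ref{residual-total} (which encodes both the fixed-point contraction argument and the global inf-sup from Theorem~\ref{th:global-inf-sup}) yields
\[
\overline{\textnormal{e}}_h \;\lesssim\; \max\{C_1,C_2\}\Bigl(\norm{R_{F_1}}_{\bV'_1} + \norm{R_{G_1^{\varphi_h}}}_{Q'_1} + \norm{R_{F_2}}_{\bV'_2} + \norm{R_{G_2}}_{Q'_2}\Bigr),
\]
so the only thing left is to control the four dual norms by the global estimator $\Theta$.

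Next I would apply the elementary inequality $a+b+c+d \le 2\sqrt{a^2+b^2+c^2+d^2}$ to the four residual norms, and then insert the squared bounds from Lemma~\ref{residual-bound-1} and Lemma~\ref{residual-bound-2}:
\[
\norm{R_{F_1}}_{\bV'_1}^2 + \norm{R_{G_1^{\varphi_h}}}_{Q'_1}^2 + \norm{R_{F_2}}_{\bV'_2}^2 + \norm{R_{G_2}}_{Q'_2}^2 \;\lesssim\; \sum_{E\in\mathcal{T}^h}\bigl(\Theta_{1,E}^2+\Theta_{2,E}^2\bigr) \;=\; \Theta^2,
\]
where the last identity is just the definition of $\Theta$. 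Taking square roots and combining with the bound from Theorem~\ref{residual-total} yields $\overline{\textnormal{e}}_h \lesssim \max\{C_1,C_2\}\,\Theta$, which is exactly the claim.

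Because the hypotheses of Theorem~\ref{residual-total} (smallness of data, $\lambda\ge 1$, $\theta\le 1/M$, $\varphi\in\mathrm{W}$, $\varphi_h\in\mathrm{W}^h$) and of Lemma~\ref{residual-bound-2} (two-dimensional connected domain with $\Gamma_{\mathrm{N}}$ on the boundary of a convex superset of $\Omega$, and $\varphi_{\mathrm{D}}\in \mathrm{H}^1(\partial\Omega)$) are assumed, no additional verification is required. I do not expect a genuine obstacle here: the only minor care is to verify that the hidden constants in the two lemmas indeed do not hide any parameter-dependent factors (e.g.\ powers of $\mu$, $\lambda$, $M$, or $\theta$); this is guaranteed by the parameter-weighted norms entering the definitions of $\Theta_{1,E}$ and $\Theta_{2,E}$ in \eqref{error-indicators}, which already absorb the correct scaling factors $\frac{1}{2\mu}$, $2\mu$, and $M$. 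Hence the proof is a one-line combination argument.
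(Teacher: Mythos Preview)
Your proposal is correct and matches the paper's approach exactly: the paper simply states that the reliability bound is a direct consequence of Lemmas~\ref{residual-bound-1}--\ref{residual-bound-2} (combined with Theorem~\ref{residual-total}), without spelling out the chaining you describe. Your additional remark about the parameter-independence of the hidden constants is accurate and consistent with the weighted definitions in \eqref{error-indicators}.
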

\subsection{Efficiency}\label{sec:efficiency}
This section aims to show the efficiency of the global residual and mixed estimators $\Xi$ and $\Lambda$  
up to data oscillation $\eta$, the stabilisation estimator $S$ and higher-order terms 
($\mathrm{HOTs}$). In what follows, we use the properties of bubble functions, the Lipschitz continuity of the nonlinear terms from Section~\ref{nonlinear-terms}, and the strong mixed formulation in \eqref{eq:mixed-formulation}. The main result is then a consequence of Lemmas~\ref{efficiency-elasticity}-\ref{efficiency-reaction-diffusion}.
\begin{lemma}\label{efficiency-elasticity}
    The following bound holds 
    \begin{align*}
        \sum_{E\in \mathcal{T}^h} \left(\frac{1}{2\mu} \Xi_{1,E}^2 + 2\mu \Lambda_{1,E}^2\right) \lesssim \max \left\{ 1, 2\mu M \right\} \sum_{E\in \mathcal{T}^h} \left( \overline{\textnormal{e}}_h^2 + \frac{1}{2\mu}\eta_{1,E}^2 + S_{1,E}^2 \right).
    \end{align*}
\end{lemma}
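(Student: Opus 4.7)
The approach is to bound each of the four pieces inside $\Xi_{1,E}^2$ and the constitutive piece $\Lambda_{1,E}^2$ separately via the bubble-function technique (Lemmas~\ref{bubble-int} and \ref{bubble-inner}), together with the strong momentum balance $-\bdiv(2\mu\beps(\bu)-{p}\bbI)=\fb$ and the strong constitutive law ${p}=-\lambda\vdiv\bu+\ell(\varphi)$. Since every residual inside $\Xi_{1,E}$ is a polynomial, bubble norm equivalences give full control of its $\mathrm{L}^2$ norm by testing against the bubble-weighted residual itself. The factor $\max\{1,2\mu M\}$ will appear exactly once, through the Lipschitz estimate of $\ell(\cdot)$ and the $\mathrm{Q}_2$-weighted norm conversion.

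\textbf{Volume and edge residuals.} Writing $R_E := \bPi_1^{0,k_1-2}\fb + 2\mu\bdiv(\beps(\bPi_1^{\beps,k_1}\bu_h)) - \nabla {p}_h$ and using the strong form yields $R_E = \bdiv(2\mu\beps(\bPi_1^{\beps,k_1}\bu_h-\bu)) - \nabla({p}_h-{p}) - (\fb - \bPi_1^{0,k_1-2}\fb)$. Testing against $\bv = \Psi_E R_E \in \bH^1_0(E)$, applying Lemma~\ref{bubble-int} to get $\|\bv\|_{0,E}\lesssim\|R_E\|_{0,E}$ and $|\bv|_{1,E}\lesssim h_E^{-1}\|R_E\|_{0,E}$, then integrating by parts and using the triangle inequality together with $|\bPi_1^{\beps,k_1}\bu_h-\bu|_{1,E}\le|\bPi_1^{\beps,k_1}\bu_h-\bu_h|_{1,E}+|\bu_h-\bu|_{1,E}$ (the first summand controlled by $S_{1,E}$ via Lemma~\ref{approximation-estimates} and \eqref{stabilisation-elast}), leads after scaling by $h_E^2/(2\mu)$ to
\begin{align*}
\tfrac{h_E^2}{2\mu}\|R_E\|_{0,E}^2 \;\lesssim\; \|\bu-\bu_h\|_{\bV_1(E)}^2 + \|{p}-{p}_h\|_{\mathrm{Q}_1(E)}^2 + \tfrac{1}{2\mu}\eta_{1,E}^2 + S_{1,E}^2,
\end{align*}
where the pressure weight is absorbed using $\tfrac{1}{2\mu}\le\tfrac{1}{2\mu}+\tfrac{1}{\lambda}$. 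The interior-edge and Neumann-edge jump terms are treated analogously with $\bv_e=\Psi_e\jump{\cdot}$ (or $\Psi_e(2\mu\beps(\bPi_1^{\beps,k_1}\bu_h)-{p}_h\bbI)\bn_e$ on $\Gamma_{\mathrm{N}}$), extended by zero outside the edge patch. Lemma~\ref{bubble-inner} gives $\|\bv_e\|_{0,E}\lesssim h_e^{1/2}\|\cdot\|_{0,e}$ and $|\bv_e|_{1,E}\lesssim h_e^{-1/2}\|\cdot\|_{0,e}$; the same integration-by-parts identity now produces a boundary term $\int_e J_e\cdot\bv_e$ plus an $R_E$-type bulk term already estimated, and scaling by $h_e/(2\mu)$ yields a bound of the same form.

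\textbf{Constitutive estimator $\Lambda_{1,E}$.} Subtracting the identity ${p}+\lambda\vdiv\bu-\ell(\varphi)=0$ from the residual and using Lipschitz continuity of $\ell$ gives
\begin{align*}
\Lambda_{1,E}\;\le\;\tfrac{L_\ell}{\lambda}\|\varphi-\varphi_h\|_{0,E}+|\bu-\bu_h|_{1,E}+\tfrac{1}{\lambda}\|{p}-{p}_h\|_{0,E}.
\end{align*}
Multiplying by $2\mu$, using $\lambda\ge 1$ and the norm equivalence $\|\varphi-\varphi_h\|_{0,E}^2\le M\|\varphi-\varphi_h\|_{\mathrm{Q}_2(E)}^2$, the first summand contributes $2\mu M L_\ell^2\|\varphi-\varphi_h\|_{\mathrm{Q}_2(E)}^2$ — precisely where the $\max\{1,2\mu M\}$ factor enters. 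The remaining two terms are absorbed into $\|\bu-\bu_h\|_{\bV_1(E)}^2$ and $\|{p}-{p}_h\|_{\mathrm{Q}_1(E)}^2$.

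\textbf{Main obstacle.} The technical care lies in (i) keeping the weights $\tfrac{1}{2\mu}$, $\lambda^{-1}$ and $M$ aligned with the parameter-weighted norms so that no spurious dependence on the physical parameters is introduced, and (ii) the swap between $\bu_h$ and its projection $\bPi_1^{\beps,k_1}\bu_h$, which repeatedly generates $S_{1,E}$ contributions via Lemma~\ref{a_1-error} and the stabilisation equivalence \eqref{stabilisation-elast}. Summing the four local bounds over $E\in\mathcal{T}^h$ and invoking the finite-overlap property of edge patches finishes the proof.
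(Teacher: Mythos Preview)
Your proposal is correct and follows essentially the same route as the paper: bubble-function testing of the polynomial volume residual $R_E$ and the edge jumps, combined with the strong momentum balance to convert to error quantities, the swap $\bu_h\leftrightarrow\bPi_1^{\beps,k_1}\bu_h$ producing the $S_{1,E}$ contribution via \eqref{stabilisation-elast}, and the constitutive law for $\Lambda_{1,E}$ where the $\max\{1,2\mu M\}$ factor enters through the $\mathrm{Q}_2$-weight on $\|\varphi-\varphi_h\|$. The only cosmetic difference is that the paper packages the same computation through the residual operator identity $R_{F_1}(\bv)=a_1(\bu-\bu_h,\bv)+b_1(\bv,{p}-{p}_h)$ and the decomposition \eqref{residual_F1_rewrite} with $\bv_h=\bzero$, rather than invoking the strong form of \eqref{eq:linear-momentum} directly; both are equivalent once one tests with the compactly supported bubble $\Psi_E R_E$ and integrates by parts. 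One small correction: the projection error $\|\beps(\bu_h-\bPi_1^{\beps,k_1}\bu_h)\|_{0,E}$ is controlled by $S_{1,E}$ straight from \eqref{stabilisation-elast} (since $\bu_h-\bPi_1^{\beps,k_1}\bu_h\in\ker\bPi_1^{\beps,k_1}$), not via Lemma~\ref{a_1-error}, but this does not affect the argument.
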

\begin{proof}
    Let $E\in \mathcal{T}^h$, $\mathbf{q}_E := \bPi_1^{0,k_1-2}\fb + 2\mu \bdiv(\beps(\bPi_1^{\beps,k_1} \bu_h)) - \nabla {p}_h \in \boldsymbol{\mathcal{P}}_{k_1-2}(E)$, and $\overline{\bv}_E := \Psi_E \mathbf{q}_E$ where $\Psi_E$ is an element bubble function from Lemma~\ref{bubble-int}. Note that \eqref{weak-1} leads to $R_{F_1}(\bv) = a_1(\bu-\bu_h,\bv)+b_1(\bv,{p}-{p}_h)$ for all $\bv\in \bV_1$. Then, we set $\bv=\overline{\bv}_E$ and $\bv_h = \bzero$ in \eqref{residual_F1_rewrite}. Thus, given that $\overline{\bv}_E$ vanishes on $\partial E$ and outside $E$, we can apply  integration by parts to arrive at 
    \begin{align*}
        &\norm{\mathbf{q}_E}_{0,E}^2 \lesssim \left| \int_E \mathbf{q}_E \cdot \overline{\bv}_E \right| =  \left| a_1^E(\bu-\bu_h,\overline{\bv}_E) + b_1^E(\overline{\bv}_E,{p}-{p}_h) -  \left[F_1^E-F_1^{h,E} \right](\overline{\bv}_E) + a_1^E(\bu_h - \bPi_1^{\beps,k_1} \bu_h, \overline{\bv}_E) \right|.
    \end{align*}
    Next, the continuity of $a_1(\cdot,\cdot)$ and $b_1(\cdot,\cdot)$ together with the Cauchy--Schwarz inequality and \eqref{stabilisation-elast} lead to
    \begin{align*}
        &\norm{\mathbf{q}_E}_{0,E}^2 \lesssim (\norm{\bu-\bu_h}_{\bV_1(E)} + \norm{{p}-{p}_h}_{\mathrm{Q}_1(E)}+ S_{1,E}) \norm{\overline{\bv}_E}_{\bV_1(E)} + \norm{\fb - \bPi_{1}^{0,k_1-2}\fb}_{0,E} \norm{\overline{\bv}_E}_{0,E}.
    \end{align*}
    From the inequality $\norm{\beps(\cdot)}_{0,E}\leq |\cdot|_{1,E}$ and Lemma~\ref{bubble-int} we obtain
    \begin{align*}
        &\norm{\mathbf{q}_E}_{0,E}^2 \lesssim \frac{\sqrt{2\mu}}{h_E} (\norm{\bu-\bu_h}_{\bV_1(E)} + \norm{{p}-{p}_h}_{\mathrm{Q}_1(E)}+ S_{1,E}) \norm{\mathbf{q}_E}_{0,E} + \norm{\fb - \bPi_{1}^{0,k_1-2}\fb}_{0,E} \norm{\mathbf{q}_E}_{0,E},
    \end{align*}
    and consequently, we have
    \begin{align} \label{bound-q_E}
        \frac{1}{2\mu}h_E^2\norm{\mathbf{q}_E}_{0,E}^2 \lesssim \left(\norm{\bu-\bu_h}_{\bV_1(E)}^2 + \norm{{p}-{p}_h}_{\mathrm{Q}_1(E)}^2 + S_{1,E}^2 + \frac{1}{2\mu}\eta_{1,E}^2\right).
    \end{align}
    Now, we define the edge polynomial $\mathbf{q}_e = \jump{(2\mu\varepsilon(\bPi_1^{\beps,k_1} \bu_h) -{p}_h \bbI)\bn_e} \in \boldsymbol{\mathcal{P}}_{k_1-1}(e)$ and  the edge bubble function $\overline{\bv}_e=\Psi_e \mathbf{q}_e$  as in Lemma~\ref{bubble-inner}. Note that, this polynomial can be extended to $\mathcal{T}^h_e$ by the techniques used in \cite[Remark 3.1]{MORA2017}. Choosing $\bv = \overline{\bv}_e$ and $\bv_h = \boldsymbol{0}$ in \eqref{residual_F1_rewrite}, we readily see that $\norm{\mathbf{q}_e}_{0,e}^2 \lesssim \left|\int_e \mathbf{q}_e \cdot \overline{\bv}_e \right|$, and
    \begin{align*}
        &\biggl|\int_e \mathbf{q}_e \cdot \overline{\bv}_e \biggr| \leq \sum_{E\in \mathcal{T}^h_e}  \biggl| \int_E \mathbf{q}_E \cdot \overline{\bv}_e + \left[F_1^E-F_1^{h,E} \right](\overline{\bv}_e) - a_1^E(\bu_h-\bPi_1^{\beps,k_1}\bu_h,\overline{v}_e) -a_1^E(\bu-\bu_h,\overline{\bv}_e) - b_1^E(\overline{\bv}_e,{p}-{p}_h) \biggr|.
    \end{align*}
    Similarly, note that
    \begin{align*}
        & \norm{\mathbf{q}_e}_{0,e}^2 \lesssim \sum_{E\in\mathcal{T}^h_e} \left(\norm{\bu-\bu_h}_{\bV_1(E)} + \norm{{p}-{p}_h}_{\mathrm{Q}_1(E)}+ S_{1,E}\right) \norm{\overline{\bv}_e}_{\bV_1(E)} + \left(\norm{\fb - \bPi_{1}^{0,k_1-2}\fb}_{0,E} + \norm{\mathbf{q}_E}_{0,E}\right) \norm{\overline{\bv}_e}_{0,E},
    \end{align*}
    which together with Lemma~\ref{bubble-inner} and the fact that $h_e\leq h_E$, imply that
    \begin{align*}
        \sqrt{h_e} \norm{\mathbf{q}_e}_{0,e} \lesssim \sum_{E\in \mathcal{T}^h_e} \biggl[ &\sqrt{2\mu} (\norm{\bu-\bu_h}_{\bV_1(E)} + \norm{{p}-{p}_h}_{\mathrm{Q}_1(E)}+ S_{1,E})  + h_E (\norm{\fb - \bPi_{1}^{0,k_1-2}\fb}_{0,E} + \norm{\mathbf{q}_E}_{0,E})\biggr].
    \end{align*}
    Therefore, 
    \begin{align}
        &\frac{1}{2\mu} h_e \norm{\mathbf{q}_e}_{0,e}^2 \lesssim  \sum_{E\in \mathcal{T}^h_e} \left[ \norm{\bu-\bu_h}_{\bV_1(E)}^2 + \norm{{p}-{p}_h}_{\mathrm{Q}_1(E)}^2 + S_{1,E}^2  \quad + \frac{1}{2\mu}\eta_{1,E}^2 + \frac{1}{2\mu}h_E^2 \norm{\mathbf{q}_E}_{0,E} \right].\label{bound-q_e}
    \end{align}
    Finally, from \eqref{eq:hooke-law}, the Lipschitz continuity of $\ell(\cdot)$, the inequality $\norm{\vdiv(\cdot)}_{0,E}\leq |\cdot|_{1,E}$, K\"orn's inequality, and given that $\frac{1}{\lambda} \leq 1$, we easily see that 
    \begin{align}\label{bound-poly-appx}
        \sqrt{2\mu}\Lambda_{1,E} &= \sqrt{2\mu}\norm{\frac{1}{\lambda}(\ell(\varphi)-\ell(\varphi_h)) + \vdiv(\bu-\bu_h)  + \frac{1}{\lambda}({p}-{p}_h)}_{0,E} \notag \\ &\lesssim \max \left\{ \sqrt{2\mu M}, 1 \right\} ( \norm{\varphi-\varphi_h}_{\mathrm{Q}_2(E)} + \norm{\bu - \bu_h}_{\bV_1(E)} + \norm{{p}-{p}_h}_{\mathrm{Q}_1(E)}).
    \end{align}
    Summing  the bounds in \eqref{bound-q_E}-\eqref{bound-poly-appx} for all $E$, and \eqref{bound-q_e} for all $e\subset \partial E$ concludes the proof.
\end{proof}

\begin{lemma}\label{efficiency-reaction-diffusion} The following bound holds
    \begin{align*}
        \sum_{E\in \mathcal{T}^h} M\Xi_{2,E}^2 + M\Lambda_{2,E}^2 & \lesssim \max \left\{ M^2, M \overline{C}_3 \right\} \sum_{E\in \mathcal{T}^h} ( \overline{\textnormal{e}}_h^2 + M\eta_{2,E}^2 + S_{2,E}^2 + S_{1,E}^2 + \mathrm{HOTs} ),
    \end{align*}
with the high-order terms given by $\mathrm{HOTs} := h_E^2 \left[ \overline{C}_3 \left(\norm{\bu-\bu_h}_{\bV_1(E)}^2 + \norm{{p}-{p}_h}_{\mathrm{Q}_1(E)}^2 + S_{1,E}^2 \right) + S_{2,E}^2 + \eta_{2,E}^2 \right] + \sum_{e\in \mathcal{E}^h(E)} h_e \norm{\nabla \varphi_\mathrm{D} \cdot \bt_e - \Pi_{2,e}^{0,k_2} \left(\nabla \varphi_\mathrm{D} \cdot \bt_e\right)}_{0,e}^2.$
\end{lemma}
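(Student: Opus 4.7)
The proof mirrors the three‐stage strategy of Lemma~\ref{efficiency-elasticity}: first we bound the scalar mixed estimator $M\Lambda_{2,E}^2$ by inserting the strong form \eqref{eq:reaction-diffusion}; next we bound the two interior contributions of $\Xi_{2,E}^2$ with element‐bubble functions, exploiting the continuous constitutive identity $\bbM^{-1}(\beps(\bu),p)\bzeta=\nabla\varphi$ from \eqref{eq:diffusive-flux} and the fact that $\vrot(\nabla\varphi)=0$; finally we bound the two Dirichlet‐boundary contributions with edge‐bubble functions, using that $\varphi|_{\Gamma_{\mathrm{D}}}=\varphi_{\mathrm{D}}$ and consequently $\nabla\varphi_{\mathrm{D}}\cdot\bt_e=\bbM^{-1}(\beps(\bu),p)\bzeta\cdot\bt_e$ on $\Gamma_{\mathrm{D}}$. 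Throughout, Lemma~\ref{extra_estimates_error_M} is used to replace discrete evaluations of the nonlinear coefficient by continuous ones, which is the mechanism that introduces $\overline{C}_3$ and the $S_{1,E}^2$, $\|\bu-\bu_h\|_{\bV_1(E)}^2$, $\|p-p_h\|_{\mathrm{Q}_1(E)}^2$ contributions on the right‐hand side.

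\textbf{Step 1 (reaction residual).} Substituting $g=\theta\varphi-\vdiv\bzeta$ into $\Lambda_{2,E}$ gives $\Lambda_{2,E}=\|\vdiv(\bzeta-\bzeta_h)+\theta(\varphi_h-\varphi)\|_{0,E}$, so a triangle inequality together with $M\theta^2\le\theta\le\frac{1}{M}+\theta$ and the very definitions of $\|\cdot\|_{\bV_2(E)}$ and $\|\cdot\|_{\mathrm{Q}_2(E)}$ yield $M\Lambda_{2,E}^{2}\lesssim\|\bzeta-\bzeta_h\|_{\bV_2(E)}^{2}+\|\varphi-\varphi_h\|_{\mathrm{Q}_2(E)}^{2}$, which is controlled by $\overline{\textnormal{e}}_h^{2}$.

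\textbf{Step 2 (interior residuals of $\Xi_{2,E}$).} Set $\mathbf{q}_E:=\bPi_2^{0,k_2}(\bbM^{-1}(\beps(\overline{\bu}_h),p_h)\bPi_2^{0,k_2}\bzeta_h)-\nabla\varphi_h\in\boldsymbol{\mathcal{P}}_{k_2}(E)$ and test with $\overline{\bv}_E=\Psi_E\mathbf{q}_E$, which vanishes on $\partial E$. Add and subtract $\bbM^{-1}(\beps(\bu),p)\bzeta=\nabla\varphi$; integration by parts turns the $\nabla(\varphi-\varphi_h)$ contribution into $-\int_E(\varphi-\varphi_h)\,\vdiv\overline{\bv}_E$. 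Lemma~\ref{bubble-int} gives $\|\overline{\bv}_E\|_{0,E}\lesssim\|\mathbf{q}_E\|_{0,E}$ and $|\overline{\bv}_E|_{1,E}\lesssim h_E^{-1}\|\mathbf{q}_E\|_{0,E}$, and Lemma~\ref{extra_estimates_error_M} controls the remaining discrepancy. Squaring, multiplying by $Mh_E^{2}$, and collecting the $h_E^{2}$–weighted factors yields the $Mh_E^2\|\mathbf{q}_E\|_{0,E}^2$ bound in terms of $\|\varphi-\varphi_h\|_{\mathrm{Q}_2(E)}^2$ plus the HOTs listed in the statement. The $\vrot$ term is treated analogously: since $\vrot(\bbM^{-1}(\beps(\bu),p)\bzeta)=\vrot(\nabla\varphi)=0$, the 2D integration‐by‐parts identity $\int_E\vrot(\mathbf{w})\psi=-\int_E\mathbf{w}\cdot\brot(\psi)$ applied to $\overline{\psi}=\Psi_E q_E$ (vanishing on $\partial E$) reduces the estimate to the same Lemma~\ref{extra_estimates_error_M} bound, with the factor $h_E$ absorbed by the scaling $|\brot\overline{\psi}|_{0,E}\lesssim h_E^{-1}\|q_E\|_{0,E}$.

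\textbf{Step 3 (Dirichlet boundary residuals).} For the tangential derivative term on $e\in\mathcal{E}^h_{\mathrm{D}}(E)$, use $\nabla\varphi_{\mathrm{D}}\cdot\bt_e=\nabla\varphi\cdot\bt_e=\bbM^{-1}(\beps(\bu),p)\bzeta\cdot\bt_e$. Split $(\nabla\varphi_{\mathrm{D}}-\bPi_2^{0,k_2}(\cdots))\cdot\bt_e$ as a polynomial part $\mathbf{q}_e$ obtained from $\Pi_{2,e}^{0,k_2}$ plus the oscillation $\nabla\varphi_{\mathrm{D}}\cdot\bt_e-\Pi_{2,e}^{0,k_2}(\nabla\varphi_{\mathrm{D}}\cdot\bt_e)$; the latter is exactly the last HOT term. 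For the polynomial part, test with the edge‐bubble extension $\Psi_e\mathbf{q}_e$, use Lemma~\ref{bubble-inner} to convert into volume norms, and apply integration by parts with $\brot$ to reduce to the interior discrepancy already controlled via Lemma~\ref{extra_estimates_error_M}. The term $h_e\|\varphi_{\mathrm{D}}-\varphi_h\|_{0,e}^2$ is handled analogously via an edge‐bubble argument using $\varphi_{\mathrm{D}}=\varphi$ and a scaled trace argument that borrows the $h_E^2\|\mathbf{q}_E\|_{0,E}^2$ control obtained in Step~2 (hence the $h_E^2$–weighted entries in the HOTs). Combining all local bounds and summing over $E$ completes the proof.

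\textbf{Main obstacle.} The delicate point is the tangential derivative on $\Gamma_{\mathrm{D}}$: because $\bbM^{-1}(\beps(\bu),p)\bzeta$ does not a priori enjoy an edge trace in $\mathrm{L}^2$, one is forced to work at the level of the discrete polynomial $\mathbf{q}_e$ and to pay the price of a genuine data oscillation on $\nabla\varphi_{\mathrm{D}}\cdot\bt_e$, which is precisely what the assumption $\varphi_{\mathrm{D}}\in\mathrm{H}^1(\partial\Omega)$ from Lemma~\ref{residual-bound-2} is meant to allow. Equally delicate is the robust bookkeeping: every invocation of Lemma~\ref{extra_estimates_error_M} introduces an $\overline{C}_3$ factor together with the elasticity stabilisation $S_{1,E}^2$ and the cross‐error $\|\bu-\bu_h\|_{\bV_1(E)}^2+\|p-p_h\|_{\mathrm{Q}_1(E)}^2$, and the weights $M, M^2, M\overline{C}_3$ must be tracked carefully so that the final constant $\max\{M^2,\,M\overline{C}_3\}$ emerges and remains independent of $\mu$, $\lambda$, and $\theta$.
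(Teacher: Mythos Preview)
Your proposal is correct and follows essentially the same route as the paper: element bubbles for the interior terms $\mathbf{q}_E$ and $q_E=\vrot(\bPi_2^{0,k_2}(\cdots))$, edge bubbles together with the $\Pi_{2,e}^{0,k_2}$ splitting for the tangential-derivative term on $\Gamma_{\mathrm{D}}$, Lemma~\ref{extra_estimates_error_M} to pass from discrete to continuous coefficients, and the strong form \eqref{eq:reaction-diffusion} for $\Lambda_{2,E}$. One small clarification: the paper handles $h_e\|\varphi_{\mathrm{D}}-\varphi_h\|_{0,e}^2$ by the scaled trace inequality $\|\varphi-\varphi_h\|_{0,e}^2\lesssim h_e^{-1}\|\varphi-\varphi_h\|_{0,E}^2+h_e|\varphi-\varphi_h|_{1,E}^2$ (no edge bubble is used for this term), after which $|\varphi-\varphi_h|_{1,E}$ is controlled via $\mathbf{q}_E$ and Lemma~\ref{extra_estimates_error_M} exactly as you indicate.
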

\begin{proof}
    We start by defining $\mathbf{q}_E = \bPi_2^{0,k_2}(\bbM^{-1}(\beps(\overline{\bu}_h),{p}_h) \bPi_2^{0,k_2} \bzeta_h) - \nabla \varphi_h \in \boldsymbol{\mathcal{P}}_{k_2}(E)$, and $\overline{\bxi}_E = \Psi_E \mathbf{q}_E$ with a given element bubble function $\Psi_E$. Lemma~\ref{bubble-inner}, and the definition of diffusive-flux in \eqref{eq:diffusive-flux} lead to
    \begin{align*}
        & \norm{\mathbf{q}_E}_{0,E}^2 \lesssim \biggl|\int_E \overline{\bxi}_E \cdot \mathbf{q}_E \biggr| = \biggl| \int_E \overline{\bzeta}_E \cdot \biggl( \nabla(\varphi_h-\varphi) + \bbM^{-1}(\beps(\bu),{p}) \bzeta - \bPi_2^{0,k_2}(\bbM^{-1}(\beps(\bu_h),{p}_h)\bPi_2^{0,k_2} \bzeta_h) \biggr) \biggr|.
    \end{align*}
    The Cauchy--Schwarz inequality, Lemma~\ref{extra_estimates_error_M}, an integration by parts, and Lemma~\ref{bubble-inner} show that
    \begin{align}\label{4.29}
        M h_E^2 \norm{\mathbf{q}_E}_{0,E}^2 \lesssim  M^2 (\norm{\varphi-\varphi_h}_{\mathrm{Q}_2(E)}^2 + \mathrm{HOTs}).
    \end{align}
    Similarly, let $q_E = \vrot(\bPi_2^{0,k_2}(\bbM^{-1}(\beps(\overline{\bu}_h),{p}_h) \bPi_2^{0,k_2} \bzeta_h)) \in \mathcal{P}_{k_2-1}(E)$, and $\overline{\xi}_E = \Psi_E q_E$.  Lemma~\ref{bubble-inner},  the observation that $\vrot(\bbM^{-1}(\beps(\bu),{p})\bzeta) = \vrot(\nabla \varphi) = 0$, and an integration by parts imply that
    \begin{align*}
        \norm{q_E}_{0,E}^2 \lesssim \left| \int_E \overline{\xi}_E q_E  \right| &= \left| \int_E \brot(\overline{\xi}_E) \cdot \left(\bPi_2^{0,k_2}(\bbM^{-1}(\beps(\overline{\bu}_h),{p}_h) \bPi_2^{0,k_2} \bzeta_h) - \bbM^{-1}(\beps(\bu),{p})\bzeta\right) \right| \\ &\lesssim \norm{\bPi_2^{0,k_2}(\bbM^{-1}(\beps(\overline{\bu}_h),{p}_h) \bPi_2^{0,k_2} \bzeta_h) - \bbM^{-1}(\beps(\bu),{p})\bzeta}_{0,E} |\overline{\xi}_E|_{1,E}.
    \end{align*}
    This and Lemma~\ref{extra_estimates_error_M} result in
    \[
        M h_E^2 \norm{q_E}_{0,E}^2 \lesssim M \max \left\{ 1, \overline{C}_3 \right\} \bigl(\norm{\bu-\bu_h}_{\bV_1(E)}^2 + \norm{{p}-{p}_h}_{\mathrm{Q}_1(E)}^2 +  M\eta_{2,E}^2 + \sum_{i=1}^{2} S_{i,E}^2\bigr).
    \]
Now, for  $e\in \mathcal{E}^h(E)$, we  define $q_e = \jump{\bPi_2^{0,k_2}(\bbM^{-1}(\beps(\overline{\bu}_h),{p}_h) \bPi_2^{0,k_2} \bzeta_h)\cdot \bt_e} \in \mathcal{P}_{k_2}(e)$, and $\overline{\xi}_e = \Psi_e q_e$, with a given edge bubble function $\Psi_e$. Notice that $\jump{\bbM^{-1}(\beps(\bu),{p})\bzeta\cdot \bt_e} = 0$ for all $e$. Then, from Lemma~\ref{bubble-inner} and integration by parts, we readily see that
    \begin{align*}
        \norm{q_e}_{0,e}^2 \lesssim \left| \int_e \overline{\xi}_e q_e \right| &= \biggl| \sum_{E\in \mathcal{T}^h_e} \int_E \brot(\overline{\xi}_e) \cdot \left(\bPi_2^{0,k_2}(\bbM^{-1}(\beps(\overline{\bu}_h),{p}_h) \bPi_2^{0,k_2} \bzeta_h) - \bbM^{-1}(\beps(\bu),{p})\bzeta)\right) \\ 
        & \quad + \sum_{E\in \mathcal{T}^h_e} \int_E \overline{\xi}_e \vrot(\bPi_2^{0,k_2}(\bbM^{-1}(\beps(\overline{\bu}_h),{p}_h) \bPi_2^{0,k_2} \bzeta_h) \biggr|\\
        & \lesssim \sum_{E\in \mathcal{T}^h_e} \norm{\bPi_2^{0,k_2}(\bbM^{-1}(\beps(\overline{\bu}_h),{p}_h) \bPi_2^{0,k_2} \bzeta_h) - \bbM^{-1}(\beps(\bu),{p})\bzeta}_{0,E} |\overline{\xi}_e|_{1,E} \\
        & \quad + \sum_{E\in \mathcal{T}^h_e} \norm{\vrot(\bPi_2^{0,k_2}(\bbM^{-1}(\beps(\overline{\bu}_h),{p}_h) \bPi_2^{0,k_2} \bzeta_h)}_{0,E} \norm{\overline{\xi}_e}_{0,E}.
    \end{align*}
    Hence, Lemma~\ref{extra_estimates_error_M} implies that 
    \begin{align}
        &M h_e\norm{q_e}_{0,e} \lesssim 
        M \max\left\{ 1, \overline{C}_3 \right\} \left(\norm{\bu-\bu_h}_{\bV_1(E)}^2 + \norm{{p}-{p}_h}_{\mathrm{Q}_1(E)}^2 +  M\eta_{2,E}^2 + \sum_{i=1}^{2} S_{i,E}^2 + M h_E^2 \norm{q_E}_{0,E}^2\right).\label{4.31}
    \end{align}
    The trace inequality  yields  $\norm{\varphi_{\mathrm{D}} - \varphi_h}_{0,e}^2 = \norm{\varphi - \varphi_h}_{0,e}^2 \lesssim h_e^{-1} \norm{\varphi - \varphi_h}_{0,E}^2 + h_e |\varphi - \varphi_h|_{1,e}$, and consequently,
    \begin{align*}
        \norm{\varphi_{\mathrm{D}} - \varphi_h}_{0,e}^2 & \lesssim h_e^{-1} \norm{\varphi-\varphi_h}_{0,e} + h_E \norm{\bPi_2^{0,k_2}(\bbM^{-1}(\beps(\overline{\bu}_h),{p}_h) \bPi_2^{0,k_2} \bzeta_h) - \nabla \varphi_h}_{0,E}^2 \\
        & \quad + h_E \norm{\bPi_2^{0,k_2}(\bbM^{-1}(\beps(\overline{\bu}_h),{p}_h) \bPi_2^{0,k_2} \bzeta_h) - \bbM^{-1}(\beps(\bu),{p})\bzeta}_{0,E}.
    \end{align*}
    This and a scaling argument with the inclusion of $\mathrm{HOTs}$ lead to the following bound
    \begin{align*}
        M h_e \norm{\varphi_{\mathrm{D}} - \varphi_h}_{0,e}^2 \lesssim  M^2  (\norm{\varphi-\varphi_h}_{\mathrm{Q}_2(E)} + M h_E^2\norm{\mathbf{q}_E}_{0,E} + \mathrm{HOTs}).
    \end{align*}
    In addition, we define $q_e^*= \Pi_{2,e}^{0,k_2}(\nabla \varphi_{\mathrm{D}}\cdot \bt_e) - \bPi_2^{0,k_2}(\bbM^{-1}(\beps(\overline{\bu}_h),{p}_h) \bPi_2^{0,k_2} \bzeta_h) \cdot \bt_e \in \mathcal{P}_{k_2}(e)$, where $\Pi_{2,e}^{0,k_2}$ is defined as the polynomial projection on the edge $e$, and $\overline{\xi}_e^* = \Psi_e q_e^*$. Lemma~\ref{bubble-inner}, $\nabla \varphi \cdot \bt_e = \nabla \varphi_{\mathrm{D}} \cdot \bt_e$ on $\Gamma_{\mathrm{D}}$, \eqref{eq:diffusive-flux}, and an integration by parts imply that $\norm{q_e^*}_{0,e}^2 \lesssim \left| \int_e  \overline{\xi}_e^* q_e^* \right|$. Furthermore,
    \begin{align*}
        \left| \int_e  \overline{\xi}_e^* q_e^* \right| & = \biggl| \sum_{E\in \mathcal{T}^h_e} \int_E  \brot(\overline{\xi}_e^*) \cdot (\bPi_2^{0,k_2}(\bbM^{-1}(\beps(\overline{\bu}_h),{p}_h) \bPi_2^{0,k_2} \bzeta_h) - \bbM^{-1}(\beps(\bu),{p})\bzeta) \\ 
        & \quad  + \sum_{E\in \mathcal{T}^h_e} \int_E \overline{\xi}_e^* \vrot(\bPi_2^{0,k_2}(\bbM^{-1}(\beps(\overline{\bu}_h),{p}_h) \bPi_2^{0,k_2} \bzeta_h)) + \int_e \overline{\xi}_e^* \left(\nabla \varphi_{\rm{D}} \cdot \bt_e - \Pi_{2,e}^{0,k_2} \left(\nabla \varphi_{\rm{D}} \cdot \bt_e\right) \right) \biggr| \\
        & \lesssim \norm{\bPi_2^{0,k_2}(\bbM^{-1}(\beps(\overline{\bu}_h),{p}_h) \bPi_2^{0,k_2} \bzeta_h) - \bbM^{-1}(\beps(\bu),{p})\bzeta}_{0,E} |\overline{\xi}_e^*|_{1,E} \\ 
        & \quad + \norm{\vrot(\bPi_2^{0,k_2}(\bbM^{-1}(\beps(\overline{\bu}_h),{p}_h) \bPi_2^{0,k_2} \bzeta_h))}_{0,E} \norm{\overline{\xi}_e^*}_{0,E} + \norm{\nabla \varphi_{\rm{D}} \cdot \bt_e - \Pi_{2,e}^{0,k_2} (\nabla \varphi_{\rm{D}} \cdot \bt_e)}_{0,e}\norm{\overline{\xi}_e^*}_{0,e},
    \end{align*}
    which results in the following bound
    \begin{align}
        &M h_e \norm{q_e}_{0,e}^2 \lesssim M \max\left\{1,\overline{C}_3\right\} \bigl(\norm{\bu-\bu_h}_{\bV_1(E)}^2 + \norm{{p}-{p}_h}_{\mathrm{Q}_1(E)}^2 +  M \eta_{2,E}^2 + \sum_{i=1}^{2} S_{i,E}^2 + M h_E^2 \norm{q_E}_{0,E}^2 + \mathrm{HOTs}\bigr).\label{4.33}
    \end{align}
    Finally, the equation \eqref{eq:reaction-diffusion} shows that
    \begin{align}
        & \sqrt{M}\Lambda_{2,E} = \sqrt{M}\norm{\vdiv(\bzeta-\bzeta_h)  + \theta(\varphi-\varphi_h)}_{0,E} \lesssim M \left( \norm{\bzeta-\bzeta_h}_{\bV_2(E)} + \norm{\varphi-\varphi_h}_{\mathrm{Q}_2(E)}\right).\label{4.34}
    \end{align}
    Summing for all $e\in \partial E$ \eqref{4.31}-\eqref{4.33} and for all $E\in \mathcal{T}^h$ \eqref{4.29}-\eqref{4.34} concludes the proof.
\end{proof}
The efficiency of the residual and mixed estimators (up to data oscillation  and stabilisation) is summarised below, and we  recall that it is a direct consequence of Lemmas~\ref{efficiency-elasticity}-\ref{efficiency-reaction-diffusion}.

\begin{theorem}\label{th:lower-bound} Under the assumptions of Theorem~\ref{residual-total}, the following bound holds 
\[\Xi^2 + \Lambda^2 \lesssim \max \left\{  2\mu M, M^2, M\overline{C}_3 \right\} (\overline{\textnormal{e}}_h^2 + \eta^2 + S^2+ \mathrm{HOTs}).\] 
\end{theorem}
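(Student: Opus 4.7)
The plan is to combine Lemmas~\ref{efficiency-elasticity} and~\ref{efficiency-reaction-diffusion} directly, since the two of them already provide efficiency bounds for the elasticity and reaction-diffusion contributions of $\Xi^2+\Lambda^2$ separately. The global estimators decompose cleanly as
\[
\Xi^2 = \sum_{E\in\mathcal{T}^h}\Bigl(\tfrac{1}{2\mu}\Xi_{1,E}^2 + M\Xi_{2,E}^2\Bigr), \quad
\Lambda^2 = \sum_{E\in\mathcal{T}^h}\Bigl(2\mu\Lambda_{1,E}^2 + M\Lambda_{2,E}^2\Bigr),
\]
and similarly for $\eta^2$ and $S^2$, so the two lemmas exactly partition the contributions on the left-hand side and the right-hand side.

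My first step is to write $\Xi^2+\Lambda^2$ as the sum of the elasticity block $\sum_E (\tfrac{1}{2\mu}\Xi_{1,E}^2+2\mu\Lambda_{1,E}^2)$ and the reaction-diffusion block $\sum_E(M\Xi_{2,E}^2+M\Lambda_{2,E}^2)$. Apply Lemma~\ref{efficiency-elasticity} to the former, producing a bound of the form
\[
\max\{1,2\mu M\}\sum_{E\in\mathcal{T}^h}\Bigl(\overline{\textnormal{e}}_h^2+\tfrac{1}{2\mu}\eta_{1,E}^2+S_{1,E}^2\Bigr),
\]
and Lemma~\ref{efficiency-reaction-diffusion} to the latter, yielding
\[
\max\{M^2,M\overline{C}_3\}\sum_{E\in\mathcal{T}^h}\Bigl(\overline{\textnormal{e}}_h^2+M\eta_{2,E}^2+S_{2,E}^2+S_{1,E}^2+\mathrm{HOTs}\Bigr).
\]

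The second step is to absorb both prefactors into the single constant $\max\{2\mu M,M^2,M\overline{C}_3\}$. Since $\max\{1,2\mu M\}\leq 1+2\mu M\lesssim \max\{2\mu M,M^2,M\overline{C}_3\}$ whenever any of the three quantities is bounded away from zero (and otherwise the bound is trivial under the standing hypotheses of Theorem~\ref{residual-total}), and since the $S_{1,E}^2$ double-counting from the two lemmas only multiplies the contribution by a harmless constant, the two partial estimates combine to
\[
\Xi^2+\Lambda^2 \lesssim \max\{2\mu M,M^2,M\overline{C}_3\}\,\bigl(\overline{\textnormal{e}}_h^2+\eta^2+S^2+\mathrm{HOTs}\bigr),
\]
after recognising the weighted sums $\tfrac{1}{2\mu}\eta_{1,E}^2+M\eta_{2,E}^2$ and $S_{1,E}^2+S_{2,E}^2$ as the local contributions to $\eta^2$ and $S^2$.

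Since all the heavy lifting (bubble-function arguments, Helmholtz decomposition, Lipschitz estimates for $\bbM^{-1}$ and $\ell$, and the strong form of the PDE) has already been carried out in Lemmas~\ref{efficiency-elasticity}--\ref{efficiency-reaction-diffusion}, there is no genuine obstacle here. The only point requiring attention is bookkeeping the parameter-dependent prefactors so that the final constant collapses to $\max\{2\mu M,M^2,M\overline{C}_3\}$; this is where one checks that the factor $\max\{1,2\mu M\}$ is dominated (up to a constant) by the stated maximum under the smallness and parameter ranges assumed in Theorem~\ref{residual-total}.
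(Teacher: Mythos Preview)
Your proposal is correct and follows exactly the paper's approach: the paper states explicitly that the theorem ``is then a consequence of Lemmas~\ref{efficiency-elasticity}-\ref{efficiency-reaction-diffusion}'' and provides no further proof. Your only unnecessary detour is the hedging around $\max\{1,2\mu M\}\lesssim\max\{2\mu M,M^2,M\overline{C}_3\}$; since $M\geq 1$ by the standing assumption on $\bbM^{-1}$, one has $M^2\geq 1$ and the inequality holds outright without appealing to Theorem~\ref{residual-total} or any ``bounded away from zero'' caveat.
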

\begin{remark}
    Notice that Theorem~\ref{th:lower-bound} can be extended to a parameter-free bound for the regime $\mu\geq\frac{1}{2}$. Indeed, the small data assumption \eqref{small_data_disc1} implies that
    \begin{align*}
        \max\left\{\frac{1}{2\mu},2\mu\right\}M^4 = 2\mu M^4 \leq  \frac{1}{C_*^2}, \text{ with } C_* = \overline{C}_1\overline{C}_2^2 L_\ell L_\bbM \left(\norm{\varphi_{\mathrm{D}}}_{\frac{1}{2},\Gamma_{\mathrm{D}}} + \norm{g}_{0,\Omega}\right).
    \end{align*}
    This observation together with the assumption $M\geq 1$ (from Section~2) readily shows that
    \begin{align*}
        \max\left\{2\mu M,M^2,M\overline{C}_3\right\}  \leq  \max\left\{1, C^*\right\}2\mu M^4 \leq \max\left\{\frac{1}{C_*^2},\frac{C^*}{C_*^2}\right\},
    \end{align*}
    with $C^*=L_\bbM \overline{C}_2 \left(\norm{\varphi_{\mathrm{D}}}_{\frac{1}{2},\Gamma_{\mathrm{D}}} + \norm{g}_{0,\Omega}\right)$. Noticing that both the constants $C_*$ and $C^*$ do not depend on parameters, the (parameter-free) efficiency reads 
    \begin{equation*}
        \Xi^2 + \Lambda^2 \lesssim \max \left\{ \frac{1}{C_*^2},\frac{C^*}{C_*^2}\right\} (\overline{\textnormal{e}}_h^2 + \eta^2 + S^2+ \mathrm{HOTs}).
    \end{equation*}
\end{remark}

\section{3D virtual element formulation}\label{sec:3d} In this section, we extend the discretisation to the 3D case following \cite{beirao2020stokes,veiga-Hdiv}. We introduce two novel quasi-interpolators for $\bH^1$ functions in Stokes-like and edge  VE spaces, and an extended Helmholtz decomposition for 3D mixed VEM. Although we only sketch the proof of reliability to avoid repeating arguments from the two-dimensional case, we present a detailed proof of the interpolation estimates, as these constitute essential tools and may be of independent interest.

\subsection{Considerations for 3D discretisation} Let $\mathcal{T}^h$ be a decomposition of $\Omega$ into polyhedral elements $P$ with diameter $h_P$ and let $\mathcal{F}^h$ be the set of faces $f$ with diameter $h_f$. Natural extensions of the mesh assumptions from Section~\ref{sec:vem} are given as follows:
\begin{enumerate}[label={(\bfseries M\arabic*)}]
    \item \label{M3} each polyhedral element $P$ is star-shaped with respect to a ball of radius $\geq$ $\rho h_P$,
    \item \label{M13d} every face $f$ of $P$ is star-shaped with respect to a disk of radius $\geq$ $\rho h_P$,
    \item \label{M23d} every edge $e$ of $P$ has length $\geq$ $\rho h_P$.
\end{enumerate}

Note that the polynomial decompositions discussed in Section~\ref{sec:vem} also follow in the 3D case for $\boldsymbol{\mathcal{P}}_{k}(P)$ with the spaces $\boldsymbol{\mathcal{G}}_k^\oplus(P):= \bx\wedge(\mathcal{P}_{k-1}(P))^3$, $\boldsymbol{\mathcal{R}}_{k}(P):= \bcurl(\mathcal{P}_{k+1}(P))$, and $\boldsymbol{\mathcal{R}}_k^\oplus(P) := \bx \mathcal{P}_{k-1}(P)$, where $\bx := (x_1,x_2,x_3)^{\tt t}$, and $\wedge$ the usual external product.

Following Section~\ref{sec:aposteriori}, the set of faces is divided as $\mathcal{F}^h = \mathcal{F}^h_\Omega \cup \mathcal{F}^h_D \cup \mathcal{F}^h_N$, where $\mathcal{F}^h_\Omega = \{ f\in \mathcal{F}^h: f\subset \Omega\}$, $\mathcal{F}^h_D = \{ f\in \mathcal{F}^h : f\subset \Gamma_D\}$ and $\mathcal{F}^h_N = \{ f\in \mathcal{F}^h : f\subset \Gamma_N\}$. Furthermore, the set of faces of $P$ is denoted by $\mathcal{F}^h(P)$, the set of faces of $P$ which are not in the boundary $\partial \Omega$ is denoted by $\mathcal{F}^h_\Omega(P)$, and the ones that lie on the Dirichlet  (resp. Neumann) portion of the boundary are denoted by $\mathcal{F}^h_D(P)$ (resp. $\mathcal{F}^h_N(P)$). Also, the set of elements $P$ that share $f$ as a common face is denoted by $\mathcal{T}^h_f$ and the set of faces $f$ of $P$ that share a common edge $e$ is denoted by $\mathcal{F}^h_e(P)$. The normal and tangential jump operators are defined as usual by $\jump{\bu \cdot \bn^f_P}:= (\bu|_{P} - \bu|_{P'})|_f \cdot \bn^f_P$ and $\jump{\bzeta \times \bn^f_P}:= (\bzeta|_{P} - \bzeta|_{P'})|_f \times \bn^f_P$, where $P$ and $P'$ are elements in $\mathcal{T}^h$ with a common face $f$, also $\bn^f_P$ and  $\bt^{f,1}_P,\bt^{f,2}_P$ are the outward normal and tangential vectors of $P$ with respect to the plane defined by $f$. In addition, for a smooth enough vector-valued function $\bu$ on $P$ we define the tangential component with respect to $f$ as $\bu_f := \bu - (\bu\cdot \bn_P^f)\bn_P^f$. We also set $\bu_t |_f := \bu_f$.

We recall that the same definitions from Section~\ref{sec:proj_interp} hold for the 3D case taking into account that the set of rigid body motions for a polyhedral $P$ is given by
\begin{align*}
    \textbf{RBM}(P) &= 
        \left\{ \begin{pmatrix} \frac{1}{h_P} \\ 0 \\ 0 \end{pmatrix}, 
\begin{pmatrix} 0 \\ \frac{1}{h_P} \\ 0 \end{pmatrix}, \begin{pmatrix} 0 \\ 0 \\ \frac{1}{h_P} \end{pmatrix}, \begin{pmatrix} \frac{x_{2,P}-x_2}{h_P} \\ \frac{x_1-x_{1,P}}{h_P} \\ 0 \end{pmatrix}, \begin{pmatrix} 0 \\ \frac{x_{3,P}-x_3}{h_P} \\ \frac{x_2-x_{2,P}}{h_P} \end{pmatrix}, \begin{pmatrix} \frac{x_3-x_{3,P}}{h_P} \\ 0 \\ \frac{x_{1,P}-x_1}{h_P} \end{pmatrix}\right\}.
\end{align*}
We shall use the notation $\bPi_1^{\beps,k_1,P},\Pi_1^{\beps,k_1,P},\bPi_j^{0,k_j,P},\Pi_j^{0,k_j,P}$ (resp. $\bPi_1^{\beps,k_1,f}$, $\Pi_1^{\beps,k_1,f}$, $\bPi_j^{0,k_j,f}$, $\Pi_j^{0,k_j,f}$) for vector and scalar valued polyhedral (resp. face) projections. We refer  to \cite[Proposition 5.1]{beirao2020stokes} and \cite[Theorem 3.2]{veiga-Hdiv} for the computability of the 3D projections in Stokes and $\textrm{H}(\vdiv)-\textrm{H}(\bcurl)$ spaces, respectively. Furthermore, Lemma~\ref{approximation-estimates} can be extended to the 3D case by  classical techniques for polynomial projections and a scaling argument.

\subsection{Discrete spaces}\label{sec:discrete_spaces_3D} Given $k_1\geq 2$, we first define a local VE space \cite{beirao2020stokes} as
\begin{align*}
    \bV^{h,k_1}_1(P):= \{
            \bv\in \bH^1(P) \colon &\bv|_{\partial P}\in \widetilde{\boldsymbol{\mathcal{B}}}^{h,k_1}_1(\partial P),\;\vdiv \bv\in\mathcal{P}_{k_1-1}(P), \\
            & -2\mu \bdiv\beps(\bv) -\nabla s\in \boldsymbol{\mathcal{G}}_{k_1}^\oplus(P) \text{, for some } s\in \mathrm{L}^2_0(P)\},
\end{align*}
where the boundary space of VE functions along the boundary $\partial P$ of $P$, is defined as follows
$$\widetilde{\mathcal{B}}^{h,k_1}_1(\partial P):=\{v\in C^0(\partial P)\colon v|_f\in \widetilde{\mathcal{B}}^{h,k_1}_1(f), \, \forall f\subset\partial P\},$$
and for each face $f\in\partial P$, the enhanced VE space $\widetilde{\mathcal{B}}^{h,k_1}_1(f)$ locally solves the Poisson equation with Dirichlet boundary conditions, and it is defined by (here $\Delta_f$ denotes the tangential Laplacian on $f$)
\begin{align*}\label{VE:face}
    \widetilde{\mathcal{B}}^{h,k_1}_1(f):= \{
        v\in \mathrm{H}^1(f) \colon &v|_{\partial f}\in C^0(\partial f),\; v|_e\in\mathcal{P}_{k_1}(e),\, \forall e\subset\partial f, \; \Delta_f v\in\mathcal{P}_{k_1+1}(f),\\
        & \int_f (v-\Pi_1^{\beps,k_1,f}v) m_{k_1+1} = 0, \; \forall m_{k+1}\in\mathcal{M}_{k_1+1}(f)\setminus\mathcal{M}_{k_1-2}(f)\}.
\end{align*}
Notice that the enhanced property on faces is needed in 3D to compute exactly the  $L^2$-orthogonal projection $\Pi_1^{0,k_1+1,f}$ (see \cite[Remark 5]{ahmad13} for further details).  Moreover, $\boldsymbol{\mathcal{P}}_{k_1}(P)\subseteq \bV_1^{h,k_1}(P)$. The global discrete spaces are defined by
\begin{align*}
\bV^{h,k_1}_1 := \{\bv\in \bV_1: \bv|_P\in\bV^{h,k_1}_1(P), \, \forall P\in\mathcal{T}^h\}, \quad
\mathrm{Q}_1^{h,k_1} := \{ q\in \mathrm{Q}_1 \colon q|_P\in \mathcal{P}_{k_1-1}(P), \, \forall P\in \mathcal{T}^h \}.
\end{align*}
The DoFs for $\bv_h\in\bV_1^{h,k_1}(P)$ and ${q}_h\in \mathrm{Q}_1^{h,k_1}(P)$ are as follows
\begin{align*}
    &\bullet \text{The values of $\bv_h$ at the vertices of $P$},\\
    &\bullet \text{The values of $\bv_h$ at the $k_1-1$ internal quadrature points on each edge of $P$},\\
    &\bullet \dashint_f (\bv_h\cdot\bn_P^f)m_{k_1-2}, \quad \forall m_{k_1-2}\in\mathcal{M}_{k_1-2}(f),\\
    &\bullet \dashint_f (\bv_h)_t\cdot\mathbf{m}_{k_1-2}, \quad \forall \mathbf{m}_{k_1-2}\in \boldsymbol{\mathcal{M}}_{k_1-2}(f),\\
    &\bullet \int_P \bv_h\cdot \mathbf{m}_{k_1-2}^\oplus, \quad \forall \mathbf{m}_{k_1-2}^\oplus\in\boldsymbol{\mathcal{M}}_{k_1-2}^\oplus(P),\\
    &\bullet \int_P (\vdiv \bv_h) m_{k_1-1}, \quad \forall m_{k_1-1}\in \mathcal{M}_{k_1-1}(P)\setminus \left\{ \frac{1}{h_P} \right\},\\
    &\bullet \int_P {q}_h m_{k_1-1}, \quad \forall m_{k_1-1}\in \mathcal{M}_{k_1-1}(P).
\end{align*}

 The construction of the $\bH(\vdiv,\Omega)$ conforming 3D VE space naturally follows its 2D counterpart \cite{veiga-Hdiv}. The definition differs from the 2D version by setting $k_2\geq 1$ to ensure continuity of the normal components across faces. The discrete VE space locally solves a $\nabla(\vdiv)$-$\bcurl$ problem as follows
\begin{align*}
    \bV_2^{h,k_2}(P) := \{ \bxi \in \bH(\vdiv,P)\cap \bH(\bcurl,P) \colon &\bxi \cdot \bn_P^f|_f \in \mathcal{P}_{k_2}(f), \, \forall f\in \partial P, \\
    & \nabla (\vdiv \bxi) \in \boldsymbol{\mathcal{G}}_{k_2-2}(P), \; \bcurl \bxi \in \boldsymbol{\mathcal{R}}_{k_2-1}(P)\}.
\end{align*}
Observe that $\boldsymbol{\mathcal{P}}_{k_2}(P)\subseteq\bV_2^{h,k_2}(P)$. Then, the discrete global spaces are defined by
\begin{align*}
    \bV_2^{h,k_2} := \{ \bxi \in \bV_2 \colon \bxi|_P \in \bV_2^{h,k_2}(P), \, \forall P\in \mathcal{T}^h \}, \quad
    \mathrm{Q}_2^{h,k_2} := \{ \psi\in \mathrm{Q}_2 \colon \psi|_P\in \mathcal{P}_{k_2-1}(P), \, \forall P\in \mathcal{T}^h \}.
\end{align*}
The set of DoFs for $\bxi_h\in \bV_2^{h,k_2}(P)$ and $\psi_h\in \mathrm{Q}_2^{h,k_2}(P)$ can be taken as
\begin{align*}
&\bullet \text{The values of } \bxi_h\cdot \bn_{P}^{f} \text{ at the $k_2+1$ quadrature points on each face of $P$}, \\
&\bullet  \int_P \bxi_h\cdot \mathbf{m}_{k_2-2}^\nabla, \quad \forall \mathbf{m}_{k_2-2}^{\nabla} \in \boldsymbol{\mathcal{M}}_{k_2-2}^{\nabla}(P),\\
&\bullet \int_P \bxi_h \cdot \mathbf{m}_{k_2}^{\oplus}, \quad \forall \mathbf{m}_{k_2}^{\oplus} \in \boldsymbol{\mathcal{M}}_{k_2}^{\oplus}(P), \\
&\bullet \int_P \psi_h m_{k_2}, \quad \forall m_{k_2}\in \mathcal{M}_{k_2}(P).
\end{align*}

\subsection{Interpolation operators}\label{sec:interp_3d}
Our goal is to define a quasi--interpolator $\mathbf{I}_1^{Q,k_1}:\mathbf{H}^{1}(P)\rightarrow \bV_1^{h,k_1}(P)$ and Fortin operator $\mathbf{I}_2^{F,k_2}:\mathbf{H}^{1}(P)\rightarrow \bV_2^{h,k_2}(P)$, which are essential in extending the reliability result (see Theorem~\ref{th:upper-bound}) to the 3D case. The Fortin operator $\mathbf{I}_2^{F,k_2}$ defined through the DoFs (similarly to the 2D case) is presented in \cite[Section 4.1]{Beirao22}. On the other hand, the construction of $\mathbf{I}_1^{Q,k_1}$ is more involved due to the minimal regularity, which is stated next.  
\begin{proposition}\label{prop:int}
   Let $\bv\in \bH^{s_1+1}(\Omega)$, and $0\leq s_1 \leq k_1$. Under the mesh assumptions, there exists $\mathbf{I}_1^{Q,k_1}\bv\in\bV_1^{h,k_1}$ such that, for all $P\in \mathcal{T}^h$ 
   \[\norm{\bv-\mathbf{I}_1^{Q,k_1}\bv}_{0,P}+h_P\norm{\beps(\bv-\mathbf{I}_1^{Q,k_1}\bv)}_{0,P}\lesssim 
   h_P^{s_1+1}|\bv|_{s_1+1,D(P)},\]
   where $D(P)$ denote the union of the polyhedral elements in $\mathcal{T}^h$ intersecting $P$. 
\end{proposition}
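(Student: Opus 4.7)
The plan is to adapt the 2D Stokes-like quasi-interpolator of \cite{beirao2020stokes} to 3D, combining a Scott--Zhang type averaging for the boundary trace with an interior completion that enforces the remaining DoFs of $\bV_1^{h,k_1}(P)$. First, I would build a vectorial Scott--Zhang operator on the mesh skeleton that reproduces the vertex values (averaged over neighbouring edges/faces to accommodate $\bH^1$ regularity), the edge polynomial moments of degree $\leq k_1-2$, and the face polynomial moments of degree $\leq k_1-2$ in both normal and tangential components. Because these averages are taken over entities of dimension $d-1$ and $d-2$, they are well-defined for $\bH^1$ functions by the trace theorem, and the construction reproduces every polynomial in $\boldsymbol{\mathcal{P}}_{k_1}(\Omega)$.

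Second, for each $P\in\mathcal{T}^h$, I would define $\mathbf{I}_1^{Q,k_1}\bv|_P$ as the unique element of $\bV_1^{h,k_1}(P)$ whose trace on $\partial P$ agrees with the Scott--Zhang boundary function, whose moments against $\boldsymbol{\mathcal{M}}_{k_1-2}^{\oplus}(P)$ match those of $\bv$, and whose divergence moments against $\mathcal{M}_{k_1-1}(P)\setminus\{h_P^{-1}\}$ equal those of $\vdiv\bv$. Unisolvency follows directly from the DoF characterisation of $\bV_1^{h,k_1}(P)$ recalled in Section~\ref{sec:discrete_spaces_3D}. By construction, this interpolator is polynomial-preserving on $\boldsymbol{\mathcal{P}}_{k_1}(P)$ since $\boldsymbol{\mathcal{P}}_{k_1}(P)\subseteq \bV_1^{h,k_1}(P)$.

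Third, the approximation estimate follows from the usual Bramble--Hilbert plus stability strategy. Pick a polynomial $\bv_\pi\in \boldsymbol{\mathcal{P}}_{s_1}(D(P))$ with
\[
\norm{\bv-\bv_\pi}_{0,D(P)}+h_P|\bv-\bv_\pi|_{1,D(P)}\lesssim h_P^{s_1+1}|\bv|_{s_1+1,D(P)},
\]
write $\bv-\mathbf{I}_1^{Q,k_1}\bv=(\bv-\bv_\pi)-\mathbf{I}_1^{Q,k_1}(\bv-\bv_\pi)$ using polynomial preservation, and reduce the proof to the local stability bound
\[
\norm{\mathbf{I}_1^{Q,k_1}\bw}_{0,P}+h_P\norm{\beps(\mathbf{I}_1^{Q,k_1}\bw)}_{0,P}\lesssim \norm{\bw}_{0,D(P)}+h_P|\bw|_{1,D(P)}, \quad \forall \bw\in \bH^1(D(P)).
\]
This stability is split into a harmonic lifting of the Scott--Zhang boundary trace (controlled by standard Scott--Zhang bounds together with scaled trace inequalities based on \ref{M3}--\ref{M23d}) and an interior correction with vanishing boundary trace (controlled by the continuity of the $\bL^2$-projections defining its DoFs and a scaled inverse inequality on the finite-dimensional subspace $\bV_1^{h,k_1}(P)\cap \bH^1_0(P)$).

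The main obstacle is precisely this last stability step: because $\bV_1^{h,k_1}(P)$ is defined implicitly through an auxiliary Stokes-like problem, one must bound the $\bH^1$-seminorm of a VE function with zero boundary trace in terms of its interior DoFs with the correct $h_P$-scaling. This requires a Poincaré--Korn inequality on $P$ under the star-shapedness assumption \ref{M3}, a discrete inf--sup argument for the interior Stokes block (in the spirit of \cite{beirao2020stokes,beirao13}) to handle the divergence moments, and a careful scaling on the reference configuration to track the powers of $h_P$. The full technical argument, including face-by-face Scott--Zhang estimates on the enhanced boundary space $\widetilde{\mathcal{B}}_1^{h,k_1}$, is deferred to Supplementary Material~\ref{sec:appendix-B}.
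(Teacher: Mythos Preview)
Your route is viable but genuinely different from the paper's. You propose the classical FEM template: fix the DoFs of $\bV_1^{h,k_1}(P)$ (Scott--Zhang on the skeleton, moments of $\bv$ and $\vdiv\bv$ in the interior), invoke polynomial preservation, and reduce everything to a single local stability bound for $\mathbf{I}_1^{Q,k_1}$. The paper instead works constructively through the PDE that defines the space: after a Cl\'ement-type boundary interpolant $\mathbf{I}_1^{C,k_1}\bv$ (from \cite{cangiani2017posteriori}, essentially your first step), it \emph{solves a local Stokes problem} on each $P$ with boundary datum $\mathbf{I}_1^{C,k_1}\bv$, divergence datum $\Pi_1^{0,k_1-1}(\vdiv\mathbf{I}_1^{C,k_1}\bv)$, and right-hand side $\bg_\pi^\oplus$ extracted from $\bv_\pi=\bPi_1^{\beps,k_1}\bv$, obtaining $\widetilde{\bv}_I\in\widetilde{\bV}_1^{h,k_1}(P)$; the $\beps$-estimate for $\widetilde{\bv}_I$ then follows from an energy minimisation comparison with $\bv_\pi$ (which solves the same Stokes problem with different data) and the right-inverse of the divergence \cite{duran2012elementary}. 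A final correction step moves from $\widetilde{\bV}_1^{h,k_1}$ to the enhanced space $\bV_1^{h,k_1}$, controlled by the inverse-type bound of \cite[Theorem~4]{daveiga2022stability}.

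What each approach buys: the paper never needs a free-standing ``norm of a VE function $\lesssim$ its DoF values'' lemma, because the estimates come directly from the variational structure of the local Stokes problem. Your approach is cleaner to state but concentrates all the difficulty in the stability bound, and the step you flag as the obstacle is indeed the crux. Calling it a ``scaled inverse inequality'' undersells it: $\bV_1^{h,k_1}(P)\cap\bH_0^1(P)$ is not a polynomial space, and the uniform (in $h_P$ and shape) equivalence between the DoF norm and the energy norm ultimately rests on the same local Stokes stability that the paper exploits explicitly. So both routes land on the same analytic core; the paper makes that dependence transparent, while yours would import it as a black box.
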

\begin{proof}
\textit{Step 1 (Existence of interpolation in the space $\bV_1^{h,k_1}$)}. Let $P\in\mathcal{T}^h$, we start considering the super-enhanced version of the enhanced VE space $\widetilde{\boldsymbol{\mathcal{B}}}^{h,k_1}_1(f)$  from \cite{cangiani2017posteriori} which locally solves a Poisson problem with Dirichlet boundary conditions. Let $\mathbf{I}_1^{C,k_1}\bv\in\bH^1_D(\Omega)$ be the Cl\'ement-type VE interpolation in $\widetilde{\boldsymbol{\mathcal{B}}}^{h,k_1}_1(f)$ satisfying the following estimate for all $P\in \mathcal{T}^h$ (see \cite[Theorem 11]{cangiani2017posteriori})
\begin{align}\label{5.1.1}
\norm{\bv-\mathbf{I}_1^{C,k_1}\bv}_{0,P}+h_P\norm{\beps(\bv-\mathbf{I}_1^{C,k_1}\bv)}_{0,P}\lesssim  
h_P^{s_1+1}|\bv|_{s_1+1,D(P)}.
\end{align}
Let $\bv_\pi=\bPi_1^{\beps,k_1}\bv$ be the polynomial approximation of $\bv$. 
The decomposition $\boldsymbol{\mathcal{P}}_{k_1-2}(P) = \boldsymbol{\mathcal{G}}_{k
_1-2}(P) \oplus  \boldsymbol{\mathcal{G}}_{k_1-2}^\oplus(P)$ guarantees the existence of $p_\pi\in\mathcal{P}_{k_1-1}(P)$ and $\bg_\pi^\oplus\in\boldsymbol{\mathcal{G}}_{k_1-2}^\oplus(P)$ such that 
\begin{align}
    - 2\mu \bdiv \beps(\bv_\pi) = \nabla p_\pi+\bg_\pi^\oplus. \label{5.1.2}
\end{align}
For $q_{k_1-1}|_P = \Pi_1^{0,k_1-1}(\vdiv \mathbf{I}_1^{C,k_1}\bv)$, we introduce the following Stokes problem for all $P\in\mathcal{T}^h$ as 
\begin{align}\label{5.1.3}
    \begin{cases}
        -2\mu \bdiv (\beps(\widetilde{\bv}_I)) -\nabla s = \bg_\pi^\oplus&\quad\text{in}\;P,\\
        \vdiv \widetilde{\bv}_I = q_{k_1-1}&\quad\text{in}\;P,\\
        \widetilde{\bv}_I = \mathbf{I}_1^{C,k_1}\bv&\quad\text{on}\;\partial P.
    \end{cases}
\end{align}
Observe that $\widetilde{\bv}_I\in\bV_1^{h,k_1}(P)$ from the definition of $\bV_1^{h,k_1}(P)$. This and $\widetilde{\bv}_I=\mathbf{I}_1^{C,k_1}\bv\in\bV_1$ on each boundary of $P$ imply that $\widetilde{\bv}_I\in\bV_1^{h,k_1}$.

\noindent \textit{Step 2 (Error estimate)}. Note that \eqref{5.1.2} imply that $\bv_\pi$ solves the following local Stokes problem 
\begin{align}\label{5.1.4}
    \begin{cases}
        -2\mu \bdiv (\beps(\bv_\pi))-\nabla p_\pi = \bg_\pi^\oplus&\quad\text{in}\;P,\\
        \vdiv \bv_\pi = \vdiv\bv_\pi&\quad\text{in}\;P,\\
       \bv_\pi = \bv_\pi&\quad\text{on}\;\partial P.
    \end{cases}
\end{align}
On the other hand, we define the auxiliary problem
\begin{align}\label{5.1.4.1}
    \begin{cases}
        -2\mu \bdiv (\beps(\widetilde{\bv})) -\nabla \widetilde{s} = \bg_\pi^\oplus&\quad\text{in}\;P,\\
        \vdiv \widetilde{\bv} = \vdiv \mathbf{I}_1^{C,k_1}\bv&\quad\text{in}\;P,\\
        \widetilde{\bv} = \mathbf{I}_1^{C,k_1}\bv&\quad\text{on}\;\partial P.
    \end{cases}
\end{align}
Then, subtracting \eqref{5.1.4} and \eqref{5.1.4.1} lead to
\begin{align}\label{5.1.5}
    \begin{cases}
        -2\mu \bdiv (\beps(\bv_\pi-\widetilde{\bv}))-\nabla (p_\pi-\widetilde{s}) = \bzero&\quad\text{in}\;P,\\
        \vdiv (\bv_\pi-\widetilde{\bv}) = \vdiv(\bv_\pi-\mathbf{I}_1^{C,k_1}\bv)&\quad\text{in}\;P,\\
       \bv_\pi- \widetilde{\bv} = \bv_\pi-\mathbf{I}_1^{C,k_1}\bv&\quad\text{on}\;\partial P.
    \end{cases}
\end{align}
Since $\bz-(\bv_\pi- \widetilde{\bv})\in \bH^1_{0}(P)$ for any $\bz\in \bH^1(P)$ with $\vdiv \bz = \vdiv (\bv_\pi-\mathbf{I}_1^{C,k_1}\bv)$ in $P$ and $\bz = \bv_\pi-\mathbf{I}_1^{C,k_1}\bv$ on $\partial P$, an integration by parts twice and \eqref{5.1.5} show 
\begin{align*}
    &2\mu \int_P \beps(\bv_\pi- \widetilde{\bv}):\beps(\bz-(\bv_\pi- \widetilde{\bv}))=\int_P \nabla (p_\pi - \widetilde{s}) \cdot (\bz-(\bv_\pi- \widetilde{\bv}))=-\int_P(p_\pi-\widetilde{s})\vdiv(\bz-(\bv_\pi- \widetilde{\bv}))=0.
\end{align*}
This results in $\norm{\beps(\bv_\pi- \widetilde{\bv})}_{0,P}\leq \norm{\beps(\bz)}_{0,P}$ and hence for $\bz = \bv_\pi-\mathbf{I}_1^{C,k_1}\bv$ we have 
\begin{align}
    &\norm{\beps(\bv_\pi- \widetilde{\bv})}_{0,P}\leq \norm{\beps(\bv_\pi-\mathbf{I}_1^{C,k_1}\bv)}_{0,P} \leq \norm{\beps(\bv - \bv_\pi)}_{0,P}+\norm{\beps(\bv-\mathbf{I}_1^{C,k_1}\bv)}_{0,P}
    \lesssim h_P^{s_1}|\bv|_{s_1+1,D(P)},\label{estimate_aux_poly_st}
\end{align}
where the triangle inequality, $\norm{\beps(\cdot)}_{0,P}\leq |\cdot|_{1,P}$, Lemma~\ref{approximation-estimates} and \eqref{5.1.1} were applied. Next, to estimate $\widetilde{\bv}-\widetilde{\bv}_I$ we consider the auxiliary problem arising from subtracting \eqref{5.1.4.1} and \eqref{5.1.3}
\begin{align*}
    \begin{cases}
        -2\mu \bdiv(\beps(\widetilde{\bv}-\widetilde{\bv}_I)) -\nabla (\widetilde{s}-s) = \bzero &\quad\text{in}\;P,\\
        \vdiv (\widetilde{\bv}-\widetilde{\bv}_I) = \vdiv(\mathbf{I}_1^{C,k_1}\bv)-q_{k-1}&\quad\text{in}\;P,\\
        \widetilde{\bv}-\widetilde{\bv}_I = \bzero &\quad\text{on}\;\partial P.
    \end{cases}
\end{align*}
Given a star-shaped polyhedral domain $P$ with respect to a ball $B$, the mesh assumptions lead  to $|P||B|^{-1}\leq \rho^{-3}$ in 3D (see e.g. \cite{di2020hybrid}). This and \cite[Theorem 3.2]{duran2012elementary} imply that there exists a universal positive constant 
depending only on the mesh regularity parameter $\rho$ and the dimension $d$ (but independent of $P$) such that 
\begin{align}
\norm{\beps(\widetilde{\bv}-\widetilde{\bv}_I)}_{0,P}\leq |\widetilde{\bv}-\widetilde{\bv}_I|_{1,P} 
\lesssim \norm{\vdiv(\mathbf{I}_1^{C,k_1}\bv)-\Pi_1^{0,k_1-1}\vdiv(\mathbf{I}_1^{C,k_1}\bv)}_{0,P}.\label{duran}
\end{align}
Note that the triangle inequality, \eqref{5.1.1}, and the estimates for $\Pi_1^{0,k_1-1}$  allow us to assert that 
\begin{align}
&\norm{\vdiv (\mathbf{I}_1^{C,k_1}\bv) - \Pi_1^{0,k_1-1} \vdiv \bv}_{0,P} 
\lesssim \norm{\vdiv(\bv-\mathbf{I}_1^{C,k_1}\bv)}_{0,P}+\norm{\vdiv\bv-\Pi_1^{0,k_1-1}\vdiv\bv}_{0,P}
\lesssim h^{s_1}|\bv|_{s_1+1,D(P)}.\label{est:vI}
\end{align}
Thus, \eqref{duran}, the $\mathrm{L}^2$-orthogonality of $\Pi_1^{0,k_1-1}$, and \eqref{est:vI} lead to 
\begin{align}\label{estimate-aux}
    \norm{\beps(\widetilde{\bv}-\widetilde{\bv}_I)}_{0,P} &\lesssim \norm{\vdiv(\mathbf{I}_1^{C,k_1}\bv)-\Pi_1^{0,k_1-1}\vdiv(\mathbf{I}_1^{C,k_1}\bv)}_{0,P} = \norm{\vdiv (\mathbf{I}_1^{C,k_1}\bv) - \Pi_1^{0,k_1-1} \vdiv \bv}_{0,P} \lesssim h^{s_1}|\bv|_{s_1+1,D(P)}.
\end{align}

\noindent \textit{Step 3 ($\bL^2$-error estimate)}. Since $\mathbf{I}_1^{C,k_1}\bv$ and $\mathbf{I}_1^{Q,k_1}\bv$ coincide in $\partial P$, the Poincar\'e  inequality holds. Thus,
$$\norm{\mathbf{I}_1^{C,k_1}\bv-\mathbf{I}_1^{Q,k_1}\bv}_{0,P} \lesssim h_P \norm{\beps(\mathbf{I}_1^{C,k_1}\bv-\mathbf{I}_1^{Q,k_1}\bv)}_{0,P}.$$
This, \eqref{5.1.1}, and the $H^1$ estimate of the interpolator with the triangle inequality 
$$\norm{\bv-\mathbf{I}_1^{Q,k_1}\bv}_{0,P}\leq \norm{\bv-\mathbf{I}_1^{C,k_1}\bv}_{0,P}+\norm{\mathbf{I}_1^{C,k_1}\bv-\mathbf{I}_1^{Q,k_1}\bv}_{0,P}$$ prove the required bound.
\end{proof}
\begin{remark} We recall that the scaled interpolation estimates for $\mathbf{I}_1^{Q,k_1}$ and $\mathbf{I}_2^{F,k_2}$ follow exactly as Lemma~\ref{interpolation-estimates} by adapting Proposition~\ref{prop:int} and \cite[Theorem 4.2]{Beirao22} with the correct scaling factor. 
\end{remark}
\begin{remark}\label{3d-wp} The stability and   well-posedness of  problem \eqref{eq:sto1}-\eqref{eq:sto2} in 3D is discussed in \cite[Section 5.1]{beirao2020stokes}, whereas the commutative property of $\mathbf{I}_2^{F,k_2}$ ensures the stability of the 3D reaction-diffusion equation \eqref{eq:mix1}-\eqref{eq:mix2}. The well-posedness of the discrete fully-coupled problem \eqref{eq:weak-discrete} in 3D follows using a Banach fixed-point argument with small data assumption (see \cite[Section 4]{khot2024}), while a priori error estimates are provided in \cite{rubiano2025}.
\end{remark}

\subsection{A posteriori error analysis in 3D}
Here we extend the results from \cite{munar2024} to our setting. First we invoke a 3D Helmholtz decomposition  \cite[Lemma 3.9]{gatica2016} as an extension of the 2D version used in Lemma~\ref{residual-bound-2}.

\begin{lemma} \label{lem:decomposition_Hdiv}
    Assume that $\Omega \subset \mathbb{R}^3$ is a connected domain and that $\Gamma_N$ is contained in the boundary of a convex part of $\Omega$, that is, there exists a convex domain $B$ such that $\overline{\Omega}\subseteq B$ and $\Gamma_N\subseteq \partial B$ (see Figure~\ref{fig:domain_helmhotz}). Then, for each $\bxi \in \bH_N(\vdiv,\Omega)$, there exist $\bz \in \bH^1(\Omega)$ and $\bchi \in \bH^1_N(\Omega)$ such that
    \begin{equation*}
        \bxi = \bz + \bcurl \bchi \text{ in } \Omega \text{ and } \norm{\bz}_{1,\Omega} + \norm{\bchi}_{1,\Omega} \lesssim \norm{\bxi}_{\vdiv,\Omega}.
    \end{equation*}
\end{lemma}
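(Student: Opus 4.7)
My plan is to lift the classical 3D Helmholtz decomposition for $\bH(\vdiv)$ to the mixed-boundary setting by exploiting the convex extension $B$ as the source of full $\mathrm{H}^2$-regularity via Grisvard's theorem; this is the 3D counterpart of the 2D argument from \cite{Cascon07} used inside Lemma~\ref{residual-bound-2}. The two pillars are a conforming $\bH(\vdiv)$-extension of $\bxi$ from $\Omega$ to $B$ that respects the Neumann-type condition on $\Gamma_N$, and the Neumann Laplacian on the convex set $B$.

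First I would construct an extension $\tilde{\bxi}\in\bH_0(\vdiv,B)$ with $\tilde{\bxi}|_\Omega=\bxi$ and $\norm{\tilde{\bxi}}_{\vdiv,B}\lesssim\norm{\bxi}_{\vdiv,\Omega}$. Since $\bxi\cdot\bn=0$ already on $\Gamma_N\subset\partial B$, only a field on the annular region $B\setminus\overline{\Omega}$ is needed whose normal trace matches $\bxi\cdot\bn$ on $\Gamma_D$ and vanishes on $\partial B\setminus\Gamma_N$; this is provided by a standard Bogovskii-type or trace-lifting construction, with a constant added to the divergence to absorb the flux $\int_{\Gamma_D}\bxi\cdot\bn$. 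I would then solve the pure Neumann problem $-\Delta p=-\vdiv\tilde{\bxi}$ in $B$ with $\nabla p\cdot\bn=0$ on $\partial B$; compatibility is automatic from $\tilde{\bxi}\cdot\bn|_{\partial B}=0$, convexity of $B$ yields $p\in\mathrm{H}^2(B)/\mathbb{R}$ with $\norm{p}_{2,B}\lesssim\norm{\vdiv\tilde{\bxi}}_{0,B}\lesssim\norm{\bxi}_{\vdiv,\Omega}$, and the choice $\bz:=\nabla p|_\Omega$ gives $\bz\in\bH^1(\Omega)$ satisfying $\vdiv\bz=\vdiv\bxi$ in $\Omega$, $\bz\cdot\bn=0$ on $\Gamma_N$ (inherited from the Neumann trace on $\partial B$), and the prescribed norm bound.

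The residual $\bxi-\bz\in\bH(\vdiv,\Omega)$ is now divergence-free with vanishing normal trace on $\Gamma_N$, so a 3D vector-potential theorem for polyhedra with mixed boundary data (in the spirit of Amrouche--Bernardi--Dauge--Girault) produces $\bchi\in\bH^1_N(\Omega)$ with $\bcurl\bchi=\bxi-\bz$ and $\norm{\bchi}_{1,\Omega}\lesssim\norm{\bxi-\bz}_{0,\Omega}\lesssim\norm{\bxi}_{\vdiv,\Omega}$, which closes the decomposition. The main obstacle is precisely this last step: realising the boundary condition encoded in $\bH^1_N(\Omega)$ on a \emph{non-convex} polyhedral $\Omega$ with a constant independent of the physical parameters. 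The compatibility $(\bxi-\bz)\cdot\bn|_{\Gamma_N}=0$ produced in the previous paragraph is exactly what makes such a potential exist stably; if a first-pass potential does not satisfy the full boundary condition, one corrects it by adding a gradient $\nabla q$ (which preserves the curl) obtained from a further auxiliary mixed problem on the convex lift $B$, thereby cancelling any residual trace on $\Gamma_N$ without spoiling the estimate.
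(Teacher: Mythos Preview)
The paper does not supply its own proof of this lemma; it is quoted directly from \cite[Lemma~3.9]{gatica2016} and used as a black box, so there is no in-paper argument to compare against.

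Your extension/Neumann--Laplacian strategy for producing $\bz=\nabla p|_\Omega\in\bH^1(\Omega)$ (Steps~1--2) is correct and is the expected route; the convex lift $B$ is precisely what gives Grisvard's $\mathrm{H}^2$-regularity for $p$. The gap is in Step~3. Constructing the vector potential directly on the non-convex $\Omega$ via an ABDG-type theorem will not yield $\bchi\in\bH^1(\Omega)$ in general, since those results require convexity or smoothness of the domain for $\bH^1$-regularity of the potential; you should instead build $\tilde\bchi$ on the convex $B$ from the divergence-free field $\tilde\bxi-\nabla p$ there and only then restrict to $\Omega$. Moreover, the proposed gradient correction $\bchi\mapsto\bchi+\nabla q$ cannot in general enforce the \emph{full} trace condition $\bchi|_{\Gamma_N}=\bzero$ demanded by $\bH^1_N(\Omega)$: forcing $(\nabla q)_t=-\bchi_t$ on $\Gamma_N$ is a surface-gradient constraint (so it requires the tangential trace to be surface-curl-free to begin with), and once $q|_{\Gamma_N}$ is fixed, the normal derivative $\partial_n q$ is determined by the Dirichlet-to-Neumann map and cannot be prescribed independently to match $-\bchi\cdot\bn$. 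The argument in the cited reference avoids this obstruction by working on $B$ throughout and selecting the gauge there so that the trace on $\partial B\supset\Gamma_N$ already vanishes in the required sense, rather than by an a~posteriori correction on $\Omega$.
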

 
Next, we introduce the vector space $\bV_3 := \bH(\vdiv,\Omega) \cap \bH(\bcurl,\Omega)$ with corresponding local virtual space
\begin{align*}
    \bV_3^{h,k_2+1}(P) := \{\bchi\in \bV_3(P):&\, \bchi_{t}|_{\partial P} \in \mathcal{B}_3^{h,k_2+1}(\partial P), \vdiv \bchi \in \mathcal{P}_{k_2}(P), \bcurl (\bcurl\bchi)\in \boldsymbol{\mathcal{R}}_{k_2-1}(P)\},
\end{align*}
with $\mathcal{B}_3^{h,k_2}(\partial P) := \{ \bchi \in (\mathcal{B}_3)_t : \bchi_f \in \bV_{2}^{h,k_2}(f), \, \forall f\in \partial P\}$. Here, $(\mathcal{B}_3)_t$ is defined as the tangential component of the elements in $\mathcal{B}_3(\partial P)$ given for $f\in \partial P$ by
\begin{align*}
     \mathcal{B}_3(\partial P) := \{\bchi \in \bL^2(\partial P) : &\, \bchi|_f \in  \bH(\vdiv,f) \cap \bH(\vrot,f),\,  \forall f\in \mathcal{F}^h,\bchi_{f_1}\cdot \bt_e = \bchi_{f_2}\cdot \bt_e,\, \forall f_1,f_2\in \mathcal{F}^h_e,\, e\subset \partial f \}.
\end{align*}
The VE space $\bV_{2}^{h,k_2}(f)$ refers to the 2D edge space (see \cite[Section 4]{veiga-Hdiv}) which is a rotation by $\pi/2$ of the standard VE space for mixed problems given in Section~\ref{sec:vem}. Finally, the global space $\bV_3^{h,k_2+1}$ is given as follows
\begin{align*}
    \bV_3^{h,k_2+1} := \{\bchi_h\in \bV_3: \bchi_h|_P\in\bV^{h,k_2+1}_3(P), \, \forall P\in\mathcal{T}^h\}.
\end{align*}
For further details about DoFs and unisolvence we refer to \cite[Section 6]{veiga-Hdiv}. A key relation between $\bV_2^{h,k_2}$ and $\bV_3^{h,k_2+1}$ needed for the Helmholtz decomposition is given next and we refer to \cite[Theorem 8.2]{veiga-Hdiv} for a proof.
\begin{lemma} \label{lem:relation_V3_V2}
    For $k_2 \geq 1$ and $\bchi_h \in \bV_3^{h,k_2+1}$, we have that $\bcurl \bchi_h \in \bV_2^{h,k_2}$.
\end{lemma}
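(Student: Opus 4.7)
The plan is to verify that $\bxi := \bcurl \bchi_h$ satisfies each defining property of $\bV_2^{h,k_2}$, starting element-wise on $P \in \mathcal{T}^h$ and then checking global conformity. The element-wise polynomial conditions are largely automatic. First, by the vector-calculus identity $\vdiv \bcurl = \boldsymbol{0}$, we have $\vdiv \bxi = 0$ on $P$, and in particular $\nabla(\vdiv \bxi) = \bzero \in \boldsymbol{\mathcal{G}}_{k_2-2}(P)$. Second, $\bcurl \bxi = \bcurl(\bcurl \bchi_h) \in \boldsymbol{\mathcal{R}}_{k_2-1}(P)$ by the very definition of $\bV_3^{h,k_2+1}(P)$. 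Finally, $\bxi \in \bH(\vdiv,P) \cap \bH(\bcurl,P)$ follows from $\bchi_h \in \bV_3(P)$ together with the two polynomial bounds just derived.

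The main work is to show that the face normal trace has the correct polynomial degree, $\bxi \cdot \bn_P^f |_f \in \mathcal{P}_{k_2}(f)$. Here I would rely on the identity
\[
(\bcurl \bchi_h)\cdot \bn_P^f\big|_f \;=\; \vrot_f(\bchi_{h,t}|_f),
\]
where $\vrot_f$ denotes the surface scalar curl on $f$, and where only the tangential part $\bchi_{h,t}|_f$ enters. By definition of $\mathcal{B}_3^{h,k_2+1}(\partial P)$, $\bchi_{h,t}|_f$ lies in the 2D face space $\bV_2^{h,k_2}(f)$, which is described in Section~\ref{sec:3d} as a $\pi/2$ rotation of the 2D mixed VE space of Section~\ref{sec:vem}. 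Under this rotation, the divergence on the mixed space (which lands in $\mathcal{P}_{k_2}$ by construction) corresponds exactly to $\vrot_f$ on the rotated edge space, so $\vrot_f(\bchi_{h,t}|_f) \in \mathcal{P}_{k_2}(f)$, as required.

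For the global conformity in $\bH_N(\vdiv,\Omega)$, I would argue that single-valuedness of the normal trace across every interior face $f$ follows from the previous identity: the tangential trace $\bchi_{h,t}$ is the \emph{same} element of $\bV_2^{h,k_2}(f)$ seen from either adjacent element (that cross-face matching being precisely what the compatibility condition in the definition of $\mathcal{B}_3(\partial P)$ enforces), so $\vrot_f(\bchi_{h,t}|_f)$ is single-valued. The Neumann condition $\bxi \cdot \bn = 0$ on $\Gamma_N$ is inherited from the analogue of the tangential vanishing of $\bchi_h$ on $\Gamma_N$ encoded in $\bV_3^{h,k_2+1}$, matching the continuous setting of Lemma~\ref{lem:decomposition_Hdiv}. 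The main obstacle I anticipate is the bookkeeping in the second paragraph: one must make the $\pi/2$ identification between $\bV_2^{h,k_2}(f)$ and the mixed VE space explicit enough to track how face edge-traces (normal for the mixed space, tangential for the rotated one) line up with the edge compatibility conditions built into $\mathcal{B}_3(\partial P)$, and how the $\vdiv$--$\vrot_f$ correspondence preserves the polynomial degree $k_2$.
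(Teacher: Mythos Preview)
Your direct-verification strategy is exactly the standard argument and is what the paper appeals to: the paper does not give its own proof but refers to \cite[Theorem~8.2]{veiga-Hdiv}, whose proof proceeds precisely by checking the element-wise constraints of $\bV_2^{h,k_2}(P)$ for $\bcurl\bchi_h$ and then the global normal continuity via the surface identity $(\bcurl\bchi_h)\cdot\bn_P^f|_f=\vrot_f(\bchi_{h,t}|_f)$. One small index slip to fix: by the paper's convention $\mathcal{B}_3^{h,k_2+1}(\partial P)$ places $\bchi_{h,t}|_f$ in the 2D edge space $\bV_2^{h,k_2+1}(f)$, not $\bV_2^{h,k_2}(f)$; tracking the $\pi/2$ rotation carefully so that $\vrot_f$ of a degree-$(k_2{+}1)$ edge-space function lands in $\mathcal{P}_{k_2}(f)$ is exactly the bookkeeping you correctly flag as the delicate part, and is handled in \cite[Section~4]{veiga-Hdiv}.
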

\begin{figure}[ht!]
    \centering
    \includegraphics[width=1.0\textwidth,trim={0.05cm .1cm 0.05cm 0cm},clip]{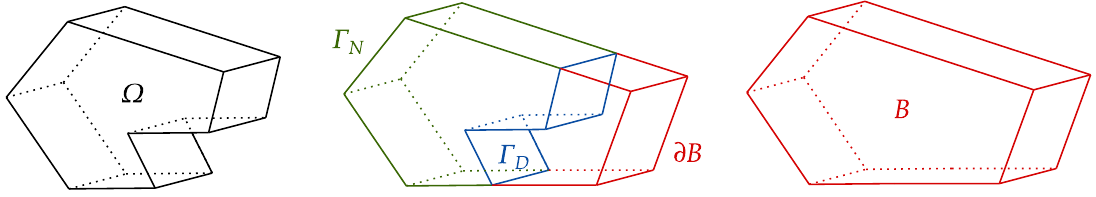}  
    \caption{Extension of $\Omega$ (left) to a convex domain $B$ (right), with mixed boundary conditions  on $\Gamma_N$ and $\Gamma_D$ (centre).\label{fig:domain_helmhotz}}
\end{figure}

Following the approach outlined in Section~\ref{sec:aposteriori}, we introduce a novel quasi-interpolation operator for the 3D VE edge space, specifically applied to $\bH^1(\Omega)$ functions. 
\begin{lemma}\label{lem:quasi-int-edge}
Let $\bchi\in \bH^{s_1+1}(\Omega)$, and $0\leq s_2 \leq k_2$. Under the mesh assumptions, there exists $\mathbf{I}_3^{Q,k_2+1}\bchi\in\bV_3^{h,k_2+1}$ such that 
   \[\norm{\bchi-\mathbf{I}_3^{Q,k_2+1}\bchi}_{0,P}+h_P|\bchi-\mathbf{I}_3^{Q,k_2+1}\bchi|_{1,P}
   \lesssim h_P^{s_2+1}|\bv|_{s_2+1,D(P)}.\]
\end{lemma}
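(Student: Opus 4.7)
The plan is to adapt, mutatis mutandis, the three–stage construction used in Proposition~\ref{prop:int}: regularise $\bchi$, correct the boundary degrees of freedom of $\bV_3^{h,k_2+1}$, and then fix the remaining interior moments. The edge-type space in 3D has DoFs that are not well defined for a generic $\bH^1$ vector field (edge integrals require more regularity than $\bH^{1/2}$ on faces), so the regularisation step is indispensable and should be based on a vectorial Scott--Zhang or Clément type averaging onto a continuous piecewise polynomial space with the same tangential-continuity pattern as $\bV_3^{h,k_2+1}$. Call the result $\bchi^{\textrm{SZ}}$; on every patch $D(P)$ it satisfies the local stability and approximation bound $\norm{\bchi-\bchi^{\textrm{SZ}}}_{0,P}+h_P|\bchi-\bchi^{\textrm{SZ}}|_{1,P}\lesssim h_P^{s_2+1}|\bchi|_{s_2+1,D(P)}$ by a Bramble--Hilbert argument on shape-regular patches (using \ref{M3}--\ref{M23d}).

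Next I would correct the trace on $\partial P$. Since $\bchi^{\textrm{SZ}}$ has pointwise values on edges, I can apply the rotated 2D mixed interpolator $\mathbf{I}_2^{F,k_2}$ face by face to the tangential component $(\bchi^{\textrm{SZ}})_f$, producing $\bchi^{\partial P}\in\mathcal{B}_3^{h,k_2+1}(\partial P)$; tangential continuity along polyhedral edges follows from the Scott--Zhang edge averaging, which is compatible across the faces meeting at $e$. The analogue of Lemma~\ref{interpolation-estimates}\eqref{tt0-2} on each face yields face-wise approximation with the right scaling in $h_f\sim h_P$. Then I would lift $\bchi^{\partial P}$ into $\bV_3^{h,k_2+1}(P)$ as the (unique, by unisolvence on this space) element whose interior DoFs are zero, and finally add a correction in the interior polynomial subspaces (those generated by $\boldsymbol{\mathcal{G}}_{k_2-2}(P)$ and $\boldsymbol{\mathcal{G}}_{k_2}^\oplus(P)$, which do not modify the tangential trace) so that all interior DoFs of $\mathbf{I}_3^{Q,k_2+1}\bchi$ coincide with those of $\bchi$. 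This last step reduces to $\bL^2$-projection on each element and is trivially stable.

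Putting the three pieces together and summing the estimates proves the claim: the Scott--Zhang error is handled by (i); the face/edge interpolation error is controlled via the 2D Fortin estimate combined with the trace inequality $\norm{\cdot}_{0,f}\lesssim h_P^{-1/2}\norm{\cdot}_{0,P}+h_P^{1/2}|\cdot|_{1,P}$; the lifting is controlled by $\norm{\bchi^{\partial P}}_{H^{-1/2}(\vdiv,\partial P)}$; and the interior projection by $\bL^2$-stability. The main obstacle I anticipate is step three, namely the stable lifting from $\mathcal{B}_3^{h,k_2+1}(\partial P)$ into $\bV_3^{h,k_2+1}(P)$ with a constant depending only on $\rho$. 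This requires an element-level inf-sup / Poincaré argument for the $\vdiv$-$\bcurl$ system defining $\bV_3^{h,k_2+1}(P)$, which in turn leans on the star-shapedness of $P$ and its faces granted by \ref{M3}--\ref{M13d}. A secondary technical point is ensuring consistency of the tangential component across faces sharing an edge; this will be arranged by choosing the averaging weights in the Scott--Zhang step to be edge-based, so that the tangential value along $e$ depends only on a neighbourhood of $e$, guaranteeing that the face interpolants agree on $e\subset\partial f_1\cap\partial f_2$ and thus land in $\mathcal{B}_3^{h,k_2+1}(\partial P)$.
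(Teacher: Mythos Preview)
Your three-stage DoF-based plan is a genuinely different route from the paper's. The paper does not face-interpolate and then lift with zero interior DoFs; instead it takes a N\'ed\'elec Cl\'ement interpolant $\mathbf{I}_3^{C,k_2+1}\bchi$ on a shape-regular sub-tetrahedralisation and \emph{defines} $\mathbf{I}_3^{Q,k_2+1}\bchi$ on each $P$ as the solution of
\[
\bcurl\bcurl(\mathbf{I}_3^{Q,k_2+1}\bchi)=\bcurl\bcurl\bchi_\pi,\quad
\vdiv(\mathbf{I}_3^{Q,k_2+1}\bchi)=\vdiv\bchi_\pi\ \text{in }P,\quad
(\mathbf{I}_3^{Q,k_2+1}\bchi)_t=(\mathbf{I}_3^{C,k_2+1}\bchi)_t\ \text{on }\partial P,
\]
with $\bchi_\pi=\bPi_2^{0,k_2+1}\bchi$. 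The $\bH^1$-error is then bounded via a Helmholtz decomposition of $\mathbf{I}_3^{Q,k_2+1}\bchi-\bchi_\pi$, Maxwell-type well-posedness from \cite{SHEN2014}, and an argument \`a la \cite{ZAGHDANI2006} for the auxiliary $\vdiv$--$\bcurl$ data; the $\bL^2$-bound follows by Poincar\'e since the tangential traces of $\mathbf{I}_3^{Q,k_2+1}\bchi$ and $\mathbf{I}_3^{C,k_2+1}\bchi$ agree on $\partial P$. In effect, the local PDE solve \emph{is} the stable lifting you flag as the main obstacle, and the Maxwell estimate delivers its $\rho$-robust stability constant directly; the separate face-by-face Fortin step in your plan is not needed because the N\'ed\'elec Cl\'ement trace already has the right tangential continuity.

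Your sketch has a real gap precisely at the lift. Controlling the zero-interior-DoF extension by $\norm{\bchi^{\partial P}}_{H^{-1/2}(\vdiv,\partial P)}$ is not enough: that norm governs $\bH(\bcurl)$-extensions, whereas the claim requires an $\bH^1$ bound, which needs $\bH^{1/2}$ control of the tangential trace together with its surface rotation. Your interior-moment correction is $\bL^2$-stable, but you have not argued its $\bH^1$ contribution is of order $h_P^{s_2}$; an inverse estimate only gives $h_P^{-1}$ times the $\bL^2$-size, so you must first show that size is $O(h_P^{s_2+1})$, which again relies on the missing lifting bound. If you pursue your route you would still need an element-level stability result for the edge VE space of strength comparable to what the paper extracts from \cite{SHEN2014}; the paper's PDE construction with polynomial interior data and Cl\'ement boundary data sidesteps this entirely.
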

\begin{proof}
\textit{Step 1 (Existence of interpolation in the 3D edge VE space)}. First, let $\mathbf{I}_3^{C,k_2+1}\bchi\in\bH^1_N(\Omega)$ be the 3D Cl\'ement-type interpolation in the N\'ed\'elec FE space defined on a sub-triangulation allowed by the mesh assumptions (See \cite[Theorem 5.2]{Ern2017}), satisfying the following estimate
\begin{align}
\norm{\bchi-\mathbf{I}_3^{C,k_2+1}\bchi}_{0,P}+h_P|\bchi-\mathbf{I}_3^{C,k_2+1}\bchi|_{1,P} 
\lesssim h_P^{s_2+1}|\bchi|_{s_2+1,D(P)}.\label{clement-bound-V3}
\end{align}
For each polyhedron $P\in\mathcal{T}^h$, let $\bchi_\pi=\bPi_2^{0,k_2+1}\bchi$ be the polynomial approximation of $\bchi$. Then, we consider 
\begin{align}\label{aux-prob-edge}
    \begin{cases}
        \bcurl(\bcurl(\mathbf{I}_3^{Q,k_2+1}\bchi))=\bcurl(\bcurl\bchi_\pi) &\quad\text{in}\;P,\\
        \vdiv (\mathbf{I}_3^{Q,k_2+1}\bchi) = \vdiv \bchi_\pi &\quad\text{in}\;P,\\
        (\mathbf{I}_3^{Q,k_2+1}\bchi)_t = (\mathbf{I}_3^{C,k_2+1}\bchi)_t &\quad\text{on}\;\partial P.
    \end{cases}
\end{align}
Note that the definition of the N\'ed\'elec space implies that $\jump{(\mathbf{I}_3^{C,k_2+1}\bchi)_f \cdot \bt_e} = 0$ for all $e\in \partial f$, and $f\in \partial P$. Therefore, the unique solvability of \eqref{aux-prob-edge} (see \cite[Section 6.2]{veiga-Hdiv}) for all $P$ implies that $\mathbf{I}_3^{Q,k_2+1} \bchi  \in \bV_3^{h,k_2+1}$. 

\noindent \textit{Step 2 (Error estimate)}. Subtracting $\bchi_\pi$ from \eqref{aux-prob-edge} leads to
\begin{align}\label{aux-prob-edge-2}
    \begin{cases}
        \bcurl(\bcurl(\mathbf{I}_3^{Q,k_2+1}\bchi-\bchi_\pi))=\bzero &\quad\text{in}\;P,\\
        \vdiv (\mathbf{I}_3^{Q,k_2+1}\bchi-\bchi_\pi) = 0 &\quad\text{in}\;P,\\
        (\mathbf{I}_3^{Q,k_2+1}\bchi-\bchi_\pi)_t = (\mathbf{I}_3^{C,k_2+1}\bchi-\bchi_\pi)_t &\quad\text{on}\;\partial P.
    \end{cases}
\end{align}
We invoke the analysis of \cite[Section 6.2]{veiga-Hdiv} to show that the unique solution of \eqref{aux-prob-edge-2} has the decomposition $\mathbf{I}_3^{Q,k_2+1}\bchi-\bchi_\pi = \tilde{\bpsi} + \nabla \tilde{\varphi}$, where $\tilde{\bpsi}\in \bH^1(P)$ and $\tilde{\varphi}\in \mathrm{H}^1_0(P)$ are the respective unique solutions of
\begin{align}\label{aux-prob-edge-3}
    &\begin{cases}
        \Delta\tilde{\varphi}=0 &\quad\text{in}\;P,\\
        \tilde{\varphi} = 0 &\quad\text{on}\;\partial P,
    \end{cases}\quad
    &\begin{cases}
        \bcurl\tilde{\bpsi} = \bh &\quad\text{in}\;P,\\
        \vdiv \tilde{\bpsi} = 0 &\quad\text{in}\;P,\\
        \tilde{\bpsi}_t = (\mathbf{I}_3^{C,k_2+1}\bchi-\bchi_\pi)_t &\quad\text{on}\;\partial P.
    \end{cases}
\end{align}
The above data $\bh\in \bH^1(P)$ is the solution of the following problem 
\begin{align}\label{aux-prob-edge-5}
    \begin{cases}
        \bcurl\bh = \bzero  &\quad\text{in}\;P,\\
        \vdiv \bh = 0 &\quad\text{in}\;P,\\
        \bh\cdot \bn = \vrot(\mathbf{I}_3^{C,k_2+1}\bchi-\bchi_\pi)_t &\quad\text{on}\;\partial P.
    \end{cases}
\end{align}
Notice from \eqref{aux-prob-edge-3} that $\tilde{\varphi} = 0$. Thus, the well-posedness of Maxwell-type equations from \cite[Theorem 2.3]{SHEN2014}, and the trace inequality imply that
\begin{align}
    & |\mathbf{I}_3^{Q,k_2+1}\bchi - \bchi_\pi|_{1,P} \lesssim \norm{\bcurl\tilde{\bpsi}}_{0,P} + \norm{(\mathbf{I}_3^{C,k_2+1}\bchi-\bchi_\pi)_t}_{\frac{1}{2},\partial P} \lesssim \norm{\bh}_{0,P} + |\mathbf{I}_3^{C,k_2+1}\bchi-\bchi_\pi|_{1,P}.\label{bound-cle-poly-1}
\end{align}
Next, we follow a local version of \cite{ZAGHDANI2006} to estimate $\norm{\bh}_{0,P}$. For this, we recall the Helmholtz decomposition $\bL^2(P) = \bH(\bcurl_{\bzero}, P) \oplus \bH_{\partial P}(\vdiv_0, P)$ (see \cite{Girault1986}) where $\bH(\bcurl_{\bzero}, P)$, $\bH_{\partial P}(\vdiv_0, P)$ correspond to $\bcurl$-free and $\vdiv$-free (with zero boundary condition) functions, respectively. Thus, there exist $\bh_1\in \bH(\bcurl \times \bzero, P)$ and $\bh_2 \in \bH_{\partial P}(\vdiv \cdot \bzero, P)$ such that $\bh = \bh_1 + \bh_2$. Moreover, $\bh_1=\nabla q$ for some $q\in \text{H}^1(P)$ with $\int_P q = 0$, and $\bh_2 = \bcurl(\bphi)$ for some $\bphi\in \bH_{\partial P}(\bcurl, P) \cap \bH(\vdiv_0, P)$, with 
\begin{align}\label{norm-h}
    \norm{\bh}_{0,P}^2 = \norm{\nabla q}_{0,P}^2 + \norm{\bcurl(\bphi)}_{0,P}^2.
\end{align}
An integration by parts and \eqref{aux-prob-edge-5} imply that
\begin{align*}
     \norm{\bh}_{0,P}^2 = \left|  \int_P \bh \cdot (\nabla q + \bcurl(\bphi)) \right| &= \left| -\int_P \vdiv(\bh) q + \int_{\partial P} (\bh \cdot \bn)q + \int_P \bcurl(\bh) \cdot \bphi + \int_{\partial P} \bh \cdot (\bphi\times\bn) \right| \notag \\
    & = \left|\int_{\partial P} \vrot(\mathbf{I}_3^{C,k_2+1}\bchi-\bchi_\pi)_t q\right|.
\end{align*}
This, the trace inequality together with \eqref{norm-h} and the Poincar\'e inequality lead to
\begin{align}
    \norm{\bh}_{0,P}^2 = \left|\int_{\partial P} \vrot(\mathbf{I}_3^{C,k_2+1}\bchi-\bchi_\pi)_t q\right| &\lesssim h_P^{-\frac{1}{2}} \norm{\bcurl(\mathbf{I}_3^{C,k_2+1}\bchi-\bchi_\pi)}_{0,P} (h_P^{-\frac{1}{2}}\norm{q}_{0,P} + h_P^{\frac{1}{2}}\norm{\nabla q}_{0,P} ) \notag\\
    & \lesssim h_P^{-\frac{1}{2}} \norm{\bcurl(\mathbf{I}_3^{C,k_2+1}\bchi-\bchi_\pi)}_{0,P} (h_P^{\frac{1}{2}}\norm{\nabla q}_{0,P}) \notag\\
    & \lesssim \norm{\bcurl(\mathbf{I}_3^{C,k_2+1}\bchi-\bchi_\pi)}_{0,P}\norm{\bh}_{0,P}.\label{bound-cle-poly-2}
\end{align}
The triangle inequality, the inequality $\norm{\bcurl(\cdot)}_{0,P}\lesssim |\cdot|_{1,P}$, \eqref{bound-cle-poly-2}, and \eqref{bound-cle-poly-1} lead to
\begin{align}
    & |\bchi - \mathbf{I}_3^{Q,k_2+1}\bchi|_{1,P} \leq |\bchi - \bchi_\pi|_{1,P} + |\mathbf{I}_3^{Q,k_2+1}\bchi - \bchi_\pi|_{1,P} \lesssim  |\bchi - \bchi_\pi|_{1,P} + |\bchi - \mathbf{I}_3^{C,k_2+1}\bchi |_{1,P}.\label{bound-final-Hs}
\end{align}
Finally, Lemma~\ref{approximation-estimates} and \eqref{clement-bound-V3} imply the $\bH^1$-error estimate.

\noindent \textit{Step 3 ($\bL^2$-error estimate)} Since $(\mathbf{I}_3^{Q,k_2+1}\bchi)_t-(\mathbf{I}_3^{C,k_2+1}\bchi)_t = 0$, a Poincar\'e inequality can be applied to the polynomial $\mathbf{I}_3^{Q,k_2+1}\bchi-\mathbf{I}_3^{C,k_2+1}\bchi$, which together with the triangle inequality lead to
$$\norm{\bchi - \mathbf{I}_3^{Q,k_2+1}\bchi}_{0,P} \leq \norm{\bchi-\mathbf{I}_3^{C,k_2+1}\bchi}_{0,P} + h_P|\mathbf{I}_3^{C,k_2+1}\bchi-\mathbf{I}_3^{Q,k_2+1}\bchi|_{1,P}.$$
The proof follows by applying again the triangle inequality, \eqref{bound-final-Hs}, \eqref{clement-bound-V3}, and Lemma~\ref{approximation-estimates}.
\end{proof}
Finally, we focus on the extension of the reliability and efficiency of the estimators for the 3D case. Notice that the presence of the $\bcurl$ operator motivates the redefinition of the reaction-diffusion estimator as follows
\begin{align*}
    &\Xi_{2,P}^2  := \sum_{f\in\mathcal{F}^h_{\mathrm{D}}(P)} h_f \norm{\varphi_{\mathrm{D}} - \varphi_h}_{0,f}^2  + \sum_{f\in\mathcal{F}^h_{\mathrm{D}}(P)} h_f \norm{(\nabla \varphi_{\mathrm{D}} - \bPi_2^{0,k_2}(\bbM^{-1}(\beps(\overline{\bu}_h),{p}_h) \bPi_2^{0,k_2} \bzeta_h) )\times \bn}_{0,f}^2 \\ 
    & \quad + \sum_{f\in \mathcal{F}^h_\Omega(P)} h_f \norm{ \jump{\bPi_2^{0,k_2}(\bbM^{-1}(\beps(\overline{\bu}_h),{p}_h) \bPi_2^{0,k_2} \bzeta_h)\times \bn_f} }_{0,f}^2 \\
    & \quad + h_P^2 \norm{\bPi_2^{0,k_2}(\bbM^{-1}(\beps(\overline{\bu}_h),{p}_h) \bPi_2^{0,k_2} \bzeta_h) - \nabla \varphi_h}_{0,P}^2 + h_P^2\norm{\bcurl(\bPi_2^{0,k_2}(\bbM^{-1}(\beps(\overline{\bu}_h),{p}_h) \bPi_2^{0,k_2} \bzeta_h))}_{0,P}^2.
\end{align*}
On the other hand, $\Theta_{1,P}$ and the remaining terms of $\Theta_{2,P}$ are defined analogously to \eqref{error-indicators} replacing edges $e$ and polygons $E$ with faces $f$ and polyhedrons $P$, respectively. 

Following the techniques from  Section~\ref{sec:aposteriori} and the definition of the quasi-interpolator $\mathbf{I}_1^{Q,k_1}$ in Lemma~\ref{prop:int}, we readily see that Lemma~\ref{residual-bound-1} holds in the 3D case. In turn, Lemma~\ref{efficiency-elasticity} is a consequence of bubble functions in polyhedra and faces (see \cite[Lemmas 8,9]{cangiani2017posteriori}). 

Next, we recall  integration by parts formulae involving $\bcurl(\bchi-\bchi_h)$, with $\bchi \in \bH^1_N(\Omega)$ and $\bchi_h:= \mathbf{I}_3^{Q,k_2+1}(\bchi) \in \bV_3^{h,k_2+1}$  \cite[Equation 2.17, Theorem 2.11]{Girault1986}:
\begin{subequations}\label{IBP-3D}
\begin{align}
    &\sum_{f\in \mathcal{F}^h_D}\langle \bcurl(\bchi-\bchi_h), \varphi_\mathrm{D} \rangle_{f} = -\sum_{f\in \mathcal{F}^h_D} \langle \bchi-\bchi_h ,\nabla \varphi_\mathrm{D} \times \bn_f \rangle_{f}, \\
    &\int_P \bPi_2^{0,k_2}(\bbM^{-1}(\beps(\overline{\bu}_h),{p}_h) \bPi_2^{0,k_2} \bzeta_h) \cdot \bcurl(\bchi-\bchi_h) = \int_P \bcurl\left( \bPi_2^{0,k_2}(\bbM^{-1}(\beps(\overline{\bu}_h),{p}_h) \bPi_2^{0,k_2} \bzeta_h) \right) \cdot (\bchi-\bchi_h) \notag\\
    & \quad + \sum_{f\in \mathcal{F}^h(P)} \langle \bchi-\bchi_h, \bPi_2^{0,k_2}(\bbM^{-1}(\beps(\overline{\bu}_h),{p}_h) \bPi_2^{0,k_2} \bzeta_h) \times \bn_f \rangle_f.
\end{align}
\end{subequations}
Using this and Lemmas~\ref{lem:decomposition_Hdiv}-\ref{lem:quasi-int-edge}, we can prove Lemma~\ref{residual-bound-2} in the 3D case. Finally, since $\bcurl(\nabla\varphi)=\bzero$, Lemma~\ref{efficiency-reaction-diffusion} can be extended again using bubble function  techniques and the integration by parts presented in \eqref{IBP-3D}. Therefore, Theorem~\ref{th:upper-bound} and Theorem~\ref{th:lower-bound} hold also in the 3D case.

\section{Numerical tests}\label{sec:numerical-examples}
In this section, we present  numerical results illustrating the properties of the robust estimator from Section~\ref{sec:aposteriori} and show the optimal behaviour of the associated adaptive algorithm under different polygonal convex meshes and polynomial orders. We also use L-shaped and  Australia-shaped domains to illustrate the capability of capturing singularities in non-convex domains. Finally, we show some 3D tests.
\begin{figure}[ht!]
    \centering
    \subfigure[Voronoi.\label{fig:polygonal}]{\includegraphics[width=0.19\textwidth]{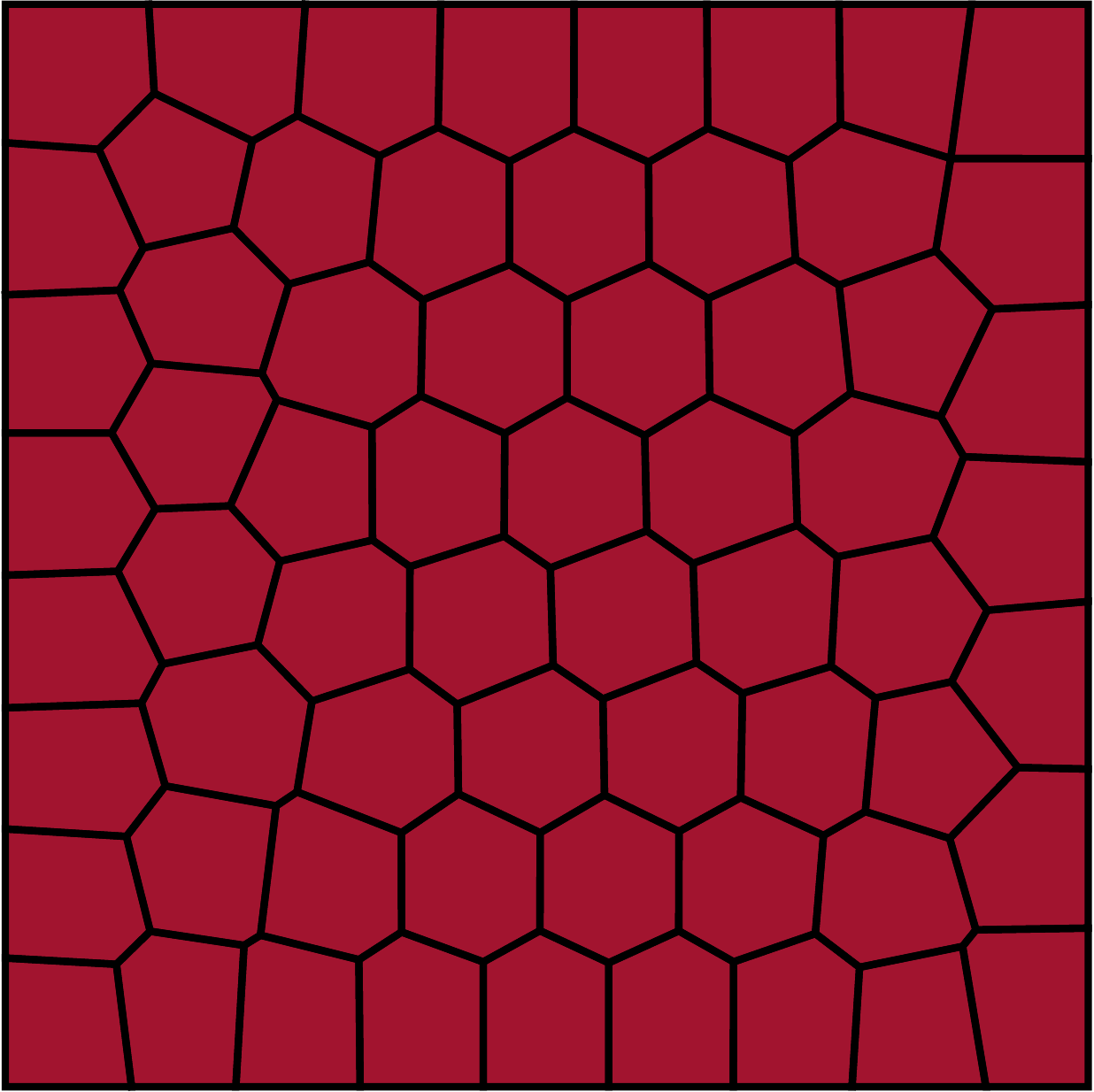}}  
    \subfigure[Square.\label{fig:square}]{\includegraphics[width=0.19\textwidth]{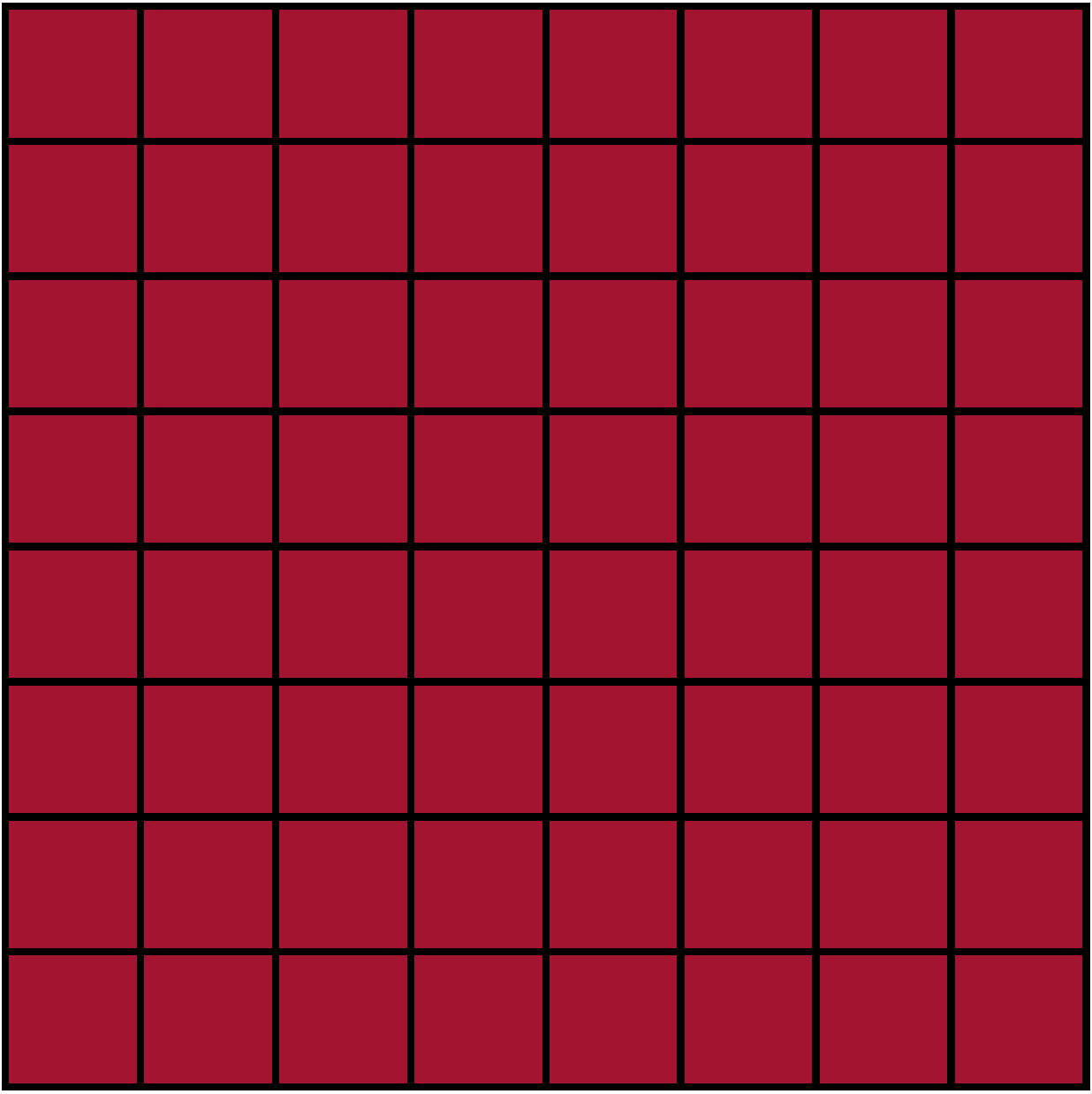}} 
    \subfigure[Crossed.\label{fig:crossed}]{\includegraphics[width=0.19\textwidth]{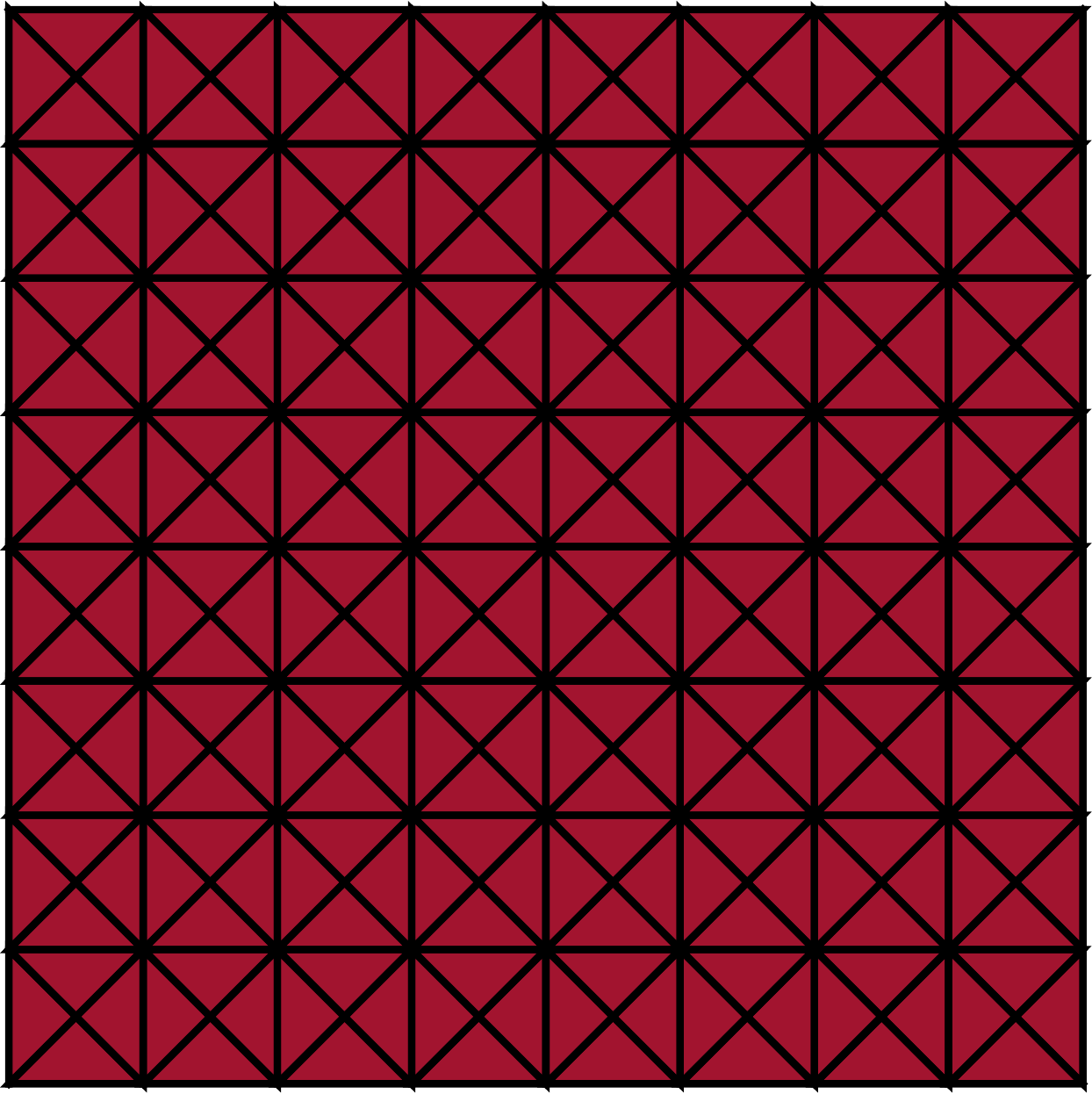}} 
    \subfigure[L.\label{fig:LShape}]{\includegraphics[width=0.19\textwidth]{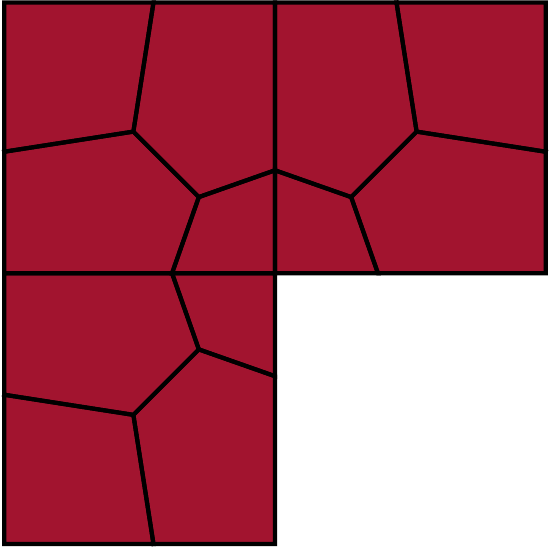}}  
    \subfigure[Australia.\label{fig:australia1}]{\includegraphics[width=0.19\textwidth]{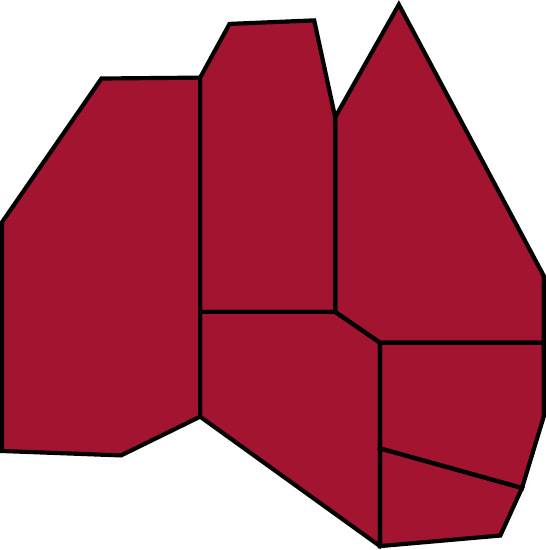}}\\  
    \subfigure[Cube.\label{fig:cube}]{\includegraphics[width=0.24\textwidth,trim={3.25cm 0cm 3.25cm .5cm},clip]{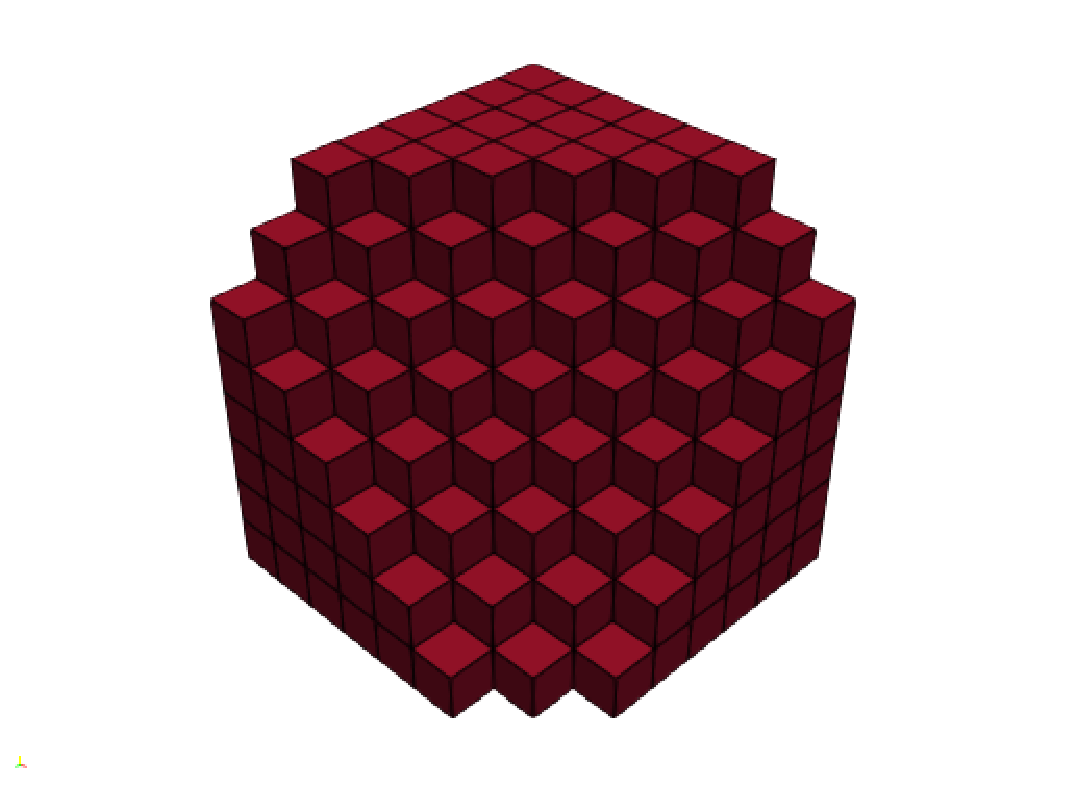}}
    \subfigure[Nove.\label{fig:novedri}]
    {\includegraphics[width=0.24\textwidth,trim={3.25cm 0cm 3.25cm .5cm},clip]{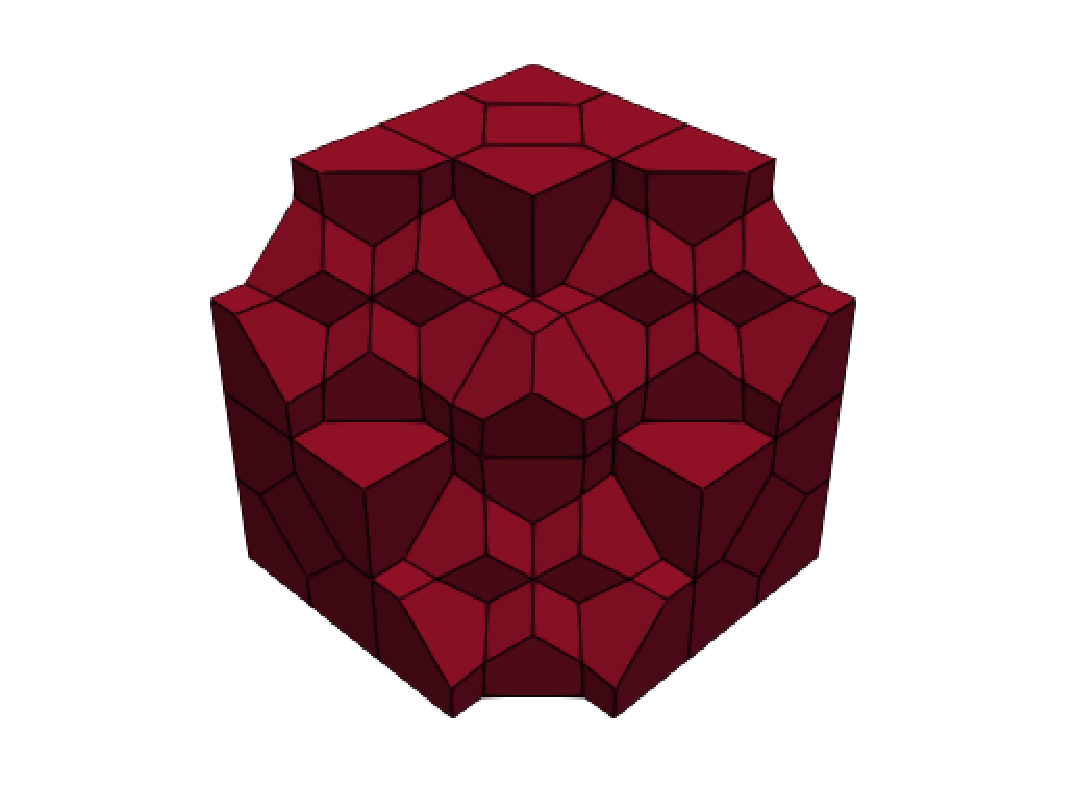}}
    \subfigure[Octa.\label{fig:octa}]
    {\includegraphics[width=0.24\textwidth,trim={3.25cm 0cm 3.25cm .5cm},clip]{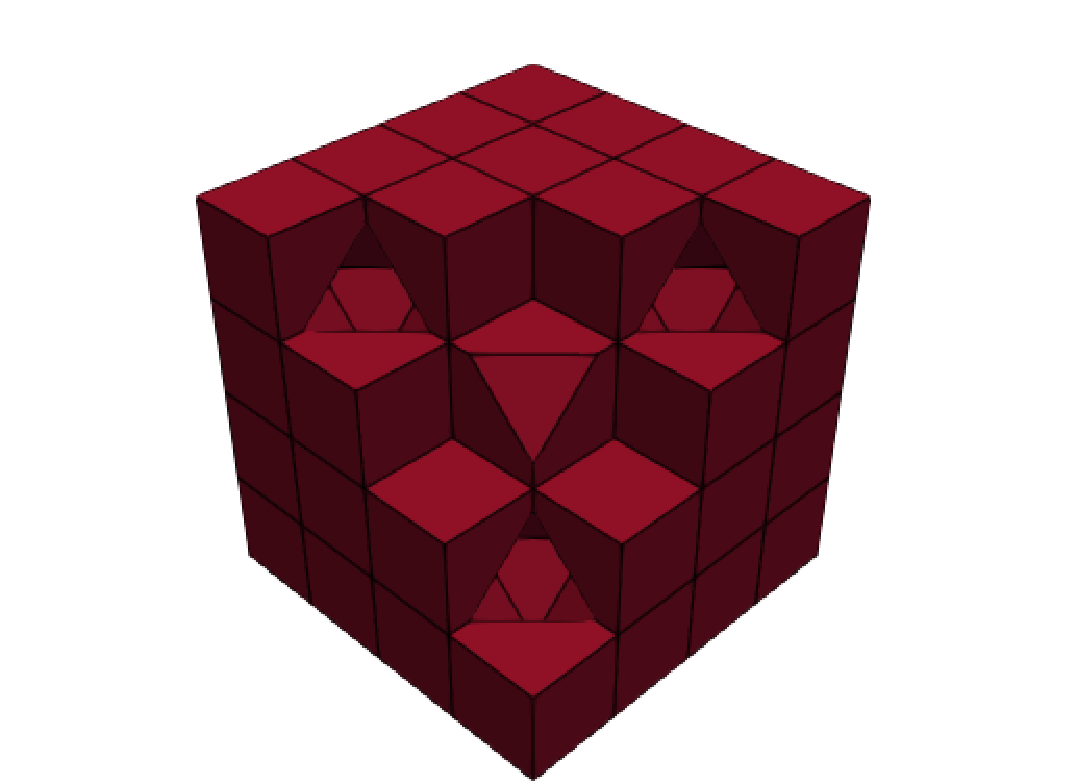}}
    \subfigure[Voro.\label{fig:voro}]
    {\includegraphics[width=0.24\textwidth,trim={3.25cm 0cm 3.25cm .5cm},clip]{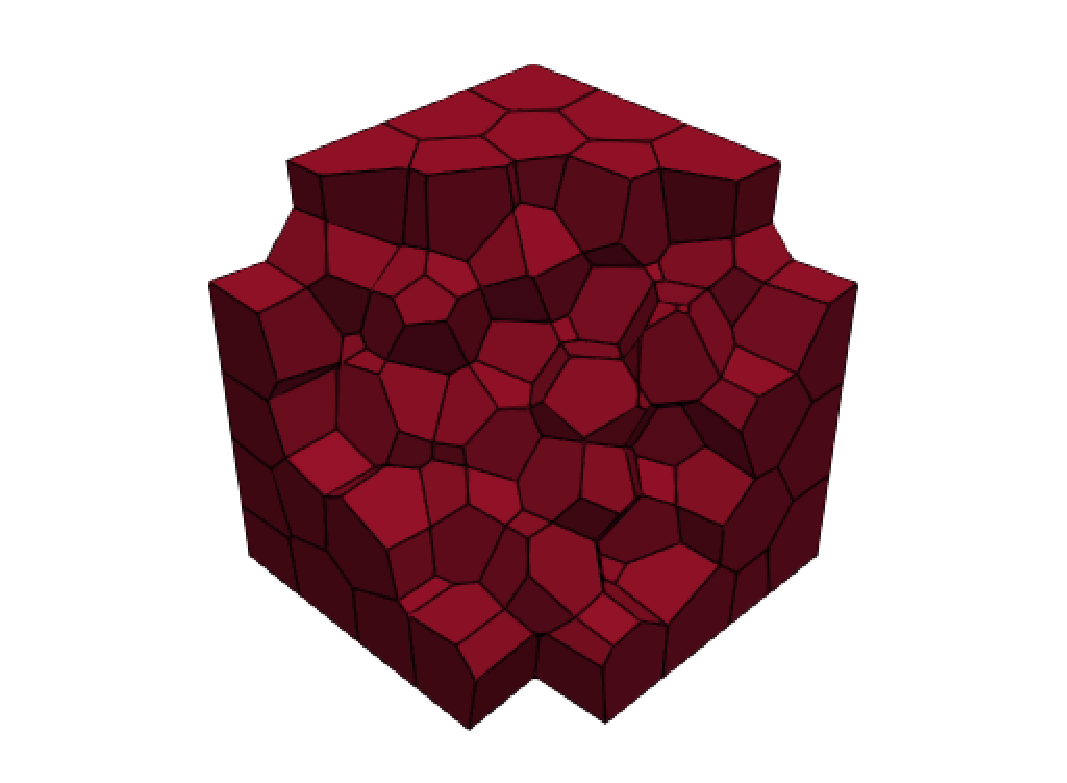}}
    
    \caption{An illustration of the distinct meshes used in the examples.\label{fig:meshes-example-1}}
\end{figure}

The error will be computed as usual in the VEM framework using the local polynomial approximation of the solution as follows
    \[\overline{\textnormal{e}}_*^2 = \norm{(\bu-\bPi_1^{\beps,k_1}\bu_h,{p}-{p}_h)}_{\bV_1^{h,k_1}\times \mathrm{Q}_1^{h,k_1}}^2 + \norm{(\bzeta-\bPi_2^{0,k_2}\bzeta_h,\varphi-\varphi_h)}_{\bV_2^{h,k_2}\times \mathrm{Q}_2^{h,k_2}}^2.\]
The numerical implementation has been done with the library \texttt{VEM++} \cite{dassi2023vem++}. We separately implemented the elasticity and reaction-diffusion equations for arbitrary orders $k_1$ and $k_2$, respectively. Then, the nonlinear coupled problem \eqref{eq:weak-discrete} is solved with an optimised Picard iteration, following the same structure as in the fixed-point analysis from  \cite{khot2024}, with a tolerance of $10^{-6}$. In this optimised version, the blocks corresponding to the coupling terms ($G_1^{\varphi_h}(\cdot)$ and $a_2^{\overline{\bu}_h,{p},h}(\cdot,\cdot)$) are the only ones that are reconstructed on each fixed point iteration. We recall that the elasticity and reaction-diffusion equations have sufficiently smooth forcing and source terms that will be manufactured according to the given exact solutions. The non-homogeneous boundary conditions require a slight modification of the right-hand side functionals as well as 
the estimator boundary term $\Xi_{1,E}$ regarding $\Gamma_{\mathrm{N}}$. The experimental order of convergence $r(\cdot)$ applied to either error $\overline{\textnormal{e}}_*$ or estimator $\Theta$ and the effectivity index (following Theorem~\ref{th:upper-bound}) of the refinement $1\leq j$ are computed from the formulae $r(\cdot)^{j+1} = -d \log\left((\cdot)^{j+1}/(\cdot)^{j}\right)/\log\left(\textnormal{DOF}_*^{j+1}/\textnormal{DOF}_*^{j}\right)$ and $\textnormal{eff}^{j} = \Theta^j/\overline{\textnormal{e}}_*^j$, with $\textnormal{DOF}_*$ indicating the total number of DoFs. In turn, the stabilisation term $S_1^E(\bu_h,\bv_h)$ follows the ``diagonal recipe" introduced in \cite{dassi2017stab} (weighted by $2\mu$) and $S_2^{\overline{\bu}_h,{p}_h,E}(\bzeta_h,\bxi_h)$ have into account the nonlinearity $\bbM^{-1}(\beps(\overline{\bu}_h),{p}_h)$ as in \cite{khot2024}. In the following experiments, we set $M \geq \max_{(x,y)\in \Omega} \quad \norm{\bbM(\beps(\bu),p)}_{F}$, where the Frobenius norm $\norm{\bbM(\beps(\bu),p)}_{F}$ is computed by the \texttt{fmincom} MATLAB optimisation subroutine.

The adaptive algorithm follows the usual strategy:
$\textnormal{SOLVE} \rightarrow \textnormal{ESTIMATE} \rightarrow \textnormal{MARK} \rightarrow \textnormal{REFINE}$.
The first three steps are performed inside \texttt{VEM++}, exploiting the efficiency capabilities of C++. For the REFINE step, we use the Matlab-based method from \cite{yu2021implementationpolygonalmeshrefinement}, which connects edge midpoints to the polygon barycentre (in 2D). In turn, the adaptive refinement used in this paper relies on the library \texttt{p4est} \cite{p4est} executed through the module GridapP4est of the Julia package \texttt{Gridap} \cite{gridap}. The \texttt{VEM++} code is executed through Matlab/Julia (2D/3D, respectively), generating a list of elements to refine, which is then processed by the refinement routine. These refinement routines restrict the mesh elements to convex polygons in 2D and cubes (with hanging nodes/faces) in 3D. However, we recall that this procedure is independent of the SOLVE, ESTIMATE, and MARK stages. Therefore, the implementation can be extended to general star-shaped polygons with more general refinement routines. 
For the MARK procedure we follow a D\"orfler/Bulk  strategy, marking the subset of mesh elements $\mathcal{K} \subseteq \mathcal{T}^h$ with the largest estimated errors such that for $\delta = \frac{1}{2}\in [0,1]$, we have $\delta \sum_{E\in \mathcal{T}^h} \Theta_E^2 \leq  \sum_{E\in \mathcal{K}} \Theta_E^2.$
\begin{figure}[htbp!]
    \centering
    \includegraphics[width=0.49\textwidth,trim={4.4cm 2.75cm 5.cm 15.7cm},clip]{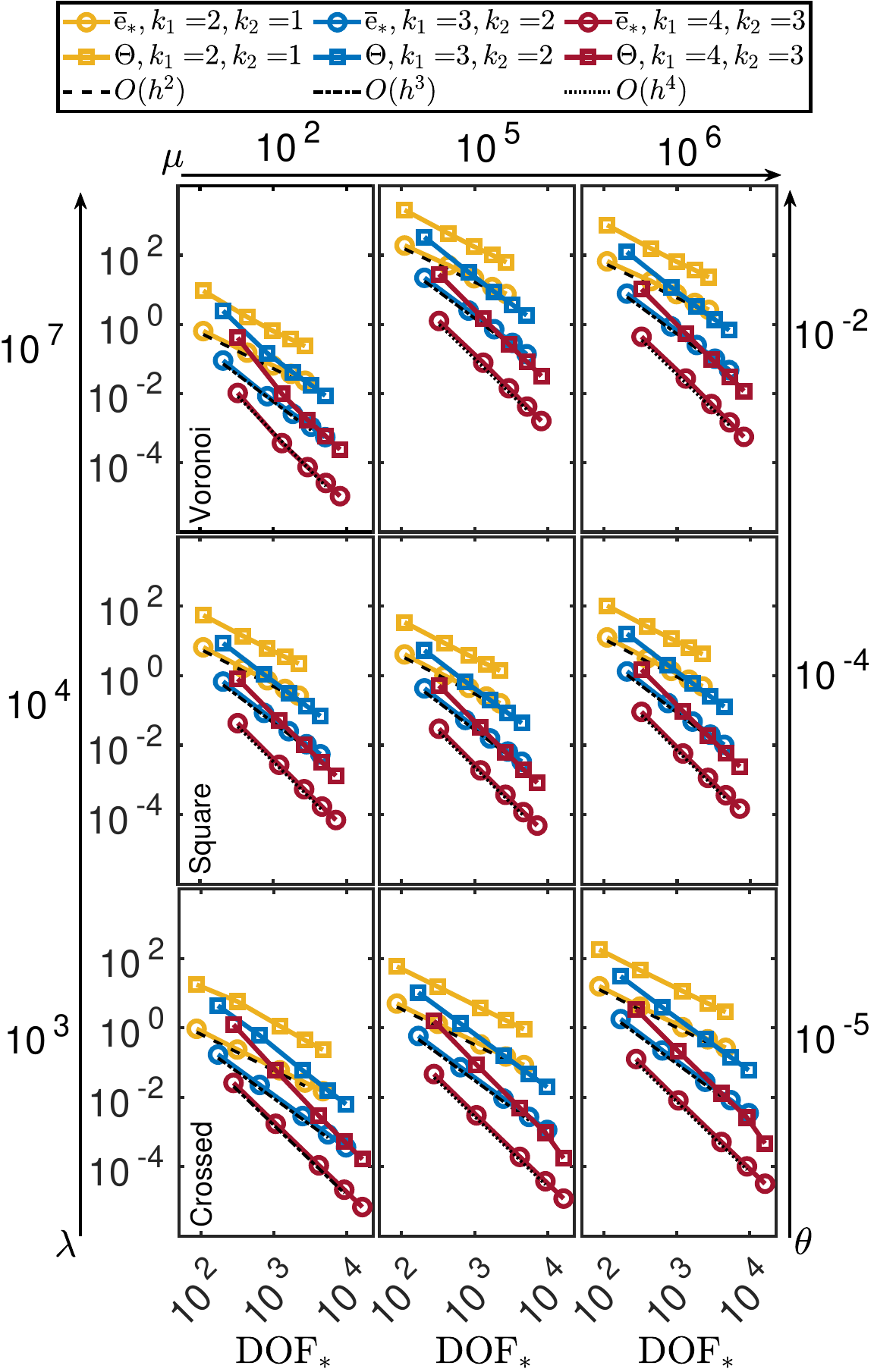}
    \includegraphics[width=0.4875\textwidth,trim={5.cm 1.75cm 4.3cm 16.7cm},clip]{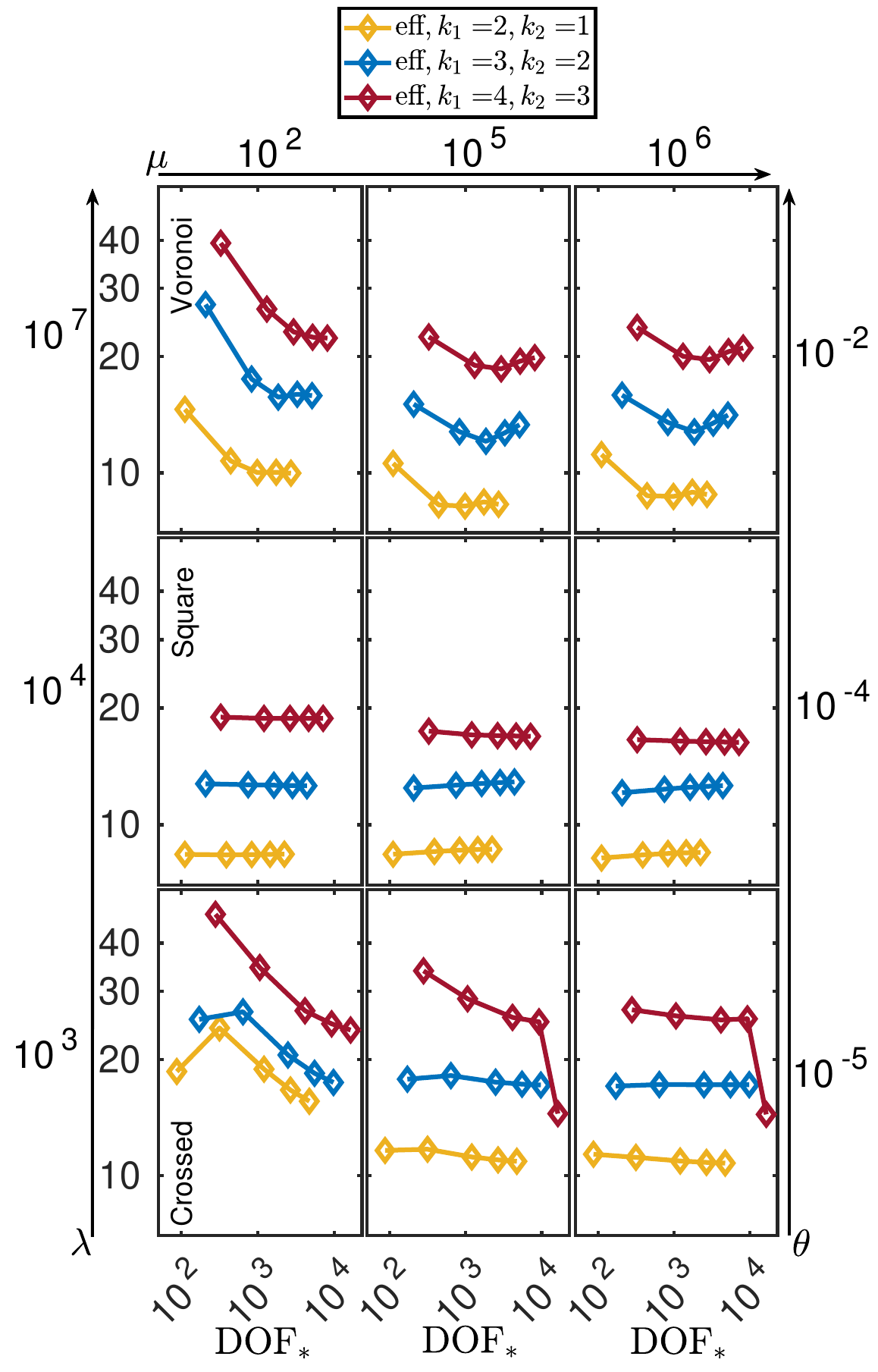}

    \caption{Example 1. Behaviour of the error $\overline{\textnormal{e}}_*$, estimator $\Theta$ (left), and effectivity index $\textnormal{eff}$ (right)  under uniform refinement across various meshes, polynomial orders, and parameter selections.\label{fig:uniform_graph}}
\end{figure}
\subsection{Example 1: Robust behaviour of the estimator under uniform refinement}\label{sec:numerical-example-1}
We consider the meshes in Fig~\ref{fig:meshes-example-1} for the unit square domain $\Omega = (0,1)^2$ with Dirichlet part $\Gamma_{\mathrm{D}} = \{(x,y) \in \partial \Omega \colon x=0\; \text{or}\; y = 0\}$, and Neumann part $\Gamma_{\mathrm{N}} = \partial \Omega\setminus \Gamma_{\mathrm{D}}$. We explore different polynomial degrees for the elasticity and reaction-diffusion equations given by $k_1=2,3,4$ and $k_2=1,2,3$. Smooth manufactured solutions and nonlinearities are set as
\begin{gather*}
    \bu(x,y) = 5^{-1}\left( x^2+x\cos(x)\sin(y), y^2+x\cos(y)\sin(x) \right)^{\tt t},\quad \ell(\varphi) = \varphi,\\ 
    \varphi(x,y) =\cos(\pi y) + \sin(\pi x) + x^2 + y^2,\quad
    \bbM(\beps(\bu),p) = 10^{-1}\exp{\left[-10^{-8} \tr(2\mu \beps(\bu) - p \bbI)\right]}\bbI.
\end{gather*}
As usual, the exact displacement $\bu$ and concentration $\varphi$ are used to compute exact Herrmann pressure ${p}$ and total flux $\bzeta$, as well as appropriate forcing term, source, and non-homogeneous traction, displacement, and flux boundary data. Finally, we use the parameter value $M=12$.

The results in Figure~\ref{fig:uniform_graph} demonstrate robust optimal convergence rates predicted by Theorem~\ref{convergence-rates} for $\overline{\textnormal{e}}_*$ and $\Theta$ under uniform refinement for a variation in the involved parameters $\mu, \lambda, \theta$. In addition, Figure~\ref{fig:uniform_graph} shows the effectivity index for the different experiments, here we can observe that it oscillates around a fixed number for each case, confirming the reliability of the estimator given by Theorem~\ref{th:upper-bound}.

\subsection{Example 2: Non-smooth solution on non-convex domains}\label{sec:numerical-example-2}
In this case, we set an initial polygonal discretisation for the  L-shaped domain $\Omega = (-1,1)^2 \setminus [0,1) \times [0,-1)$ with boundary conditions defined on  $\Gamma_{\mathrm{N}} = \{ (x,y) \in \partial \Omega : x=-1,\, y=1\}$ and $\Gamma_{\mathrm{D}} = \partial \Omega \setminus \Gamma_N$ (see Figure~\ref{fig:LShape}). In contrast, the Australia-shaped domain is set to fit inside the unit square where each element in the initial discretisation represents the in-land states (See Figure~\ref{fig:australia1}), the further east edge coincides with $x=0$ where $\Gamma_{\mathrm{N}}$ is defined, whereas $\Gamma_{\mathrm{D}} = \partial \Omega \setminus \Gamma_N$.
\begin{figure}[ht!]
    \centering 
    \includegraphics[width=0.45\textwidth,trim={12.cm 3.25cm 11.5cm 3.25cm},clip]{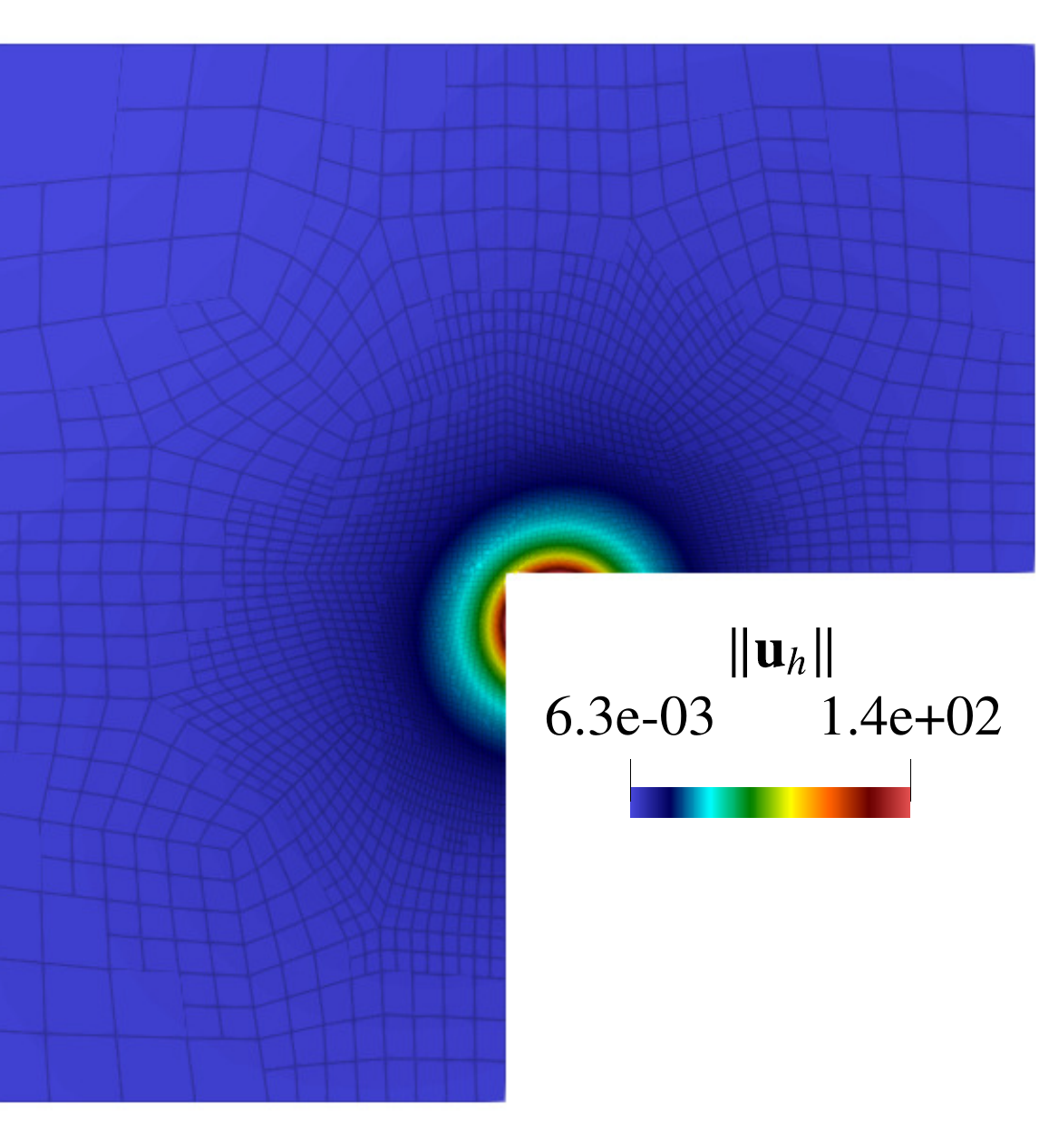}  \label{fig:L_displacement}
    \includegraphics[width=0.45\textwidth,trim={12.cm 3.25cm 11.5cm 3.25cm},clip]{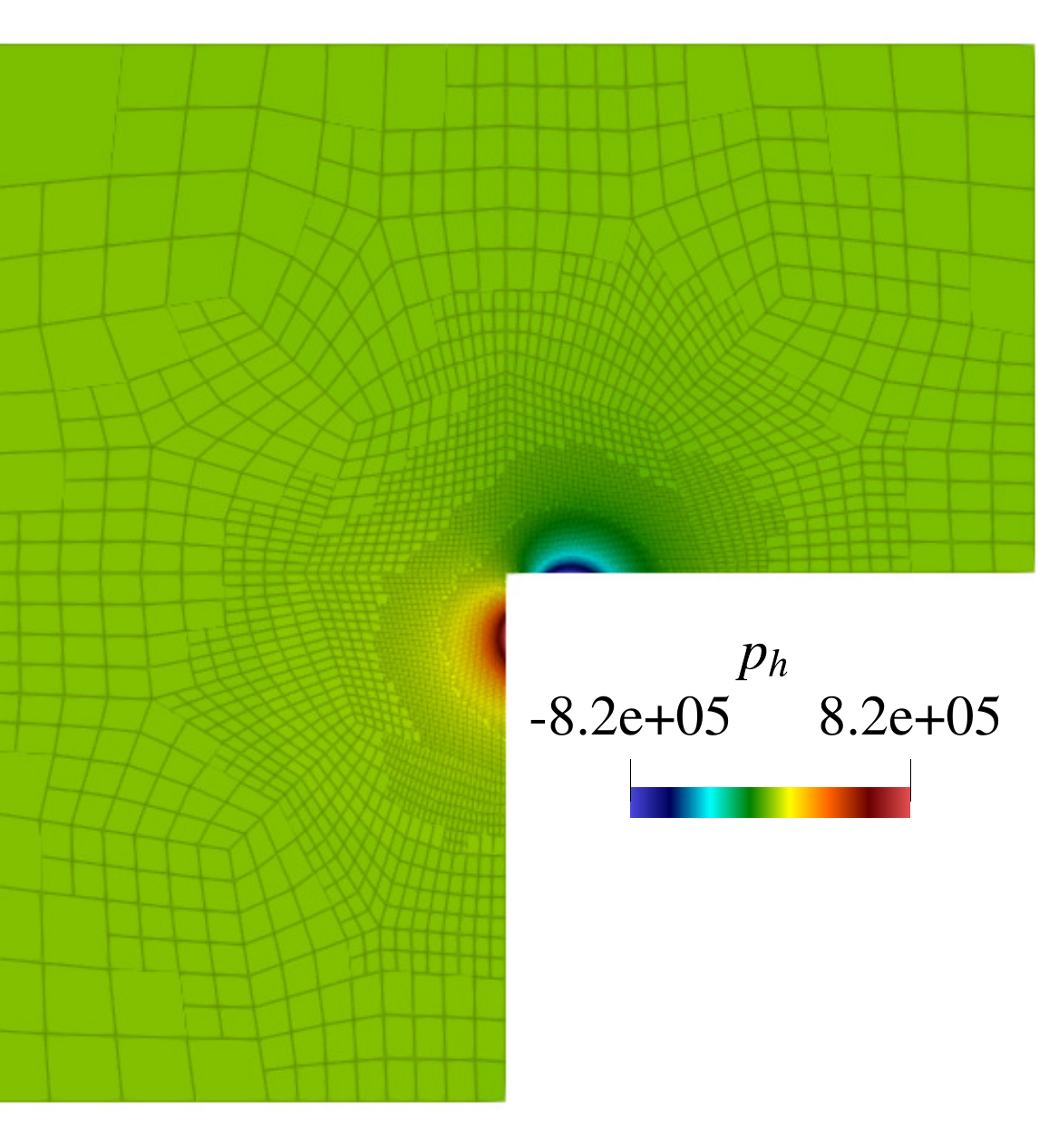} \label{fig:L_pressure}
    \includegraphics[width=0.45\textwidth,trim={12.cm 3.25cm 11.5cm 3.25cm},clip]{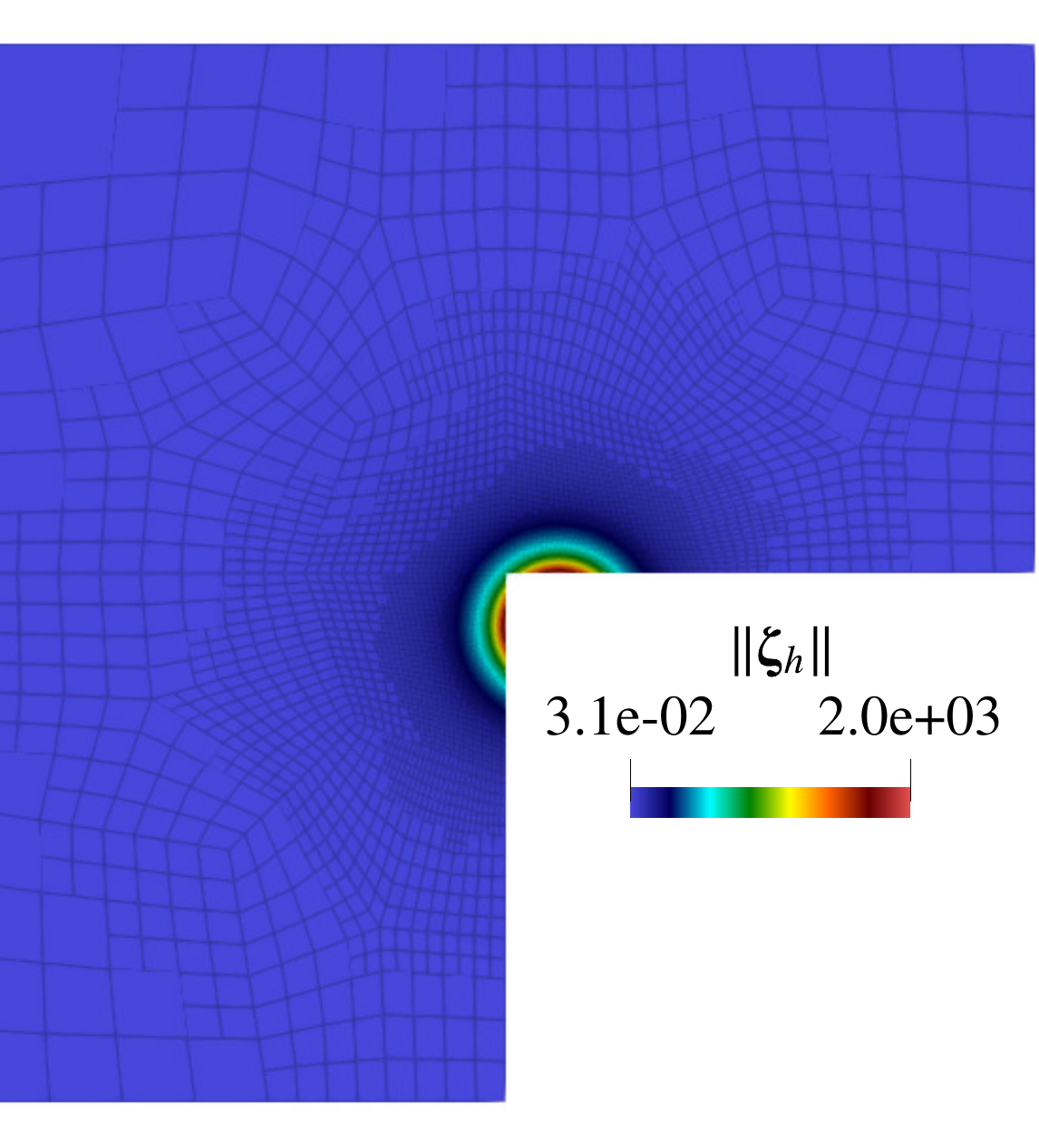} \label{fig:L_flux}
    \includegraphics[width=0.45\textwidth,trim={12.cm 3.25cm 11.5cm 3.25cm},clip]{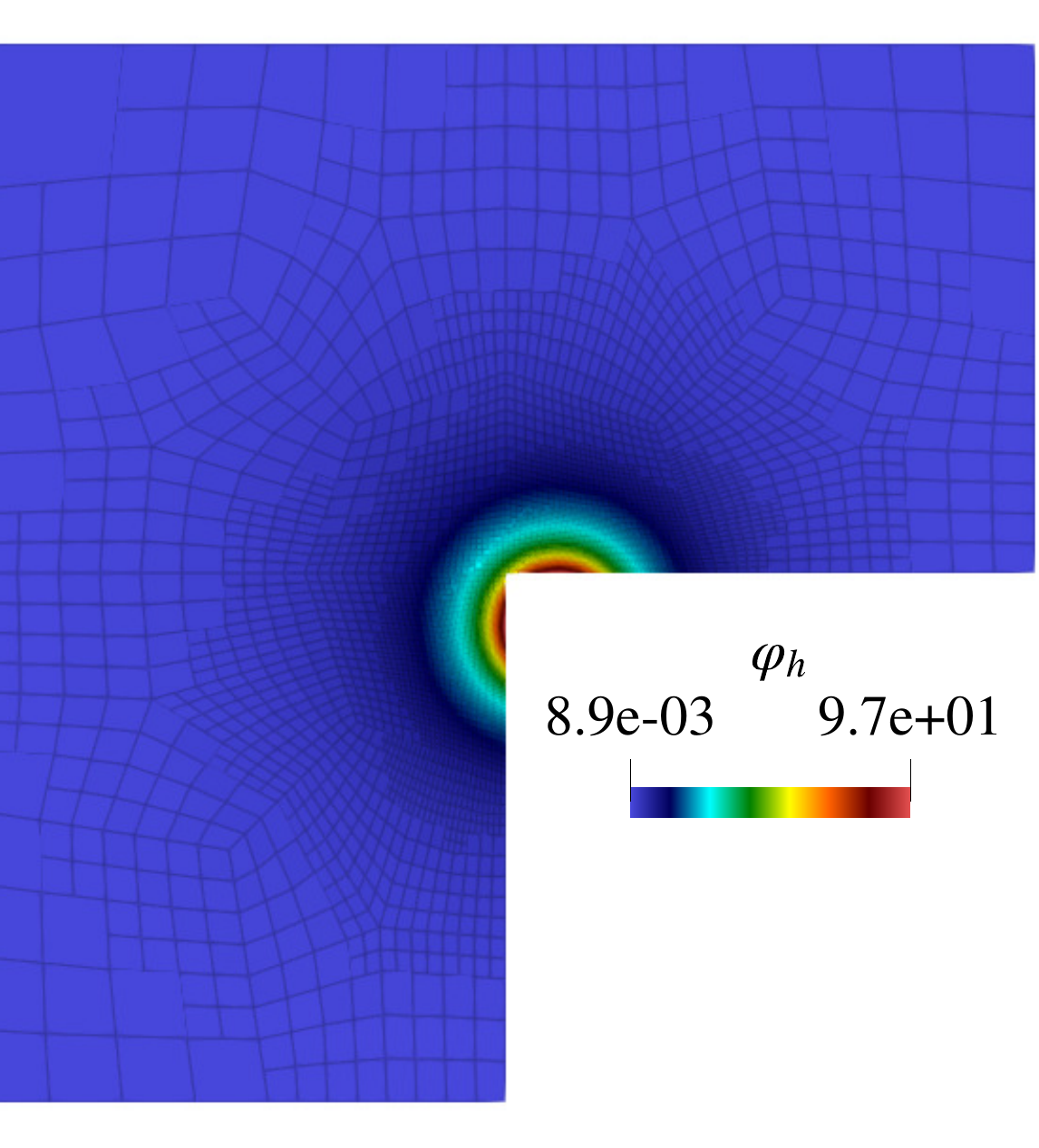} \label{fig:L_concentration}
    \caption{Example 2. Snapshots of the interest variables on the L-shape mesh are shown for polynomial degrees $k_1=2$ and $k_2=1$ after 19 refinement steps.\label{fig:solutions-example-2-L}}
\end{figure}
\begin{figure}[ht!]
    \centering  
    \includegraphics[width=0.41\textwidth,trim={1.25cm 0.25cm 1.7cm 0.cm},clip]{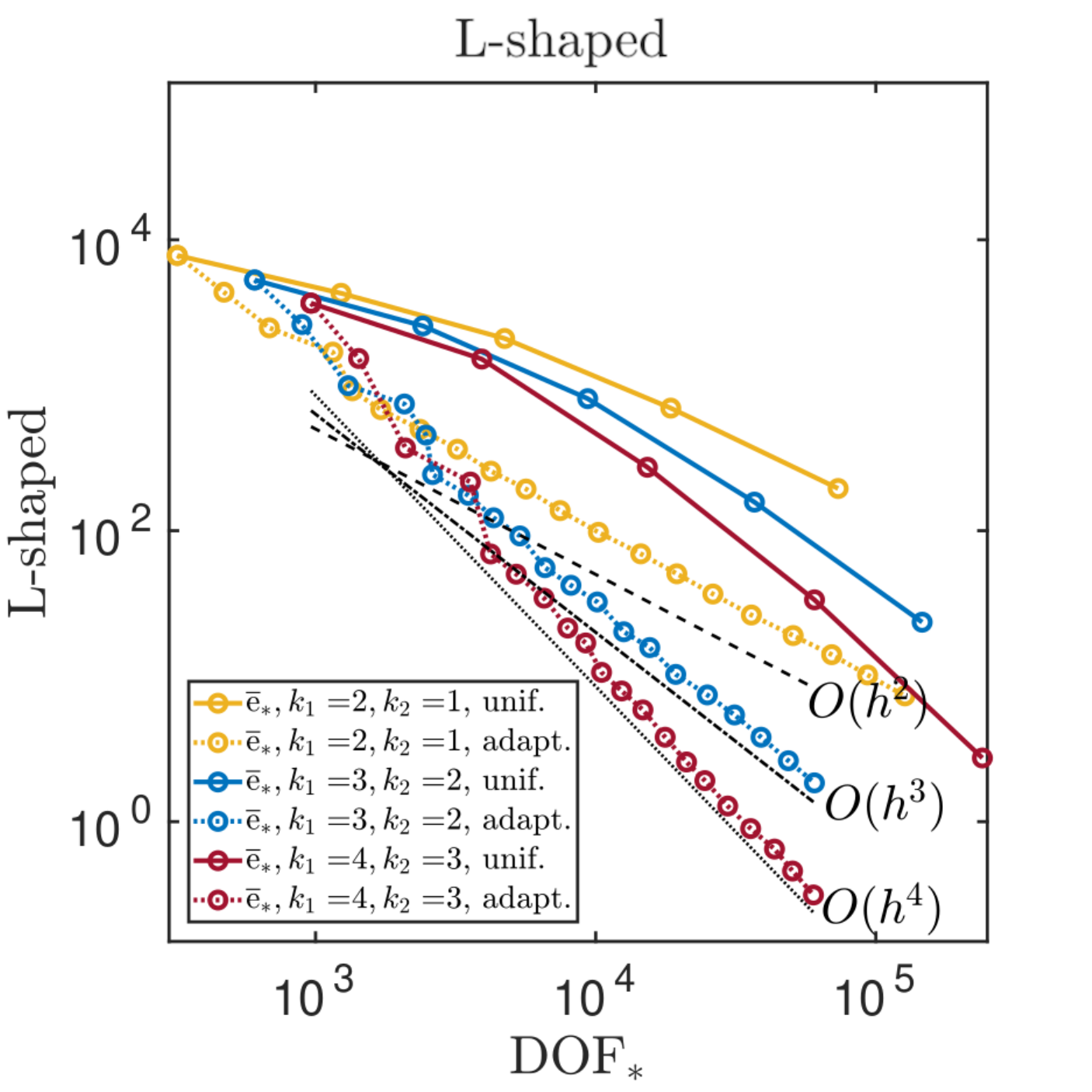}  
    \includegraphics[width=0.41\textwidth,trim={1.25cm 0.25cm 1.7cm 0.cm},clip]{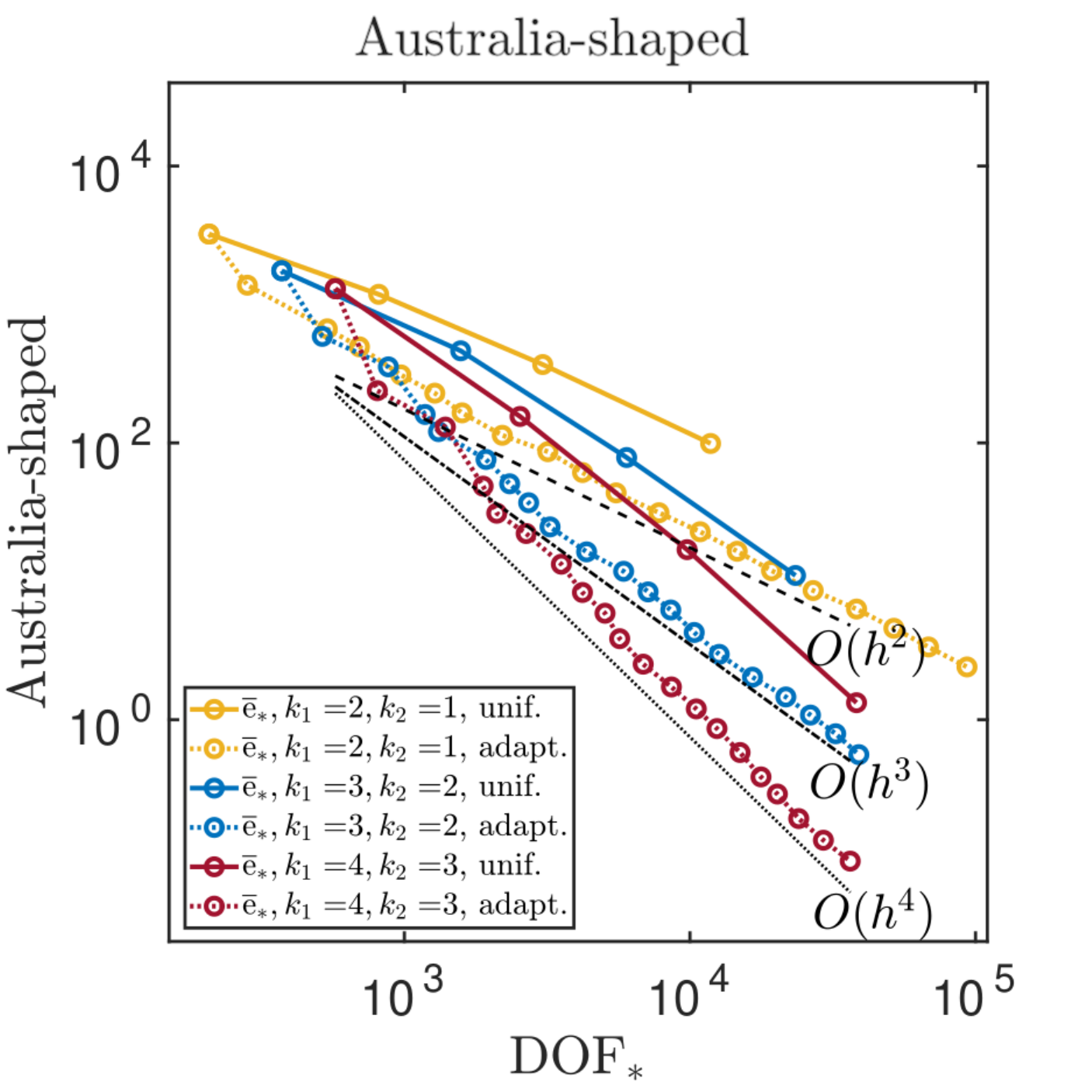} 
    \includegraphics[width=0.4\textwidth,trim={1.05cm 0.25cm 1.7cm 0.cm},clip]{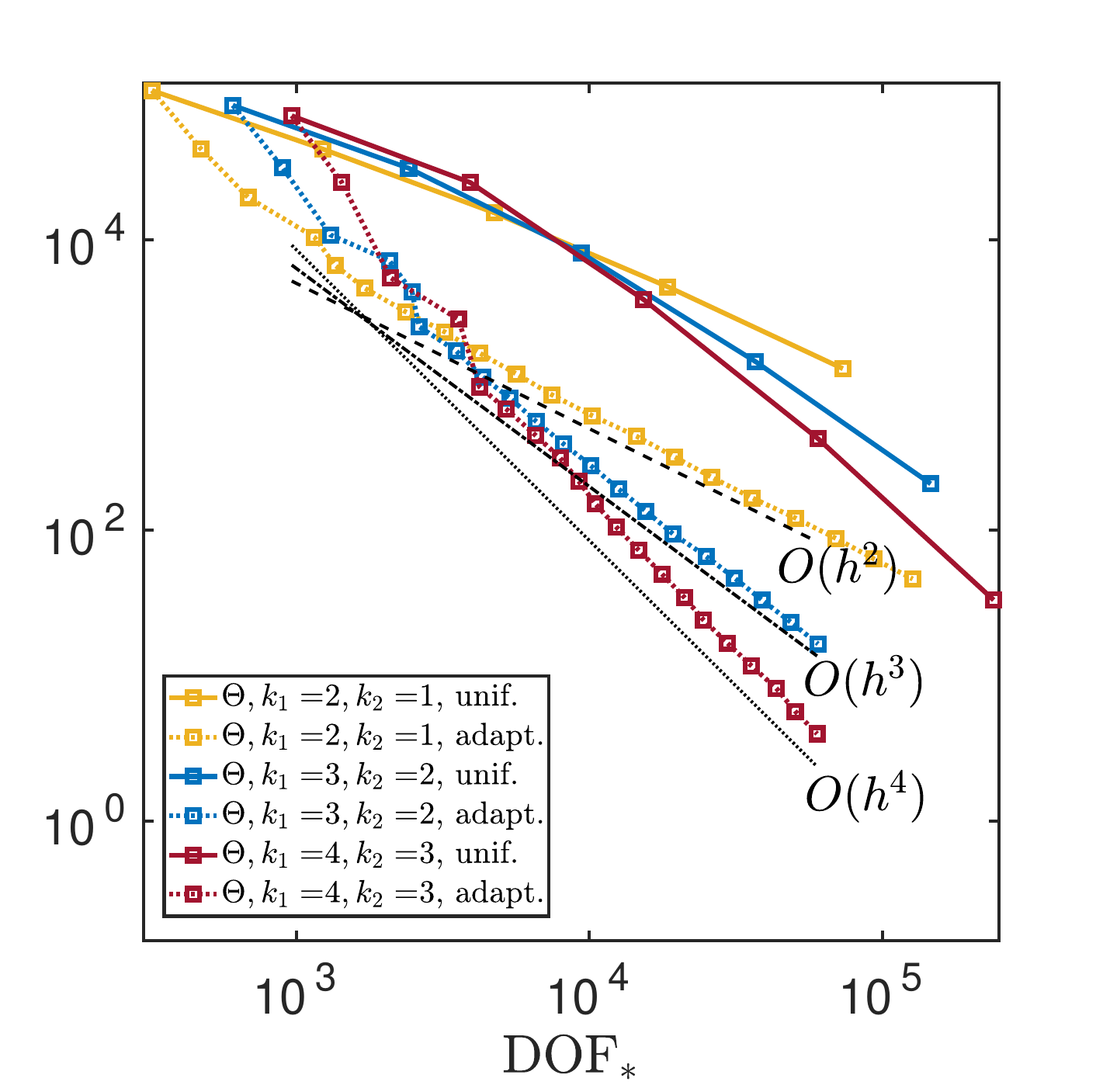}
    \includegraphics[width=0.4\textwidth,trim={1.05cm 0.25cm 1.7cm 0.cm},clip]{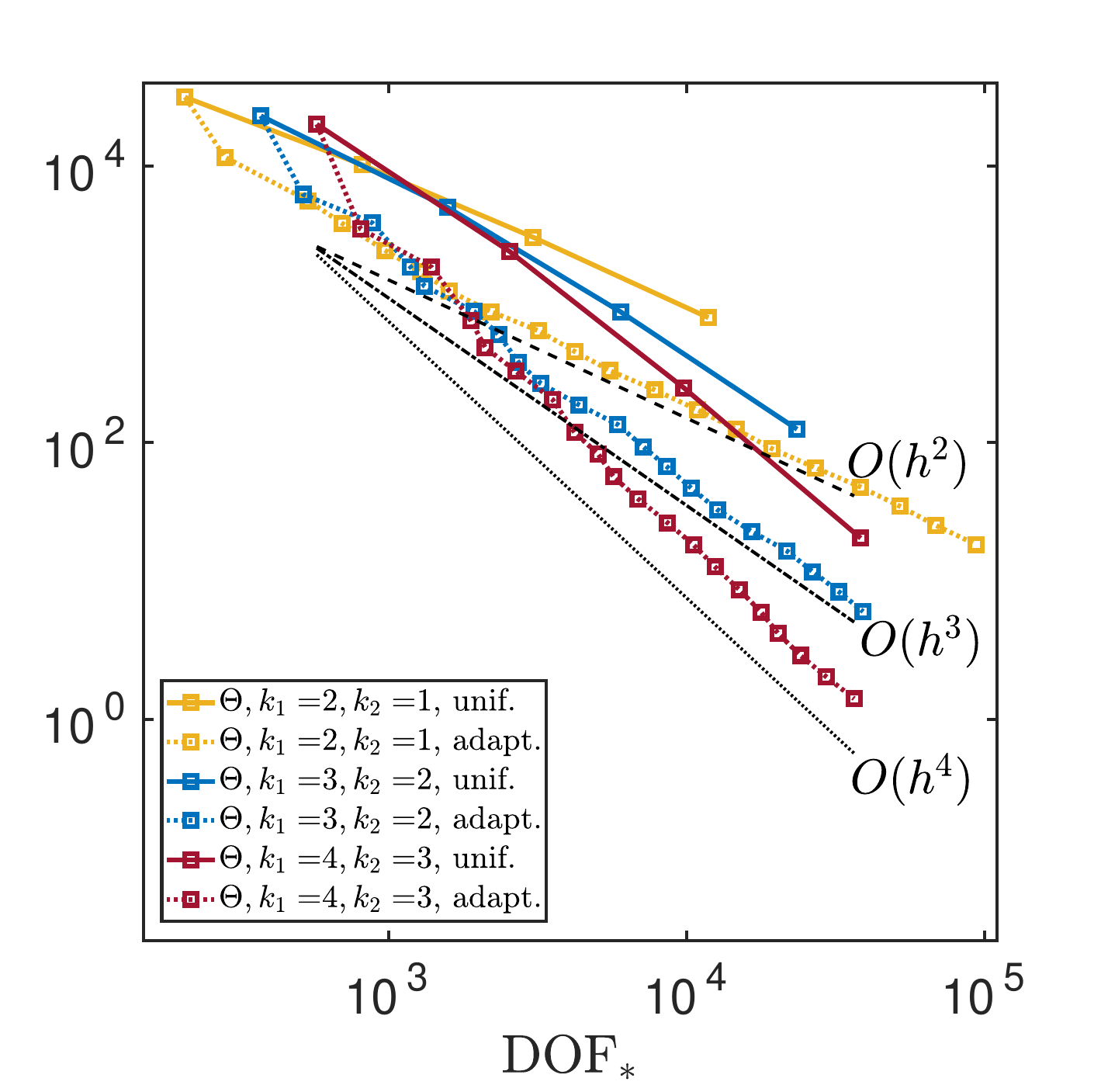}  
    \includegraphics[width=0.4\textwidth,trim={1.05cm 0.25cm 1.7cm 0.cm},clip]{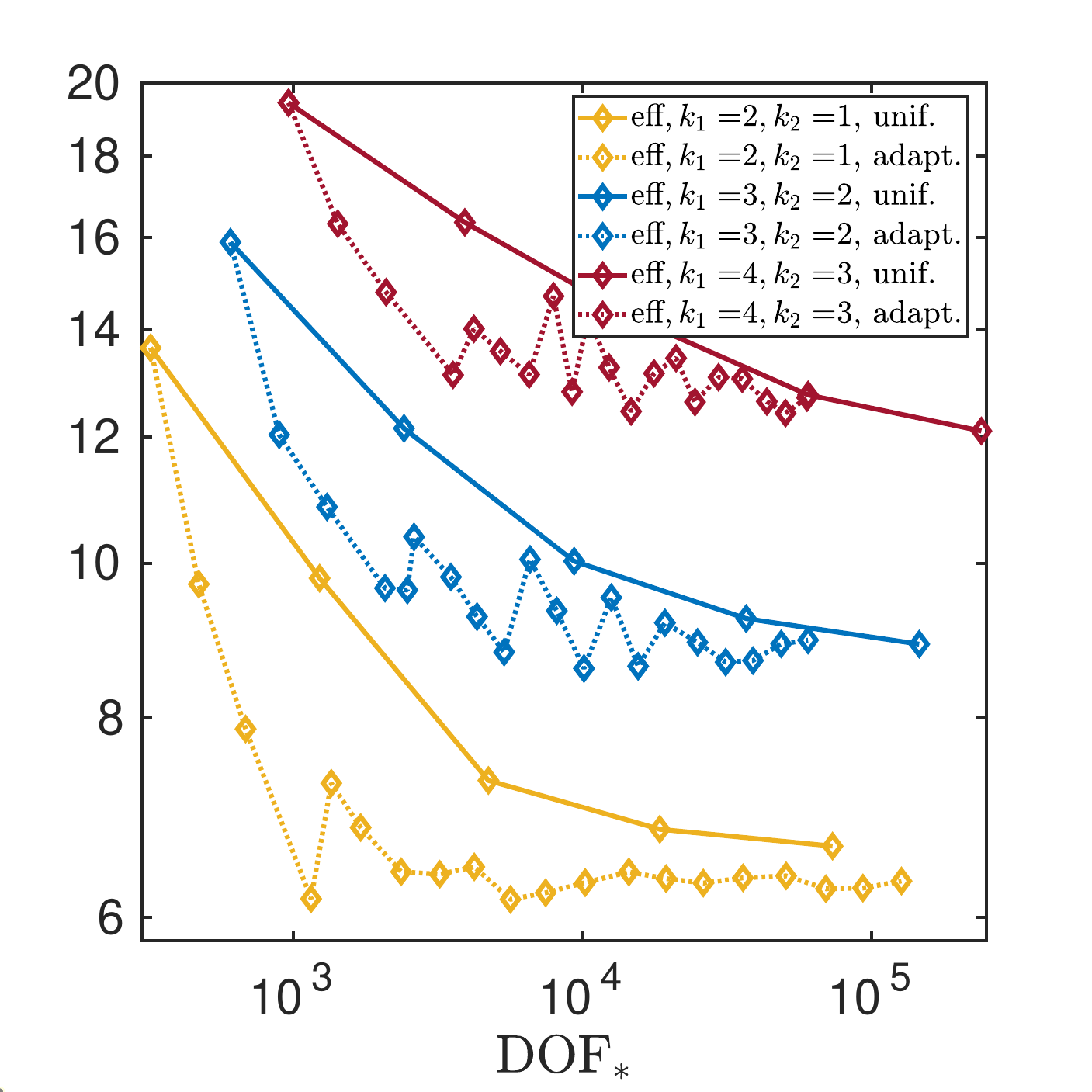}
    \includegraphics[width=0.4\textwidth,trim={1.05cm 0.25cm 1.7cm 0.cm},clip]{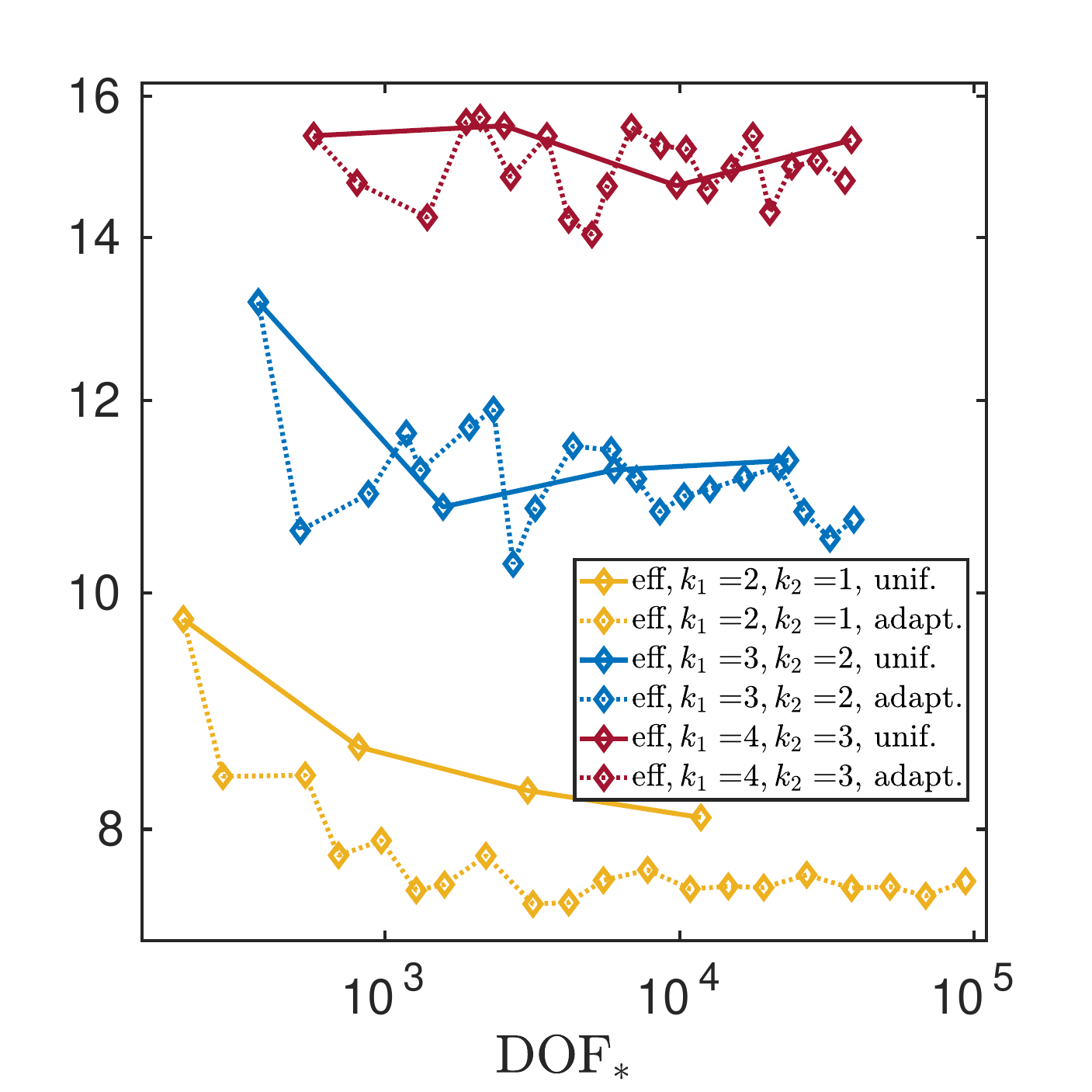} 
    
    \caption{Example 2. Behaviour of $\overline{\textnormal{e}}_*$ (top row), $\Theta$ (middle row), and $\textnormal{eff}$ (bottom row) under uniform and adaptive refinement on L-shape (left column) and Australia-shape (right column) domains for a variety polynomial orders.\label{fig:uniform_adaptive_graph}}
\end{figure}

The non-smooth manufactured solutions and the non-linear terms are set as follows
\begin{gather*}
    \bu(x,y) = \left( \frac{(x-1.0)(y-1.0)}{ \left(x - x_* \right)^2 + \left( y - y_* \right)^2 }, \frac{(x+1.0)(y+1.0)}{ \left(x - x_* \right)^2 + \left( y-y_* \right)^2 } \right)^{\tt t},\quad \ell(\varphi) = 2 + \frac{\varphi^2}{1+\varphi^2},
    \\
     \varphi(x,y) = \frac{(x-1.0)(x+1.0)(y-1.0)(y+1.0)}{ \left(x - x_* \right)^2 + \left( y - y_* \right)^2},\quad  \bbM(\beps(\bu),p) = \left( 1 + \frac{10^{-5}}{\tr(2\mu \beps(\bu) - p \bbI)} \right)\bbI,
\end{gather*}
with  parameter values $\mu = 1.4286 \times 10^3$, $\lambda = 357.1429$, $\theta = 10^{-3}$, and $M = 2$. Displacement and  concentration admit a singularity depending on the selection of the point $(x_*,y_*)^{\tt t}$. For the L-shaped domain we set $(x_*,y_*)^{\tt t} = (10^{-1},-10^{-1})^{\tt t}$, expecting high gradients close to the reentrant corner. In contrast, the Australian-shape domain expects high gradients near the location of Melbourne, i.e., $(x_*,y_*)^{\tt t}=(8.5\times 10^{-1},-10^{-1})^{\tt t}$.

Note that high values of the total error $\overline{\textnormal{e}}_*$ are expected in this simulation because the manufactured solutions attain large magnitudes near the singularity. This effect is further amplified by the parameter-dependent norms used to measure the total error. Alternatives, such as computing relative errors (with respect to the energy of the problem), can be used to mitigate this observation. 

Figure~\ref{fig:uniform_adaptive_graph} shows the behaviour of $\overline{\textnormal{e}}_h$, $\Theta$, and $\textnormal{eff}$ under uniform and adaptive refinement for different polynomial degrees. As expected,  the error decreases faster (optimally) under the adaptive procedure, and the effectivity index remains bounded, confirming the robustness of the estimator.  Figures~\ref{fig:solutions-example-2-L},~\ref{fig:solutions-example-2-A} displays approximate 
solutions after 19 adaptive mesh refinement steps according to $\Theta$. Most of the refinement occurs around the singularities, demonstrating how the method identifies regions where accuracy 
deteriorates. 

\begin{figure}[ht!]
    \centering 
    \includegraphics[width=0.45\textwidth,trim={12.cm 3.25cm 11.5cm 3.25cm},clip]{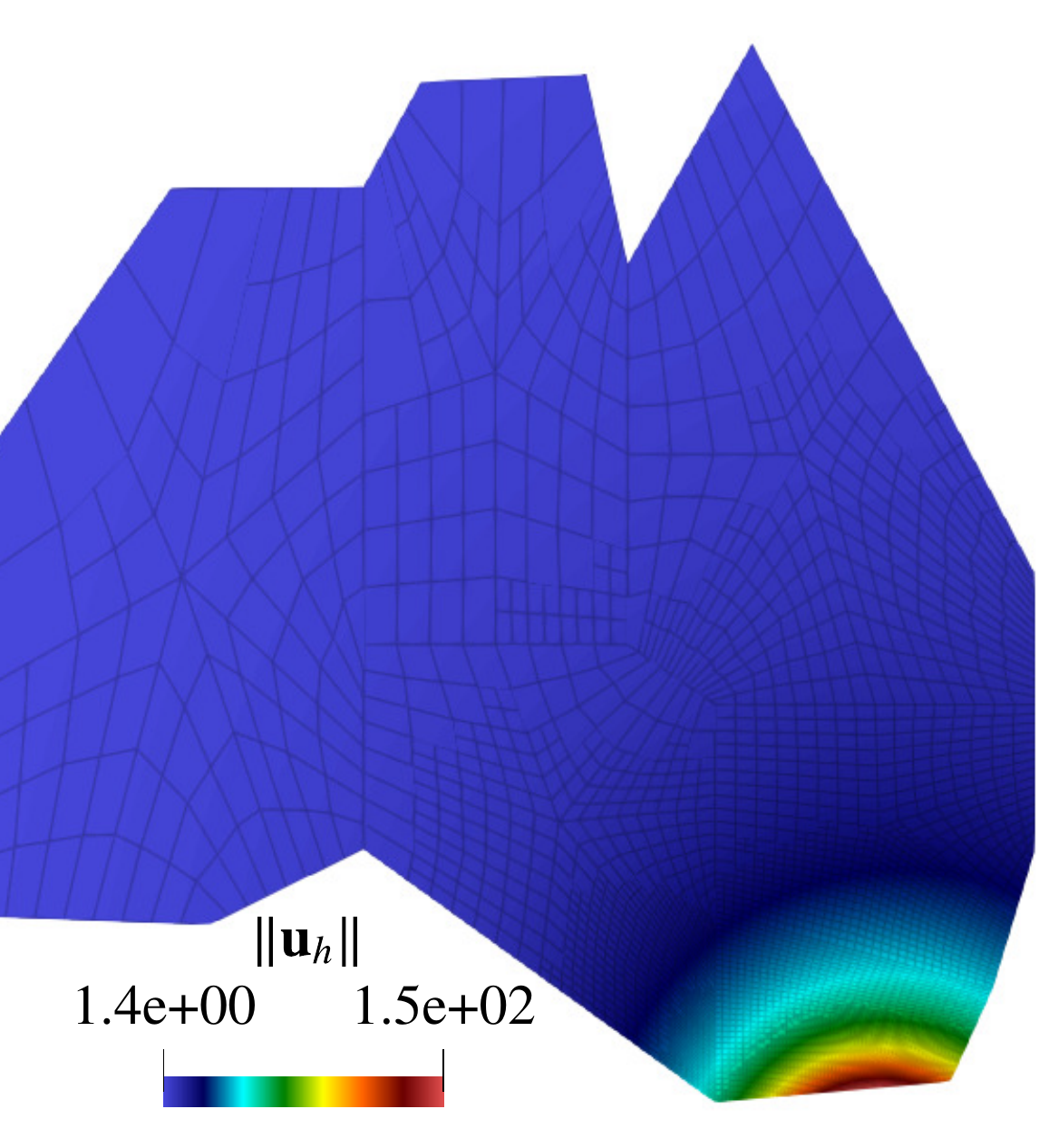}  \label{fig:australia_displacement}
    \includegraphics[width=0.45\textwidth,trim={12.cm 3.25cm 11.5cm 3.25cm},clip]{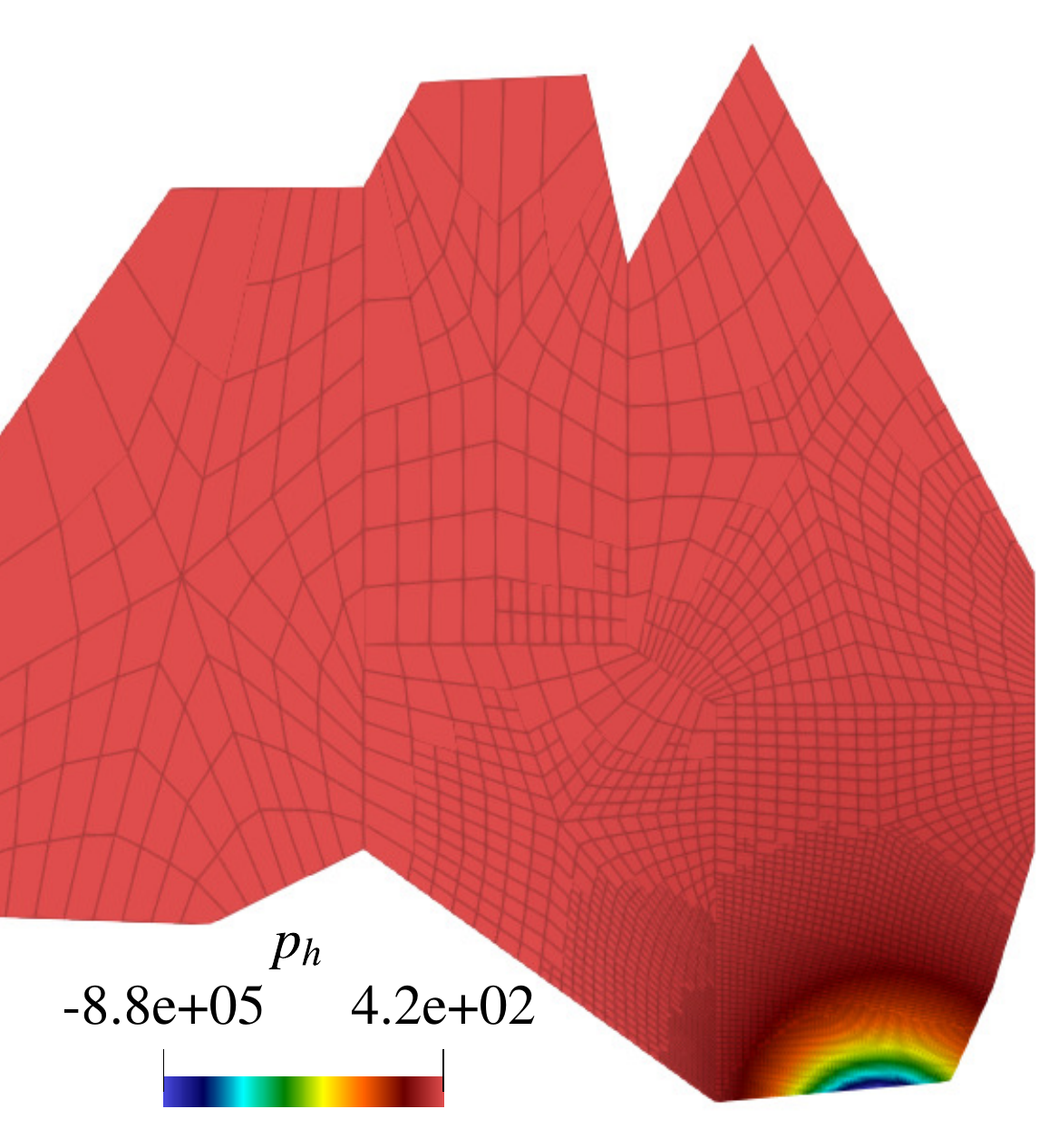} \label{fig:australia_pressure}
    \includegraphics[width=0.45\textwidth,trim={12.cm 3.25cm 11.5cm 3.25cm},clip]{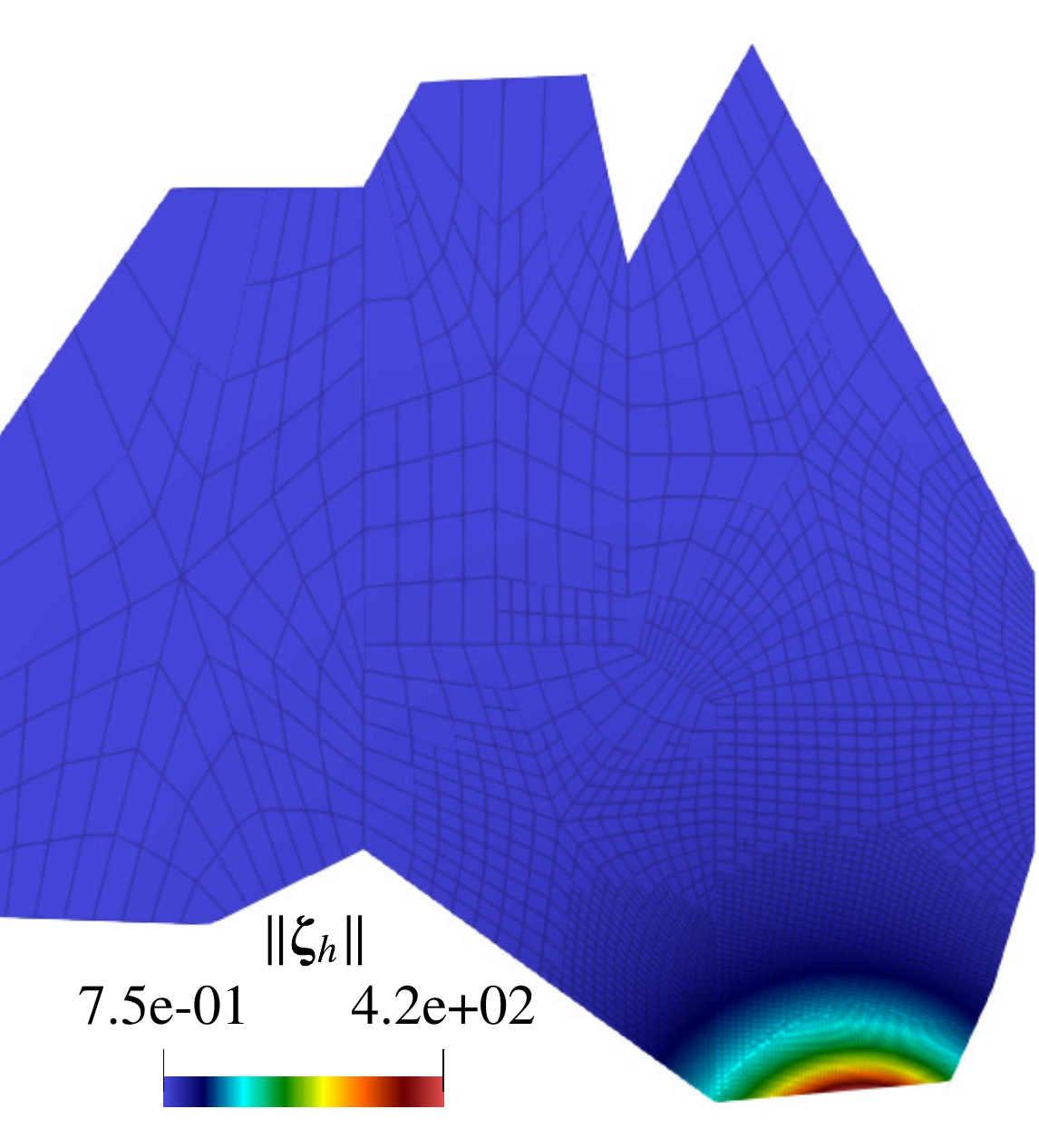} \label{fig:australia_flux}
    \includegraphics[width=0.45\textwidth,trim={12.cm 3.25cm 11.5cm 3.25cm},clip]{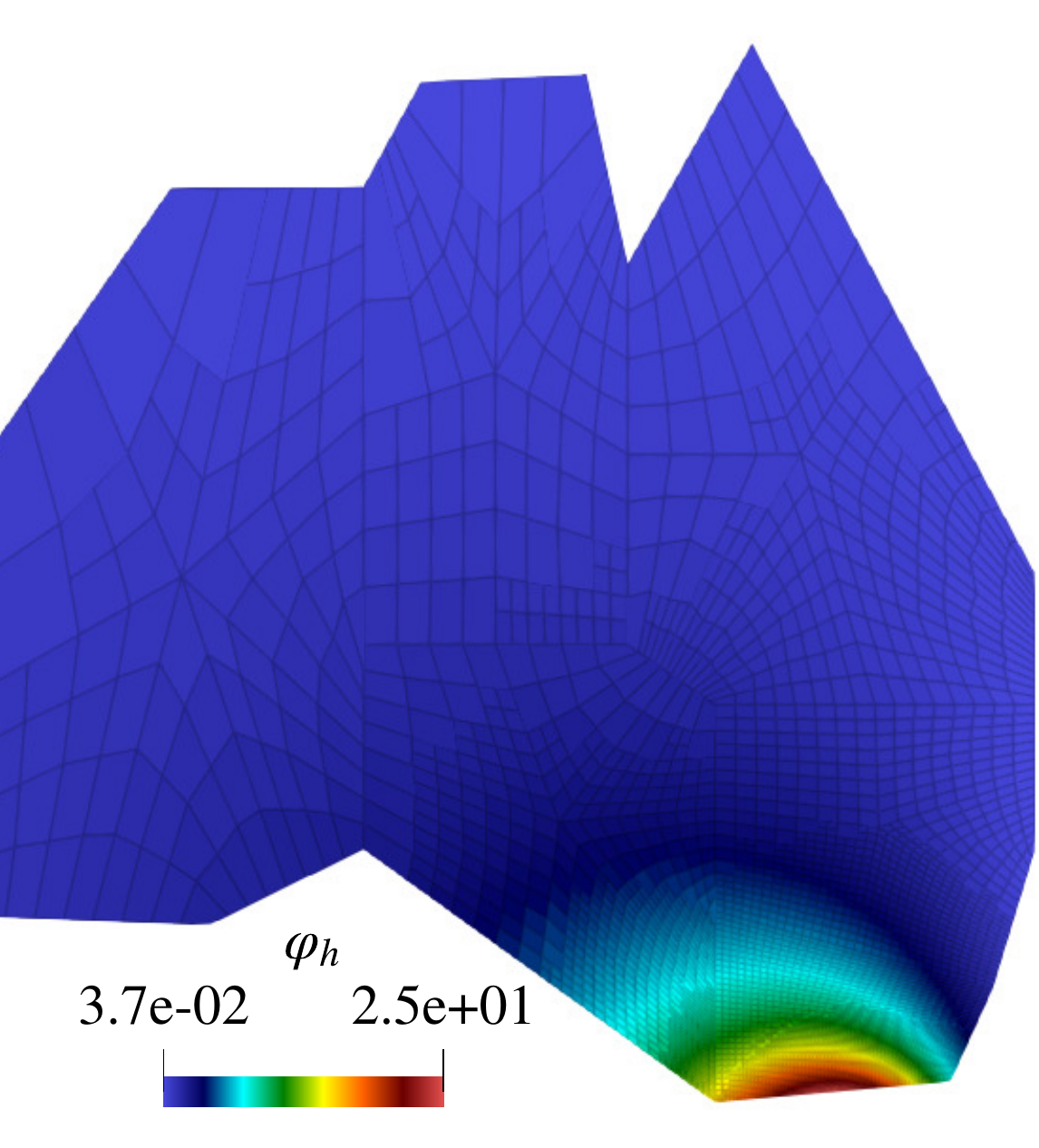} \label{fig:australia_concentration}
    \caption{Example 2. Snapshots of the interest variables on the Australia-shape mesh are shown for polynomial degrees $k_1=2$ and $k_2=1$ after 19 refinement steps.\label{fig:solutions-example-2-A}}
\end{figure}

\subsection{Example 3: Behaviour of estimator under uniform refinement in 3D}\label{sec:numerical-example-3} We consider the meshes in Figure~\ref{fig:meshes-example-1} for the unit cube domain $\Omega = (0,1)^3$ with Dirichlet part $\Gamma_{\mathrm{D}} = \{(x,y,z) \in \partial \Omega \colon x=0\; \text{or}\; y = 0\; \text{or}\; z=0\}$, and Neumann part $\Gamma_{\mathrm{N}} = \partial \Omega\setminus \Gamma_{\mathrm{D}}$. We set the polynomial degrees to $k_1 = 2$ and $k_2=2$ for the discrete spaces defined in Section~\ref{sec:discrete_spaces_3D}, noting that the implementation presented in this paper supports arbitrary polynomial degrees. For this example, we fix the adimensional parameters as $\mu = 10^2$, $\lambda = 10^3$, $\theta = 10^{-3}$, and $M=20$. On the other hand, the manufactured solutions and non-linear terms are given as follows
\begin{gather*}
    \bu(x,y,z) = 5^{-1}\left( x^2+x\cos(x)\sin(y), y^2+x\cos(y)\sin(x), z^2+x\cos(x)\cos(y) \right)^{\tt t},\\ 
    \varphi(x,y,z) =\cos(\pi y) + \sin(\pi x) + x^2 + y^2 + z^2, \\
    \ell(\varphi) = \varphi + \varphi^2, \quad \bbM(\beps(\bu),p) = 10^{-1}\exp{\left[-10^{-4} \tr(2\mu \beps(\bu) - p \bbI)\right]}\bbI.
\end{gather*}
We recall that the right-hand sides and non-homogeneous boundary conditions are introduced in the computations according to these terms. 
\begin{figure}[ht!]
    \centering
    \includegraphics[width=0.625\textwidth,trim={2.cm .cm 2.9cm .75cm},clip]{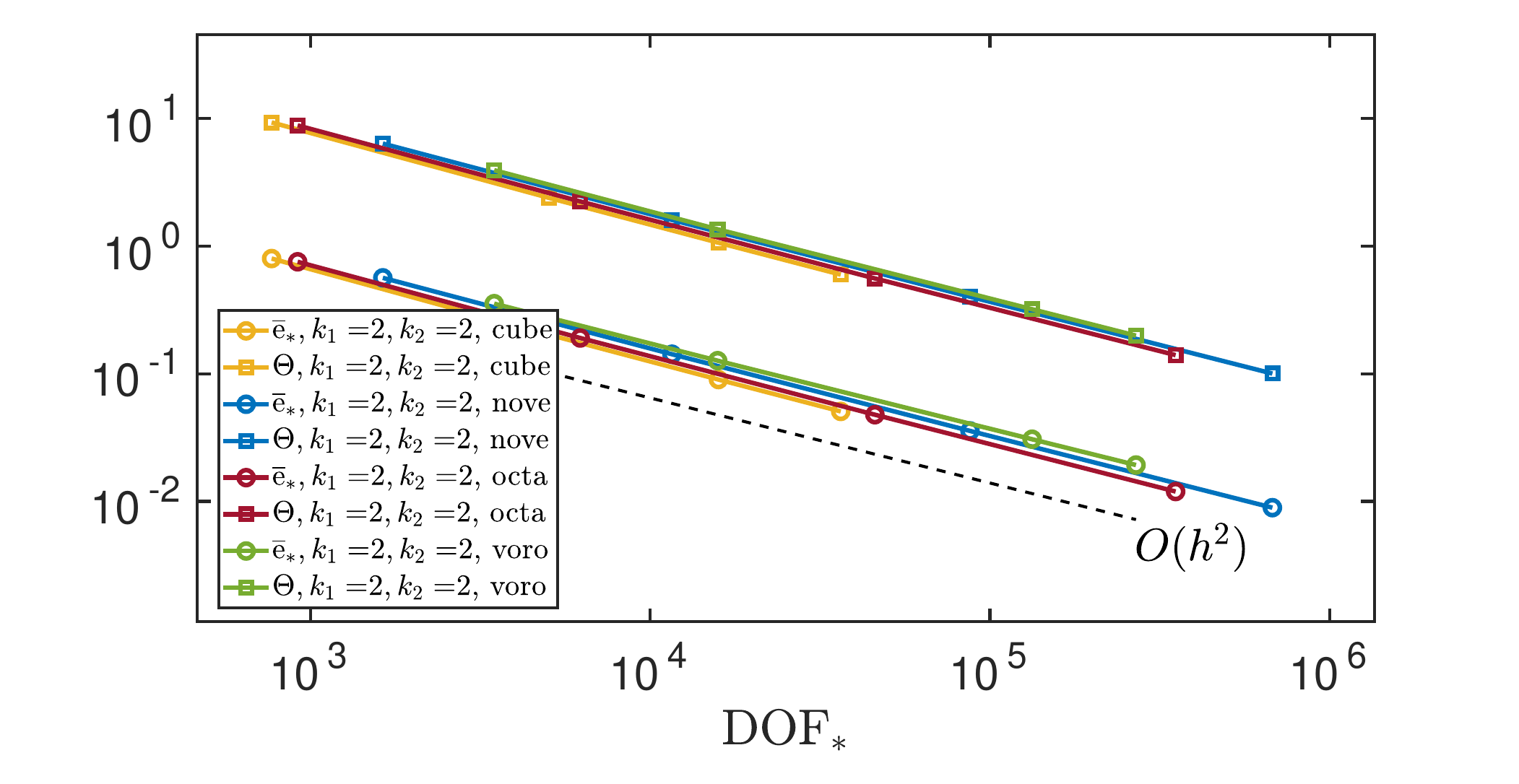}
    \includegraphics[width=0.345\textwidth,trim={0.cm -0.1cm 1.35cm 1.25cm},clip]{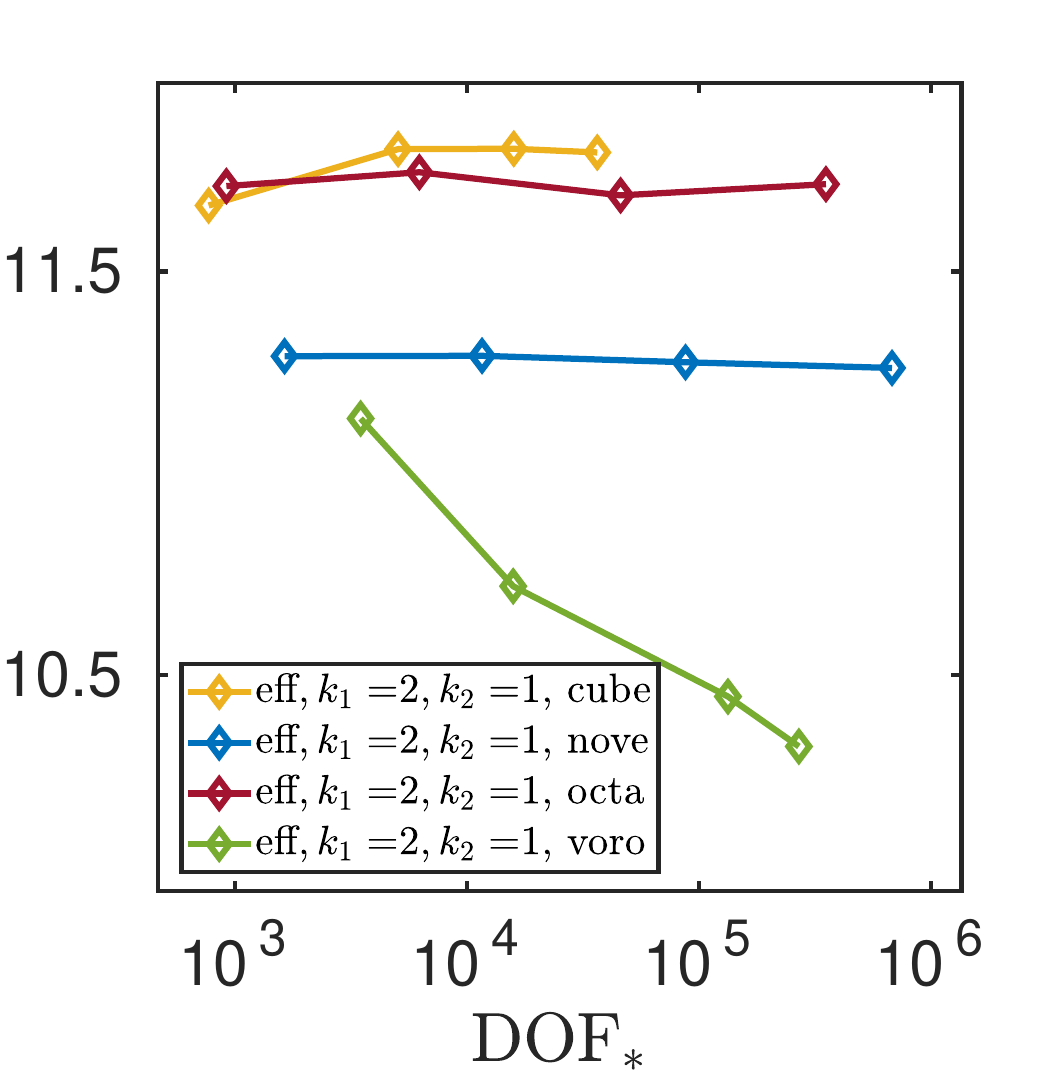}
    
    \caption{Example 3. Behaviour of the error $\overline{\textnormal{e}}_*$, estimator $\Theta$ (left), and effectivity index $\textnormal{eff}$ (right) under uniform refinement across various meshes with $k_1=2$ and $k_2=2$.\label{fig:convergence-3d-uniform}}
\end{figure}

The results reported in Figure~\ref{fig:convergence-3d-uniform} indicate optimal convergence rates for $k_1=2$ and $k_2=2$, as in \cite{rubiano2025}. In addition, the effectivity index remains bounded, which confirms the reliability and efficiency of the estimator (see Theorem~\ref{th:upper-bound} and Theorem~\ref{th:lower-bound}). 

\subsection{Example 4: Adaptivity in 3D}\label{sec:numerical-example-4} For this example, we consider the unit cube domain $\Omega = (0,1)^3$ with Dirichlet part $\Gamma_{\mathrm{D}} = \{(x,y,z) \in \partial \Omega \colon x=0\; \text{or}\; y = 0\; \text{or}\; z=0\}$, and Neumann part $\Gamma_{\mathrm{N}} = \partial \Omega\setminus \Gamma_{\mathrm{D}}$. The starting mesh is defined by $4^3$ cubes. Here we fix the adimensional parameters as $\mu = 10^2$, $\lambda = 10^3$, $\theta = 10^{-3}$, and $M=100$.
\begin{figure}[ht!]
    \centering
    \includegraphics[width=0.65\textwidth,trim={1.cm 0.cm 1.75cm 0.cm},clip]{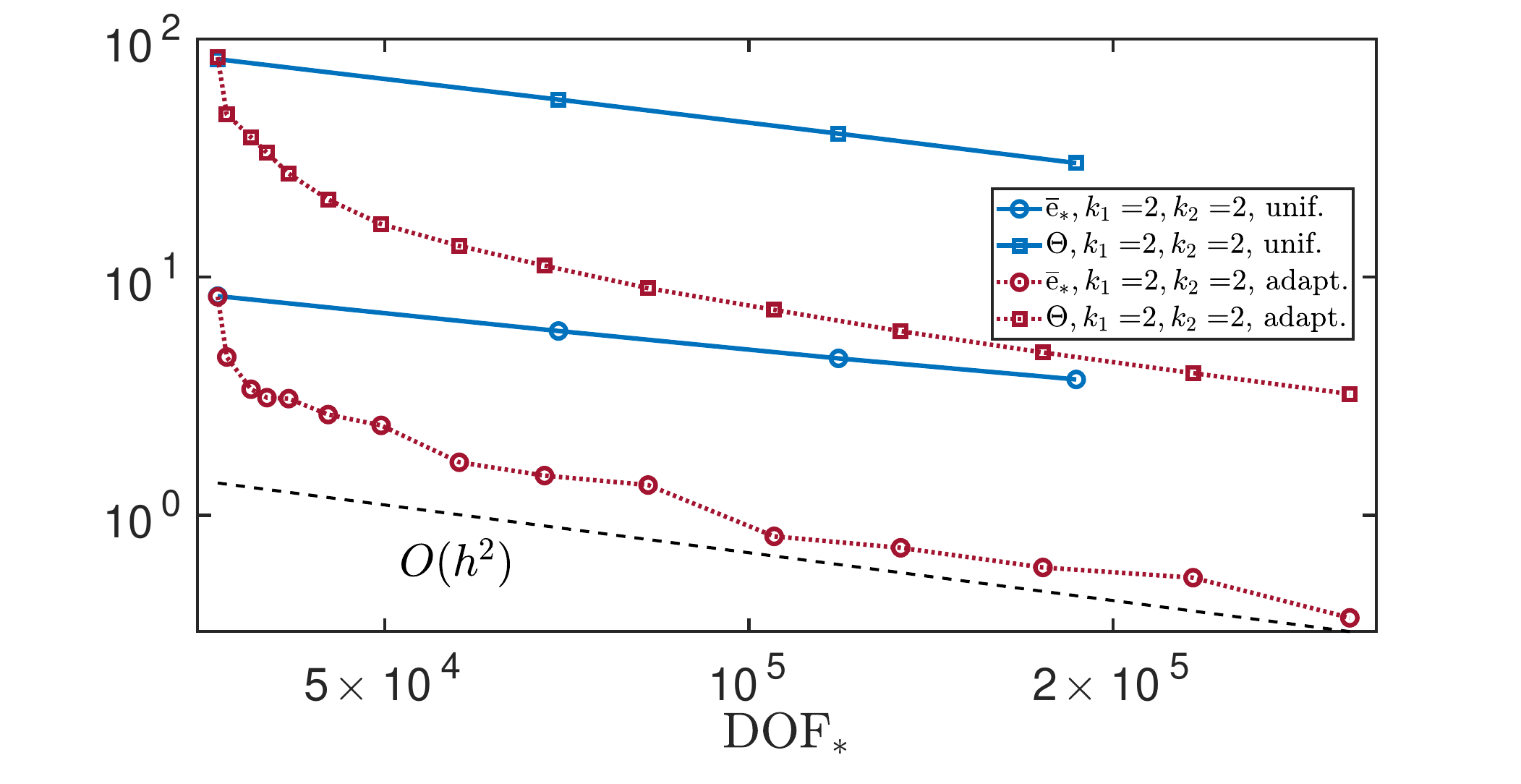}
    \includegraphics[width=0.33\textwidth,trim={.8cm 0.cm 1.25cm 0.cm},clip]{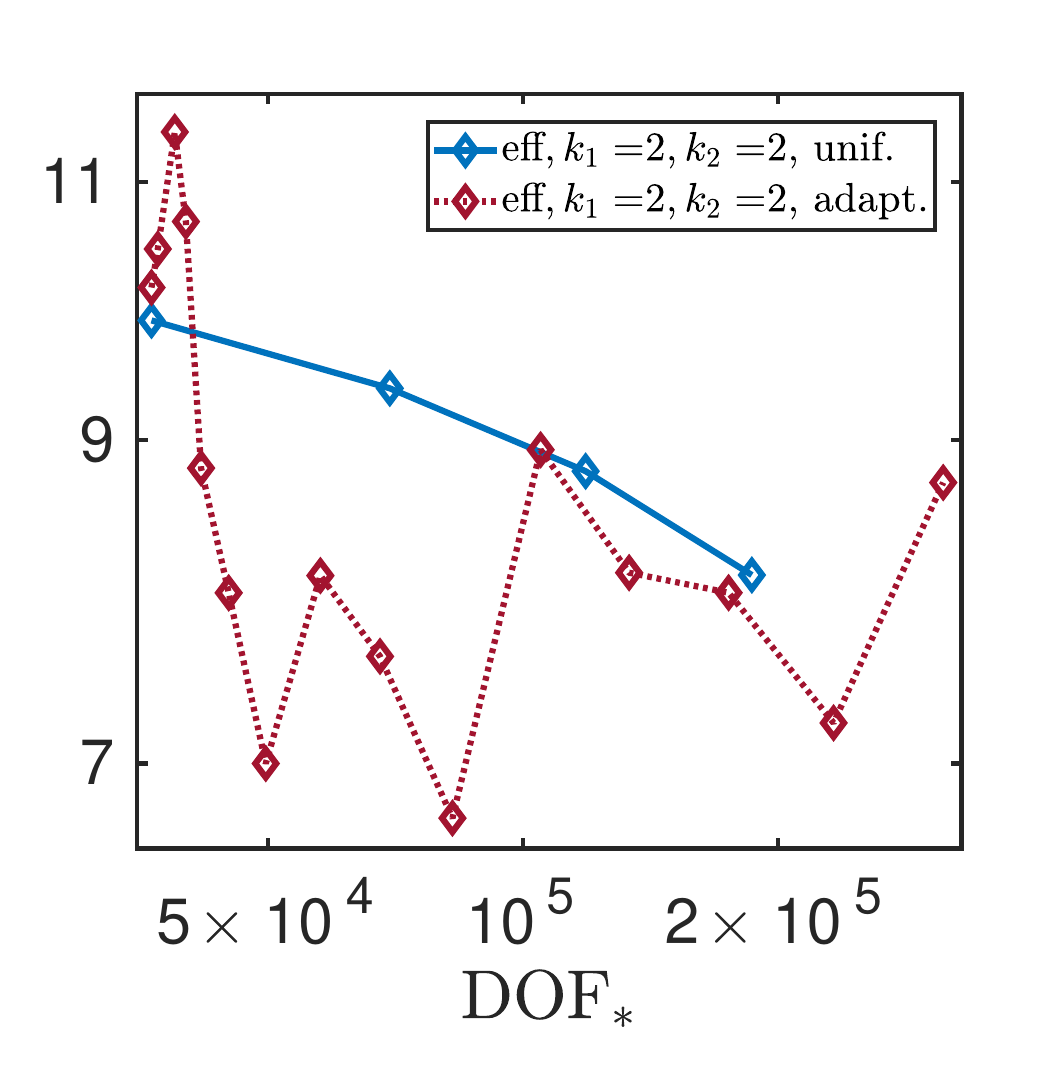}
    
    \caption{Example 4. Behaviour of the error $\overline{\textnormal{e}}_*$, estimator $\Theta$ (left), and effectivity index $\textnormal{eff}$ (right) under adaptive and uniform refinement for the lowest-order case.\label{fig:convergence-3d-adaptive}}
\end{figure}

The non-smooth manufactured solutions and non-linear terms are given as follows
\begin{gather*}
    \bu(x,y,z) = 5^{-1}\left( \left((x+0.1)^2+(y+0.1)^2+(z-1.1)^2\right)^{-1}, z\cos(x)\sin(y), x\cos(z)\sin(y) \right)^{\tt t},\\ 
    \varphi(x,y,z) = xyz\left((x-1.1)^2+(y-1.1)^2\right)^{-1},\\
    \ell(\varphi) = 10^{-2}\varphi, \quad \bbM(\beps(\bu),p) = 10^{-3}\exp{\left[-10^{-4} \tr(2\mu \beps(\bu) - p \bbI)\right]}\bbI.
\end{gather*}
The first displacement component has a singularity close to the point $(x,y,z)^{\tt t} = (0,0,1)^{\tt t}$. In addition, the concentration $\varphi$ has a high gradient  close to the line $(x,y,z)^{\tt t} = (1,1,1-t)^{\tt t}$ for $t\in \mathbb{R}$. The results reported in Figure~\ref{fig:convergence-3d-adaptive} show optimal convergence  s as predicted in \cite{rubiano2025}. Moreover, we observe that the adaptive refinement outperforms the uniform refinement. In addition, the effectivity index remains bounded, confirming the robustness of the estimator proved in Theorems~\ref{th:upper-bound} and \ref{th:lower-bound}. Finally, Figure~\ref{fig:solutions-example-4} shows snapshots of the approximate solutions on a mesh after 15 refinement steps. The method is able to capture the expected solution singularities.
\begin{figure}[ht!]
    \centering
    \includegraphics[width=0.49\textwidth,trim={7.5cm 0.3cm 5.5cm 0.75cm},clip]{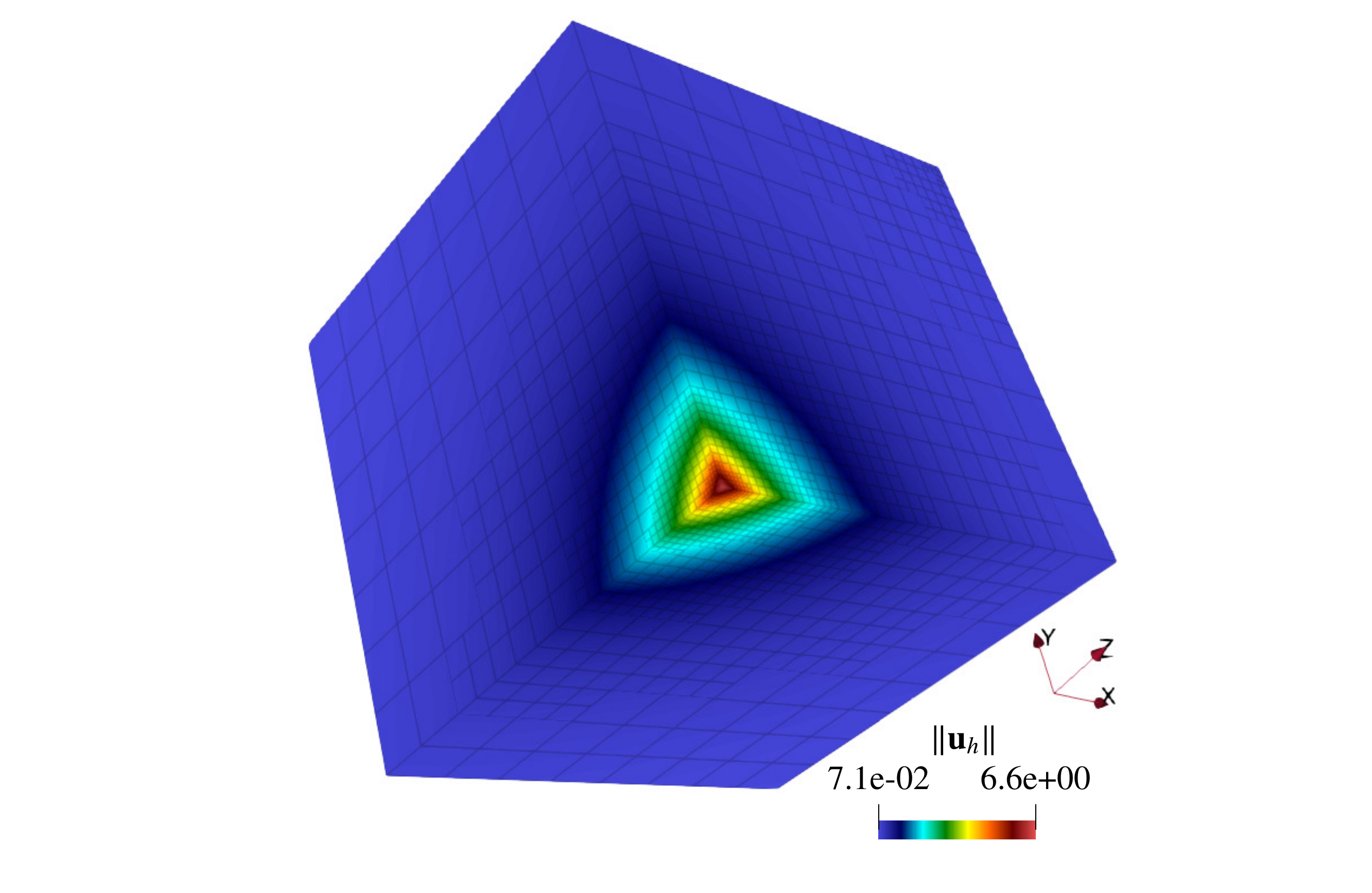}  \label{fig:displacement3d}
    \includegraphics[width=0.49\textwidth,trim={7.5cm 0.3cm 5.5cm 0.75cm},clip]{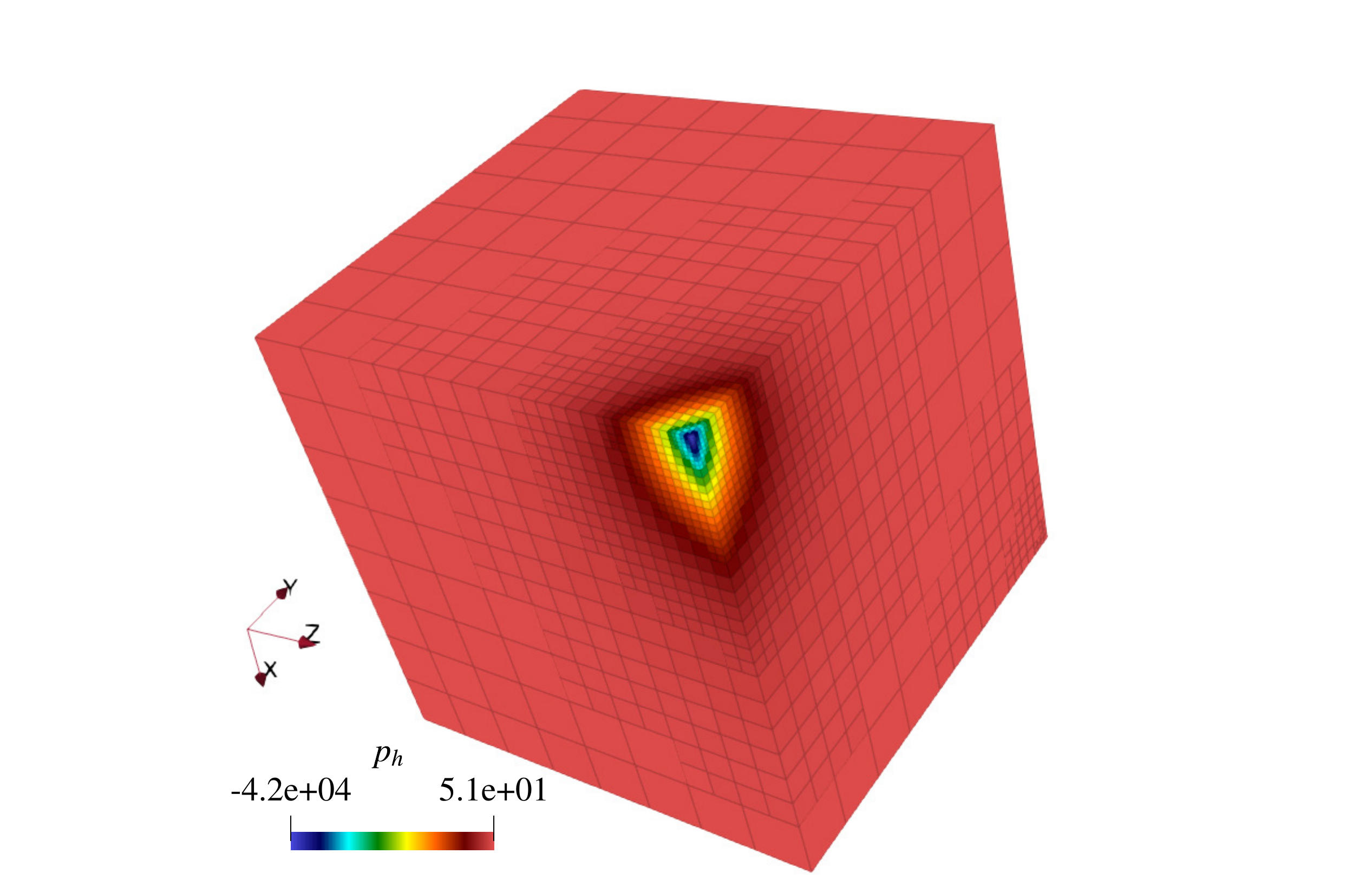} \label{fig:pressure3d}
    \includegraphics[width=0.49\textwidth,trim={7.5cm 0.3cm 5.5cm 0.75cm},clip]{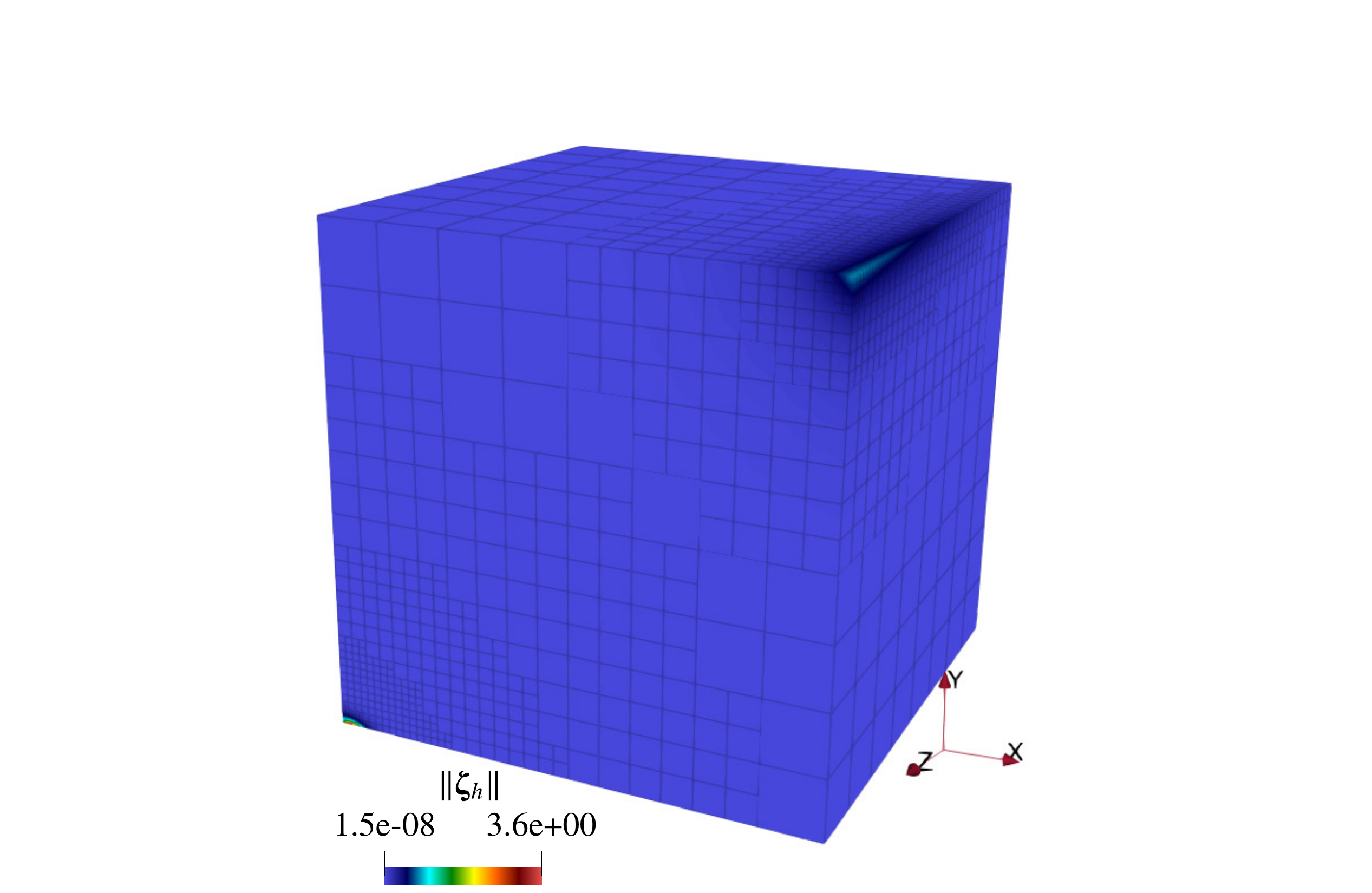} \label{fig:flux3d}
    \includegraphics[width=0.49\textwidth,trim={7.5cm 0.3cm 5.5cm 0.75cm},clip]{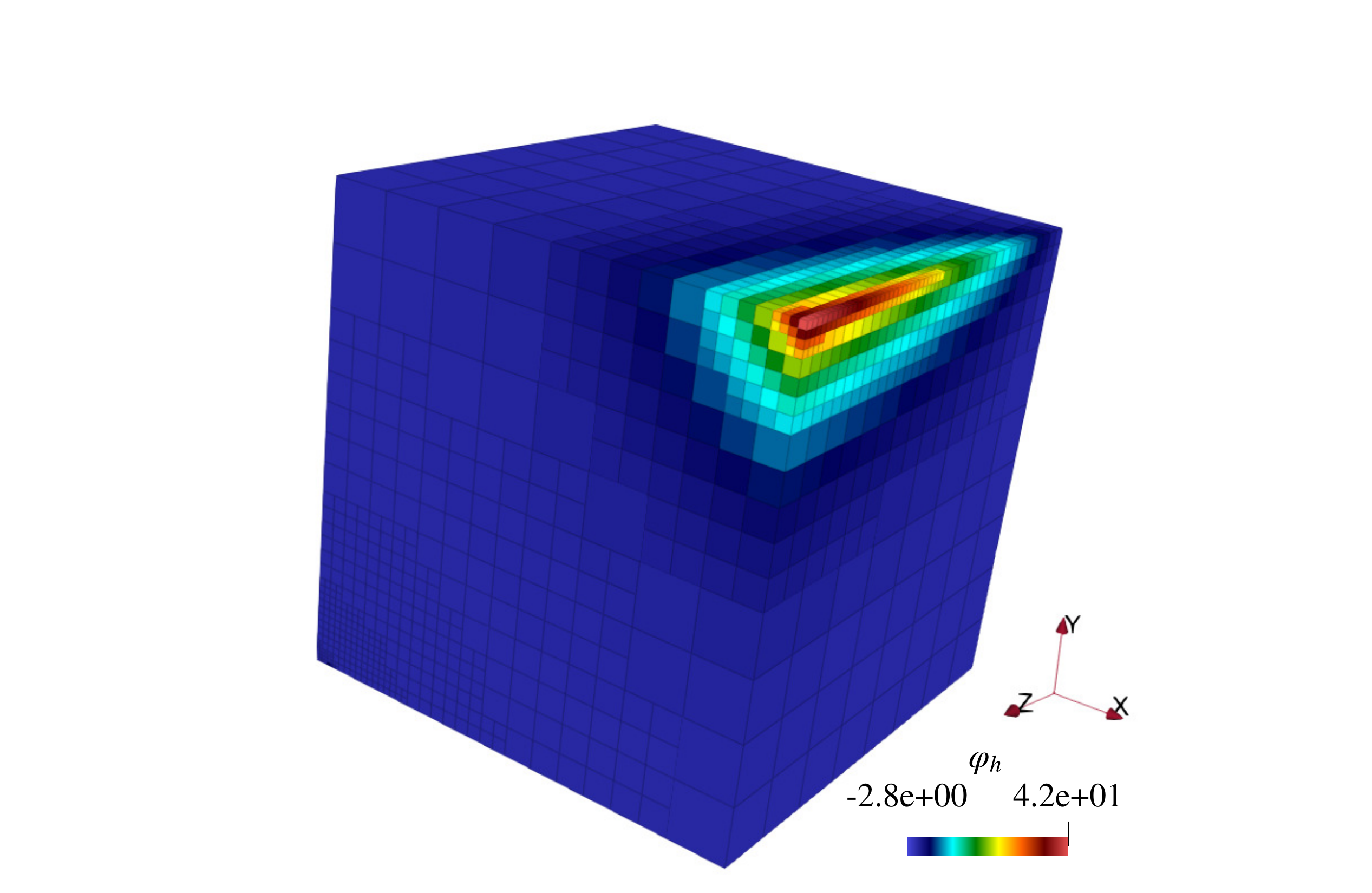} \label{fig:concentration3d}
    \caption{Example 4. Approximate solutions obtained with polynomial degrees $k_1=2$ and $k_2=2$ after 14 adaptive refinements.\label{fig:solutions-example-4}}
\end{figure}
\section*{Acknowledgement} We kindly thank A/Prof. Lorenzo Mascotto for his insight 
on the construction of quasi-interpolators  
for the edge VE spaces in 3D. We also thank Mr. Jordi Manyer 
for an 
implementation of the \texttt{Gridap} interface for \texttt{p4est}  (GridapP4est). 

\section*{Funding} This work has been partially supported by  the Australian Research Council through the \textsc{Future Fellowship} grant FT220100496. 

\bibliography{bibliography}

\end{document}